\documentclass[a4paper,11pt]{amsart}
\usepackage[margin=1in]{geometry}
\usepackage{amsmath, amssymb, amsthm, float, tikz, tikz-cd, epstopdf}

\usepackage[unicode,colorlinks,plainpages=false,hyperindex=true,bookmarksnumbered=true,bookmarksopen=false,pdfpagelabels]{hyperref}
\hypersetup{urlcolor=cyan,linkcolor=blue,citecolor=red,colorlinks=true}
\usepackage{color}

\usepackage{datetime}

\numberwithin{equation}{section}
\newtheorem{thm}{Theorem}[section]
\newtheorem{prop}[thm]{Proposition}
\newtheorem{lem}[thm]{Lemma}
\newtheorem{cor}[thm]{Corollary}

\theoremstyle{definition}
\newtheorem{defn}[thm]{Definition}
\newtheorem{exa}[thm]{Example}
\theoremstyle{remark}
\newtheorem{rem}[thm]{Remark}
\newtheorem{ques}[thm]{Question}

\title{Computations of Floer Lasagna Modules for Traces of Negative $L$-Space Knots}
\author{Colin McCulloch}
\address{Mathematical Institute, University of Oxford, Andrew Wiles Building, Radcliffe Observatory Quarter, Woodstock Road, Oxford, OX2 6GG, UK}
\email{colin.mcculloch@maths.ox.ac.uk}

\subjclass[2020]{57K18; 57K41}
\keywords{Lasagna modules; Floer homology; link cobordisms; spectral sequences}

\begin{document}

\begin{abstract}
    We show that for all negative $L$-space knots and framings $n\geq2g(K)$, if a certain cobordism map between cables of $K$ is non-vanishing, then the Floer lasagna module of the knot trace of $K$ with framing $-n$ is infinite dimensional. Further, for the maps on the hat flavour of link Floer homology induced by band maps, quasi-stabilisations and pair of pants cobordisms we give a description of the map on the $E^{\infty}$-page of Ozsvath and Szabo's spectral sequence for forgetting components.
\end{abstract}

\maketitle

\section{Introduction}

A skein lasagna module is an invariant of a pair $(X,L)$, where $X$ is a 4-manifold and $L\subset \partial X$ is a link, that was introduced by Morrison, Walker, and Wedrich in \cite{OGSkLas}. This invariant was recently used by Ren and Willis to give the first analysis-free proof of the existence of exotic compact orientable 4-manifolds in \cite{exoticsklas}. This result has since been reproved without the use of lasagna modules by Nahm in \cite{NoSkeinLasExotica}. The invariant is constructed using Khovanov-Rozansky homology \cite{KhovanovRozOG}, which is a generalisation of Khovanov's categorification of the Jones polynomial \cite{KhovanovOG}. Further, the module is defined by considering the vector space over a field $\mathbb{F}$ generated by what are called Lasagna fillings of the pair $(X,L)$, modulo some equivalence relation. This definition is unwieldy and near impossible to compute except in very simple cases. However, for 2-handlebodies, that is 4-manifolds obtained by attaching 2-handles to $D^4$ along a framed link $L$, it was shown by Manolescu and Neithalath in \cite{SkeinLasagna2Handle} that the skein lasagna module can be computed in terms of the direct sum of the Khovanov-Rozansky homologies of cables of $L$ quotiented by relations coming from cobordism maps. This work was then extended by Manolescu, Walker, and Wedrich in \cite{SkeinLasangaHandleDecomp} to any 4-manifold given a handle decomposition. However, even with these breakthroughs it remains difficult to compute the skein lasagna module as there are, in general, no formulas for the Khovanov-Rozansky homology of cables of a knot, let alone a link.

In another direction Ozsvath and Szabo introduced Heegaard Floer homology in \cite{OSHFOG}, the hat flavour is denoted $\widehat{HF}$, which assigns an isomorphism class of Abelian groups to a closed oriented 3-manifold. They later extended this in \cite{OSLinkFloer} to define an isomorphism class of bigraded $\mathbb{F}_{2}$-vector spaces for pointed null-homologous links in three manifolds called link Floer homology and denoted $\widehat{HFL}$. It was then proven by Juhasz in \cite{AndrasCobordisms} that given a decorated link cobordism, see Definition \ref{defn: decorated cobordism}, $\Sigma$ from $L$ to $L'$, there is a well defined linear map $F_\Sigma\colon \widehat{HFL}(L)\to \widehat{HFL}(L')$. Using different techniques Zemke defined linear maps for all of the flavours of link Floer homology in \cite{CobMapPaper}. These two techniques were shown to agree for the hat flavour in \cite{MapsAreSamePaper}.

Using link Floer homology and these cobordism maps, Chen generalised the definition of a skein lasagna module to define a Floer lasagna module in \cite{floerlasagna}, which is denoted $\mathcal{FL}(X,L)$. He also proved that similar to the skein lasagna module case there is a 2-handlebody formula for Floer lasagna modules in terms of the cables of the link defining the attaching spheres. Further, for an $L$-space knot $K$ and framing $n\geq 2g(K)$, Gorsky and Hom in \cite{linkLspacernrm} give a formula for the link Floer homology of the $(r,rn)$-cable of $K$ for all $r\geq1$. Hence, using this it should, in theory, be possible to compute the Floer lasagna module of a knot trace $X_n(K)$, that is the 2-handlebody obtained by attaching a single 2-handle to $D^4$ along $K$ with framing $n$. However, in the formula for the Floer lasagna module of the 2-handle maps infinitely many cobordism maps must be computed. This presents a challenge as these maps are defined in terms of holomorphic triangle counts.

In this paper we present a method for reducing the complexity of computing this set of infinitely many cobordism maps to a single computation of a cobordism map $F_{K_{-n}}$ in the case of negative $L$-space knots. Recall that a negative $L$-space knot is a knot that admits a negative surgery where the Heegaard Floer homology of the resulting 3-manifold is as simple as possible, see \cite{LSpaceAlexMonic}. The cobordism $F_{K_{-n}}$ is defined in Definition \ref{defn: F_K map} and is between cables of $K$, where the cable depends on $n$ and the Alexander polynomial $\Delta_K(t)$. Formally we prove the following.

\begin{thm}
\label{thm: main theorem intro version}
    Let $K$ be a negative $L$-space knot and $n\geq2g(K)$. Then there exists a linear map $F_{K_{-n}}$ that is a restriction of a pair of pants cobordism between the link Floer homology of particular cables of $K$, such that if $F_{K_{-n}}$ is non-vanishing then $\mathcal{FL}(X_{-n}(K))$ is infinite dimensional.
\end{thm}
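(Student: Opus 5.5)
We will use Chen's 2-handlebody formula for Floer lasagna modules. For the trace $X_{-n}(K)$ it expresses $\mathcal{FL}(X_{-n}(K))$ as a colimit-type quotient
\[
\bigoplus_{r\geq 0}\widehat{HFL}(K(r,-rn))\otimes \mathbb{F}[\ldots] \Big/ \sim ,
\]
where the relations come from the cobordism maps between cables induced by pair of pants cobordisms (merging or splitting parallel strands) and by the symmetric group action permuting the components of the cable. Since $K$ is a negative $L$-space knot and $n\geq 2g(K)$, the mirror of $K$ is an $L$-space knot and $K(r,-rn)$ is the mirror of the $(r,rn)$-cable of $\overline{K}$. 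Gorsky and Hom's formula therefore determines $\widehat{HFL}(K(r,-rn))$ for every $r$, together with its Alexander multi-grading.

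The plan is to exhibit an infinite family of classes, one in each of infinitely many distinct gradings, and show that none of them is killed by the relations. The quotient respects the homological and Alexander gradings, suitably shifted by $r$. It therefore suffices to find, for infinitely many $r$, a graded piece of the quotient that is nonzero. The key structural step is to identify, in each cable, a distinguished generator sitting at an extremal multi-Alexander grading, for example the top or bottom corner of the staircase-type description coming from $\Delta_K(t)$. We then show that all relation maps landing in that grading factor through a single model map. This model map is the restriction $F_{K_{-n}}$ of a pair of pants cobordism between specific cables, namely the cables whose number of strands is determined by $n$ and $\Delta_K(t)$.

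To carry out this factorisation, we will analyse the hat-flavour pair of pants, band and quasi-stabilisation maps through Ozsv\'ath--Szab\'o's spectral sequence for forgetting link components. On the $E^\infty$-page these maps become computable in terms of the knot Floer complex of $K$. This reduces the infinitely many relation maps to an essentially combinatorial statement. Under it, a class in the extremal grading of $K(r,-rn)$ is identified with a class in $K(r+1,-(r+1)n)$ precisely through (tensor powers or iterates of) $F_{K_{-n}}$.

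If $F_{K_{-n}}$ is non-vanishing, the iterates give a compatible nonzero system of classes along the direct system. We then check that these classes survive in the quotient in pairwise distinct gradings, for instance distinct total Alexander or homological degree growing linearly in $r$. This yields infinitely many linearly independent elements, so $\mathcal{FL}(X_{-n}(K))$ is infinite dimensional.

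The main obstacle is the reduction step. We must control all the relations, including the symmetrisation over permutations of cable components and the maps for all pairs of cables, using only $F_{K_{-n}}$. We must also ensure that no relation coming from a different cable index kills the chosen classes. I would first restrict to the extremal Alexander grading, where the Gorsky--Hom groups are one-dimensional or very small. There, grading constraints should force most relation maps to vanish for degree reasons, leaving only the pair of pants maps captured by $F_{K_{-n}}$.
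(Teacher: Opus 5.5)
There is a genuine gap, and it starts with your model of Chen's formula. In $c\widehat{HFK}(K_{-n})$ the summands are the cables $K_{-n}(a,b)$, with $a$ strands parallel to $K$ and $b$ strands antiparallel. The relations are the braid group $B_{a,b}$, the basepoint moving maps, and $v\sim F_P(v\otimes B)$, $0\sim F_P(v\otimes T)$. Here $F_P$ goes from $K_{-n}(a,b)\sqcup U$ to $K_{-n}(a+1,b+1)$ by inserting an oppositely oriented pair of strands bounding an annulus; no relation merges or splits coherently oriented strands. Every relation preserves the class $\alpha=a-b\in H_2(X_{-n}(K))$ and the shifted Maslov and Alexander gradings. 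So the direct system $K(r,-rn)\to K(r+1,-(r+1)n)$ built from iterates of $F_{K_{-n}}$ does not exist. Your counting step is also self-contradictory: classes identified with each other by relations lie in one graded piece and give at most one element of the quotient. A single ``compatible nonzero system'' therefore cannot produce infinitely many independent classes in distinct gradings. What you need is a separate direct system for each of infinitely many $\alpha$. The paper fixes $\alpha=2s+1$ and runs along $t\mapsto K_{-n}(\alpha+t,t)$. It shows this system contributes a copy of $\mathbb{F}$ to $c\widehat{HFK}_{n(s^2+s)+2c_K(0)+1}(K_{-n};\alpha,0)$. Non-vanishing of the pants maps is exactly what stops $v\sim F_P(v\otimes B)=0$ from killing that class. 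Note too that Gorsky--Hom only covers coherently oriented cables; the mixed ones need the orientation-reversal and mirror formulas.

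Second, your reduction of infinitely many maps to one has no working mechanism. The grading you use must be reproduced by the pants maps along the whole system, since the new pair of strands carries total Alexander grading $0$. The paper uses the multigrading $\mathbf{0}$ at the minimal Maslov grading $i_{a,b}$, where $\widehat{HFL}(K_{-n}(\alpha+t,t),\mathbf{0})\cong\mathbb{F}$ for every $t$. Even there, the total-Alexander-grading-$0$ part also contains multigradings whose entries are all nonzero but sum to $0$. These must be split off, and every relation shown to respect the splitting. The $T$-relation must be ruled out by showing its source vanishes in Maslov grading $i_{\alpha+t,t}-1$; ``degree reasons'' alone do not do this.

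The key lemma is then an induction: if $F_{a,b}$ is an isomorphism, so is $F_{a+1,b+1}$, passing through $K_{-n}(a,b+1)$. It rests on two facts. After forgetting one strand with the Ozsv\'ath--Szab\'o spectral sequence, the pants map on the abutment is $F_{P\setminus(L_0\times I)}\otimes\mathrm{Id}_V$, the pants map for a cable with one fewer strand. And the relevant $E^1$ and $E^\infty$ pieces are one-dimensional, so non-vanishing passes from $E^\infty$ back to $E^1$. This is neither a computation in terms of the knot Floer complex of $K$ nor a tensor power of $F_{K_{-n}}$.

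Finally, the induction only gives $F_{1+t,t}$, or $F_{3+t,2+t}$ when $d$ is odd; this is where $\Delta_K$ enters the choice of cable. To reach every odd $\alpha$ you also need two further ingredients. Swapping the basepoints and orientation of a strand disjoint from the pants must commute with the band, quasi-stabilisation and pants maps. This is then combined with basepoint moving maps.
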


We compute the map $F_{U_{-n}}$ for $U$ the unknot and all $n>0$ in Lemma \ref{lem: FUn is an isomorphism}. Using this lemma and the above theorem we prove the following.

\begin{cor}
    The vector space $\mathcal{FL}(X_{-n}(U))$ is infinite dimensional for all $n> 0$.
\end{cor}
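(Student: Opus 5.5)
The corollary should follow by combining Theorem \ref{thm: main theorem intro version} with Lemma \ref{lem: FUn is an isomorphism}. The plan has three steps.

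First, check that the unknot satisfies the hypotheses of the theorem for every $n>0$. The unknot is an $L$-space knot of both signs: every surgery on $U$ is a lens space or $S^1\times S^2$, and the negative surgeries are lens spaces, which are $L$-spaces. So $U$ is a negative $L$-space knot. Since $g(U)=0$, the framing condition $n\geq 2g(U)=0$ holds for all $n>0$. Its Alexander polynomial is $\Delta_U(t)=1$, so the cables entering Definition \ref{defn: F_K map} are the simplest possible. These are $(r,-rn)$-cables of the unknot, i.e.\ torus links $T(r,-rn)$, whose link Floer homology is explicitly known, for instance from the Gorsky--Hom formula in \cite{linkLspacernrm}.

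Second, invoke Lemma \ref{lem: FUn is an isomorphism}, which computes $F_{U_{-n}}$ and shows it is an isomorphism for all $n>0$. The only point to verify is that the source of this isomorphism is nonzero, since an isomorphism between zero spaces would vanish. This is immediate because $\widehat{HFL}$ of any link in $S^3$ is nonzero; its graded Euler characteristic recovers a nonzero multiple of the multivariable Alexander polynomial, and in any case it detects the link and is never trivial. Hence $F_{U_{-n}}$ is non-vanishing.

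Third, apply Theorem \ref{thm: main theorem intro version} with $K=U$ to conclude that $\mathcal{FL}(X_{-n}(U))$ is infinite dimensional for all $n>0$.

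The only step that could be an obstacle is confirming that the hypotheses of Lemma \ref{lem: FUn is an isomorphism} cover exactly the range $n>0$ and the same normalisation of $F_{U_{-n}}$ used in the theorem. In particular, one must check that the choice of cable depending on $\Delta_U$ agrees in both statements, and that the decorations on the pair of pants cobordism match. Once these conventions are aligned, the argument is a formal combination of the two results.
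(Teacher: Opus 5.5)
Your proposal is correct and is exactly the paper's argument: for the unknot $\Delta_U(t)=1$, so $d=0$ and $F_{U_{-n}}=F_{1,0}$, which Lemma~\ref{lem: FUn is an isomorphism} shows is an isomorphism for every $n>0$, and Theorem~\ref{thm: main thm body} (whose hypothesis is precisely that $F_{K_{-n}}$ is an isomorphism) then gives infinite dimensionality. Your extra check that the source is nonzero is unnecessary, and your justification would not work anyway: nonvanishing of the total group $\widehat{HFL}$ says nothing about a single bigraded summand, and the graded Euler characteristic vanishes for the split link $U_{-n}(1,0)\sqcup U$; the nonvanishing instead comes from the lemma's proof, which shows $F_{U_{-n}}(v_{1,0})=v_{2,1}\neq 0$ between one-dimensional spaces.
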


One of the main technical ingredients in the proof of the above corollary is the holomorphic triangle counts carried out in Proposition \ref{prop: band map injective for n<0}. This uses a particularly nice Heegaard triple and so does not easily generalise to other negative $L$-space knots.  However, it is possible there is a more general method for doing these computations, which leads to the following question.

\begin{ques}
    Is $\mathcal{FL}(X_{-n}(K))$ infinite dimensional for all negative $L$-space knots $K$ and $n\geq2g(K)$?
\end{ques}

Before explaining the organisation of the paper we mention a detail about the proof of the main theorem that may be of independent interest. The main theorem relies on a technical result about how the maps on link Floer homology behave under spectral sequences. In particular we consider Ozsvath and Szabo's spectral sequence \cite{OSLinkFloer} for forgetting link components, that is the spectral sequence from $\widehat{HFL}(L)$ to $ \widehat{HFL}(L\backslash L_0)\otimes \mathbb{F}^2$, where $L_0\subset L$ is a single component. We prove that a band map $F_\Sigma \colon \widehat{HFL}(L)\to \widehat{HFL}(L')$ induces a map of spectral sequences and the map on the $E^{\infty}$-page is  $F_{\Sigma\backslash L_0\times I}\otimes \text{Id}_{\mathbb{F}^2}$, see Section \ref{sec: Spectral Sequences} for the relevant definitions and Proposition \ref{prop: spec maps for forgetting components of bands} for the exact statement. We also prove this for the quasi-stabilisation map $T^+$ and as a consequence pair of pants cobordisms. We believe that there should be a general version of these results for a link cobordism $\Sigma$ relating the $E^\infty$-page to the map induced by the cobordism obtained by forgetting all the components intersecting $L_0$. We plan to investigate this in future work.

\subsection{Organisation}
\hfill

In Sections \ref{sec: Link Floer Homology} and \ref{sec: Floer Lasagna Modules} we recall important results we need about link Floer homology and then the definition of Floer lasagna modules. We also use these sections to set up notation for the rest of the paper. Then Section \ref{sec: Spectral Sequences} is dedicated to proving how band maps act under forgetting components. We start this section by recalling notation and facts about spectral sequences. Computations of cables of L-space knots in certain gradings are then carried out in Section \ref{sec: Link Floer Homology of T(r,rn)}. Finally, we pull all of this together and prove the main theorem in Section \ref{subsec: Proof Main Theorem}, then apply it to traces of the unknot in Subsection \ref{subsec: Cobordism Maps}.

\subsection*{Acknowledgments} 
I wish to thank my supervisor Andr\'as Juh\'asz for his continued guidance and encouragement, as well as careful readings of drafts of this paper. I would also like to thank Ian Zemke for helpful correspondence. Finally, this work was supported by the Additional Funding Programme for Mathematical Sciences, delivered by EPSRC (EP/V521917/1) and the
Heilbronn Institute for Mathematical Research.

\section{Link Floer Homology}
\label{sec: Link Floer Homology}
\hfill

We assume the reader has some familiarity with Heegaard Floer homology and in particular link Floer homology. All of our links will be null-homologous in some closed but not necessarily connected 3-manifold $Y$, and we will denote by $\mathbb{L}=(L,\mathbf{w},\mathbf{z})$ a pointed link where we assume there is at least one $w$ and one $z$ basepoint on each component. Further, if $\mathcal{H}$ is a multipointed Heegaard diagram for the pair $(Y,\mathbb{L})$ then we denote by $\widehat{CFL}(\mathcal{H})$ the hat flavour chain complex associated to this Heegaard diagram over the field $\mathbb{F}:=\mathbb{F}_2$. We will always implicitly assume that the Heegaard diagrams mentioned satisfy the necessary admissibility requirements. In the case that the link is in $S^3$ there is a $\mathbb{Z}$-grading called the Maslov grading and a $\mathbb{Z}^{|L|}$-grading called the Alexander multigrading. We will denote by $\widehat{HFL}_i(\mathcal{H},\mathbf{k})$ the homology in Maslov grading $i$ and Alexander grading $\mathbf{k}$ and we denote by $\widehat{HFK}_{i}(\mathcal{H},k)$ the vector space obtained by collapsing the Alexander multigrading to a single grading. That is, the vector space, 
\begin{equation*}
\widehat{HFK}_{i}(\mathcal{H},k):=\bigoplus_{\substack{ |\mathbf{k}|=k}}\widehat{HFL}_i(\mathcal{H},\mathbf{k}),
\end{equation*}
where $|\mathbf{k}|=\mathbf{k}_1+\ldots + \mathbf{k}_{|L|}$. As we will be considering maps associated to decorated link cobordisms we will need the notion of a transitive system from \cite{Naturality}, but we follow the notation of \cite{CobMapPaper}.

\begin{defn}
    A \textit{transitive system} in a category $\mathcal{C}$ is a collection of objects $\{C_{i}\}_{i \in I}$ and for every $(i,j)\in I\times I$ a \text{distinguished morphism} $\Phi_{i \to j}\colon C_{i}\to C_{j}$ such that,
    \begin{itemize}
        \item $\Phi_{i\to i}=\text{Id}_{C_{i}}$ for all $i \in I$;
        \item $\Phi_{j \to k}\circ \Phi_{i \to j}=\Phi_{i \to k}$ for all $i,j,k \in I$.
    \end{itemize}
    Further, let $\{C_{i}\}_{i\in I}$ and $\{D_{j}\}_{j\in J}$ be transitive systems in a category $\mathcal{C}$, then a \textit{morphism of transitive systems} is a collection of morphisms $\{F_{i\to j}\}_{(i,j)\in I\times J}$ such that the following diagram commutes for all $i,i'\in I$ and $j,j' \in J$,
    \begin{equation}
    \label{eq: morphism of transitive systems}
        \begin{tikzcd}
            C_{i} \arrow[rr,"F_{i\to j}"] \arrow[dd,"\Phi_{i\to i'}"] && D_{j} \arrow[dd,"\Phi_{j\to j'}"] \\
            &&\\
            C_{i'} \arrow[rr,"F_{i'\to j'}"] && D_{j'}.
        \end{tikzcd}
    \end{equation}
\end{defn}

An important thing to note from this is that if $F$ is a morphism of transitive systems, then it is completely determined by one of the maps $F_{i \to j}$. The two categories we are particularly interested in are vector spaces and chain complexes over $\mathbb{F}$. In the case of chain complexes we ask for the distinguished morphisms to be chain homotopy equivalences and we will only ask that the diagram \eqref{eq: morphism of transitive systems} commutes up to chain homotopy equivalence. Further, given a transitive system of vector spaces $\{V_{i}\}_{i \in I}$ a canonical vector space $V_{I}$ can be defined in such a way that if $F$ is a morphism of transitive systems of vector spaces from $\{V_i\}_{i\in I}$ to $\{W_{j}\}_{j \in J}$, then there is a canonically defined linear map $F\colon V_{I}\to W_J$, see \cite{Naturality} for details. What is proven in \cite{Naturality} is that the collection $\{\widehat{CFL}(\mathcal{H})\}_{\mathcal{H}}$ over all Heegaard diagrams for $(Y,\mathbb{L})$ forms a transitive system of chain complexes. Formally, we have the following.

\begin{lem}
\label{lem: transitive system for hat}
    Let $\mathcal{H}$ and $\mathcal{H}'$ be Heegaard diagrams for $(Y,\mathbb{L})$, then there exists a chain homotopy equivalence,
    \begin{equation*}
        \Phi_{\mathcal{H}\to\mathcal{H'}}\colon \widehat{CFL}(\mathcal{H})\to \widehat{CFL}(\mathcal{H}'),
    \end{equation*}
    that is well defined up to chain homotopy equivalence and in the case of $Y \cong S^3$ preserves the Maslov grading and Alexander multigrading. That is there is a transitive system of chain complexes $\{\widehat{CFL}(\mathcal{H})\}_{\mathcal{H}}$ indexed over all Heegaard diagrams for $(Y,\mathbb{L})$.
\end{lem}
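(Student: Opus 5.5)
The plan is to follow the naturality argument of Juh\'asz--Thurston--Zemke \cite{Naturality}, here for the hat flavour of link Floer homology. The key input is the Reidemeister--Singer type theorem for multipointed Heegaard diagrams of $(Y,\mathbb{L})$ (Ozsv\'ath--Szab\'o, refined in \cite{Naturality}). It says that any two such diagrams $\mathcal{H}$ and $\mathcal{H}'$, embedded in $Y$ and compatible with $\mathbb{L}$, are connected by a finite sequence of moves, none of which crosses the basepoints $\mathbf{w}\cup\mathbf{z}$. The moves are:
\begin{itemize}
\item isotopies of the $\alpha$ and $\beta$ curves;
\item handleslides among the $\alpha$ curves or among the $\beta$ curves;
\item index one/two stabilisations away from the basepoints;
\item diffeomorphisms of $Y$ isotopic to the identity and fixing $L$.
\end{itemize}
To each elementary move we assign a chain map:
\begin{itemize}
\item isotopies get continuation maps, defined by counting holomorphic disks with dynamic boundary conditions;
\item handleslides get triangle maps, pairing with the top-graded generator $\Theta$ of the appropriate $\widehat{CF}(\#^{g}S^1\times S^2)$ (with extra basepoints);
\item stabilisations get the maps $x\mapsto x\times c$, where $c$ is the new intersection point;
\item diffeomorphisms get tautological pushforwards.
\end{itemize}
Standard arguments show each map is a chain homotopy equivalence. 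For isotopies, composing with the reverse isotopy is homotopic to the identity. For handleslides, associativity of triangle maps reduces this to a small-model computation of $\Theta$-products. For stabilisations, a neck-stretching argument identifies the complexes.

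Next comes gradings when $Y\cong S^3$. Maslov and Alexander gradings are computed from relative gradings together with absolute normalisations that are intrinsic, such as the symmetry of $\widehat{HFL}$ and the identification with $\widehat{HF}(S^3)$ after forgetting $\mathbf{z}$. It therefore suffices to check that each elementary map is homogeneous of degree zero. For continuation and triangle maps this follows from additivity of the grading formulas $\mu(\phi)-2n_{\mathbf{w}}(\phi)$ and $n_{\mathbf{z}}-n_{\mathbf{w}}$ (componentwise). Since $\Theta$ is in the correct grading, the triangle maps preserve it. Stabilisations visibly preserve gradings, because the new domain avoids all basepoints.

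Define $\Phi_{\mathcal{H}\to\mathcal{H}'}$ as the composite along a chosen sequence of moves. The main obstacle is showing that this is independent of the sequence up to chain homotopy, which is what is needed for the transitive system axioms $\Phi_{\mathcal{H}\to\mathcal{H}}\simeq \text{Id}$ and $\Phi_{\mathcal{H}'\to\mathcal{H}''}\circ\Phi_{\mathcal{H}\to\mathcal{H}'}\simeq\Phi_{\mathcal{H}\to\mathcal{H}''}$. Here I would invoke the strong Heegaard invariant framework of \cite{Naturality}. Two sequences of moves with the same endpoints differ by a loop in the graph of diagrams. Their cellular theorem says every such loop is generated by a finite list of elementary loops: commuting squares of independent moves, distinguished rectangles, handleswap triangles, and simple handleswaps/stabilisation loops. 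So one checks that the assigned maps satisfy:
\begin{itemize}
\item functoriality for the $\alpha$- and $\beta$-equivalence groupoids, via associativity of holomorphic polygon counts;
\item commutation with stabilisations, via the stabilisation gluing results;
\item continuity, meaning that a diffeomorphism isotopic to the identity through maps fixing $\mathbb{L}$ induces the same map as the corresponding continuation map;
\item the handleswap invariance, which is the genuinely delicate point. It is handled in \cite{Naturality} by a direct model computation on a genus-two diagram, and the multipointed link case goes through verbatim since the handleswap region is disjoint from the basepoints.
\end{itemize}
Passing to homology then gives the transitive system, and at the chain level everything commutes up to chain homotopy.
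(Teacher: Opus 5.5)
Your proposal is correct and follows the same route as the paper, which gives no separate proof and attributes the lemma directly to Juh\'asz--Thurston--Zemke \cite{Naturality}; your sketch (moves between embedded diagrams, the continuation, triangle, stabilisation and diffeomorphism maps, the grading checks, and the strong Heegaard invariant axioms including handleswap invariance) is exactly the content of that reference. Two small points: \cite{Naturality} already covers the multipointed link case through the sutured complement of $\mathbb{L}$, so no ``verbatim'' adaptation is needed, and the chain-level statement follows by running their framework in the homotopy category of chain complexes.
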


We will denote this transitive system by $\widehat{CFL}(Y,\mathbb{L})$ and by $\widehat{CFL}(\mathbb{L})$ when $Y\cong S^3$. Further, note that by taking homology of each of these we get a transitive system of vector spaces $\{\widehat{HFL}(\mathcal{H})\}_{\mathcal{H}}$ that preserves the gradings when they exist. Hence, there is a well defined vector space $\widehat{HFL}(Y,\mathbb{L})$ associated to a multipointed link in $Y$ and it has a $\mathbb{Z}\oplus \mathbb{Z}^{|L|}$ grading in $S^3$. To define morphisms between these it is not enough to just consider link cobordisms; instead, the notion of decorated link cobordisms is needed. These were introduced by Juhasz in \cite{AndrasCobordisms}, but we will use the reformulated definition from \cite{CobMapPaper}. 

\begin{defn}
\label{defn: decorated cobordism}
    A \textit{decorated link cobordism} from $(Y_0,\mathbb{L}_0)$ to $(Y_1,\mathbb{L}_1)$ is a pair $(W^4,\mathcal{F})$, denoted $(W,\mathcal{F})\colon (Y_0,\mathbb{L}_0)\to(Y_1, \mathbb{L}_1)$, where $W$ is a cobordism from $Y_0$ to $Y_1$ and $\mathcal{F}=(\Sigma,\mathcal{A})$ is a pair, such that
    \begin{itemize}
        \item $\Sigma \subseteq W$ is a properly embedded compact oriented surface with $\partial\Sigma\cap Y_i=L_i$;
        \item $\mathbb{\mathcal{A}}\subseteq \Sigma$ is a properly embedded compact 1-manifold such that each component of $L_i\backslash\ \partial \mathcal{A}$ contains one basepoint;
        \item $\Sigma$ is partitioned into two subsurfaces $\Sigma_{\mathbf{w}}$ and $\Sigma_{\mathbf{z}}$ that meet along $\mathcal{A}$ with $\mathbf{w}_{0}\cup \mathbf{w}_1\subseteq \Sigma_{\mathbf{w}}$ and $\mathbf{z}_{0}\cup \mathbf{z}_1\subseteq \Sigma_{\mathbf{z}}$.
    \end{itemize}
    Further, a \textit{colouring} of a decorated link cobordism is a pair $(J,\mathbb{J})$ where $\mathbb{J}$ is a finite set with the discrete topology and $J\colon \Sigma\backslash \mathcal{A}\to \mathbb{J}$ is a continuous function such that $J(\Sigma_\mathbf{w})$ and $J(\Sigma_{\mathbf{z}})$ are disjoint in $\mathbb{J}$, and we define, 
    \begin{equation*}
        (\Sigma_{j})_{\mathbf{w}}=J^{-1}(j)\cap\Sigma_{\mathbf{w}} \qquad \text{and} \qquad (\Sigma_{j})_{\mathbf{z}}=J^{-1}(j)\cap\Sigma_{\mathbf{z}}.
    \end{equation*}
\end{defn}

The colourings above are technically ``type-partitioned, indexed colourings" in the language of \cite{CobMapPaper}, however we do not need this level of generality. Further, the main colouring that we consider is $\mathbb{J}=\mathbb{J}_w\cup\mathbb{J}_z$ where each of $\mathbb{J}_w,\mathbb{J}_z$ is the set of connected components of $\Sigma$, and $J$ maps a connected component of $\Sigma_{\boldsymbol{\delta}}$ to the connected component of $\Sigma$ it lies on in $\mathbb{J}_{\boldsymbol{\delta}}$ for $\boldsymbol{\delta}\in \{\mathbf{w},\mathbf{z}\}$. For the Alexander grading this amounts to collapsing the Alexander gradings of all of the components that lie on the same connected component of $\Sigma$. Also we will often abuse notation and write $\Sigma\colon \mathbb{L}_0\to \mathbb{L}_1$ when the decorated link cobordism is in $S^3\times I$ and the set $\mathcal{A}$ is clear.

To a decorated link cobordism both \cite{AndrasCobordisms} and \cite{CobMapPaper} associate morphisms of transitive systems of chain complexes. The following theorem is a combination of theorems from the previous papers and the grading change formulas from \cite{CobGradings}.

\begin{thm}
\label{thm: grading change formulas}
    Let $(W,\mathcal{F})\colon (Y_0,\mathbb{L}_0)\to(Y_1, \mathbb{L}_1)$ be a decorated link cobordism then there is a morphism of transitive systems of chain complexes, 
    \begin{equation*}
        F_{(W,\mathcal{F})}\colon \widehat{CFL}(Y_0,\mathbb{L}_0)\to \widehat{CFL}(Y_1,\mathbb{L}_1).
    \end{equation*}
    Further, let $J$ be a colouring of $\Sigma$ and suppose $W=S^3\times I$. Then for all $v \in \widehat{CFL}(\mathbb{L})$ and $j \in J$ the following grading change formulas hold:
    \begin{align}
        &M\left(F_{\Sigma}(v)\right)-M(v)=\chi(\Sigma_{w})-|\mathbf{w}_{1}|,\label{eq: MasShift}\\
        &A_{j}(F_{\Sigma}(v))-A_{j}(v)=\frac{\chi\left((\Sigma_{j})_{\mathbf{w}}\right)-\chi\left((\Sigma_{j})_{\mathbf{z}}\right)}{2}, \label{eq: AlexShift}
    \end{align}
    where $A_{j}$ is understood to mean the sum of the Alexander gradings of all the link components intersecting $(\Sigma_{j})_{\mathbf{w}}$.
\end{thm}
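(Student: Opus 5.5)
This statement collects results from \cite{AndrasCobordisms}, \cite{CobMapPaper}, \cite{MapsAreSamePaper} and \cite{CobGradings}, so the proof assembles them rather than constructing anything new. First we produce the morphism of transitive systems. Given $(W,\mathcal{F})$, \cite{CobMapPaper} decomposes the cobordism into elementary pieces: 4-dimensional handle attachments away from the surface, band moves, quasi-stabilisations, and births/deaths of unknotted components. To each piece it assigns a chain map on $\widehat{CFL}(\mathcal{H})$ for suitable diagrams $\mathcal{H}$, and then composes these maps. Independence of the decomposition and of the diagrams, up to chain homotopy, is the main theorem there (equivalently, via \cite{AndrasCobordisms} and \cite{MapsAreSamePaper}, using sutured cobordism maps). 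Compatibility with the maps $\Phi_{\mathcal{H}\to\mathcal{H}'}$ of Lemma \ref{lem: transitive system for hat} is exactly the statement that each elementary map commutes up to chain homotopy with the naturality maps. This gives the square \eqref{eq: morphism of transitive systems} up to homotopy, and hence a morphism $F_{(W,\mathcal{F})}$.

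For the grading formulas with $W=S^3\times I$, I would cite the absolute grading formulas of \cite{CobGradings}. There the Maslov shift of $F_{W,\mathcal{F}}$ on the $\mathbf{w}$-grading is
\[
\frac{c_1(\mathfrak{s})^2-2\chi(W)-3\sigma(W)}{4}+\tilde\chi(\Sigma_{\mathbf{w}}),
\qquad
\tilde\chi(\Sigma_{\mathbf{w}})=\chi(\Sigma_{\mathbf{w}})-\tfrac12(|\mathbf{w}_0|+|\mathbf{w}_1|).
\]
For a product cobordism the first term vanishes. One then checks that the normalisation in \cite{CobGradings} gives exactly $\chi(\Sigma_{\mathbf{w}})-|\mathbf{w}_1|$ for the hat flavour. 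The cleanest route is to verify this on generators: for a band map the shift is $-1$, and for quasi-stabilisation it is $0$, both of which agree with \eqref{eq: MasShift}. Both sides are additive under composition, since Euler characteristics of glued subsurfaces add after correcting for the $|\mathbf{w}|$ points on the gluing boundary.

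The Alexander formula comes from the \cite{CobGradings} formula for $A=\tfrac12(\mathrm{gr}_{\mathbf{w}}-\mathrm{gr}_{\mathbf{z}})$:
\[
\frac{\langle c_1(\mathfrak{s}),\hat\Sigma\rangle-[\hat\Sigma]^2}{2}+\frac{\tilde\chi(\Sigma_{\mathbf{w}})-\tilde\chi(\Sigma_{\mathbf{z}})}{2}.
\]
In $S^3\times I$ the homological term vanishes. The $\tfrac12|\mathbf{w}|$ and $\tfrac12|\mathbf{z}|$ corrections cancel because $|\mathbf{w}_i|=|\mathbf{z}_i|$ on each boundary component. For the refined colouring $J$, we apply the multi-graded version in \cite{CobMapPaper}/\cite{CobGradings} colour by colour, with the same vanishing.

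The main obstacle is bookkeeping. The conventions in \cite{CobGradings} (the tilde Euler characteristics and the $\mathbf{w}$ versus $\mathbf{z}$ normalisation), and the identification of $A_j$ as a sum over the components meeting $(\Sigma_j)_{\mathbf{w}}$, must match the statement here exactly. I would resolve this by checking the formulas on the elementary cobordisms: a band, a quasi-stabilisation, and a birth.
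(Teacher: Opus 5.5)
Your route is the same as the paper's, which gives no argument beyond citing \cite{AndrasCobordisms}, \cite{CobMapPaper} and \cite{MapsAreSamePaper} for the morphism of transitive systems, and \cite{CobGradings} for the grading shifts. Your treatment of the maps is fine, and so is your treatment of the Alexander formula: the homological term vanishes in $S^3\times I$, and the $\tfrac12|\mathbf{w}|$, $\tfrac12|\mathbf{z}|$ corrections cancel on each connected component of $\Sigma$.

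The Maslov step, however, does not go through as you describe it.

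\textbf{The two formulas genuinely differ.} Zemke's shift $\tilde\chi(\Sigma_{\mathbf{w}})=\chi(\Sigma_{\mathbf{w}})-\tfrac12(|\mathbf{w}_0|+|\mathbf{w}_1|)$ differs from $\chi(\Sigma_{\mathbf{w}})-|\mathbf{w}_1|$ by $\tfrac12(|\mathbf{w}_1|-|\mathbf{w}_0|)$. This is nonzero exactly for quasi-stabilisations, births and deaths. For example, for $T^+$ of Lemma \ref{lem: Quasi stab link} his formula gives $-\tfrac12$, while \eqref{eq: MasShift} gives $-1$. So no ``check of the normalisation'' inside \cite{CobGradings} produces \eqref{eq: MasShift}.

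\textbf{The missing observation.} What you need is that the two absolute gradings are different. Zemke's formula assigns half-integer shifts to maps of the form $v\mapsto v\otimes\xi$. Hence his $\mathrm{gr}_{\mathbf{w}}$ places the extra factor from a new pair of basepoints in degrees $\pm\tfrac12$. The $M$ used here places it in degrees $0$ and $-1$ (Lemma \ref{lem: Quasi stab link}). Therefore $\mathrm{gr}_{\mathbf{w}}-M=\tfrac12|\mathbf{w}|+c$ for a universal constant $c$, and
\[
M(F_\Sigma(v))-M(v)=\tilde\chi(\Sigma_{\mathbf{w}})-\tfrac12(|\mathbf{w}_1|-|\mathbf{w}_0|)=\chi(\Sigma_{\mathbf{w}})-|\mathbf{w}_1|.
\]
The same shift occurs in $\mathrm{gr}_{\mathbf{z}}$, which is why the Alexander formula is unaffected.

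\textbf{Your fallback check.} The values you quote are swapped for the decorations used in this paper. The $\alpha$-band of Figure \ref{fig: Band map dec} has its saddle in $\Sigma_{\mathbf{z}}$, so it preserves $M$ and raises $A$ by $\tfrac12$, as used in Example \ref{ex: forgetting cobordisms}. Meanwhile $T^+$ lowers $M$ by $1$, since $M(\xi^{\mathbf{w}})=-1$. Moreover, a band check cannot detect the normalisation at all, because $|\mathbf{w}_0|=|\mathbf{w}_1|$ for a band. Only quasi-stabilisations and births distinguish the two conventions.
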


Note from this that if we compute such a cobordism map for a single Heegaard diagram then this determines the cobordism maps for all Heegaard diagrams. There are two link cobordism maps that we will use throughout the paper so we recall their definitions now for ease of reading. The first is a quasi-stabilisation and we first recall the following topological preliminary.

\begin{defn}
    Let $\mathcal{H}=(\Sigma,\boldsymbol{\alpha},\boldsymbol{\beta},\mathbf{w},\mathbf{z})$ be a Heegaard diagram for a link $\mathbb{L}$ and let $w$ and $z$ be two basepoints adjacent to each other on the link component $L_i$. Further, let $R$ be the region of $\Sigma \backslash\boldsymbol{\alpha}$ that contains both $w$ and $z$ and let $p$ be a point in $R$. Then pick a curve $\alpha_s$ in $R$ such that it separates $w$ and $z$ and take $\beta_s$ to be a small circle around $p$. Then we define a new Heegaard diagram,
    \begin{equation*}
        \mathcal{H}^+:=(\Sigma,\boldsymbol{\alpha}\cup\{\alpha_s\},\boldsymbol{\beta}\cup\{\beta_s\},\mathbf{w}\cup\{w_s\},\mathbf{z}\cup\{z_s\})
    \end{equation*}
    where $w_s$ and $z_s$ are points both contained in the ball bounded by $\beta_s$ and such that $w_s$ and $z$ are in the same component of $R\backslash\alpha_s$ and similarly $z_s$ and $w$ are.
\end{defn}

\begin{figure}[h]
        \centering
        \def\svgwidth{0.5\textwidth}
        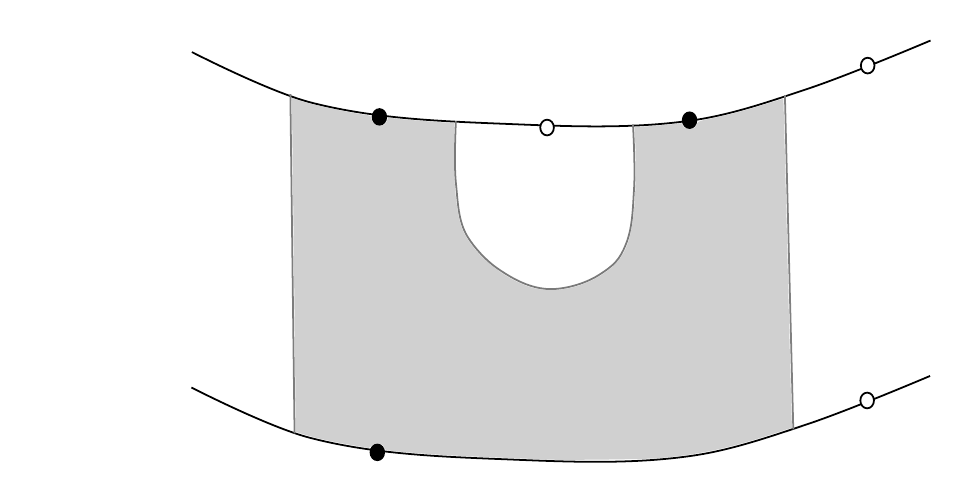
        \caption{Decoration for the quasi-stabilisation map $T^{+}$.}
        \label{fig: QS map dec}
\end{figure}

Note that $\mathcal{H}^+$ is a Heegaard diagram for the same underlying link $L$ except the component $L_i$ now has two additional basepoints. We will denote this new pointed link by $\mathbb{L}^+$. In \cite{QuasiMap} it is shown that this construction can be used to define what is called a quasi-stabilisation map and that the resulting map is a morphism of transitive systems. We recall the definition now.

\begin{lem}
\label{lem: Quasi stab link}
    Let $\mathcal{H}$ and $\mathcal{H}^+$ be as above. Then,
    \begin{equation*}
        \widehat{CFL}(\mathcal{H}^+)\cong\widehat{CFL}(\mathcal{H})\otimes\mathbb{F}\langle\xi^{\mathbf{w}},\xi^\mathbf{z}\rangle,
    \end{equation*}
    where $M(\xi^{\mathbf{w}})=-1,M(\xi^{\mathbf{z}})=0,A_{j}(\xi^{\mathbf{w}})=-\frac{1}{2}\delta_{ij}$ and $A_{j}(\xi^{\mathbf{z}})=\frac{1}{2}\delta_{ij}$. Further, \textit{the quasi-stabilisation map},
    \begin{equation*}
        T^{+}\colon \widehat{CFL}(\mathbb{L})\to \widehat{CFL}(\mathbb{L}^{+})
    \end{equation*}
    associated to the decorated cobordism $(L\times I,\mathcal{A})$, where $\mathcal{A}$ is the decoration in Figure \ref{fig: QS map dec}, is given by $T^{+}(v)=v\otimes \xi^{\mathbf{w}}$ for all $v \in \widehat{CFL}(\mathcal{H})$.
\end{lem}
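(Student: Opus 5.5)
The plan is to identify the generators of $\widehat{CFL}(\mathcal{H}^+)$ with those of $\widehat{CFL}(\mathcal{H})$ tensored with the two intersection points of $\alpha_s\cap\beta_s$. Then I show that the differential splits, compute the gradings, and read off the map $T^+$ from the construction in \cite{QuasiMap}.

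\emph{Generators.} The curve $\alpha_s$ lies in the region $R$, which meets no $\beta$ curve, and $\beta_s$ is a small circle around $p\in R$. Hence the only intersections of the new curves with the other curves are the two points of $\alpha_s\cap\beta_s$; I take $\alpha_s$ to pass near $p$ so that $\beta_s$ meets it in exactly two points. Call these points $\xi^{\mathbf{w}}$ and $\xi^{\mathbf{z}}$. Every intersection tuple for $\mathcal{H}^+$ is then of the form $\mathbf{x}\times\{\xi\}$ with $\mathbf{x}$ a generator for $\mathcal{H}$, which gives the isomorphism of vector spaces.

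\emph{Differential.} The disc $D$ bounded by $\beta_s$ is cut by $\alpha_s$ into two bigons. One bigon contains $w_s$ and the other contains $z_s$, since the two new basepoints lie in the disc on opposite sides of $\alpha_s$. So in the hat flavour neither bigon contributes a differential between $\xi^{\mathbf{w}}$ and $\xi^{\mathbf{z}}$.

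Now take a domain counted in $\widehat{CFL}(\mathcal{H}^+)$. It avoids $w_s$ and $z_s$, so it has zero multiplicity on both small bigons. Its multiplicities on the two sides of $\alpha_s$ inside $R$ are therefore forced by the boundary conditions at $\beta_s$ to agree with the multiplicity of the corresponding domain in $\mathcal{H}$. That multiplicity is $0$, because $R$ contains $w$ and $z$. The outcome is that domains for $\mathcal{H}^+$ with vanishing multiplicity at all basepoints correspond bijectively to domains for $\mathcal{H}$, with the $\xi$-coordinate fixed.

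Matching the holomorphic counts requires a neck-stretching or degeneration argument as in \cite{QuasiMap}, and I expect this to be the main obstacle. Since we only need the hat flavour and all relevant multiplicities near the stabilisation vanish, I would first try to cite the stabilisation invariance argument directly rather than redo the analysis. This gives the chain isomorphism $\widehat{CFL}(\mathcal{H}^+)\cong\widehat{CFL}(\mathcal{H})\otimes\mathbb{F}\langle\xi^{\mathbf{w}},\xi^{\mathbf{z}}\rangle$ with trivial differential on the second factor.

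\emph{Gradings.} Relative gradings come from the bigon $B$ from $\xi^{\mathbf{z}}$ to $\xi^{\mathbf{w}}$, namely the one containing $w_s$. Its Maslov index is $1$ and $n_{\mathbf{w}}(B)=1$, so $M(\xi^{\mathbf{z}})-M(\xi^{\mathbf{w}})=\mu(B)-2n_{\mathbf{w}}(B)=1-2=-1$ (up to the sign convention). Comparing the two bigons gives $A_j(\xi^{\mathbf{z}})-A_j(\xi^{\mathbf{w}})=\delta_{ij}$.

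Absolute normalisations come from two requirements:
\begin{itemize}
\item the total Alexander grading must stay symmetric, which gives $A_i=\pm\tfrac12$;
\item the Maslov grading must match $\widehat{HFL}(\mathbb{L}^+)\cong\widehat{HFL}(\mathbb{L})\otimes V$, where $V$ is supported in Maslov gradings $0$ and $-1$.
\end{itemize}
Together these give $M(\xi^{\mathbf{w}})=-1$, $M(\xi^{\mathbf{z}})=0$, $A_j(\xi^{\mathbf{w}})=-\tfrac12\delta_{ij}$ and $A_j(\xi^{\mathbf{z}})=\tfrac12\delta_{ij}$.

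\emph{The map.} By definition in \cite{QuasiMap}, $T^+$ for the decoration of Figure \ref{fig: QS map dec} is $\mathbf{x}\mapsto\mathbf{x}\times\xi^{\mathbf{w}}$, where $\xi^{\mathbf{w}}$ is the top-graded point with respect to the $\mathbf{w}$-type decoration. As a consistency check I use the grading change formula \eqref{eq: MasShift}. The cobordism is an annulus with $\Sigma_{\mathbf{w}}$ a disc attached, so $\chi(\Sigma_{\mathbf{w}})-|\mathbf{w}_1|$ yields a shift of $-1$, which matches $M(\xi^{\mathbf{w}})=-1$. Its Alexander shift is $-\tfrac12$, which matches $A_i(\xi^{\mathbf{w}})$. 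Finally, \cite{QuasiMap} shows the map commutes with the transitive system maps of Lemma \ref{lem: transitive system for hat}, so it is a morphism of transitive systems.
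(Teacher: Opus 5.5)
Your final appeal to \cite{QuasiMap} is the intended route: the paper gives no proof of this lemma and simply recalls it from \cite{QuasiMap}. The problem is the combinatorial reasoning you put in front of that citation, which is false in ways that matter.

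\textbf{The differential.} The region $R$ is a component of $\Sigma\setminus\boldsymbol{\alpha}$, not of $\Sigma\setminus(\boldsymbol{\alpha}\cup\boldsymbol{\beta})$. It is crossed by $\boldsymbol{\beta}$-curves, so $\alpha_s$ may meet $\boldsymbol{\beta}$, and domain multiplicities are not constant on $R$. Your generator count still holds, but for a different reason: $\beta_s$ meets only $\alpha_s$, so the $\beta_s$-coordinate of every generator lies in $\alpha_s\cap\beta_s$.

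Consequently, a basepoint-free domain need not vanish near $p$. The correct bookkeeping sends a domain $D_0$ of $\mathcal{H}$ with multiplicity $m$ at $p$ to $D_0-m\Delta$, where $\Delta$ is the disc bounded by $\beta_s$. This has the same Maslov index, but its boundary wraps $m$ times around $\beta_s$.

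More seriously, the two small bigons are not the only classes that change the $\xi$-coordinate. Take even the most favourable case, where $\alpha_s$ is a small circle around $w$, so that $p$ lies in the region of $w$ and your vanishing claim does hold. Let $\phi_0\in\pi_2(\mathbf{x},\mathbf{y})$ be any positive index-two class of $\mathcal{H}$ with $n_w(\phi_0)=1$ and all other basepoint multiplicities zero. Let $D_\alpha$ be the disc bounded by $\alpha_s$ and $B$ the bigon containing $w_s$. Then $\phi_0-D_\alpha-B$ is a positive, basepoint-free class from $\mathbf{x}\times\xi^{\mathbf{z}}$ to $\mathbf{y}\times\xi^{\mathbf{w}}$. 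Its index is $(\mu(\phi_0)+2)-\mu(D_\alpha)-\mu(B)=4-2-1=1$.

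Two things must be shown: that such classes contribute nothing in the hat flavour, and that the fixed-$\xi$ curves are counted exactly as in $\mathcal{H}$. Both are the neck-stretching and boundary-degeneration analysis of \cite{QuasiMap}, following \cite{LinkSurgeryFormulaOG}; ordinary stabilisation invariance is not the relevant input. So this result must be cited for the splitting of the differential itself, not only for ``matching counts''.

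\textbf{The gradings.} Your grading computation is internally inconsistent. If the bigon containing $w_s$ lay in $\pi_2(\xi^{\mathbf{z}},\xi^{\mathbf{w}})$, as you write, you would get $M(\xi^{\mathbf{z}})-M(\xi^{\mathbf{w}})=-1$ and $A_i(\xi^{\mathbf{z}})-A_i(\xi^{\mathbf{w}})=n_{\mathbf{z}}-n_{\mathbf{w}}=-1$. Both are opposite to the statement, and your Alexander claim silently uses the other orientation.

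The labelling in the lemma is the one in which this bigon lies in $\pi_2(\xi^{\mathbf{w}},\xi^{\mathbf{z}})$. This gives $M(\xi^{\mathbf{w}})-M(\xi^{\mathbf{z}})=1-2=-1$ and $A_i(\xi^{\mathbf{w}})-A_i(\xi^{\mathbf{z}})=0-1=-1$. So $\xi^{\mathbf{w}}$ is the \emph{lower} Maslov-graded point; it is the top one only for $M-2A_i$. Calling it the top-graded point contradicts your own value $M(\xi^{\mathbf{w}})=-1$.

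Similarly, in your consistency check a Maslov shift of $-1$ requires $\chi(\Sigma_{\mathbf{w}})=|\mathbf{w}_1|-1$. That is, the new dividing arc must cut off a $\mathbf{z}$-region around $z_s$, so $\Sigma_{\mathbf{w}}$ gains no component. If $\Sigma_{\mathbf{w}}$ acquired a new disc, as ``a disc attached'' suggests, the formulas would give shifts $(0,+\tfrac12)$. Those describe $v\mapsto v\otimes\xi^{\mathbf{z}}$, not $T^+$.
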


Note that in \cite{CobMapPaper} there are three other quasi-stabilisation maps but we will not need these. The second cobordism we will need is a band map. We recall the definition of an $\alpha$-band.

\begin{defn}
    Let $\mathbb{L}$ be a pointed link in $S^3$ then a \text{band} on $L$ is an embedding $B\colon I\times I\to S^3$, such that $\text{im}(B)\cap L=B(\{0,1\}\times I)$. We say $B$ is an \textit{$\alpha$-band} if the ends of $B$ lie in components of $L\backslash(\mathbf{w}\cup \mathbf{z})$ oriented from the $\mathbf{w}$ to the $\mathbf{z}$ basepoints. Denote by $\mathbb{L}(B)$ the link that is obtained by smoothing the corners of $(L\backslash \text{im}(B))\cup B(I\times\{0,1\})$.
\end{defn}

There is clearly a saddle cobordism corresponding to such a band as pictured in Figure \ref{fig: Band map dec}. To define the corresponding map on link Floer complexes we need the following definition of a Heegaard triple subordinate to this band.

\begin{figure}[h]
        \centering
        \def\svgwidth{0.5\textwidth}
        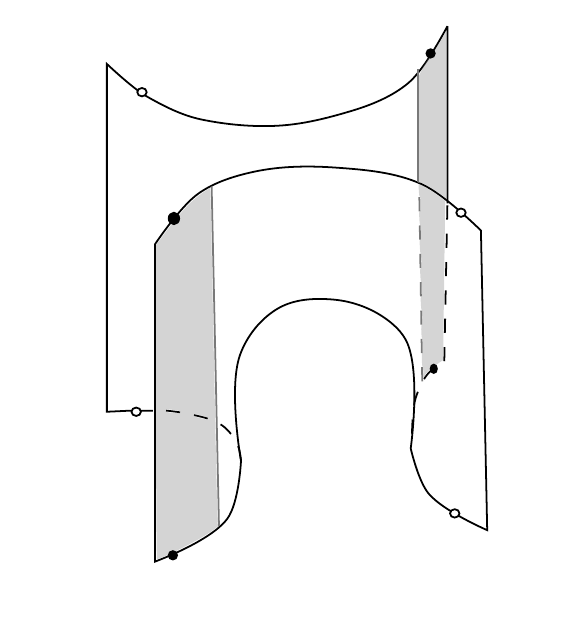
        \caption{Decoration for the cobordism associated to an $\alpha$-band.}
        \label{fig: Band map dec}
\end{figure}

\begin{defn}
\label{defn: subordinate to B}
    Let $\mathbb{L}$ be a pointed link and $B$ an $\alpha$-band. Then a Heegaard triple 
    \begin{equation*}
        \mathcal{T}=(\Sigma,\boldsymbol{\alpha}'=\{\alpha'_1,\ldots\alpha'_n\},\boldsymbol{\alpha}=\{\alpha_1,\ldots\alpha_n\},\boldsymbol{\beta}=\{\beta_1,\ldots,\beta_n\},\mathbf{w},\mathbf{z}),
    \end{equation*}
    is \textit{subordinate to $B$} if the following hold:
    \begin{enumerate}
        \item let $\Sigma_{0}=\Sigma\backslash N(\mathbf{w}\cup\mathbf{z})$, then the manifold obtained from $\Sigma_{0}\times I$ by attaching 3-dimensional 2-handles along $\alpha_{1},\ldots,\alpha_{n-1}\subset \Sigma_{0}\times\{0\}$ and $\beta_{1},\ldots ,\beta_{n}\subset \Sigma_{0}\times \{1\}$ is $S^3\backslash N(L\cup B)$;
        \item the curves $\alpha'_{1},\ldots, \alpha'_{n-1}$ are small Hamiltonian isotopies of the curves $\alpha_{1},\ldots,\alpha_{n-1}$ such that $|\alpha_{i}\cap\alpha_{j}'|=2\delta_{ij}$;
        \item let $Q$ be the arcs of $L\backslash(\mathbf{w}\cup \mathbf{z})$ that contain the ends of $B$, then $\alpha_{n}$ is the projection onto $\Sigma\backslash(\alpha_{1}\cup\ldots\cup\alpha_{n-1})$ of a curve on the boundary of $N(Q\cup B)$ that bounds a disc in $N(Q\cup B)$ that separates the components of $Q$, $\alpha_{n}'$ is defined similarly for $L(B)$. 
    \end{enumerate}
\end{defn}

Let $\mathcal{T}=(\Sigma,\boldsymbol{\alpha}',\boldsymbol{\alpha},\boldsymbol{\beta},\mathbf{w},\mathbf{z})$ be a Heegaard triple subordinate to some $\alpha$-band $B$, then we denote by
\begin{equation*}
    \mathcal{T}_{\alpha,\beta}:=(\Sigma,\boldsymbol{\alpha},\boldsymbol{\beta},\mathbf{w},\mathbf{z}), \qquad \mathcal{T}_{\alpha',\beta}:=(\Sigma,\boldsymbol{\alpha}',\boldsymbol{\beta},\mathbf{w},\mathbf{z}) \qquad \text{and} \qquad \mathcal{T}_{\alpha',\alpha}:=(\Sigma,\boldsymbol{\alpha}',\boldsymbol{\alpha},\mathbf{w},\mathbf{z}).
\end{equation*}
Then from this definition we have that $\mathcal{T_{\alpha,\beta}}$ and $\mathcal{T}_{\alpha',\beta}$ are Heegaard diagrams for $\mathbb{L}$ and $\mathbb{L}(B)$ respectively. It was also shown in \cite{CobMapPaper} that $\mathcal{T}_{\alpha',\alpha}$ is a Heegaard diagram for a $(|\mathbf{w}|-1)$-component unlink in $(S^1\times S^2)^{\#g(\Sigma)}$, with two basepoints on each component except one, which has four. Due to this, $\widehat{HFL}(\mathcal{T}_{\alpha',\alpha})$ has a unique element of highest Maslov grading. Finally, before defining the maps associated to these band maps we recall the definition of the holomorphic triangle count map associated to a Heegaard triple.

\begin{defn}
\label{defn: triangle count}
    Let $\mathcal{T}=(\Sigma,\boldsymbol{\alpha}',\boldsymbol{\alpha},\boldsymbol{\beta},\mathbf{w},\mathbf{z})$ be a Heegaard triple. Then there is a chain map,
    \begin{equation*}
        F_{\mathcal{T}}\colon \widehat{CFL}(\mathcal{T}_{\alpha',\alpha})\otimes \widehat{CFL}(\mathcal{T}_{\alpha,\beta}) \to \widehat{CFL}(\mathcal{T}_{\alpha',\beta}),
    \end{equation*}
    defined for $\boldsymbol{\theta}\in \mathbb{T}_{\boldsymbol{\alpha}'}\cap \mathbb{T}_{\boldsymbol{\alpha}} $ and $ \mathbf{x}\in \mathbb{T}_{\boldsymbol{\alpha}}\cap \mathbb{T}_{\boldsymbol{\beta}}$ by the following,
    \begin{equation}
        F_{\mathcal{T}}(\boldsymbol{\theta}\otimes \boldsymbol{x})= \sum_{\mathbf{y} \in \mathbb{T}_{\boldsymbol{\alpha}'}\cap \mathbb{T}_{\boldsymbol{\beta}}} \sum_{\substack{\psi \in \pi_{2}(\boldsymbol{\theta},\mathbf{x},\mathbf{y}) \\ \mu(\psi)=0 \\ n_{\mathbf{w}}(\psi)=n_{\mathbf{z}}(\psi)=0}} \# \mathcal{M}(\psi)\mathbf{y}.
    \end{equation}
\end{defn}

Combining all of these we get the map associated to the cobordism associated to an $\alpha$-band $B$ with the decorations as in Figure \ref{fig: Band map dec}.

\begin{lem}
\label{lem: defn of band map}
    Let $\mathbb{L}$ be a pointed link and $B$ be an $\alpha$-band. Further, let $F_{\Sigma}$ be the morphism of transitive systems associated to the cobordism coming from the band $B$ with decorations as in Figure \ref{fig: Band map dec}. Then for any Heegaard triple $\mathcal{T}$ subordinate to $B$ the linear map,
    \begin{equation*}
        F_{\Sigma}\colon \widehat{CFL}(\mathcal{T}_{\alpha,\beta})\to \widehat{CFL}(\mathcal{T}_{\alpha',\beta}),
    \end{equation*}
    is given for all $\mathbf{x}\in\widehat{CFL}(\mathcal{T}_{\alpha,\beta})$ by $F_{\Sigma}(\mathbf{x})=F_{\mathcal{T}}(\boldsymbol{\theta}^+\otimes\mathbf{x})$, where $\boldsymbol{\theta}^+\in \widehat{CFL}(\mathcal{T}_{\alpha',\alpha})$ is any representative of the maximum graded homology class.
\end{lem}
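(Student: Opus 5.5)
This lemma is essentially a recollection of Zemke's construction in \cite{CobMapPaper}, specialised to the hat flavour. The plan is to unpack the general definition of the link cobordism map for a saddle and check that it reduces to the stated formula. In \cite{CobMapPaper}, the map associated to the decorated cobordism $(S^3\times I,\Sigma)$ is built by decomposing $\Sigma$ into elementary pieces: isotopies, quasi-stabilisations, and band maps. For an $\alpha$-band the elementary map is defined on the $\mathcal{CFL}^-$ complex as the triangle map $F_{\mathcal{T}}(\Theta^{+}_{\alpha',\alpha}\otimes -)$. This uses a triple subordinate to $B$ in the sense of Definition \ref{defn: subordinate to B}, with the decoration of Figure \ref{fig: Band map dec} (the band lies in the $\mathbf{z}$-subsurface boundary pattern appropriate to an $\alpha$-band). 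Setting the variables $U_{\mathbf{w}}=V_{\mathbf{z}}=0$ restricts the count to triangles with $n_{\mathbf{w}}(\psi)=n_{\mathbf{z}}(\psi)=0$, which gives exactly Definition \ref{defn: triangle count}. By the agreement result of \cite{MapsAreSamePaper}, the same map is Juhasz's map from \cite{AndrasCobordisms}.

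The key steps are as follows.

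First, I would verify that $\mathcal{T}_{\alpha',\alpha}$ represents an unlink in $(S^1\times S^2)^{\#g(\Sigma)}$, as recalled above. Its hat link Floer homology is then a tensor product of copies of two-dimensional spaces, one factor for each extra basepoint pair and each $S^1\times S^2$ summand. This has a one-dimensional top Maslov grading, so $\boldsymbol{\theta}^+$ is well defined as a homology class.

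Second, I would show the formula is independent of the representative. Since $F_{\mathcal{T}}$ is a chain map on the tensor product, two representatives $\boldsymbol{\theta}^+$ and $\boldsymbol{\theta}^+ + \partial\eta$ give maps differing by $F_{\mathcal{T}}(\partial\eta\otimes -)=\partial F_{\mathcal{T}}(\eta\otimes-)+F_{\mathcal{T}}(\eta\otimes\partial -)$. This difference is chain homotopic to zero, so the two maps agree on homology. Moreover $\boldsymbol{\theta}^+$ is a cycle, so $F_{\mathcal{T}}(\boldsymbol{\theta}^+\otimes-)$ is a chain map.

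Third, I would show independence of the subordinate triple. Any two triples subordinate to $B$ are related by handleslides and isotopies among the $\boldsymbol{\alpha}$, $\boldsymbol{\alpha}'$, $\boldsymbol{\beta}$ curves, together with (de)stabilisations. The associativity of triangle maps (quadrilateral counts) makes the triangle maps commute up to homotopy with the transition maps $\Phi$ of Lemma \ref{lem: transitive system for hat}. Hence the map is a morphism of transitive systems, and by the remark after the definition of transitive systems it coincides with the canonical $F_\Sigma$.

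The main obstacle is this third step: invariance under moves of the $\alpha$-curves. It requires checking that $\Theta^+$ for one triple is carried to $\Theta^+$ for another under the triangle maps for the $\alpha',\alpha,\alpha''$ triples. I would treat this as given in \cite{CobMapPaper}, where the naturality of the band map is proved, and simply cite it. The only thing left to check here is that the hat specialisation commutes with all of these constructions, which is immediate since all the relevant maps are $\mathbb{F}[U_{\mathbf{w}},V_{\mathbf{z}}]$-equivariant.
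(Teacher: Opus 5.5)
Your proposal is correct and follows essentially the same route as the paper. The paper gives no separate proof: it presents the lemma as a recollection of Zemke's definition of the $\alpha$-band map in \cite{CobMapPaper}, namely the triangle map paired with the top-graded generator of $\widehat{HFL}(\mathcal{T}_{\alpha',\alpha})$, specialised to the hat flavour by setting all variables to zero, and it elsewhere cites \cite[Lemma 6.5]{CobMapPaper} for independence of both the representative $\boldsymbol{\theta}^+$ and the subordinate triple. Your extra unpacking, namely the homotopy argument for changing $\boldsymbol{\theta}^+$ by a boundary and the one-dimensional top grading for the unlink in $(S^1\times S^2)^{\#g(\Sigma)}$, is consistent with how the paper later uses the lemma.
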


Finally, in this section we recall the definition of cylindrical boundary degenerations of curves. We will need this for some neck stretching arguments. Recall that in the Lipshitz cylindrical setup we consider certain $J$-holomorphic embeddings, 
\begin{equation*}
    u\colon S \to \Sigma \times [0,1]\times \mathbb{R},
\end{equation*}
where $S$ is a Riemann surface with $2n$ punctures $\{p_1,\ldots,p_n,q_1\ldots, q_n\}\subseteq\partial S$ and $J$ is an almost complex structure satisfying certain properties, see \cite{CylindricalReformulation} for details. Now recall the definition of cylindrical boundary degenerations, we use the formulation from \cite{GraphCobZemke}.

\begin{defn}
    Let $S$ be a Riemann surface with punctures $\{p_1,\ldots,p_n\}\subseteq \partial S$. Then a \textit{cylindrical $\boldsymbol{\alpha}$-boundary degeneration} is a $J$-holomorphic embedding,
    \begin{equation*}
        u\colon S \to \Sigma\times (-\infty,1]\times \mathbb{R},
    \end{equation*}
    such that the following hold,
    \begin{enumerate}
        \item $u(\partial S)\subseteq \boldsymbol{\alpha}\times (-\infty,1]\times \mathbb{R}$;
        \item the map $\pi_{(-\infty,1]\times \mathbb{R}}\circ u$ is non-constant on each component of $S$;
        \item for each $\alpha_i\in \boldsymbol{\alpha}$, the preimage $u^{-1}(\alpha_i\times (-\infty, 1]\times \mathbb{R})$ consists of exactly one component of $\partial S$;
        \item $u$ has finite energy;
        \item and if $\{x_i\}_{i \in I}$ is a sequence of points in $S$ approaching a puncture $p_j$ then $(\pi_{\mathbb{R}}\circ u)(x_i)$ approaches $\pm \infty$.
    \end{enumerate}
    A \textit{cylindrical $\boldsymbol{\beta}$-boundary degeneration} is defined similarly.
\end{defn}

\section{Floer Lasagna Modules}
\label{sec: Floer Lasagna Modules}

We recap here the definition of Floer lasagna modules and the construction of $c\widehat{HFK}(K_n)$ from \cite{floerlasagna}. For some properties and more exposition for Floer lasagna modules see \cite{floerlasagna} and for the definition of skein lasagna modules that this is based on see \cite{OGSkLas}. Further, see \cite{exoticsklas} for a more categorical approach to the definition. We start by recalling the definition of a lasagna filling.

\begin{defn}
    Let $W$ be a 4-manifold and $\mathbb{L}\subseteq \partial W$ a framed and pointed link, that may be the empty link. Then a \textit{lasagna filling of $(W,\mathbb{L})$} is a tuple 
    \begin{equation*}
        F=((\Sigma,\mathcal{A}),\{(B_{i},\mathbb{L}_{i},v_i)\}_{i \in I})
    \end{equation*}
    for some finite indexing set $I$, such that for all $i \in I$:
    \begin{enumerate}
        \item the $B_{i}$'s are disjoint embedded copies of $D^4$ in $W$;
        \item $\mathbb{L}_{i}\subseteq \partial B_{i}$ is a pointed framed link;
        \item $(\Sigma,\mathcal{A})$ is a decorated surface in $W\backslash(\cup_{i\in I}B_{i})$ such that $\partial W\cap \partial\Sigma=L$ and $\partial B_{i}\cap \partial \Sigma=L_{i}$;
        \item and $v_{i}\in\widehat{HFL}(\mathbb{L}_i)$.
    \end{enumerate}
\end{defn}

We can then consider the infinite dimensional vector space $\mathbb{F}\{\text{lasagna fillings of }(W,\mathbb{L})\}$. To turn this into a more interesting invariant we define an equivalence relation on this vector space.

\begin{defn}
    Let $F=((\Sigma,\mathcal{A}), \{(B_{i},\mathbb{L}_{i},v_{i})\}_{i\in I})$ and $F=((\Sigma',\mathcal{A}'), \{(B_{j}',\mathbb{L}_{j}',v_{j}')\}_{j\in J})$ be lasagna fillings of $(W,\mathbb{L})$. We say $F\sim_{1} F'$ if the following hold:
    \begin{enumerate}
        \item we can partition $J=\bigsqcup_{i\in I} J_{i}$ such that for each set $i\in I$ we have $\bigcup_{j\in J_{i}}B_{j}'\subseteq B_{i}$;
        \item $(\Sigma',\mathcal{A}')$ is isotopic, relative boundaries, to $(\Sigma,\mathcal{A})$ in $W\backslash(\cup_{i\in I}B_{i})$;
        \item for each $i\in I$, $\Sigma'\cap\partial B_{i}$ is isotopic to $\mathbb{L}_{i}$ where the basepoints are determined by the subsurfaces $\Sigma_{\mathbf{w}}$ and $\Sigma_{\mathbf{z}}$;
        \item for each $i\in I$, the map $F_{\mathcal{F}_i}$ coming from the link cobordism
        \begin{equation*}
            \mathcal{F}_{i}=(\Sigma',\mathcal{A}')\cap B_{i}\colon \cup_{j\in J_{i}}\mathbb{L}_{j}\to \mathbb{L}_{i}
        \end{equation*}
        has the property $F_{\mathcal{F}_{i}}(\otimes v_{j})=v_{i}$.
    \end{enumerate}
    We then define an equivalence relation $\sim$ on $\mathbb{F}\{\text{lasagna fillings of }(W,\mathbb{L})\}$ as the transitive and linear closure of $\sim_{1}$ and multilinearity in the $v_{i}$. That is, if $F=((\Sigma,\mathcal{A}), \{B_{i},L_{i},v_{i}\}_{i\in I})$ is a lasagna filling and for some $j\in I$ we have $v_{j}=w_{j}+z_{j}$, then 
    \begin{equation*}
        F\sim ((\Sigma,\mathcal{A}), \{B_{i},L_{i},w_{i}\}_{i\in I})+(\Sigma,\mathcal{A}), \{B_{i},L_{i},z_{i}\}_{i\in I})
    \end{equation*}
    where $w_{i}=z_{i}=v_{i}$ for $i\neq j$.
\end{defn}

Using this equivalence relation we can define the Floer lasagna module of the pair $(W,\mathbb{L})$.

\begin{defn}
    We define the \textit{Floer Lasagna Module of $(W,\mathbb{L})$} to be,
    \begin{equation}
        \mathcal{FL}(W,\mathbb{L}):=\mathbb{F}\{\text{lasagna fillings in }(W,\mathbb{L})\}/\sim.
    \end{equation}
\end{defn}

An observation made in \cite{floerlasagna} is that if any two lasagna fillings are equivalent then the surfaces must represent the same element in $H_{2}(W,L)$. Hence, we can write $\mathcal{FL}(W,\mathbb{L};\alpha)$ for the lasagna fillings in the class $\alpha$ and 
\begin{equation*}
    \mathcal{FL}(W,\mathbb{L})=\bigoplus_{\alpha\in H_{2}(W,L)}\mathcal{FL}(W,\mathbb{L};\alpha).
\end{equation*}
In fact Chen defines a further bigrading on $\mathcal{FL}(W,\mathbb{L})$ coming from the Maslov and Alexander gradings, see \cite{floerlasagna} for the proof.

\begin{lem}
    Let $[F=((\Sigma,\mathcal{A}), \{B_{i},\mathbb{L}_{i},v_{i}\}_{i\in I})] \in \mathcal{FL}(W,\mathbb{L};\alpha)$ be an equivalence class of lasagna fillings then the following gradings are well defined:
    \begin{align*}
         &M([F])=\chi(\Sigma_{\mathbf{w}})+\sum_{i\in I}M(v_{i}); \\
         &A([F])=\frac{\chi(\Sigma_{\mathbf{w}})-\chi(\Sigma_{\mathbf{z}})}{2}+\sum_{i\in I}A(v_{i}).
     \end{align*}
     Therefore, $\mathcal{FL}(W,\mathbb{L};\alpha)$ is bigraded, that is 
     \begin{equation*}
         \mathcal{FL}(W,\mathbb{L};\alpha)=\bigoplus_{(i,j)\in \mathbb{Z}^2}\mathcal{FL}_{i}(W,\mathbb{L};\alpha,j),
     \end{equation*}
     where $i$ is the Maslov grading and $j$ the Alexander grading.
\end{lem}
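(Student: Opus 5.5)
The plan is to check that $M$ and $A$ are unchanged by each generating move of $\sim$. Multilinearity needs little: a filling with $v_j=w_j+z_j$ is homogeneous only when $w_j$ and $z_j$ can be taken homogeneous of the same bigrading as $v_j$. So we restrict to fillings whose $v_i$ are homogeneous and declare the bigrading on those. Isotopy of $(\Sigma,\mathcal{A})$ rel boundary does not change $\chi(\Sigma_{\mathbf{w}})$ or $\chi(\Sigma_{\mathbf{z}})$. The real content is therefore the relation $F\sim_1 F'$, where $F'$ has input balls $B'_j\subseteq B_i$ and $F_{\mathcal{F}_i}(\otimes_{j\in J_i} v'_j)=v_i$. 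We must show that $M(F)=M(F')$ and $A(F)=A(F')$.

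\textbf{Decomposing $\Sigma'$.} After the isotopy we write
\[
\Sigma' \;=\; \Sigma \cup \bigcup_{i\in I}\mathcal{F}_i ,
\]
where $\Sigma$ is glued to $\mathcal{F}_i$ along the link $L_i\subseteq\partial B_i$. The gluing is compatible with the splittings into $\mathbf{w}$- and $\mathbf{z}$-parts. By condition (3) of $\sim_1$, the basepoints of $\mathbb{L}_i$ are exactly those induced by these splittings, so $\Sigma_{\mathbf{w}}$ and $(\mathcal{F}_i)_{\mathbf{w}}$ meet in the arcs of $L_i$ lying in the $\mathbf{w}$-region. Each component of $L_i\setminus\partial\mathcal{A}$ contains exactly one basepoint, so there are $|\mathbf{w}_i|$ such arcs. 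Since each arc has $\chi=1$, inclusion--exclusion gives
\[
\chi(\Sigma'_{\mathbf{w}}) \;=\; \chi(\Sigma_{\mathbf{w}}) + \sum_{i}\bigl(\chi((\mathcal{F}_i)_{\mathbf{w}}) - |\mathbf{w}_i|\bigr).
\]
Likewise,
\[
\chi(\Sigma'_{\mathbf{z}}) \;=\; \chi(\Sigma_{\mathbf{z}}) + \sum_{i}\bigl(\chi((\mathcal{F}_i)_{\mathbf{z}}) - |\mathbf{z}_i|\bigr),
\]
and $|\mathbf{z}_i|=|\mathbf{w}_i|$ because the $w$ and $z$ basepoints alternate on each component.

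\textbf{Applying the grading change formulas.} Next we use the grading change formulas of Theorem \ref{thm: grading change formulas} for each cobordism $\mathcal{F}_i\colon \bigsqcup_{j\in J_i}\mathbb{L}'_j\to\mathbb{L}_i$, using the colouring that collapses everything to a single colour. Since the grading of a tensor product is the sum of the gradings, these give
\[
M(v_i)=\sum_{j\in J_i}M(v'_j)+\chi((\mathcal{F}_i)_{\mathbf{w}})-|\mathbf{w}_i|,
\]
\[
A(v_i)=\sum_{j\in J_i}A(v'_j)+\tfrac12\bigl(\chi((\mathcal{F}_i)_{\mathbf{w}})-\chi((\mathcal{F}_i)_{\mathbf{z}})\bigr).
\]
Substituting these into $M(F)=\chi(\Sigma_{\mathbf{w}})+\sum_i M(v_i)$ and using the Euler characteristic identity for $\chi(\Sigma'_{\mathbf{w}})$ gives exactly $M(F')$. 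For $A$ the two arc corrections $|\mathbf{w}_i|$ and $|\mathbf{z}_i|$ cancel in the difference, which gives $A(F)=A(F')$. Since $\sim$ is generated by $\sim_1$ together with linearity and multilinearity, the gradings descend to $\mathcal{FL}(W,\mathbb{L};\alpha)$, and the stated direct sum decomposition follows.

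\textbf{Main obstacle.} The difficulty is that Theorem \ref{thm: grading change formulas} is stated only for cobordisms in $S^3\times I$. Here $\mathcal{F}_i$ lives in $B_i$ minus several open balls, which is a cobordism from $\bigsqcup_j S^3$ to $S^3$. I would handle this in one of two ways. The first is to invoke Zemke's general grading formula from \cite{CobGradings} for such cobordisms: this punctured $B^4$ has $b_1=b_2^+=0$ and $\sigma=0$, and the $d$-invariants of $S^3$ vanish, so no correction terms appear. The second is to factor $\mathcal{F}_i$ into 1-handle attachments joining the spheres, whose effect on $\widehat{CFL}$ is tensoring with no grading shift apart from the bookkeeping already accounted for, followed by a cobordism in $S^3\times I$ to which Theorem \ref{thm: grading change formulas} applies directly. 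A secondary point to check is the arc count: because $\mathcal{A}$ meets $L_i$ transversally at the points $\partial\mathcal{A}$, the intersection $\Sigma_{\mathbf{w}}\cap(\mathcal{F}_i)_{\mathbf{w}}$ is genuinely a disjoint union of $|\mathbf{w}_i|$ arcs and not circles.
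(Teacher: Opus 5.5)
Your proof is correct and is the standard argument; the paper gives no proof of its own and defers to Chen. You check invariance under $\sim_1$ by combining additivity of $\chi$ under gluing along the $|\mathbf{w}_i|$ (respectively $|\mathbf{z}_i|$) arcs with the grading change $\chi((\mathcal{F}_i)_{\mathbf{w}})-|\mathbf{w}_i|$ on each punctured ball.

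One correction to your first way of handling the punctured ball: Zemke's formula on $B^4$ minus $m$ balls does contain the term $-2\chi(W)/4=(m-1)/2$, which is nonzero unless $m=1$. It cancels only after converting his half-integer normalisation (through $\tilde\chi(\Sigma_{\mathbf{w}})$ and the grading on disjoint unions) to the additive normalisation used here, so ``no correction terms appear'' is not literally right, whereas your 1-handle factorisation avoids the issue.
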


With the definition in hand, we now define $c\widehat{HFK}_n(K)$, which by a result from \cite{floerlasagna} is isomorphic as a graded vector space to $\mathcal{FL}(X_{n}(K))$. This vector space was defined by Chen in \cite{floerlasagna} and was based on a similar formula from \cite{SkeinLasagna2Handle}. We will only recall the definition in the case of a knot and no link in the boundary as this is all we will need for the paper.

The motivation for the definition comes from the observation that for a knot trace $X_n(K)$, if we consider the subspace 
\begin{equation*}
    Y=N\left((\{0\}\times D^2) \cup\partial X_n(K)\right),
\end{equation*}
that is a neighbourhood of the cocore of the 2-handle and the boundary, then $B:=X_n(K)\backslash Y$ is a 4-ball. Further, any surface $\Sigma\subseteq int(X_n(K))$ intersects $Y$ as a collection of parallel copies of $(D^2\times \{0\}) \cap Y$. Therefore, every lasagna filling is equivalent to a lasagna filling, 
\begin{equation*}
    \big((\sqcup_{r}D^2,\mathcal{A}),\{(B,\tilde{K},v)\}_r\big),
\end{equation*}
where $\tilde{K}$ is an $r$-component cable of $K$ with some orientations and $v \in \widehat{HFK}(\tilde{K})$. These fillings are then used to give the isomorphism with $c\widehat{HFK}(K_n)$. We start by defining the cables of $K$ that are needed.

\begin{defn}
    Let $K$ be a knot with framing $n\in \mathbb{Z}$ in $S^3$, and $f\colon S^1\times D^2\to N(K)$ be a parametrisation of the framing. Further, let $a_+,a_-\in \mathbb{Z}_{\geq 0}$ and 
    \begin{equation*}
        x_{1}^{+},\ldots,x_{a_+}^{+},x_{1}^{-}, \ldots,x_{a_-}^{-}\in \{1\} \times D^2,
    \end{equation*}
    then we define, as an unoriented link, 
    \begin{equation*}
        K_n(a_+,a_-)=f(S^{1}\times\{x_{1}^{+},\ldots,x_{a_-}^{-}\}).
    \end{equation*}
    We then orientate $f(S^{1}\times\{x_{i}^{\pm}\})$ in the same direction as $K$ for $x_{i}^+$, we say \textit{positively oriented}, and in the opposite direction, \textit{negatively oriented}, for $x_{i}^{-}$.  Finally we add $w$ and $z$ basepoints at $f(1,x_{i}^{\pm})$ and $f(-1,x_{i}^{\pm})$ respectively.
\end{defn}

The definition takes a direct sum over all of the link Floer homology of these and then quotients out by certain images of some cobordisms. We recall the definition of these cobordisms now starting with the basepoint moving map.

\begin{figure}[h]
    \centering
    \def\svgwidth{0.5\textwidth}
    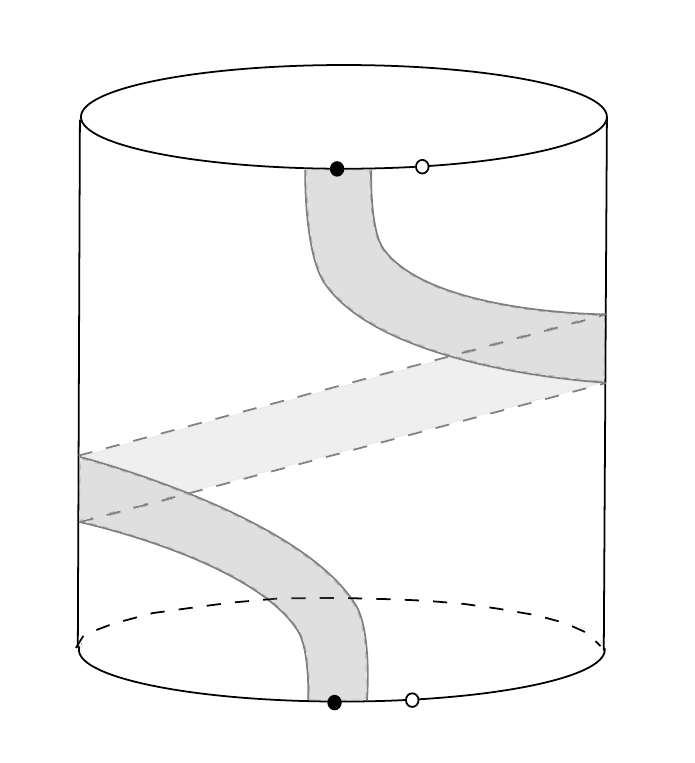
    \caption{Decoration for the basepoint moving map.}
    \label{fig: Basepoint moving cobordism}
\end{figure}

\begin{defn}
    Let $K_n(a_+,a_-)$ be as above, let $j \in \{1,\ldots,a_++a_-\}$ and $K_j\subseteq K_n(a_+,a_-)$ be the component $f(S^1\times\{x_{j}\})$, where $x_j=x_{j}^+$ if $j\leq a_+$ and $x_{j-a_+}^-$ if $j>a_+$. Then the \textit{basepoint moving map} $m_j$ is defined to be the cobordism $K_n(a_+,a_-)\times I$ with the decoration $\mathcal{A}$ that on $K_j$ is the decoration in Figure \ref{fig: Basepoint moving cobordism} and on every other component is $f(\pm i\times\{x\})\times I$. The associated map is denoted 
    \begin{equation*}
        F_{m_j}\colon \widehat{HFL}(K_n(a_+,a_-))\to  \widehat{HFL}(K_n(a_+,a_-)).
    \end{equation*}
\end{defn}

The only thing we need to note about this map is that $F_{m_j}$ is clearly an isomorphism, as the inverse is given by the decoration going the other way. Further, by applying the equations in Theorem \ref{thm: grading change formulas} we can see it also preserves the gradings. The next cobordisms are those associated to the braid action.

\begin{defn}
    Let $B_{a_++a_-}$ be the braid group on $a_++a_-$ strands and let it act on,
    \begin{equation*}
        \{1\}\times (D^2\backslash \{x_{1}^{+}, \ldots, x_{a_+}^{+}, x_{1}^{-}, \ldots, x_{a_-}^{-}\}).
    \end{equation*}
    Then define $B_{a_+,a_-}$ to be the subgroup that fixes both $\{x_{1}^{+},\ldots, x_{a_+}^{+}\}$ and $\{x_{1}^{-},\ldots,x_{a_-}^{-}\}$ setwise. Then there is an action on $K_n(a_+,a_-)$ by $B_{a_+,a_-}$, and for any $\tau \in B_{a_+,a_-}$ we denote by $\Sigma_\tau$ the cobordism from $K_n(a_+,a_-)$ to itself coming from seeing $\tau$ as a diffeomorphism. Further, a decoration $\mathcal{A}$ is defined by considering the image of $f(i\times \{x^\pm_j\})$ in the cobordism. Finally we denote by
    \begin{equation*}
        F_\tau\colon \widehat{HFK}(K_n(a_+,a_-))\to  \widehat{HFK}(K_n(a_+,a_-)),
    \end{equation*}
    the associated map.
\end{defn}

Note that again this is an isomorphism, with inverse given by $F_{\tau^{-1}}$, and preserves grading. To define the final cobordism map we need the following fact about link Floer homology.

\begin{lem}
\label{lem:  unknot component}
    Let $\mathbb{L}$ be a pointed link with $n$ components and $\mathbb{L}\sqcup U$ be the split link where $U$ is an unknot and has exactly two basepoints. Then we have the following isomorphism,
    \begin{equation}
        \widehat{HFL}(\mathbb{L}\sqcup U)\cong \widehat{HFL}(\mathbb{L})\otimes V,
    \end{equation}
    where $V$ is the vector space $\mathbb{F}^2$ generated by $B$ and $T$, with $M(B)=-1$, $M(T)=0$ and $A(B)=A(T)=\mathbf{0}$.
\end{lem}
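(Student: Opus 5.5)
The plan is to prove this at the chain level, using a Heegaard diagram for $\mathbb{L}\sqcup U$ built from one for $\mathbb{L}$ by a connected sum, and then read off homology and gradings. Take a multipointed diagram $\mathcal{H}=(\Sigma,\boldsymbol{\alpha},\boldsymbol{\beta},\mathbf{w},\mathbf{z})$ for $\mathbb{L}$. For the unknot with two basepoints, use the genus one diagram $\mathcal{H}_U=(T^2,\alpha_0,\beta_0,w_0,z_0)$, where $\alpha_0$ and $\beta_0$ are isotopic curves meeting in two points $\theta^+,\theta^-$. These two points bound two bigons, one containing $w_0$ and one containing $z_0$. (Equivalently, one can use a sphere diagram with an extra pair of curves.) The disjoint union $\mathbb{L}\sqcup U$ lives in $S^3\# S^3=S^3$. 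A diagram for it is obtained by forming $\Sigma\#T^2$, with the connected sum taken in regions of $\mathcal{H}$ and of $\mathcal{H}_U$ that contain no basepoints. Since $U$ is split from $L$, this gives a valid diagram for the split link, because the sphere along which the sum is performed separates $U$ from $L$.

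First I compute the chain complex. The generators are pairs $\mathbf{x}\times\theta^{\pm}$. A domain contributing to the hat differential must avoid all basepoints. On the $T^2$ side, each of the two bigons contains a basepoint, so a domain can have zero multiplicity there only if it is entirely supported in $\Sigma$ away from the connected sum region. Hence
\begin{equation*}
\widehat{CFL}(\mathcal{H}\#\mathcal{H}_U)\cong\widehat{CFL}(\mathcal{H})\otimes\mathbb{F}\langle\theta^+,\theta^-\rangle,
\end{equation*}
with differential $\partial\otimes\mathrm{Id}$. To justify that the holomorphic counts agree, I would either use a neck-stretching argument or simply note that these domains are identical to those in $\mathcal{H}$ and have the same moduli spaces; this is the standard connected sum argument for split unions. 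The Künneth formula over $\mathbb{F}$ then gives $\widehat{HFL}(\mathbb{L})\otimes\mathbb{F}^2$.

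Next I determine the gradings. The relative Maslov grading between $\theta^+$ and $\theta^-$ is computed from the bigon $\phi$ from $\theta^+$ to $\theta^-$ that contains $w_0$. It has $\mu(\phi)=1$, $n_{w_0}(\phi)=1$ and $n_{z_0}(\phi)=0$, so $M(\theta^+)-M(\theta^-)=\mu(\phi)-2n_{\mathbf{w}}(\phi)=-1$, up to the orientation convention. For the Alexander grading of the $U$ component, the two bigons give $A(\theta^+)-A(\theta^-)=n_{z_0}-n_{w_0}$ computed with each bigon, and comparing the two forces this difference to be zero in a consistent convention. Moreover, the symmetry of $\widehat{HFL}$ for the unknot component pins the absolute value at $0$. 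The Alexander gradings of $\mathbb{L}$ are unchanged, because linking numbers with the split component vanish. Finally I fix the absolute normalisation. Taking $\mathbb{L}$ to be the empty link, or comparing with the two-component unlink, $\widehat{HFL}(U\sqcup U)$ is supported in Maslov gradings $0$ and $-1$. This is the standard computation and matches the hat Floer homology of $\#(S^1\times S^2)$ shifted down by $1/2$, which gives $M(T)=0$ and $M(B)=-1$. Setting $T=\theta^{\text{top}}$ and $B=\theta^{\text{bot}}$ completes the argument.

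The main obstacle is the absolute Maslov grading normalisation, together with the precise convention for the unknot's Alexander grading. I would handle it by checking against the case where $\mathbb{L}$ is the unknot, using $\widehat{HFL}$ of the two-component unlink, and by invoking additivity of absolute gradings under connected sums of diagrams.
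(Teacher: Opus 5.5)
Your starting diagram is not a diagram of the unknot in $S^3$, and this breaks the argument. A genus one diagram $(T^2,\alpha_0,\beta_0)$ with $\alpha_0$ isotopic to $\beta_0$ describes $S^1\times S^2$; for $S^3$ the two curves must meet algebraically once. Moreover, put $w_0$ and $z_0$ in the two different bigons. Then an arc from $w_0$ to $z_0$ in $T^2\setminus\alpha_0$, and an arc from $z_0$ to $w_0$ in $T^2\setminus\beta_0$, must each go once around the torus in the direction dual to $\alpha_0$, in the same sense. So the knot represents twice a generator of $H_1(S^1\times S^2)$ and is not even null-homologous.

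Your grading computation is detecting exactly this. On the torus both bigons are Whitney discs from $\theta^+$ to $\theta^-$, so their difference is a periodic domain with $n_{w_0}-n_{z_0}=\pm 2$. The bigon containing $w_0$ gives $M(\theta^+)-M(\theta^-)=A(\theta^+)-A(\theta^-)=-1$, while the one containing $z_0$ gives $+1$ for both. Choosing ``a consistent convention'' cannot remove this contradiction; it shows the relative gradings are undefined.

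Separately, summing in basepoint-free regions without adding curves never gives a valid multi-pointed diagram for a split union. The surface $\Sigma\#T^2$ has genus $g(\Sigma)+1$ and $|\mathbf{w}|+1$ basepoints of type $\mathbf{w}$. It therefore needs $g(\Sigma)+|\mathbf{w}|+1$ curves of each colour, one more than you have. Equivalently, the component of the complement of the $\alpha$ curves containing the neck contains two $\mathbf{w}$ basepoints.

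The missing ingredient is a new pair of curves around the neck; it is this pair, not a diagram of $U$, that produces $V$. Concretely, inside a small disc in any region of $\mathcal{H}$, add circles $\alpha_0=\partial D_\alpha$ and $\beta_0=\partial D_\beta$ meeting in two points $\theta^\pm$, with $w_0,z_0\in D_\alpha\cap D_\beta$. This is the free stabilisation of Proposition \ref{prop: free stabilisation} with $z_0$ placed next to $w_0$, and presumably what your parenthetical sphere picture should be.

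In this diagram the lens $D_\alpha\cap D_\beta$ is a bigon from $\theta^-$ to $\theta^+$ containing both new basepoints, so it is never counted. The crescents $D_\alpha\setminus D_\beta$ and $D_\beta\setminus D_\alpha$ are basepoint-free bigons from $\theta^+$ to $\theta^-$, and they cancel modulo $2$. All three bigons give $M(\theta^+)-M(\theta^-)=1$ and equal Alexander multigradings. So $T=\theta^+$ and $B=\theta^-$ have the stated gradings once you apply your unlink normalisation.

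You still need a real argument that the differential is $\partial\otimes\mathrm{Id}$. Zero multiplicity in the new bigons does not by itself keep a domain away from the neck region. The fix is the hat specialisation of the Ozsv\'ath--Szab\'o stabilisation argument recalled after Proposition \ref{prop: free stabilisation}, or a neck-stretching argument. For comparison, the paper does not prove this lemma; it quotes it as a standard fact, and the corrected stabilisation argument above is the standard proof.
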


Finally, using this we can define the pair of pants cobordism.

\begin{figure}[h]
    \centering
    \def\svgwidth{0.5\textwidth}
    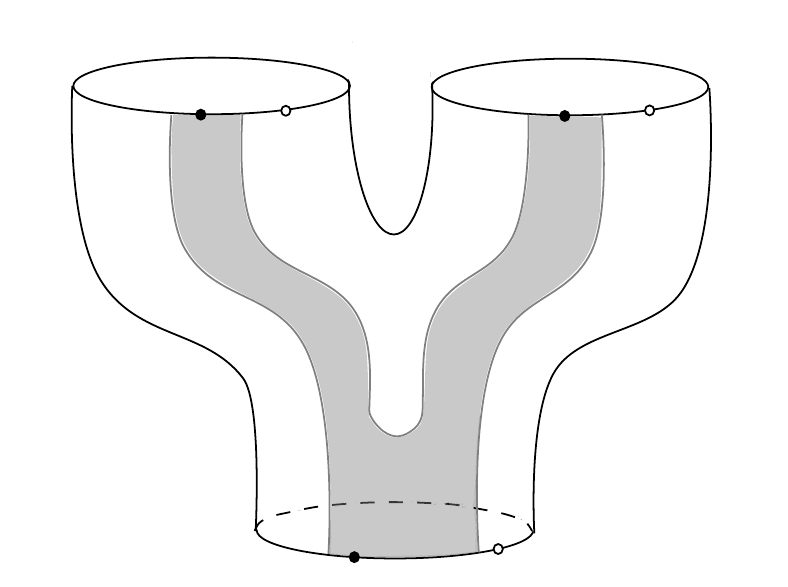
    \caption{Decoration for the pair of pants cobordism.}
    \label{fig: Pair of pants cobordism}
\end{figure}

\begin{defn}
\label{defn: pair of pants cobordism}
    Let $K_+$ and $K_-$ be two oppositely oriented components of $K_n(a_+,a_-)$. Then there is an annulus in $S^3\backslash K_n(a_+,a_-)$ bounded by these components. Push this annulus $A$ into $S^3\times I$ so that $K_n(a_+,a_-)\subseteq S^3\times\{1\}$ and the annulus intersects $S^3\times \{0\}$ in a small disc $D$. Then we define the cobordism 
    \begin{equation*}
        P:=\big((K_n(a_+,a_-)\backslash K_+\cup K_-)\times I\big)\cup (A\backslash \text{int}(D))\subseteq S^3\times I
    \end{equation*}
    and we decorate this with the trivial decoration on the cylinders and the decoration in Figure \ref{fig: Pair of pants cobordism} on the annulus. The associated map is then denoted, using Lemma \ref{lem:  unknot component},
    \begin{equation*}
        F_P\colon \widehat{HFL}(K_n(a_+-1,a_--1))\otimes V \to \widehat{HFL}(K_n(a_+,a_-)).
    \end{equation*}
\end{defn}

Note that we will often abuse notation and use $F_{P}$ to refer to both the map on homology and chain complexes, when the meaning is clear. Further, by using Theorem \ref{thm: grading change formulas} we can see that $F_{P}$ preserves the Alexander grading and shifts the Maslov grading by $-1$. Finally, note that $F_P$ is a composition of a quasi-stabilisation map and a band map. With these cobordism maps defined we can now state the definition of $c\widehat{HFK}(K)$ and the theorem that it is isomorphic to $\mathcal{FL}(X_n(K))$ from \cite{floerlasagna}.

\begin{defn}
    Let $K$ be a knot with framing $n$ in $S^3$. We define the \textit{$n$-cabled knot Floer homology} of $K$ as follows,
    \begin{equation}
        c\widehat{HFK}(K_n)=\left(\bigoplus_{a_+,a_- \in \mathbb{Z}_{\geq 0}}\widehat{HFK}(K_n(a_+,a_-))[a_++a_-]              \right)/\sim
    \end{equation}
    where $\sim$ is the linear and transitive closure of the following relations:
    \begin{enumerate}
        \item $v \sim F_{\tau}(v)$ for all $\tau \in B_{a_+,a_-}$;
        \item $v \sim F_{P}(v \otimes B)$ and $0 \sim F_{P}(v \otimes T)$;
        \item and $v \sim F_{m_{j}}(v)$ for all $1\leq j\leq a_++a_-$.
    \end{enumerate}
    Here, $F_{\tau},F_{P}$ and $F_{m_{j}}$ are the previously defined cobordism maps and $[\cdot]$ means the Maslov grading is shifted up by this amount.
\end{defn}

As noted in \cite{floerlasagna} the above construction inherits a bigrading from the Maslov and Alexander gradings of the $K_n(a_+,a_-)$, as the shift cancels out the shift from the cobordism maps. It also admits a $H_{2}(X_{n}(K))\cong\mathbb{Z}$ grading given by summing only over $a_{+}$ and $a_{-}$ such that $a_{+}-a_{-}=\alpha$. We denote the part in Maslov grading $i$, Alexander grading $j$ and homological grading $\alpha$ by $c\widehat{HFK}_i(K_n;\alpha,j)$. Hence, we can state the following theorem. Note in \cite{floerlasagna} it is proved for links but we will not need this.

\begin{thm}
\label{thm: cable and lasagna}
    Let $K$ be a knot in $S^3$. Then for all $n\in\mathbb{Z}$ the Floer lasagna module of $X_{n}(K)$ is isomorphic to $c\widehat{HFK}(K_{n})$ as a bigraded vector space, that is  the following isomorphism holds for all $i,j \in \mathbb{Z}$ and $\alpha \in H_{2}(X_{n}(K))$,
    \begin{equation}
        \mathcal{FL}_{i}(X_{n}(K);\alpha, j)\cong c\widehat{HFK}_{i}(K_{n};\alpha,j).
    \end{equation}
\end{thm}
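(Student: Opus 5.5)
The plan is to build an explicit graded linear map
\begin{equation*}
\Phi\colon \bigoplus_{a_+,a_-\geq 0}\widehat{HFK}(K_n(a_+,a_-))[a_++a_-]\to \mathcal{FL}(X_n(K)),
\end{equation*}
show that it descends to the quotient $c\widehat{HFK}(K_n)$, and then prove that the induced map is a bijection. I would follow the strategy of Manolescu and Neithalath for skein lasagna modules.

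\textbf{Construction and well-definedness.} Write $X_n(K)=B\cup Y$ as above. The 2-handle core, restricted to $Y$, is a disc $D^2\times\{0\}$ meeting $\partial B$ in $K$. Take $a_++a_-$ parallel copies $D^2\times\{x_i^\pm\}$ of it, with orientations chosen so that the boundaries are exactly $K_n(a_+,a_-)\subseteq\partial B$. Decorate each disc by a single arc joining the $w$ and $z$ basepoints. Then set
\begin{equation*}
\Phi(v)=\big[(\sqcup D^2,\mathcal{A}),\{(B,K_n(a_+,a_-),v)\}\big].
\end{equation*}
This filling lies in the homology class $a_+-a_-\in H_2(X_n(K))$.

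For the gradings, each decorated disc has $\chi(\Sigma_{\mathbf{w}})=\chi(\Sigma_{\mathbf{z}})=1$. So the Maslov grading from Chen's grading lemma is $M(v)+(a_++a_-)$, which is exactly the shift $[a_++a_-]$. The Alexander grading is unchanged.

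Next I check each relation by exhibiting an instance of $\sim_1$. Enlarge $B$ slightly, to a ball $B'$ that contains a collar $\partial B\times I$. Then:
\begin{itemize}
\item Inserting the braid cobordism $\Sigma_\tau$ in the collar shows $\Phi(v)=\Phi(F_\tau(v))$. The parallel discs are permuted by an isotopy of $Y$ rel boundary, since the braid extends across $D^2\times D^2$.
\item Inserting the basepoint-moving cobordism $m_j$ in the collar shows $\Phi(v)=\Phi(F_{m_j}(v))$. The decoration on the outer disc can be isotoped back.
\item For the pair of pants relation, the annulus $A$ together with two parallel discs $D_+$ and $D_-$ of opposite orientation forms a sphere $D_+\cup A\cup D_-$. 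This sphere can be isotoped into $B'$ and there shrunk to the small disc $D$ capped by a ball containing the unknot $U$. By the unknot lemma, the unknot with class $B$ corresponds to the empty filling, and the unknot with class $T$ corresponds to zero; this is the standard unit/counit computation for $\widehat{HFL}(U)$. Hence $\Phi(F_P(v\otimes B))=\Phi(v)$ and $\Phi(F_P(v\otimes T))=0$.
\end{itemize}

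\textbf{Surjectivity.} By the observation preceding the definition, every filling is equivalent to one with a single input ball $B$ and parallel discs in $Y$. To see this, first isotope all input balls and every part of $\Sigma$ that avoids the cocore into $B$. Merge the balls into one by applying $\sim_1$, using the cobordism map given by the part of $\Sigma$ lying in $B$. Finally, isotope $\Sigma\cap Y$ so that it consists of standard cocore-transverse discs. This shows $\Phi$ is onto.

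\textbf{Injectivity (main obstacle).} Define an inverse $\Psi\colon\mathcal{FL}(X_n(K))\to c\widehat{HFK}(K_n)$. Given any filling, put it in the normal form above and take the class of the resulting vector. The real work is showing $\Psi$ is well defined, that is, independent of the choices and constant on $\sim_1$ classes. Independence from the choice of isotopy into normal form reduces, by a general-position argument on one-parameter families, to a finite list of elementary moves:
\begin{enumerate}
\item isotopies of the parallel discs within $Y$ rel boundary, which are braids in $B_{a_+,a_-}$;
\item changes of the decoration arcs on the discs, which are basepoint moves $m_j$;
\item births and deaths of pairs of oppositely oriented intersections of $\Sigma$ with the cocore, which are exactly pair of pants cobordisms with caps;
\item isotopies supported in $B$, which are handled by functoriality of cobordism maps (Zemke and Juh\'asz).
\end{enumerate}
Each move is matched by a relation of $c\widehat{HFK}$. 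Move (3) requires the most care: one must check that the decorations and the choice of $B$ or $T$ agree with Figure \ref{fig: Pair of pants cobordism}. I expect this movie-move classification, especially controlling the intersections with the cocore in a family and verifying that the decorations match, to be the main difficulty.

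Compatibility with $\sim_1$ then follows by composing cobordism maps inside $B$. Multilinearity of fillings in the $v_i$ matches linearity of the direct sum. Finally $\Psi\circ\Phi=\mathrm{Id}$ directly from the construction, so together with surjectivity this gives the claimed bigraded isomorphism, graded also by the class $\alpha=a_+-a_-$.
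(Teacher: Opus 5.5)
Your outline is the Manolescu--Neithalath argument that Chen carries out in \cite{floerlasagna}. The paper does not reprove this theorem but quotes it from there, and its motivating remark (every filling is equivalent to parallel copies of the core in $Y=N\big((\{0\}\times D^2)\cup\partial X_n(K)\big)$ with a single input ball $B$) is exactly your normal form, so the approach agrees; as you say, matching the finger move (3) against relation (2) is where the real work lies.

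One small slip: the decoration on each disc should be an arc \emph{separating} the $w$ and $z$ basepoints, with its endpoints on the cable component between them, not an arc joining them. Your computation $\chi(\Sigma_{\mathbf{w}})=\chi(\Sigma_{\mathbf{z}})=1$ already implicitly uses the separating arc.
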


\section{Spectral Sequences}
\label{sec: Spectral Sequences}

The main aim of this section is to examine how the maps associated to bands, quasi-stabilisations and pair of pants cobordisms on link Floer homology interact with the forgetting components spectral sequence. We split the section into four parts. In the first we recap some definitions and results about spectral sequences coming from filtered chain complexes and set up notation. The second subsection is then dedicated to showing that the identification of the $E^{\infty}$-page is canonical. Then the next subsection is dedicated to the statement and proof of how the maps interact with the spectral sequence, and the final subsection to giving an example.

\subsection{Notation for Spectral Sequences}
\label{subsec: Notation for Spectral Sequences}
\hfill

We start by recalling some definitions and results about spectral sequences. We use notation from \cite{KhovanovFloer}. However, note that all of the spectral sequences we consider will be of homological type and so the notation is slightly different. Further, we suppress the homological grading from the notation, so our chain complexes are really just vector spaces $C$ equipped with a map $\partial\colon C \to C$ such that $\partial^2=0$.

\begin{defn}
\label{defn: type of chain cx}
    Let $(C,\partial)$ be a chain complex with 
    \begin{equation*}
        C=\bigoplus_{i \in \mathbb{Z}}C_{i},
    \end{equation*}
    and $\partial=\partial_{0}+\partial_{1}+\ldots$ where $C_{i}=\{0\}$ for $|i|$ sufficiently large and $\partial_{i}(C_{j})\subseteq C_{i+j}$. Then we define a filtration by 
    \begin{equation*}
        \mathcal{F}_iC=\bigoplus_{j\geq i}C_{j}
    \end{equation*}
    and note $\mathcal{F}_iC\subseteq \mathcal{F}_{i-1}C$. Further, let $C$ and $C'$ be chain complexes as above. Then we define a \textit{filtered chain map of degree $k$} to be a chain map $f\colon C \to C'$ where $f=f_{k}+f_{k+1}+\ldots$, such that $f_{j}(C_{i})\subseteq C_{i+j}$.
\end{defn}

Further, recall that a filtered chain complex defines a spectral sequence, see for example \cite[Section 2.2]{SpecSeqBook}. We will use the notation $(E^p(C),\partial^p)$ for the $p$-th page of the spectral sequence, where 
\begin{equation*}
    E^0_i(C):=\mathcal{F}_i(C)/\mathcal{F}_{i+1}(C).
\end{equation*}
Further, when we say \textit{there exists a spectral sequence from $V$ to $W$} or write $V\implies W$, we mean there exists a $C$ as above such that 
\begin{equation*}
    V\cong E^1(C) \qquad \text{and} \qquad W\cong E^{\infty}(C).
\end{equation*}

We will also have cases when $C$ has an additional grading say $C^i$ such that $\partial(C^i)\subseteq C^i$ or $\partial(C^i)\subseteq C^{i-1}$. In these cases the pages of the spectral sequence and the homology inherit these additional gradings. In these cases we will say that \textit{the spectral sequence respects the grading} and will occasionally write $V^i\implies W^i$ to mean this. Note this notation is just as above for the case $\partial(C^i)\subseteq C^i$. However, it is a slight abuse of notation in the case $\partial(C^i)\subseteq C^{i-1}$ but if $[x]^{\infty}\in W^i$ then every element $x \in[x]^{\infty}$ is an element of $C^i$ justifying the notation.

Now, suppose that $f\colon C\to C'$ is a filtered chain map as above, then this defines a map of spectral sequences and we use the notation, 
\begin{equation*}
    E^{p}(f)\colon E^{p}(C)\to E^p(C'),
\end{equation*}
for the induced map on the $p$-th page. We also denote by 
\begin{equation*}
    f_*\colon H_*(C)\to H_*(C'),
\end{equation*}
the induced map on homology. To prove the main results of this section we will need to compare $E^{\infty}(f)$, $f_*$ and $E^1(f)$. To do this we first recall that there is a natural filtration on $H_{*}(C)$. This is a standard result and uses the fact that $\mathcal{F}_i(C)$ is also a chain complex.

\begin{lem}
    Let $C$ be a chain complex as above and for all $i$ let $\iota_i\colon \mathcal{F}_i(C)\to C$ be the inclusion. Then there is a filtration on $H_*(C)$ given by,
    \begin{equation*}
        \mathcal{F}_{i}H_{*}(C)=\text{im}({\iota_i}_*\colon H_{*}(\mathcal{F}_i(C))\to H_*(C)),
    \end{equation*}
    where $\mathcal{F}_{i}H_{*}(C)\subseteq \mathcal{F}_{i-1}H_{*}(C)$.
\end{lem}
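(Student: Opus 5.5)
We need to show that $\mathcal{F}_iH_*(C)$ is well defined and that $\mathcal{F}_iH_*(C)\subseteq\mathcal{F}_{i-1}H_*(C)$. The plan is a direct check from the definitions, using only that each $\mathcal{F}_i(C)$ is a subcomplex and that the filtration is nested.

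First we verify that $\mathcal{F}_i(C)$ is a subcomplex, so that ${\iota_i}_*$ makes sense. Since $\partial=\partial_0+\partial_1+\ldots$ with $\partial_k(C_j)\subseteq C_{j+k}$ and $k\geq 0$, we get $\partial(C_j)\subseteq\bigoplus_{l\geq j}C_l$. Summing over $j\geq i$ gives $\partial(\mathcal{F}_iC)\subseteq\mathcal{F}_iC$. Hence $\iota_i$ is a chain map, ${\iota_i}_*$ is defined, and its image $\mathcal{F}_iH_*(C)$ is a subspace of $H_*(C)$.

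Next comes the nesting. We have $\mathcal{F}_iC\subseteq\mathcal{F}_{i-1}C$, so the inclusion $j_i\colon\mathcal{F}_iC\to\mathcal{F}_{i-1}C$ is a chain map of subcomplexes, and it satisfies $\iota_i=\iota_{i-1}\circ j_i$. By functoriality of homology, ${\iota_i}_*={\iota_{i-1}}_*\circ {j_i}_*$. Therefore
\begin{equation*}
\text{im}({\iota_i}_*)\subseteq\text{im}({\iota_{i-1}}_*),
\end{equation*}
which is exactly $\mathcal{F}_iH_*(C)\subseteq\mathcal{F}_{i-1}H_*(C)$.

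We can also record that the filtration is bounded. Since $C_i=0$ for $|i|$ large, $\mathcal{F}_iC=C$ for $i\ll0$ and $\mathcal{F}_iC=0$ for $i\gg0$. Consequently $\mathcal{F}_iH_*(C)$ equals $H_*(C)$ for $i\ll 0$ and is zero for $i\gg0$, so this is a finite exhaustive filtration. This will be useful later when comparing it with $E^\infty$.

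There is no real obstacle here. The only point requiring care is the first step, that each $\partial_k$ has non-negative degree, which is what makes $\mathcal{F}_iC$ closed under $\partial$.
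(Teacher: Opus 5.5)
Your proof is correct and takes the same approach as the paper. The paper gives no written argument, only remarking that the lemma is standard and rests on $\mathcal{F}_i(C)$ being a subcomplex; that is your first step, and the nesting then follows, as you show, from functoriality applied to the factorisation $\mathcal{F}_iC\hookrightarrow\mathcal{F}_{i-1}C\hookrightarrow C$.
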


Using this recall that the \textit{associated graded vector space} $\text{gr}(H_*(C))$ is defined as,
\begin{equation*}
    \text{gr}(H_*(C)):=\bigoplus_{i\in Z}\mathcal{F}_{i}H_{*}(C)/\mathcal{F}_{i+1}H_{*}(C).
\end{equation*}
We denote by $\text{gr}_i(H_*(C)):=\mathcal{F}_{i}H_{*}(C)/\mathcal{F}_{i+1}H_{*}(C)$. Using this notation we have the following.

\begin{lem}
    Let $f\colon C \to C'$ be a filtered chain map of degree $k$. Then $f_*$ is a filtered map of degree $k$ meaning,
    \begin{equation*}
        f_*(\mathcal{F}_{i}H_{*}(C))\subseteq \mathcal{F}_{i+k}H_{*}(C').
    \end{equation*}
    Further, the associated graded map $\textup{gr}f_*\colon \textup{gr}(H_*(C))\to \textup{gr}(H_*(C'))$ is a graded map that shifts the grading by $k$, that is
    \begin{equation*}
        \textup{gr}f_*(\textup{gr}_i(H_*(C)))\subseteq \textup{gr}_{i+k}(H_*(C')).
    \end{equation*}
\end{lem}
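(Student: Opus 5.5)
The plan is to prove both claims directly from the definitions. The inclusions $\iota_i$ are chain maps. Since $f=f_k+f_{k+1}+\ldots$ with $f_j(C_i)\subseteq C'_{i+j}$, the map $f$ sends $\mathcal{F}_i(C)=\bigoplus_{j\geq i}C_j$ into $\bigoplus_{j\geq i+k}C'_j=\mathcal{F}_{i+k}(C')$. So $f$ restricts to a chain map $f|\colon \mathcal{F}_i(C)\to \mathcal{F}_{i+k}(C')$, and it satisfies $\iota'_{i+k}\circ f| = f\circ \iota_i$ on the nose.

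\textbf{First claim.} Apply homology and functoriality to this equation to get $f_*\circ {\iota_i}_* = {\iota'_{i+k}}_*\circ (f|)_*$. Taking images gives
\begin{equation*}
f_*(\mathcal{F}_iH_*(C)) = f_*\big(\text{im}\,{\iota_i}_*\big) = \text{im}\big({\iota'_{i+k}}_*\circ (f|)_*\big)\subseteq \text{im}\,{\iota'_{i+k}}_* = \mathcal{F}_{i+k}H_*(C').
\end{equation*}
Concretely, a class $[x]$ with a cycle representative $x\in\mathcal{F}_i(C)$ is sent to $[f(x)]$, and $f(x)$ is a cycle lying in $\mathcal{F}_{i+k}(C')$.

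\textbf{Second claim.} Apply the first claim with $i$ and with $i+1$, giving $f_*(\mathcal{F}_iH_*(C))\subseteq\mathcal{F}_{i+k}H_*(C')$ and $f_*(\mathcal{F}_{i+1}H_*(C))\subseteq\mathcal{F}_{i+k+1}H_*(C')$. Hence the composite
\begin{equation*}
\mathcal{F}_iH_*(C)\xrightarrow{f_*}\mathcal{F}_{i+k}H_*(C')\to \text{gr}_{i+k}(H_*(C'))
\end{equation*}
kills $\mathcal{F}_{i+1}H_*(C)$. It therefore descends to a well-defined linear map $\text{gr}_i(H_*(C))\to\text{gr}_{i+k}(H_*(C'))$. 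Taking the direct sum over $i$ defines $\text{gr}f_*$, and by construction it shifts the grading by $k$.

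There is no real obstacle. The only points to check are that the restriction of $f$ is a chain map, which is automatic for a restriction of a chain map to a subcomplex, and that the quotient map is well defined, which is exactly the $i+1$ case above.
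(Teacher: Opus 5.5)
Your proposal is correct. The paper states this lemma without proof, treating it as a standard fact; your argument (restrict $f$ to a chain map $\mathcal{F}_i(C)\to\mathcal{F}_{i+k}(C')$, pass to homology via ${\iota'_{i+k}}_*\circ(f|)_*=f_*\circ{\iota_i}_*$, then use the $i+1$ case to descend to $\textup{gr}_i\to\textup{gr}_{i+k}$) is exactly the standard verification the paper has in mind.
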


Now recall that the $E^{\infty}$ page is canonically isomorphic to $\text{gr}(H_*(C))$ as a graded vector space. That is for all $i$ there is a canonical isomorphism,
\begin{equation*}
    \sigma_i\colon E_i^{\infty}(C)\to \text{gr}_i(H_*(C)).
\end{equation*}
This allows us to connect $E^{\infty}(f)$ and $f_*$.

\begin{lem}
\label{lem: f_* and E^infty (f)}
    Let $f$ be as in the previous lemma. Then, we have the following equality,
    \begin{equation*}
        E^{\infty}(f)={\sigma'}^{-1}\circ \textup{gr}f_*\circ \sigma.
    \end{equation*}
\end{lem}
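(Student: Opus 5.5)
The plan is to unwind the definitions of $E^\infty(f)$ and of the canonical isomorphism $\sigma_i$, and to check the identity $\sigma'_{i+k}\circ E^\infty_i(f) = \textup{gr}f_*\circ\sigma_i$ on an arbitrary element. Since the complexes are bounded ($C_i=0$ for $|i|$ large), the spectral sequence degenerates at a finite page. So I will take $E^\infty_i(C)=Z^\infty_i/B^\infty_i$, where
\begin{equation*}
    Z^\infty_i=\{x\in\mathcal{F}_iC : \partial x=0\}+\mathcal{F}_{i+1}C,
\end{equation*}
and $B^\infty_i=(\partial C\cap \mathcal{F}_iC)+\mathcal{F}_{i+1}C$. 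Equivalently, I will use the standard description of $E^\infty_i$ as the quotient of the cycles in $\mathcal{F}_iC$ by boundaries in $\mathcal{F}_iC$ and cycles in $\mathcal{F}_{i+1}C$.

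In this description, $\sigma_i$ sends the class of a cycle $x\in\mathcal{F}_iC$ to the class of $[x]\in\mathcal{F}_iH_*(C)$ modulo $\mathcal{F}_{i+1}H_*(C)$. The first step is to recall, or briefly verify, that this is well defined and an isomorphism. A cycle in $\mathcal{F}_iC$ represents a class in the image of ${\iota_i}_*$. Adding a boundary $\partial y$ with $\partial y\in\mathcal{F}_iC$ does not change the homology class. Adding a cycle in $\mathcal{F}_{i+1}C$ changes the class only by an element of $\mathcal{F}_{i+1}H_*(C)$. Surjectivity holds because every element of $\mathcal{F}_iH_*(C)$ has a cycle representative in $\mathcal{F}_iC$. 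For injectivity, suppose $[x]\in\mathcal{F}_{i+1}H_*$. Then $x=x'+\partial y$ with $x'$ a cycle in $\mathcal{F}_{i+1}C$, so $\partial y=x-x'\in\mathcal{F}_iC$, and the class of $x$ vanishes in $E^\infty_i$.

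The second step is to identify $E^\infty(f)$. A filtered chain map of degree $k$ sends $\mathcal{F}_iC$ into $\mathcal{F}_{i+k}C'$ and commutes with $\partial$. Hence it maps cycles in $\mathcal{F}_iC$ to cycles in $\mathcal{F}_{i+k}C'$, boundaries lying in $\mathcal{F}_iC$ to boundaries lying in $\mathcal{F}_{i+k}C'$, and cycles in $\mathcal{F}_{i+1}C$ to cycles in $\mathcal{F}_{i+k+1}C'$. The induced map on pages, defined inductively by $E^{p+1}(f)=H(E^p(f))$, is by construction the map induced by $f$ on these subquotients. On $E^\infty$ it is therefore $[x]^\infty\mapsto[f(x)]^\infty$.

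This identification is the only point that needs care, and I expect it to be the main obstacle: one has to check that the inductive definition of $E^p(f)$ agrees with the map on the explicit subquotients $Z^p/B^p$. I would handle it by the standard argument that the isomorphism $H(E^p)\cong E^{p+1}$ is natural in filtered chain maps, which follows because it is induced by the inclusions of $Z^{p+1}$ into $Z^p$.

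The third step is to chase an element. Take a cycle $x\in\mathcal{F}_iC$. Then
\begin{equation*}
    \sigma'_{i+k}(E^\infty(f)[x]^\infty)=[f(x)] \bmod \mathcal{F}_{i+k+1}H_*(C').
\end{equation*}
On the other side, $\sigma_i[x]^\infty=[x]\bmod\mathcal{F}_{i+1}H_*(C)$. Applying $\textup{gr}f_*$ gives $f_*[x]=[f(x)]\bmod\mathcal{F}_{i+k+1}H_*(C')$, and this is well defined by the previous lemma. The two sides agree. Since $\sigma'$ is an isomorphism, composing with ${\sigma'}^{-1}$ gives the stated equality.
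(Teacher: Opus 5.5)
Your proof is correct. The paper gives no proof of this lemma; it records it as a standard fact about spectral sequences of bounded filtered complexes, with a pointer to McCleary. Your argument is exactly the standard verification it relies on:
\begin{itemize}
\item present $E^\infty_i(C)$ as $Z_i/\big(Z_{i+1}+(\partial C\cap\mathcal{F}_iC)\big)$, where $Z_i$ denotes the cycles in $\mathcal{F}_iC$;
\item check that $\sigma_i$ is the well-defined isomorphism $[x]^\infty\mapsto[x]\bmod\mathcal{F}_{i+1}H_*(C)$;
\item chase a cycle through both sides.
\end{itemize}
The step you flag as needing care is that $E^\infty(f)$ acts on these subquotients by $[x]^\infty\mapsto[f(x)]^\infty$, landing in degree $i+k$ because $f(\mathcal{F}_iC)\subseteq\mathcal{F}_{i+k}C'$. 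The paper takes this for granted when it says a filtered chain map ``defines a map of spectral sequences''. Justifying it by the naturality of $H(E^p)\cong E^{p+1}$ in filtered chain maps is the right way to handle it.
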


We end this subsection with a useful observation about filtered vector spaces that will be useful in many cases to compare the $E^\infty$-page and $H_*(C)$.

\begin{lem}
\label{lem: filtration and associated grading in single degree}
    Let $V$ be a filtered vector space and $\textup{gr}(V)$ be the associated graded vector space. If $\textup{gr}(V)=\textup{gr}_iV$ for some $i$ then there is a canonical isomorphism $V \cong \textup{gr}V$.
\end{lem}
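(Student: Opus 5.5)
The lemma is pure linear algebra about a filtration $\cdots\supseteq\mathcal{F}_{j-1}V\supseteq\mathcal{F}_jV\supseteq\mathcal{F}_{j+1}V\supseteq\cdots$ on $V$. As in the setting of Definition \ref{defn: type of chain cx}, I take the filtration to be finite: $\mathcal{F}_jV=V$ for $j$ sufficiently small and $\mathcal{F}_jV=0$ for $j$ sufficiently large. This holds for $V=H_*(C)$ because $C_j=\{0\}$ for $|j|$ large. The plan is to show that the hypothesis $\textup{gr}(V)=\textup{gr}_iV$ forces the filtration to have a single jump,
\begin{equation*}
\mathcal{F}_jV=V \text{ for } j\leq i \qquad \text{and} \qquad \mathcal{F}_jV=0 \text{ for } j\geq i+1 .
\end{equation*}
Then the canonical isomorphism is the quotient map $V=\mathcal{F}_iV\to\mathcal{F}_iV/\mathcal{F}_{i+1}V=\textup{gr}_iV=\textup{gr}(V)$, and it is actually the identity because $\mathcal{F}_{i+1}V=0$.

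First, for every $j\neq i$ we have $\textup{gr}_jV=0$, that is, $\mathcal{F}_jV=\mathcal{F}_{j+1}V$. For $j\geq i+1$ these equalities chain upwards, $\mathcal{F}_{i+1}V=\mathcal{F}_{i+2}V=\cdots$, and eventually reach $\mathcal{F}_NV=0$, so $\mathcal{F}_{i+1}V=0$. For $j\leq i-1$ they chain downwards, $\mathcal{F}_iV=\mathcal{F}_{i-1}V=\cdots=\mathcal{F}_{-N}V=V$. Together these give the single-jump description above.

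Second, the map $V\to\textup{gr}_iV$ is built only from the given filtration, with no choices involved. So it is canonical, and it is natural with respect to filtered maps: if a filtered map of degree $k$ goes between two such spaces concentrated in degrees $i$ and $i+k$, then under these identifications the associated graded map is the map itself. This naturality is what makes the lemma useful when combined with Lemma \ref{lem: f_* and E^infty (f)}.

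The only real subtlety is the finiteness, or exhaustiveness and separatedness, assumption. Without it the claim fails: for example, the constant filtration $\mathcal{F}_jV=V$ for all $j$ has $\textup{gr}(V)=0$. I would therefore state explicitly that the filtrations considered are the bounded ones arising from Definition \ref{defn: type of chain cx}.
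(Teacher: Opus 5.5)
Your argument is correct and is essentially the paper's: both use $\textup{gr}_jV=0$ for $j\neq i$ to get $\mathcal{F}_jV=\mathcal{F}_{j+1}V$, and then conclude $\mathcal{F}_{i+1}V=0$ and $\mathcal{F}_iV=V$, so that $V=\mathcal{F}_iV/\mathcal{F}_{i+1}V=\textup{gr}_iV$. The paper relies on the boundedness of the filtration (coming from $C_i=\{0\}$ for $|i|$ large) only implicitly in that final step, so stating it explicitly, together with your constant-filtration counterexample showing it is needed, is a worthwhile clarification.
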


\begin{proof}
    By assumption we have that for all $j\neq i$,
    \begin{equation*}
        \text{gr}_j(V)=\mathcal{F}_{j}V/\mathcal{F}_{j+1}V=\{0\},
    \end{equation*}
    which implies $\mathcal{F}_{j}V=\mathcal{F}_{j+1}V$. Further, we have,
    \begin{equation*}
        \text{gr}(V)=\text{gr}_i(V)=\mathcal{F}_{i}V/\mathcal{F}_{i+1}V.
    \end{equation*}
    Together, these imply that $\mathcal{F}_{i+1}V=\{0\}$. Now this gives the desired isomorphism as $\mathcal{F}_iV=V$.
\end{proof}

\subsection{Naturality and the Spectral Sequence for Forgetting Components}
\label{subsec: boundary maps}
\hfill

We start this subsection by recalling the spectral sequence on link Floer homology of Ozsvath and Szabo in \cite{OSLinkFloer} for forgetting a component of the link. We primarily use the  formulation of this spectral sequence from \cite{LemmasPaper}. Recall from Lemma \ref{lem:  unknot component} that $V$ is the two dimensional vector space generated by $B$ and $T$, where $A(T)=A(B)=0$, $M(T)=0$ and $M(B)=-1$.

\begin{thm}
\label{thm: OS Spec seq}
    Let $\mathcal{H}=(\Sigma,\boldsymbol{\alpha},\boldsymbol{\beta},\mathbf{w},\mathbf{z})$ be a Heegaard diagram for the $n$-component multipointed link $\mathbb{L}=(L,\mathbf{w},\mathbf{z})$. Further, let $L_0\subseteq L$ be a component of $L$ with exactly two basepoints, and $z_0$ be the $\mathbf{z}$ basepoint on $L_0$. Then the map $\partial_{z_{0}}\colon \widehat{CFL}(\mathcal{H})\to \widehat{CFL}(\mathcal{H})$ defined by,
    \begin{equation}
    \label{eq: spectral sequence boundary map}
        \partial_{z_{0}}(\mathbf{x})=\sum_{\mathbf{y}\in \mathbb{T}_{\boldsymbol{\alpha}}\cap\mathbb{T}_{\boldsymbol{\beta}}} \sum_{\substack{\phi \in \pi_{2}(\mathbf{x},\mathbf{y}) \\ \mu(\phi)=1 \\ n_{\mathbf{w}}(\phi)=n_{\mathbf{z}\backslash\{z_{0}\}}(\phi)=0}}\#\widehat{\mathcal{M}(\phi)} \mathbf{y},
    \end{equation}
    is such that $\partial_{z_{0}}^2=0$ and 
    \begin{equation*}
        H_{*}\left(\widehat{CFL}(\mathcal{H}),\partial_{z_{0}}\right)\cong \widehat{HFL}(\mathbb{L}\backslash \mathbb{L}_0)\otimes V.
    \end{equation*}
    Therefore, there is a spectral sequence from $\widehat{HFL}(\mathbb{L})$ to $\widehat{HFL}(\mathbb{L}\backslash \mathbb{L}_0)\otimes V$. Finally this spectral sequence respects the Maslov grading and shifts the Alexander multigrading $A_{L_i}$ down by $l_i:=\frac{1}{2}lk(L_i,L_0)$, meaning there is a spectral sequence,
    \begin{align}
        \bigoplus_{x\in \mathbb{Z}}\widehat{HFL}_i(\mathbb{L},(j_1,\ldots,x,\ldots,j_{n})) \implies&  \left(\widehat{HFL}_{i+1}(\mathbb{L}\backslash \mathbb{L}_0,(j_1+l_1,\ldots,j_{n}+l_n))\otimes B\right) \nonumber\\
        & \oplus \left(\widehat{HFL}_i(\mathbb{L}\backslash \mathbb{L}_0,(j_1+l_1,\ldots,j_{n}+l_n))\otimes T\right)
    \end{align}
    where the $x$ is the Alexander grading corresponding to $L_0$.
\end{thm}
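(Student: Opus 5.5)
The plan is to realise the spectral sequence as the one induced by the Alexander filtration of $L_0$ on a single chain complex, namely $\widehat{CFL}(\mathcal{H})$ equipped with $\partial_{z_0}$. The first step is to compare $\partial_{z_0}$ with the diagram $\mathcal{H}'=(\Sigma,\boldsymbol{\alpha},\boldsymbol{\beta},\mathbf{w},\mathbf{z}\backslash\{z_0\})$.
\begin{itemize}
\item Deleting $z_0$ turns $L_0$ into a single free basepoint, so $\mathcal{H}'$ is a Heegaard diagram for $\mathbb{L}\backslash\mathbb{L}_0$ together with one extra free $\mathbf{w}$ basepoint $w_0$.
\item The differential on $\widehat{CF}(\mathcal{H}')$ counts index-one discs with $n_{\mathbf{w}}=0$ and $n_{\mathbf{z}\backslash\{z_0\}}=0$, which is exactly $\partial_{z_0}$.
\item Hence $\partial_{z_0}^2=0$ follows from the usual Gromov compactness argument for $\mathcal{H}'$.
\item Boundary degenerations do not contribute: every component of $\Sigma\backslash\boldsymbol{\alpha}$ and of $\Sigma\backslash\boldsymbol{\beta}$ still contains a $\mathbf{w}$ basepoint, and $n_{\mathbf{w}}=0$.
\end{itemize}

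The second step identifies the homology, $H_*(\widehat{CFL}(\mathcal{H}),\partial_{z_0})=\widehat{HFL}(\mathcal{H}')$. By invariance this does not depend on the diagram for $\mathbb{L}\backslash\mathbb{L}_0$ with one free basepoint. Moving $w_0$ into a region adjacent to a $\mathbf{w}$ basepoint of another component, one can choose a diagram in which $w_0$ is introduced by a free stabilisation: an extra $\alpha$ and $\beta$ circle meeting in two points around $w_0$. The standard free-basepoint computation then gives a tensor factor $V$, spanned by the top and bottom intersection points, with $M(T)=0$ and $M(B)=-1$ and no Alexander shift. This yields
\[
H_*\cong\widehat{HFL}(\mathbb{L}\backslash\mathbb{L}_0)\otimes V.
\]
This is the same type of argument as in Lemma \ref{lem:  unknot component}.

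The third step builds the spectral sequence from the $L_0$ Alexander grading $A_{L_0}$.
\begin{itemize}
\item For a disc $\phi$ counted by $\partial_{z_0}$ we have $A_{L_0}(\mathbf{x})-A_{L_0}(\mathbf{y})=n_{z_0}(\phi)-n_{w_0}(\phi)=n_{z_0}(\phi)\ge 0$.
\item Setting $C_i$ to be the generators with $A_{L_0}=-i$, in the convention of Definition \ref{defn: type of chain cx}, gives $\partial_{z_0}=\partial_0+\partial_1+\cdots$, where $\partial_k$ counts discs with $n_{z_0}=k$.
\item The grading is bounded because there are finitely many generators.
\item The term $\partial_0$ is the $\widehat{CFL}$ differential, so $E^1\cong\widehat{HFL}(\mathbb{L})$ and $E^\infty\cong\widehat{HFL}(\mathbb{L}\backslash\mathbb{L}_0)\otimes V$.
\item The Maslov grading depends only on the $\mathbf{w}$ basepoints, which are unchanged, so $\partial_{z_0}$ lowers $M$ by one and the spectral sequence respects $M$.
\item For $i\ne 0$ the other Alexander gradings $A_{L_i}$ depend only on the basepoints of $L_i$, so they are preserved by $\partial_{z_0}$ and the spectral sequence respects them as well.
\end{itemize}

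The step I expect to be the main obstacle is pinning down the absolute gradings: the shift $A_{L_i}\mapsto A_{L_i}+\tfrac12 lk(L_i,L_0)$, and the claim that the $B$ summand sits in Maslov grading $i+1$. Relative gradings agree automatically, so the issue is the normalisation.

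For the Alexander shift, the absolute $A_{L_i}$ on $\mathcal{H}$ is characterised through $\mathfrak{s}$ and $c_1$ evaluated on a Seifert surface relative to $L_i$, equivalently by the symmetry of $\widehat{HFL}$. Dropping $L_0$ changes the relevant relative homology class. I would verify the offset $\tfrac12 lk(L_i,L_0)$ using the formula
\[
A_{L_i}(\mathbf{x})=\tfrac{1}{2}\bigl(\langle c_1(\mathfrak{s}(\mathbf{x})),[\widehat{F}_i]\rangle-lk(L_i,L\backslash L_i)\bigr)
\]
applied before and after forgetting $L_0$; only the linking term changes, by $lk(L_i,L_0)$. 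For the Maslov grading, which is measured by $\mathbf{w}$ alone, I would check the normalisation directly in the stabilised model from the second step. In that model the bottom generator of the free stabilisation has grading one lower, which accounts for the $i+1$ index on the $B$ summand.
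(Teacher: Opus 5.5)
Your Steps 1--3 are the standard Ozsv\'ath--Szab\'o argument, which the paper itself only cites. The paper's own remark after the statement, filtering by $-A_{L_0}$ because counted discs have $n_{w_0}=0$, is exactly your Step 3, and these steps are fine. The gap is in the step you flag as the crux, the absolute Alexander shift: your sketch produces the wrong sign.

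With your formula and the claim that only the linking term changes, you get $A_{L_i}\mapsto A_{L_i}+\tfrac12 lk(L_i,L_0)$, an upward shift. This agrees with the $j_i+l_i$ in the display. However, that display contradicts the words ``shifts \dots down by $l_i$'' and the paper's later uses. In Lemma \ref{lem: inductive step} the grading $\mathbf{k}_{a,b+1}$ converges to $\mathbf{0}$. In Lemma \ref{lem: FUn is an isomorphism}, $lk(L_1,L_2)=-n$ gives $y\mapsto y+\frac n2$. The upward shift is also simply false, as the positive Hopf link $L_1\cup L_0$ shows:
\begin{itemize}
\item By Proposition \ref{prop: HFL of Kn(r,0)} with $r=2$, $n=1$, $\widehat{HFL}$ is one copy of $\mathbb{F}$ in each Alexander grading $(\pm\frac12,\pm\frac12)$.
\item The Maslov gradings are $0$ at $(\frac12,\frac12)$, $-1$ at the two mixed gradings, and $-2$ at $(-\frac12,-\frac12)$.
\item The $E^\infty$-page $\widehat{HFL}(U)\otimes V$ sits in Maslov gradings $0,-1$ and a single $A_{L_1}$-grading.
\item Since the spectral sequence preserves Maslov grading, the surviving column must be $A_{L_1}=\frac12$, and it goes to $A_{L_1}=0$.
\end{itemize}
So the shift is $-\frac12 lk(L_1,L_0)$, and the correct target grading is $j_i-l_i$.

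Hence your two ingredients cannot both hold: the formula with $-lk(L_i,L\backslash L_i)$, and invariance of the $c_1$-pairing under forgetting $L_0$. Either the pairing term changes when $N(L_0)$ is filled in, or, in a normalisation where it is invariant, the linking correction enters with a plus sign.

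A clean way to run your idea is via Lemma \ref{lem: orientation reversing}. Read it on generators for the diagram in which the roles of the basepoints on $L_i$ are swapped:
\[
A_{L_i}(\mathbf{x})=\tfrac12\bigl(M_{\mathbf{w}}(\mathbf{x})-M_{\mathbf{w}^{(i)}}(\mathbf{x})+lk(L_i,L\backslash L_i)\bigr),
\]
where $\mathbf{w}^{(i)}$ is $\mathbf{w}$ with the $\mathbf{w}$ basepoints of $L_i$ replaced by the $\mathbf{z}$ basepoints of $L_i$. Both Maslov gradings see only $w$-type basepoints. They are therefore identical for $\mathcal{H}$ and for $\mathcal{H}'$, where $w_0$ remains as a free basepoint and contributes equally to both. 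Only the linking term changes, now with a plus sign, and this gives $A_{L_i}\mapsto A_{L_i}-\tfrac12 lk(L_i,L_0)$, as required.
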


There is a more general version of the above for forgetting multiple components but we will not need this. Further, note that this fits into the chain complexes in Definition \ref{defn: type of chain cx} as we can consider 
\begin{equation*}
    \widehat{CFL}(\mathcal{H})=\bigoplus_{j \in \mathbb{Z}}\widehat{CFL}(\mathcal{H},-j),
\end{equation*}
where $j$ is the component of the Alexander multigrading $A_{L_0}$, and we take $-j$ so that $\partial_{z_{0}}$ increases the grading. Note that this is the case as any holomorphic disc $u\in \pi_2(\mathbf{x},\mathbf{y})$ necessarily intersects $V_{z_0}$ positively and does not intersect $V_{w_0}$. Now, 
\begin{equation*}
    A_{0}(\mathbf{x})-A_{0}(\mathbf{y})=n_{z_{0}}(u)-n_{w_{0}}(u),    
\end{equation*}
so any holomorphic disc decreases $A_0$. Therefore,
\begin{equation*}
    \partial_{z_0}=(\partial_{z_0})_0+ (\partial_{z_0})_1 +\ldots
\end{equation*}
as in Definition \ref{defn: type of chain cx}. Note further that the filtration respects the Maslov grading and the other Alexander gradings, so we can split the complex as a direct sum over these.

We now prove that $\{(\widehat{CFL}(\mathcal{H}),\partial_{z_{0}})\}_{\mathcal{H}}$ is a transitive system of chain complexes. 

\begin{lem}
    Let $\mathcal{H}$ and $\mathcal{H}'$ be Heegaard diagrams for the $n$-component link $\mathbb{L}$ and let $z_{0}$ and $z_{0}'$ be the respective basepoints on the component $L_0$. Then there exists a chain homotopy equivalence, 
    \begin{equation*}
        \Phi_{\mathcal{H}\to\mathcal{H'}}\colon (\widehat{CFL}(\mathcal{H}),\partial_{z_{0}})\to (\widehat{CFL}(\mathcal{H}'),\partial_{z_{0}'}),
    \end{equation*}
    that preserves the Maslov grading, Alexander multigrading $A_i$ for $i\neq 0$ and is filtered in the grading $A_0$. Further it is well defined up to chain homotopy equivalence. That is there is a transitive system of chain complexes $\{(\widehat{CFL}(\mathcal{H}),\partial_{z_{0}})\}_{\mathcal{H}}$ indexed over all Heegaard diagrams for $\mathbb{L}$.
\end{lem}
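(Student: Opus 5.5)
The plan is to retrace the proof of naturality for link Floer homology from \cite{Naturality}, as packaged in Lemma \ref{lem: transitive system for hat}, with the differential $\partial$ replaced throughout by $\partial_{z_0}$. By the naturality results, any two Heegaard diagrams $\mathcal{H}$ and $\mathcal{H}'$ for $\mathbb{L}$ are connected by a sequence of elementary moves:
\begin{itemize}
\item isotopies and handleslides of the $\boldsymbol{\alpha}$ and $\boldsymbol{\beta}$ curves in the complement of the basepoints;
\item (de)stabilisations;
\item diffeomorphisms isotopic to the identity in $Y$.
\end{itemize}
The map $\Phi_{\mathcal{H}\to\mathcal{H}'}$ is the composite of the corresponding elementary maps. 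The first step is to define the analogue of each elementary map for the complex $(\widehat{CFL}(\mathcal{H}),\partial_{z_0})$.

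\textbf{Defining the elementary maps.} For isotopies and handleslides we use the usual continuation and triangle maps. The only change is that the counted classes are now required to satisfy $n_{\mathbf{w}}=0$ and $n_{\mathbf{z}\backslash\{z_0\}}=0$, while $z_0$ may be crossed. Stabilisation maps and diffeomorphism maps are defined exactly as before. Equivalently, each of these maps is the specialisation of the corresponding map on the minus-type complex over $\mathbb{F}[U_{z_0}]$ (with the other variables set to zero) at $U_{z_0}=1$. Viewed this way, chain-map and homotopy-equivalence properties follow from the same degeneration arguments as in the standard theory.

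\textbf{Checking the elementary maps.} For each elementary map we check three properties.
\begin{itemize}
\item \emph{Chain maps.} These are proved by the usual analysis of ends of one-dimensional moduli spaces. Boundary degenerations cannot occur, because every region containing an $\alpha$- or $\beta$-boundary degeneration contains a $\mathbf{w}$ basepoint as well as a $\mathbf{z}$ basepoint. On $L_0$ only $w_0$ is forbidden, but a degeneration domain is a sum of components of $\Sigma\backslash\boldsymbol{\alpha}$ (or $\boldsymbol{\beta}$), each of which contains some $w$, so such domains are excluded.
\item \emph{Gradings.} The maps preserve the Maslov grading and the $A_i$ for $i\neq0$, since the relevant multiplicities vanish and the grading formulas apply. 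The maps are filtered in $A_0$, because every counted triangle or disc satisfies $n_{w_0}=0$ and $n_{z_0}\geq0$, so $A_0$ can only decrease. In the notation of Definition \ref{defn: type of chain cx}, these are filtered maps of degree $0$.
\item \emph{Homotopy equivalences.} Each elementary map is a chain homotopy equivalence, with homotopy inverse given by the reverse move. The homotopies come from counting quadrilaterals, or one-parameter families, under the same basepoint constraints.
\end{itemize}

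\textbf{Independence of the path of moves.} The next step is to show that $\Phi_{\mathcal{H}\to\mathcal{H}'}$ is independent, up to chain homotopy, of the chosen path of moves. For this we use the criterion of \cite{Naturality}: it suffices to check the commutation relations for distinguished rectangles and for simple handleswaps, together with the relation for loops of diagrams that arise from diffeomorphisms isotopic to the identity. Each of these relations is verified in the literature by associativity of holomorphic polygon maps, or by explicit small-triangle computations near the modified curves. Those arguments are local, or else proceed by degenerating moduli spaces, and they go through verbatim once $z_0$ is allowed to be crossed. The associativity relations again rely on the absence of boundary degenerations, by the $\mathbf{w}$-basepoint argument above. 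Finally, the conditions $\Phi_{\mathcal{H}\to\mathcal{H}}\simeq \text{Id}$ and $\Phi_{\mathcal{H}'\to\mathcal{H}''}\circ\Phi_{\mathcal{H}\to\mathcal{H}'}\simeq\Phi_{\mathcal{H}\to\mathcal{H}''}$ follow formally once path-independence holds.

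\textbf{Main obstacle.} I expect the hardest step to be the handleswap and the simple-loop invariance in the naturality criterion. The original arguments in \cite{Naturality} are written for the hat and minus flavours with all basepoints treated uniformly, so one must confirm that no new holomorphic contributions appear when domains may cover $z_0$. My first attempt will be to deduce this from the corresponding statements for Zemke's $\mathbb{F}[U_{\mathbf{w}},V_{\mathbf{z}}]$ complexes \cite{CobMapPaper}. Those statements are known to be natural and filtered-equivariant. Setting all $U_w=0$, all $V_z=0$ for $z\neq z_0$, and $V_{z_0}=1$ recovers $(\widehat{CFL}(\mathcal{H}),\partial_{z_0})$. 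The chain homotopies specialise along with the maps, which yields the transitive system directly.
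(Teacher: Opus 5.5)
Your proposal is correct, and its final paragraph is exactly the paper's proof. The paper takes Zemke's transitive system for $CFL^-$ over $\mathbb{F}[U_0,\ldots,U_{n-1},V_0,\ldots,V_{n-1}]$ and sets $V_0=1$ and all other variables to $0$; the chain homotopy equivalences and transitivity relations specialise directly. The grading claims also follow from this specialisation: Zemke's maps preserve the Maslov grading and the Alexander gradings $A_i$ for $i\neq 0$, and since $V_0$ has $A_0$-degree $+1$, setting $V_0=1$ leaves a map that is only filtered in $A_0$. Once you take that route, the move-by-move verification in your earlier paragraphs (elementary maps, boundary degenerations, handleswaps, simple loops) is unnecessary.
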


\begin{proof}
    First, recall that in \cite{CobMapPaper} it is proven that $\{CFL^{-}(\mathcal{H})\}_{\mathcal{H}}$ forms a transitive system of chain complexes over the ring $\mathbb{F}[U_0,U_1,\ldots U_{n-1}, V_{0},V_{1},\ldots, V_{n-1}]$. Next, note that $\partial_{z_{0}}$ is obtained from $\partial^-$ by setting $V_{0}=1$ and the rest of the variables to be $0$. Also, if 
    \begin{equation*}
        \Phi_{\mathcal{H}\to \mathcal{H}'}^{-}\colon CFL^-(\mathcal{H})\to CFL^{-}(\mathcal{H'}),
    \end{equation*}
    is a map in this transitive system, then we obtain a map, 
    \begin{equation*}
        \Phi_{\mathcal{H}\to\mathcal{H'}}\colon (\widehat{CFL}(\mathcal{H}),\partial_{z_{0}})\to (\widehat{CFL}(\mathcal{H}'),\partial_{z_{0}'}),
    \end{equation*}
    by setting the variables as before. Finally, this map is clearly a chain homotopy equivalence as $\Phi_{\mathcal{H}\to \mathcal{H}'}^-$ is, which completes the proof. 
    
    The claims about the Maslov grading and Alexander gradings for $A_i$, $i\geq0$ follow from the fact that $\Phi_{\mathcal{H}\to\mathcal{H'}}^-$ preserves the Maslov grading and the Alexander filtration in these degrees. Hence, setting the variables $U_i$, for all $i$, and $V_i$, for $i>0$, to 0 gives the claim that it respects the Alexander gradings $A_i$ for $i>0$. Setting the variable $V_0=1$ gives the claim about being filtered with respect to $A_0$.
\end{proof}

We will denote the transitive system by $\widehat{CFL}_{z_0}(\mathbb{L})$ and by $\widehat{HFL}_{z_0}(\mathbb{L})$ the canonical graded vector space coming from the homology of this transitive system. Next we define a similar result for holomorphic triangle counts on Heegaard triples. This is effectively a consequence of Definition \ref{defn: triangle count}.  Recall, from the discussion after Definition \ref{defn: subordinate to B}, that if $\mathcal{T}$ is a Heegaard triple then $\mathcal{T}_{\alpha,\beta}$ is the Heegaard diagram given by only considering the $\boldsymbol{\alpha}$ and $\boldsymbol{\beta}$ curves, $\mathcal{T}_{\alpha',\beta}$ and $\mathcal{T}_{\alpha',\alpha}$ are defined similarly.  

\begin{lem}
\label{lem: forgetting z0 triangle count is a chain map}
    Let $\mathcal{T}=(\Sigma,\boldsymbol{\alpha'},\boldsymbol{\alpha},\boldsymbol{\beta},\mathbf{w},\mathbf{z})$ be a Heegaard triple and $z_0\in \mathbf{z}$. Then the map,
    \begin{equation*}
        F_{\mathcal{T},z_{0}}\colon \widehat{CFL}(\mathcal{T}_{\alpha',\alpha})\otimes\widehat{CFL}(\mathcal{T}_{\alpha,\beta})\to \widehat{CFL}(\mathcal{T}_{\alpha',\beta})
    \end{equation*}
    defined for $\boldsymbol{\theta}\in \mathbb{T}_{\boldsymbol{\alpha}'}\cap \mathbb{T}_{\boldsymbol{\alpha}} $ and $ \mathbf{x}\in \mathbb{T}_{\boldsymbol{\alpha}}\cap \mathbb{T}_{\boldsymbol{\beta}}$ by,
    \begin{equation}
        F_{\mathcal{T},z_{0}}(\boldsymbol{\theta}\otimes \mathbf{x})= \sum_{\mathbf{y}\in \mathbb{T}_{\boldsymbol{\alpha'}}\cap\mathbb{T}_{\boldsymbol{\beta}}} \sum_{\substack{\psi \in \pi_{2}(\boldsymbol{\theta},\mathbf{x},\mathbf{y}) \\ \mu(\psi)=0 \\ n_{\mathbf{w}}(\psi)=n_{\mathbf{z}\backslash\{z_{0}\}}(\psi)=0}}\#\mathcal{M}(\psi) \mathbf{y},
    \end{equation}
    is a well-defined chain map. Moreover, for a fixed $\boldsymbol{\theta}$ that is a cycle, $F_{\mathcal{T},z_0}(\boldsymbol{\theta}\otimes -)$ induces a morphism of transitive systems.
\end{lem}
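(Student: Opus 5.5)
The plan is to deduce everything from the corresponding statements for the minus flavour, as in the proof of the previous lemma, and to use a direct degeneration argument only as a check. Zemke \cite{CobMapPaper} defines the triangle map
\begin{equation*}
F^-_{\mathcal{T}}\colon CFL^-(\mathcal{T}_{\alpha',\alpha})\otimes CFL^-(\mathcal{T}_{\alpha,\beta})\to CFL^-(\mathcal{T}_{\alpha',\beta}),
\end{equation*}
which counts index zero triangles weighted by $\prod U_i^{n_{w_i}(\psi)}V_i^{n_{z_i}(\psi)}$. He proves that it is a chain map over $\mathbb{F}[U_0,\ldots,U_{n-1},V_0,\ldots,V_{n-1}]$; the proof uses the ends of index one moduli spaces of triangles, and boundary degenerations cancel because of the $\mathbf{w},\mathbf{z}$ structure. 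Setting $V_0=1$ and all other variables to $0$ gives exactly $F_{\mathcal{T},z_0}$, since the only surviving triangles are those with $n_{\mathbf{w}}=n_{\mathbf{z}\backslash\{z_0\}}=0$. It also turns all three differentials into the corresponding $\partial_{z_0}$ maps of Theorem \ref{thm: OS Spec seq}. Ring specialisation respects tensor products and chain maps, so $F_{\mathcal{T},z_0}$ is a chain map. Well-definedness, meaning the sum over $\psi$ is finite, follows from admissibility as in the hat case. I also need $n_{z_0}\ge0$ on holomorphic triangles, which follows from positivity of intersections.

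For the direct check, I would look at the ends of one-dimensional moduli spaces $\mathcal{M}(\psi)$ with $\mu(\psi)=1$, $n_{\mathbf{w}}(\psi)=0$ and $n_{\mathbf{z}\backslash\{z_0\}}(\psi)=0$. The ends are broken configurations of a triangle together with a disc at one of the three corners, and these give the terms of $\partial_{z_0}F+F(\partial_{z_0}\otimes 1+1\otimes\partial_{z_0})$. The other possible ends are $\alpha$-, $\alpha'$- or $\beta$-boundary degenerations. Each such degeneration has domain covering a component of $\Sigma\backslash\boldsymbol{\alpha}$ (or $\boldsymbol{\alpha}'$, or $\boldsymbol{\beta}$). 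Every such component contains a $\mathbf{w}$ basepoint, so its multiplicity at that basepoint is positive, which contradicts $n_{\mathbf{w}}=0$. This step is where the hypothesis $n_{\mathbf{w}}=0$ is essential, and it is the reason only $z_0$ can be freed.

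For the morphism of transitive systems I fix a cycle $\boldsymbol{\theta}$ in $(\widehat{CFL}(\mathcal{T}_{\alpha',\alpha}),\partial_{z_0})$. The map $F_{\mathcal{T},z_0}(\boldsymbol{\theta}\otimes-)$ is then a chain map between the $\partial_{z_0}$-complexes. I must show it commutes up to chain homotopy with the maps $\Phi_{\mathcal{H}\to\mathcal{H}'}$ of the previous lemma. Those maps are compositions of triangle maps for isotopies and handleslides, of (de)stabilisations and of diffeomorphisms. The needed commutation for the minus flavour is the associativity of triangle maps: quadrilateral counts give a chain homotopy between $F(F(\boldsymbol{\theta}\otimes\Theta)\otimes -)$ and $F(\boldsymbol{\theta}\otimes F(\Theta\otimes-))$, together with invariance under moves that are disjoint from the triple. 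This is established over the full ring in \cite{CobMapPaper}, and the homotopies are specialised by the same substitution $V_0=1$, all other variables $0$.

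The main obstacle is that $\boldsymbol{\theta}$ is a $\partial_{z_0}$-cycle but need not lift to a $\partial^-$-cycle. So I would argue directly on the specialised complexes, where associativity follows from the same ends-of-moduli-spaces argument with quadrilaterals. Boundary degenerations are again excluded by $n_{\mathbf{w}}=0$, and the $\partial_{z_0}$-cycle condition on $\boldsymbol{\theta}$ and on the top generator $\Theta$ makes the extra terms vanish.
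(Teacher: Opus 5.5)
Your proposal is essentially the paper's proof: you specialise Zemke's equivariant minus-flavour triangle map and its naturality at $V_0=1$, with all other variables $0$, which turns $\partial^-$ into $\partial_{z_0}$ and $F^-_{\mathcal{T}}$ into $F_{\mathcal{T},z_0}$. You also observe that a $\partial_{z_0}$-cycle $\boldsymbol{\theta}$ need not lift to a $\partial^-$-cycle; the paper's one-line argument passes over this subtlety (it is harmless for the top generator used later), and your direct ends-of-moduli-spaces argument, with boundary degenerations excluded by $n_{\mathbf{w}}=0$, is a sound way to cover it.
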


\begin{proof}
     The fact that $F_{\mathcal{T},z_{0}}$ is well defined follows from the fact that the $\mathbb{F}[U_0,V_0,\ldots U_n,V_n]$ equivariant map,
     \begin{equation*}
         F_{\mathcal{T}}^-\colon CFL^{-}(\mathcal{T_{\alpha',\alpha}})\otimes CFL^{-}(\mathcal{T_{\alpha,\beta}})\to CFL^-(\mathcal{T}_{\alpha',\beta}),
     \end{equation*}
     is well defined by \cite{CobMapPaper} and that $F_{\mathcal{T},z_{0}}$ is obtained from this by setting $V_{0}=1$ and all the other variables to be $0$. Similarly, recall from the previous proof that $\partial_{z_{0}}$ is obtained from $\partial^-$ by setting the variables as before. It then follows that $F_{\mathcal{T},z_0}$ is a chain map as $F_{\mathcal{T}}^-$ is a chain map. Further, for a fixed cycle $\boldsymbol{\theta}$, $F_{\mathcal{T}}^-(\boldsymbol{\theta}\otimes-)$ is part of a morphism of transitive systems so it follows as before that $F_{\mathcal{T},z_{0}}(\boldsymbol{\theta}\otimes -)$ is too.
\end{proof}

Now, our aim is to define a canonical isomorphism,
\begin{equation*}
    g\colon\widehat{HFL}_{z_0}(\mathbb{L})\to \widehat{HFL}(\mathbb{L}\backslash\mathbb{L}_0)\otimes V.
\end{equation*}
To this end we recall the notion of free stabilisations. This will also be important for understanding the various cobordisms under the spectral sequence.

\begin{figure}[h]
    \centering
    \def\svgwidth{0.6\textwidth}
    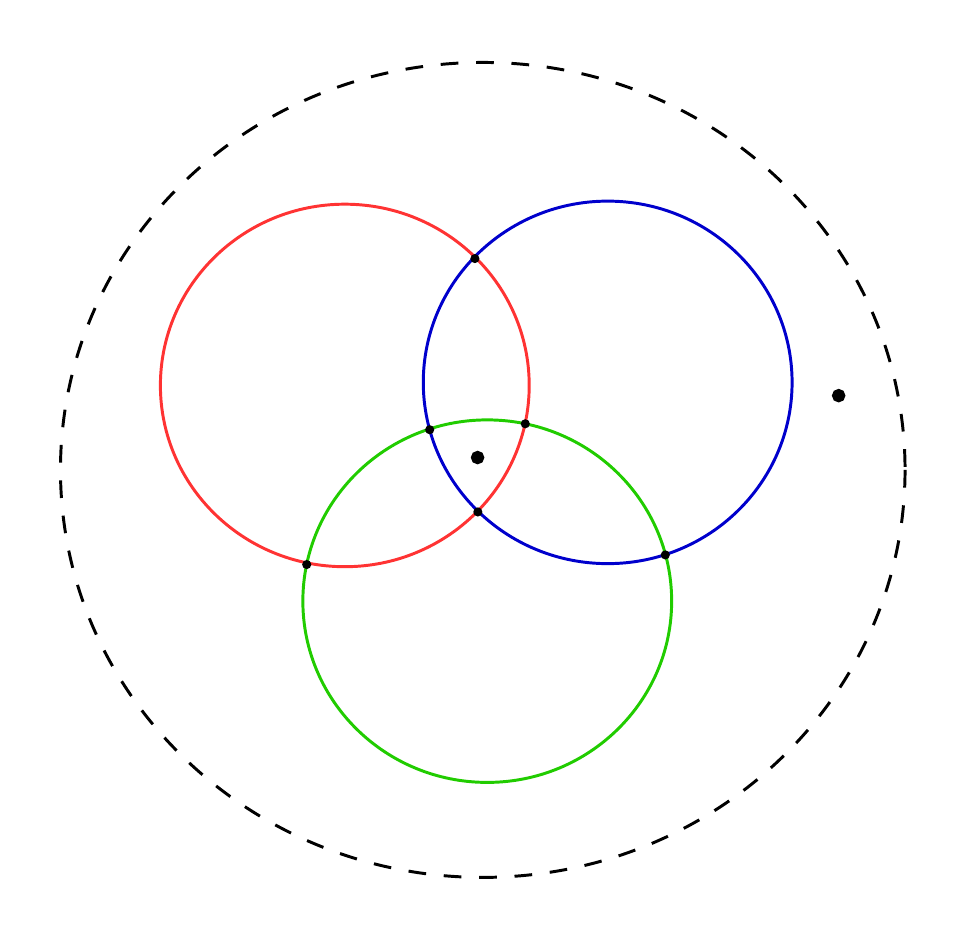
    \caption{Diagram for a free stabilisation of a Heegaard triple in the neighbourhood of $w_i$.}
    \label{fig: Heegaard Triple Triangle example}
\end{figure}

\begin{defn}
    Let $\mathcal{H}=(\Sigma,\boldsymbol{\alpha},\boldsymbol{\beta},\mathbf{w},\mathbf{z})$ be a Heegaard diagram for a pointed link $\mathbb{L}$. Then a \textit{free stabilisation of $\mathcal{H}$} is a Heegaard diagram,
    \begin{equation*}
        \mathcal{H}'=(\Sigma,\boldsymbol{\alpha}\cup\{\alpha_0\},\boldsymbol{\beta}\cup\{\beta_0\},\mathbf{w}\cup \{w_0\},\mathbf{z})
    \end{equation*}
    where $\alpha_0$ and $\beta_0$ are two circles that intersect twice, do not intersect any other curves and both bound discs containing $w_0$. See Figure \ref{fig: Heegaard Triple Triangle example} ignoring the curve $\alpha'$ for an example.
\end{defn}

Note that after a free stabilisation there are two new intersection points, which are distinguished by Maslov grading. Denote by $\theta^{+}$ the point with higher Maslov grading and $\theta^-$ the lower. Then there is a clear canonical isomorphism at the level of vector spaces,
\begin{equation*}
    \phi_{\text{stab}}\colon \widehat{CFL}(\mathcal{H}')\to \widehat{CFL}(\mathcal{H})\otimes \mathbb{F}^2,
\end{equation*}
given by $\phi_{\text{stab}}(\mathbf{x}\times\theta^{\pm})=\mathbf{x}\otimes \theta^{\pm}$. As part of the next lemma it follows that there is a graded isomorphism $\mathbb{F}\langle \theta^+,\theta^-\rangle\cong V$. The following lemma is standard and a proof appears as \cite[Proposition 7.1]{OSLinkFloer}, although we use the phrasing from \cite{LinkSurgeryFormulaOG}.

\begin{prop}
\label{prop: free stabilisation}
    Let $\mathcal{H}$ be a Heegaard diagram for a link. Then let $\mathcal{H}'$ be the result of a free stabilisation introducing a basepoint $w_0$ in the neighbourhood of some basepoint $w_i$. Then,
    \begin{equation*}
        \phi_{\text{stab}}\colon\widehat{CFL}(\mathcal{H}')\to \widehat{CFL}(\mathcal{H})\otimes V,
    \end{equation*}
    is an isomorphism of multigraded chain complexes.
\end{prop}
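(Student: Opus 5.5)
The plan is to check, in order, that $\phi_{\text{stab}}$ is a bijection on generators, that it intertwines the differentials, and that it respects the gradings.

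\textbf{Generators.} Since $\alpha_0$ and $\beta_0$ meet no other curves, every generator of $\mathbb{T}_{\boldsymbol{\alpha}\cup\{\alpha_0\}}\cap\mathbb{T}_{\boldsymbol{\beta}\cup\{\beta_0\}}$ must use one of the two points $\alpha_0\cap\beta_0=\{\theta^+,\theta^-\}$. The remaining coordinates form a generator of $\mathcal{H}$. Hence $\phi_{\text{stab}}$ is a bijection on generators and so a vector space isomorphism.

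\textbf{Differentials.} I would classify domains $\phi\in\pi_2(\mathbf{x}\times\theta^{\epsilon},\mathbf{y}\times\theta^{\delta})$ with $n_{\mathbf{w}}(\phi)=n_{\mathbf{z}}(\phi)=0$.
\begin{enumerate}
\item The small circles $\alpha_0$ and $\beta_0$ cut off a bigon-type region. The complement of $\alpha_0\cup\beta_0$ near the stabilisation has four regions: the two bigons between $\alpha_0$ and $\beta_0$, the region containing $w_0$, and the outer region. The outer region is the one that contained $w_i$ before stabilisation.
\item Suppose $\phi$ has zero multiplicity at both $w_0$ and $w_i$. Then the local multiplicities force $\phi$ to have zero multiplicity on both small bigons, so $\epsilon=\delta$. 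The domain is then a domain of $\mathcal{H}$ with the standard point $\theta^\epsilon$ adjoined.
\item Moduli spaces should agree with those of $\mathcal{H}$. I would argue this either by a neck-stretching argument, or more simply because the support of $\phi$ is disjoint from the small disc. Then a holomorphic representative in $\mathcal{H}'$ is the union of a representative in $\mathcal{H}$ with a constant component at $\theta^\epsilon$, as in \cite[Proposition 7.1]{OSLinkFloer}.
\item The two bigons from $\theta^+$ to $\theta^-$ would otherwise cancel in $V$. They both contain $w_0$ or $w_i$, so they are excluded in the hat flavour. Thus $\phi_{\text{stab}}$ carries $\partial$ to $\partial\otimes\text{Id}+\text{Id}\otimes 0$, and this is the tensor product differential with $V$ carrying the zero differential.
\end{enumerate}

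\textbf{Gradings.} The relative Maslov difference of $\theta^+$ and $\theta^-$ is computed from one bigon $D$ from $\theta^+$ to $\theta^-$. Its Maslov index is $1$ and $n_{\mathbf{w}}(D)=1$, so $\theta^-$ sits one Maslov grading lower than $\theta^+$. Its Alexander change is $n_{\mathbf{z}}-n_{\mathbf{w}}$ on the component containing $w_0$, and this vanishes once we use the standard normalisation. For the absolute grading of $\theta^+$, I would compare with an unknot: the free stabilisation corresponds to adding a split unknot component, as in Lemma \ref{lem:  unknot component}. Alternatively, I would apply the symmetry of $\widehat{HFL}$ together with the total rank. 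This pins $M(\theta^+)=0$, so $\theta^+\mapsto T$ and $\theta^-\mapsto B$, matching $V$.

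\textbf{Main obstacle.} The hard step is the identification of the moduli spaces in the differentials step. Domains in $\mathcal{H}'$ may pass through the outer region near the stabilisation, which neighbours $w_i$. I would handle this by using that such domains have multiplicity zero at $w_i$. Then the neck-stretching degeneration argument shows that the count equals the count in $\mathcal{H}$, with no contribution from cylindrical boundary degenerations because those pass through basepoints.
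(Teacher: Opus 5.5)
\textbf{There is a genuine gap: you have the local bigons the wrong way round, and this breaks the differential step.} A direct hat-flavour argument is a legitimate alternative to the paper's route. The paper quotes Ozsv\'ath--Szab\'o's identification of $CFL^-(\mathcal{H}')$ with the cone of $U_i+U_0$ and then sets all variables to zero.

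Recall that $\alpha_0$ and $\beta_0$ both bound discs containing $w_0$. So $w_0$ lies in the lens $D_{\alpha_0}\cap D_{\beta_0}$. The two ``small bigons'' in your list are the crescents $D_{\alpha_0}\setminus D_{\beta_0}$ and $D_{\beta_0}\setminus D_{\alpha_0}$, and these contain no basepoints at all. Use $M(\mathbf{x})-M(\mathbf{y})=\mu(\phi)-2n_{\mathbf{w}}(\phi)$:
\begin{itemize}
\item The crescents have $\mu=1$ and $n_{\mathbf{w}}=0$, so they are the classes from $\theta^+$ to $\theta^-$.
\item The lens and the classes through the outer region have $\mu=1$ and $n_{\mathbf{w}}=1$, so they go from $\theta^-$ to $\theta^+$.
\end{itemize}

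This makes three of your steps wrong.
\begin{itemize}
\item Your item 2 is false. The crescent itself, viewed in $\pi_2(\mathbf{x}\times\theta^+,\mathbf{x}\times\theta^-)$, is a basepoint-free index-one class with a unique holomorphic representative. The corner conditions with $m_L=m_O=0$ only rule out $\epsilon=-$, $\delta=+$. When $\epsilon=+$, $\delta=-$ they force exactly one crescent to have multiplicity one.
\item Your item 4 is false for the same reason: the $\theta^+\to\theta^-$ bigons are not excluded by basepoints.
\item Your grading step has a sign error. A $\mu=1$ bigon with $n_{\mathbf{w}}=1$ from $\theta^+$ to $\theta^-$ would put $\theta^-$ one grading \emph{above} $\theta^+$.
\end{itemize}

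\textbf{What is missing.} You need to show that the $\theta^-$-component of $\partial(\mathbf{x}\times\theta^+)$ vanishes by cancellation, not by exclusion. Write $A,B$ for the two crescents and let $\phi\in\pi_2(\mathbf{x},\mathbf{y})$ be a class in $\mathcal{H}$. The relevant classes are $\phi\# A$ and $\phi\# B$. Neither crescent meets the connected-sum point in the outer region, so $\mu(\phi\# A)=\mu(\phi)+1$ and neck-stretching imposes no matching condition. Hence the count for an index-one class $\phi\# A$ is $\#\mathcal{M}(\phi)\cdot\#\widehat{\mathcal{M}}(A)$ with $\mu(\phi)=0$. This is $1$ if $\phi$ is constant and $0$ otherwise, and the same holds for $B$. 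The two contributions therefore cancel mod $2$. Separately, classes from $\theta^-$ to $\theta^+$ must have positive multiplicity on the lens or the outer region, so they pass through $w_0$ or $w_i$ and are correctly dropped.

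With these corrections your approach works. The paper avoids the issue by reading off from the minus-flavour cone that the only off-diagonal term is $(U_i+U_0)\,\mathbf{x}\otimes\theta^+$, which dies in the hat complex. For the relative grading of $\theta^\pm$, use a crescent: its Alexander change is trivially zero and its Maslov change is $1$.
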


The main part of the proof from \cite[Proposition 6.5]{OSLinkFloer} shows that $CFL^{-}(\mathcal{H}')$ is isomorphic to the mapping cone,
\begin{equation*}
    CFL^{-}(\mathcal{H})[U_0]\xrightarrow{U_{i}+U_{0}}CFL^{-}(\mathcal{H})[U_0],
\end{equation*}
where,
\begin{equation*}
    CFL^{-}(\mathcal{H})[U_0]=CFL^{-}(\mathcal{H})\otimes_{\mathbb{F}[U_1,V_1,\ldots,U_n,V_n]}\mathbb{F}[U_{0},U_{1},V_1,\ldots,U_n,V_n].
\end{equation*}
Note that this makes the differential,
\begin{equation*}
    \partial_{\mathcal{H}'}(\mathbf{x}\times \theta^+)=\partial_{\mathcal{H}}(\mathbf{x})\otimes \theta^+ \qquad \text{and} \qquad \partial_{\mathcal{H}'}(\mathbf{x}\times \theta^-)=\partial_{\mathcal{H}}(\mathbf{x})\otimes \theta^-+(U_i+U_0)\mathbf{x}\otimes\theta^{+}.
\end{equation*}
Then setting all the $U_i$ and $V_i$ to $0$ we get the splitting of the differential as desired. The gradings then have to be checked, see the proof of \cite[Proposition 7.1]{OSLinkFloer} or the proof of \cite[Proposition 5.5]{GraphCobZemke} for details. We now define stabilisation for Heegaard triples.

\begin{defn}
\label{defn: triangle stabilisation}
    Let $\mathcal{T}=(\Sigma,\boldsymbol{\alpha}',\boldsymbol{\alpha},\boldsymbol{\beta},\mathbf{w},\mathbf{z})$ be a Heegaard triple. Then a \textit{free stabilisation of $\mathcal{T}$} is a Heegaard triple,
    \begin{equation*}
        \mathcal{T}'=(\Sigma,\boldsymbol{\alpha}'\cup\{\alpha_0'\},\boldsymbol{\alpha}\cup\{\alpha_0\},\boldsymbol{\beta}\cup\{\beta_0\},\mathbf{w}\cup \{w_0\},\mathbf{z})
    \end{equation*}
    where $\alpha_0',\alpha_0$ and $\beta_0$ are curves that pairwise intersect twice, do not intersect any other curves and each bound a disc containing $w_0$, see Figure \ref{fig: Heegaard Triple Triangle example}.
\end{defn}

Denote by $\theta^{\pm},\eta^{\pm}$ and $\xi^{\pm}$ the points in $\alpha'_0\cap \alpha_0$, $\alpha_0\cap \beta_0$ and $\alpha'_0\cap \beta_0$ respectively. Then as before there are multigraded isomorphisms,
\begin{equation*}
    \phi_{\text{stab}}\colon \widehat{CFL}(\mathcal{T}'_{\gamma,\delta})\to \widehat{CFL}(\mathcal{T}_{\gamma,\delta})\otimes V,
\end{equation*}
for each pair $\gamma,\delta\in \{\alpha',\alpha,\beta\}$. Further, for the next lemma we also need the following maps,
\begin{equation*}
    \phi_{\text{stab}}^{\pm}\colon \widehat{CFL}(\mathcal{T}_{\gamma,\delta})\to \widehat{CFL}(\mathcal{T}'_{\gamma,\delta}),
\end{equation*}
where $\phi_{\text{stab}}^{\pm}(\mathbf{x})=\mathbf{x}\otimes x^{\pm}$ for the appropriate $x \in \{\theta,\eta,\xi\}$. We now prove the following lemma which is a specialisation of \cite[Theorem 5.7]{GraphCobZemke}.

\begin{prop}
\label{prop: triple free stabilisation}
    Let $\mathcal{T}$ be a Heegaard triple diagram. Then let $\mathcal{T}'$ be the result of a free stabilisation introducing a basepoint $w_0$ in the neighbourhood of some basepoint $w_i$. Then,
    \begin{equation*}
        F_{\mathcal{T}'}(\phi_{\text{stab}}^{+}(\boldsymbol{\theta}),\phi_{\text{stab}}^{\pm}(\mathbf{x}))=\phi_{\text{stab}}^{\pm}(F_{\mathcal{T}}(\boldsymbol{\theta},\mathbf{x})).
    \end{equation*}
\end{prop}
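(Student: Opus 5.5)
The plan is a neck-stretching argument along the boundary of a small disc $D$ containing the stabilisation region (the curves $\alpha_0',\alpha_0,\beta_0$ and the basepoint $w_0$), following the proof of \cite[Theorem 5.7]{GraphCobZemke}. Index $0$ triangles $\psi\in\pi_2(\boldsymbol{\theta}\times\theta^+,\mathbf{x}\times x^{\pm},\mathbf{y}\times\xi^{\epsilon})$ on $\mathcal{T}'$ with $n_{\mathbf{w}}(\psi)=n_{\mathbf{z}}(\psi)=0$ decompose as $\psi=\psi_0\#\psi_D$. Here $\psi_0$ is a triangle on $\mathcal{T}$ and $\psi_D$ is a domain supported in $D$. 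The condition $n_{w_0}(\psi)=0$ forces the multiplicity of $\psi_D$ on the region containing $w_0$ to vanish. The multiplicity of $\psi_D$ at the connected sum point is $n_{w_i}(\psi_0)=0$. Hence $\psi_D$ is a triangle in the local genus-zero triple $(S^2,\alpha_0',\alpha_0,\beta_0,w_0,p)$ that avoids both $w_0$ and the connection point $p$.

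The first step is a grading and index computation in this local model. Since $\theta^+$ is the top generator of $\alpha_0'\cap\alpha_0$, the local domains with zero multiplicity at $w_0$ and $p$ are limited to the small triangles between the three curves. Using the Maslov index formula $\mu(\psi)=\mu(\psi_0)+\mu(\psi_D)-2n_p(\psi_D)$ (or the additivity of Sarkar's formula), together with the grading shift in Theorem \ref{thm: grading change formulas}, I will show two things. First, $\mu(\psi)=0$ forces $\mu(\psi_0)=0$. Second, the only admissible local class is the small triangle $\psi_D^{\pm}\in\pi_2(\theta^+,\eta^{\pm},\xi^{\pm})$, so the output sign matches the input sign. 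The triangles landing in $\xi^{\mp}$ are excluded by the Maslov grading, since $\theta^+\otimes\eta^\pm$ and $\xi^\mp$ differ in grading by $\pm1$.

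The second step is to identify moduli spaces. I will stretch the neck along $\partial D$, either in Lipshitz's cylindrical setting or as in \cite[Theorem 5.7]{GraphCobZemke}. Then $\#\mathcal{M}(\psi_0\#\psi_D^{\pm})$ equals the count of pairs $(u_0,u_D)$ whose asymptotics match at $p$. Because $n_p=0$, there is no matching condition, so the count is $\#\mathcal{M}(\psi_0)\cdot\#\mathcal{M}(\psi_D^\pm)$. The local small triangle is counted once mod $2$ by the Riemann mapping theorem. Summing over $\psi_0$ gives $F_{\mathcal{T}'}(\boldsymbol{\theta}\times\theta^+,\mathbf{x}\times x^{\pm})=F_{\mathcal{T}}(\boldsymbol{\theta},\mathbf{x})\times\xi^{\pm}$, which is the claimed identity.

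The main obstacle is the degeneration analysis in the neck-stretching limit. One must rule out limits containing extra components, such as $\boldsymbol{\alpha}$- or $\boldsymbol{\beta}$-boundary degenerations in the local sphere (the cylindrical boundary degenerations recalled above) and index-$0$ broken pieces. The plan is to show that every such component would have positive multiplicity at $w_0$ or at $p$, contradicting $n_{w_0}=n_{w_i}=0$. I would first try the argument of \cite[Theorem 5.7]{GraphCobZemke} essentially verbatim, since all multiplicities at basepoints vanish in the hat setting. This vanishing makes the matching-condition analysis trivial compared with the minus flavour treated there.
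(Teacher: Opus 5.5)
Your proposal is correct, but it is organised differently from the paper's proof. The paper does no neck-stretching here. It quotes \cite[Theorem 5.7]{GraphCobZemke} in the minus flavour, which gives the $\eta^+$ case exactly, and gives the $\eta^-$ case up to an extra term $\sum_{\mathbf{y}}C_{\boldsymbol{\theta},\mathbf{x},\mathbf{y}}\,\mathbf{y}\otimes\xi^+$ with polynomial coefficients. Its only new work is to show these coefficients have no constant term. It writes a contributing class as $\psi\#\psi_0$ with $\psi_0\in\pi_2(\theta^+,\eta^-,\xi^+)$ and uses $\mu(\psi\#\psi_0)=\mu(\psi)-1+2n_{w_0}(\psi_0)$ to force $\mu(\psi)=1$ and $n_{w_0}(\psi_0)=0$. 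Then $\mu(\psi_0)\geq 0$ forces $n_{w_i}(\psi_0)\geq 1$, so the term dies once $U_i=0$.

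You instead impose $n_{w_i}=n_{w_0}=0$ from the outset. The local piece then misses the connection point, the index is additive, both pieces must have index $0$, and the cross terms are exactly the local classes of index $\pm 1$. This is the same index mechanism, applied to all four local corner combinations at once rather than only to the $\eta^-\to\xi^+$ term. Your route is self-contained and avoids the minus-flavour formula. The paper's route is shorter given the citation and isolates exactly the term that obstructs the analogous statement in $CFL^-$.

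Two small points. First, in the hat setting the degeneration analysis you flag as the main obstacle essentially disappears. The new region containing $w_i$ contains an annulus separating the three local circles from the rest of $\Sigma$. By positivity of intersections with the fibres $\{z\}\times\Delta$, any holomorphic representative with $n_{w_i}=0$ misses that annulus. It is therefore already a disjoint union of a curve over the local discs and a curve for $\mathcal{T}$, and the moduli space is a product with no limiting argument needed.

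Second, justify the exclusion of the $\xi^{\mp}$ outputs by this index splitting, not by the Maslov grading of the whole complex. A grading count alone would allow a term $\mathbf{y}'\times\xi^{\mp}$ with $\mathbf{y}'$ one grading away from the outputs of $F_{\mathcal{T}}(\boldsymbol{\theta},\mathbf{x})$. What rules it out is that one of the two pieces would then have negative index. Theorem~\ref{thm: grading change formulas} concerns cobordism maps and plays no role here.
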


\begin{proof}
    In \cite[Theorem 5.7]{GraphCobZemke} it is proven for triples without $\mathbf{z}$ basepoints that the map $F^{-}_{\mathcal{T}}$ satisfies,
    \begin{align*}
        F_{\mathcal{T}'}^{-}(\boldsymbol{\theta}\times \theta^+\otimes \mathbf{x}\times \eta^+)&=F_{\mathcal{T}}^-(\boldsymbol{\theta}\otimes \mathbf{x})\otimes \xi^{+}, \\
        F_{\mathcal{T}'}^{-}(\boldsymbol{\theta}\times \theta^+\otimes \mathbf{x}\times \eta^-)&=F_{\mathcal{T}}^-(\boldsymbol{\theta}\otimes \mathbf{x})\otimes \xi^{-}+\sum_{\mathbf{y}\in \mathbb{T}_{\boldsymbol{\alpha}'}\cap \mathbb{T}_{\boldsymbol{\beta}} } C_{\boldsymbol{\theta},\mathbf{x},\mathbf{y}} \mathbf{y}\otimes \xi^{+},
    \end{align*}
    for $C_{\boldsymbol{\theta},\mathbf{x},\mathbf{y}} \in \mathbb{F}[U_0,\ldots,U_n]$. The proof clearly adapts to this case as it is just keeping track of when the holomorphic triangles cross the $\mathbf{z}$ basepoints. Now, if we have that $C_{\boldsymbol{\theta},\mathbf{x},\mathbf{y}}$ has no constant term for all $\mathbf{y}$, then by setting $U_i=V_i=0$ for all $i$ we get the above result.

    To see that $C_{\boldsymbol{\theta},\mathbf{x},\mathbf{y}}$ has no constant term we argue that any holomorphic triangle giving such a term must have $n_{w_0}(\psi')+n_{w_i}(\psi')>0$. To see this write $\psi'=\psi\#\psi_0$ where,
    \begin{equation*}
        \psi\in \pi_2(\boldsymbol{\theta},\mathbf{x},\mathbf{\mathbf{y}}) \qquad \text{and} \qquad \psi_{0}\in \pi_2(\theta^+,\eta^-,\xi^+).
    \end{equation*}
    The second triangle is living in the Heegaard triple given by collapsing the dotted circle in Figure \ref{fig: Heegaard Triple Triangle example} to a point and the first in $\mathcal{T}$. It is shown in the proof of \cite[Theorem 5.7]{GraphCobZemke} that,
    \begin{equation*}
        \mu(\psi')=\mu(\psi)+\mu(\psi_0)-2n_{w_i}(\psi_0)=\mu(\psi)-1+2n_{w_0}(\psi_0).
    \end{equation*}
    Now, $\mu(\psi)$ and $n_{w_0}(\psi_0)$ are non-negative. We claim that as $\mu(\psi')=0$ we must have $\mu(\psi)=1$ and $n_{w_0}(\psi_{0})=0$. Otherwise, if $n_{w_0}(\psi_0)\geq 1$ then,
    \begin{equation*}
        \mu(\psi)=1-2n_{w_{0}}(\psi_0)\leq-1,
    \end{equation*}
    but this contradicts non-negativity. Similarly, if $\mu(\psi)=0$ then, $n_{w_0}(\psi_0)=\frac{1}{2}\notin\mathbb{Z}$ and if $\mu(\psi)\geq2$ we get a contradiction of non-negativity of $n_{w_0}(\psi_0)$. Therefore, as $\mu(\psi_0)\geq 0$ we get that $n_{w_i}(\psi_0)>0$. Therefore, $n_{w_i}(\psi')\geq 1$.
\end{proof}

With this in hand we are almost ready to state the canonical isomorphism statement we want. However, before stating this we frame forgetting the $z_{0}$ basepoint in terms of the effect on the sutured manifold $(S^3\backslash N(L),\gamma)$, where the sutures correspond to curves around the basepoints. This will be useful in the proofs to come.

\begin{rem}
\label{rem: sutured stuff}
    Let $\mathcal{H}=(\Sigma,\boldsymbol{\alpha},\boldsymbol{\beta},\mathbf{w},\mathbf{z})$ be a Heegaard diagram for the multipointed link $\mathbb{L}=(L,\mathbf{w},\mathbf{z})$ and suppose $L_0$ is a component of $L$ with only two basepoints. Recall from \cite{OGSuturedPaper} that a Heegaard diagram for a link also defines a sutured Heegaard diagram for $(S^3\backslash N(L),\gamma)$ where the sutures $\gamma$ are given by meridians on $N(L)$ around the basepoints. Now consider attaching a 3-dimensional 2-handle along the suture corresponding to $z_0$. This replaces the torus component of the boundary of $S^3\backslash N(L)$ corresponding to $L_0$ with a 2-sphere with a single suture. Now consider a sutured Heegaard diagram, $\tilde{\mathcal{H}}$, for $S^3\backslash N(L\backslash L_0)$ with sutures at basepoints, and with a free stabilisation in a small disc near $w_i$. We can then see that $\tilde{\mathcal{H}}$ defines a sutured diagram for,
    \begin{equation*}
        (Y,\tilde \gamma)=(S^3\backslash (N(L\backslash L_0)\cup B^3),\tilde{\gamma}),
    \end{equation*}
     where there is one suture on the $S^2$ and sutures on meridians at each basepoint on the boundary of the link. Therefore, $\mathcal{H}_0=(\Sigma,\boldsymbol{\alpha},\boldsymbol{\beta},\mathbf{w},\mathbf{z}\backslash\{z_0\})$ and $\tilde{\mathcal{H}}$ represent $(Y,\tilde{\gamma})$, and so by \cite[Proposition 2.15]{OGSuturedPaper} there exists a sequence of Heegaard moves taking one to the other. 
\end{rem}

\begin{prop}
\label{prop: canonical identification}
    Let $\mathbb{L}$ be a link and $L_0$ be a component with exactly two basepoints. Then, there is a canonical isomorphism,
    \begin{equation*}
        g\colon\widehat{HFL}_{z_0}(\mathbb{L})\to \widehat{HFL}(\mathbb{L}\backslash\mathbb{L}_0)\otimes V.
    \end{equation*}
\end{prop}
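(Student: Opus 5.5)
The plan is to build $g$ on a particular class of diagrams, namely free stabilisations, where it is tautological. I will then show that it is compatible with the transitive system maps, so that it descends to the canonical vector space $\widehat{HFL}_{z_0}(\mathbb{L})$.

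First I fix the setting, following Remark \ref{rem: sutured stuff}. For a diagram $\mathcal{H}$ of $\mathbb{L}$, the complex $(\widehat{CFL}(\mathcal{H}),\partial_{z_0})$ is exactly $\widehat{CFL}(\mathcal{H}_0)$, where $\mathcal{H}_0$ is $\mathcal{H}$ with $z_0$ deleted. This is a diagram for the sutured manifold $(Y,\tilde\gamma)$. Call $\mathcal{H}$ \emph{special} if $\mathcal{H}_0$ is a free stabilisation $\tilde{\mathcal{H}}$ of a diagram $\mathcal{H}_1$ for $\mathbb{L}\backslash\mathbb{L}_0$, with the new basepoint $w_0$ placed near some $w_i$. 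For special $\mathcal{H}$, Proposition \ref{prop: free stabilisation} gives an isomorphism of chain complexes
\begin{equation*}
\phi_{\text{stab}}\colon(\widehat{CFL}(\mathcal{H}),\partial_{z_0})\to\widehat{CFL}(\mathcal{H}_1)\otimes V.
\end{equation*}
Taking homology and composing with the canonical map $\widehat{HFL}(\mathcal{H}_1)\to\widehat{HFL}(\mathbb{L}\backslash\mathbb{L}_0)$, I obtain $g_{\mathcal{H}}$. Every diagram is connected to a special one by a sequence of Heegaard moves, by \cite[Proposition 2.15]{OGSuturedPaper} as in the Remark. So I define $g$ on $\widehat{HFL}_{z_0}(\mathbb{L})$ via any special $\mathcal{H}$ and the transition maps $\Phi$.

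The content of the proof is well-definedness. Suppose $\mathcal{H}$ and $\mathcal{H}'$ are both special, with underlying $\mathcal{H}_1$ and $\mathcal{H}_1'$. I need
\begin{equation*}
(\Phi_{\mathcal{H}_1\to\mathcal{H}_1'}\otimes\text{Id}_V)\circ\phi_{\text{stab}}\simeq\phi_{\text{stab}}\circ\Phi_{\mathcal{H}\to\mathcal{H}'}.
\end{equation*}
My plan is to realise $\Phi_{\mathcal{H}_1\to\mathcal{H}_1'}$ as a composite of isotopies, handleslides and (de)stabilisations performed away from the small disc containing $\alpha_0,\beta_0,w_0$. The same moves then applied to $\mathcal{H}_0$ give a representative of $\Phi_{\mathcal{H}\to\mathcal{H}'}$, since the transitive system is the one inherited from $CFL^-$ with $V_0=1$. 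For handleslides and isotopies the maps are triangle counts $F_{\mathcal{T}'}(\boldsymbol{\theta}^+\times\theta^+\otimes-)$ on a freely stabilised triple. Proposition \ref{prop: triple free stabilisation} says these commute with $\phi^{\pm}_{\text{stab}}$, which is exactly the required square. For index one/two stabilisations away from the disc the maps are the obvious ones, so they commute on the nose.

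Two further changes remain. One is moving the free-stabilisation disc, i.e.\ changing which $w_i$ it sits near, or its position. The other is a sequence of moves between special diagrams that crosses the disc, or passes through non-special diagrams. For moving the disc, I would use the fact that the basepoint $w_0$ can be slid along the surface by isotopies of $\alpha_0,\beta_0$ realised as triangle maps. The top generator $\theta^+$ maps to the top generator, so the identification with $V$, distinguished by Maslov grading, is preserved. Since the complexes are honestly graded and $V$ has its two generators in distinct Maslov gradings, grading considerations should force the induced automorphism of $\widehat{HFL}(\mathbb{L}\backslash\mathbb{L}_0)\otimes V$ to be $\text{Id}\otimes\text{Id}$, up to an upper triangular term $B\mapsto B+(\cdot)\otimes T$. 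I expect that term to be ruled out by the same $U$-counting argument as in Proposition \ref{prop: triple free stabilisation}.

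The main obstacle is the second change. Here I would invoke the simple-connectivity of the space of sutured diagrams, i.e.\ the naturality results \cite{Naturality}: any two sequences of moves give chain homotopic maps. It therefore suffices to find one sequence consisting of moves of the controlled type. I would produce it by first destabilising, passing to diagrams for $\mathbb{L}\backslash\mathbb{L}_0$, connecting $\mathcal{H}_1$ to $\mathcal{H}_1'$ there, and re-stabilising at the end, dragging the disc if necessary. Compatibility with the gradings follows because every map involved preserves the Maslov grading and the Alexander gradings $A_i$ with $i\neq0$.
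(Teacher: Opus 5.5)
Your strategy matches the paper's: identify via $\phi_{\text{stab}}$ on free stabilisations, check compatibility with transition maps using Proposition~\ref{prop: triple free stabilisation}, and appeal to naturality. However, two steps do not work as written.

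\textbf{Special diagrams essentially never exist.} Suppose $\mathcal{H}_0$ is a free stabilisation. Then the disc bounded by $\alpha_0$ is a component of $\Sigma\backslash\boldsymbol{\alpha}$ whose only basepoint is $w_0$, and likewise for $\beta_0$. For $\mathcal{H}$ to be a diagram of $\mathbb{L}$, $z_0$ must therefore lie in both discs, i.e.\ in the lens containing $w_0$. This forces $L_0$ to be an unknot split from $L\backslash L_0$, which never happens in the paper's applications.

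Consequently you cannot build $g$ inside the system $\{(\widehat{CFL}(\mathcal{H}),\partial_{z_0})\}_{\mathcal{H}}$. Its objects are diagrams of $\mathbb{L}$, and its maps come from $CFL^-$, i.e.\ from moves avoiding $z_0$. The moves supplied by \cite[Proposition 2.15]{OGSuturedPaper} cross $z_0$, so your claim that ``the same moves applied to $\mathcal{H}_0$ give a representative of $\Phi_{\mathcal{H}\to\mathcal{H}'}$'' has no justification.

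The missing step is the paper's first commutative square. You need to regard each $(\widehat{CFL}(\mathcal{H}),\partial_{z_0})$ as an object of the Juh\'asz--Thurston--Zemke transitive system of the sutured manifold $(Y,\tilde\gamma)$ from Remark~\ref{rem: sutured stuff}. Link moves avoiding $z_0$ are sutured moves, and setting $V_0=1$ recovers the sutured maps. The free stabilisations $\mathcal{G}'$ of diagrams $\mathcal{G}$ for $\mathbb{L}\backslash\mathbb{L}_0$ are also objects of this system. Naturality then identifies $\widehat{HFL}_{z_0}(\mathbb{L})$ with the homology of the subsystem $\{\widehat{CFL}(\mathcal{G}')\}_{\mathcal{G}}$.

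\textbf{Compatibility of $\phi_{\text{stab}}$ with all maps of that subsystem is not established.} Proposition~\ref{prop: triple free stabilisation} does handle handleslides and isotopies away from the disc. The disc-moving step, however, rests on a heuristic that does not hold.
\begin{itemize}
\item Gradings place no constraint on the $\widehat{HFL}(\mathbb{L}\backslash\mathbb{L}_0)$ factor.
\item Terms $x\otimes T\mapsto y\otimes B$ with $M(y)=M(x)+1$, or $x\otimes B\mapsto y\otimes T$ with $M(y)=M(x)-1$, both with $A(y)=A(x)$, preserve all gradings.
\item So neither $\text{Id}\otimes\text{Id}$ nor triangularity follows, and the $U$-counting you invoke is only an expectation.
\item Changes of almost complex structure, which need a neck-stretching argument, are also not addressed.
\end{itemize}

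The paper does not do this by hand. It cites \cite{GraphCobZemke} for $\varphi^{+}$ being a morphism of transitive systems. It then adapts the $n_{\mathbf{w}}$-counting of Proposition~\ref{prop: triple free stabilisation} to show $\varphi^{-}$ is one too; this works for $\widehat{CFL}$ but not for $CF^-$. From $\phi_{\text{stab}}=\varphi^{+}\otimes\theta^{+}+\varphi^{-}\otimes\theta^{-}$ it concludes that $\phi_{\text{stab}}$ is an isomorphism of transitive systems, covering all moves at once.

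To complete your route, do as in Zemke's proof: keep the stabilisation point fixed and connect any two stabilised diagrams by moves avoiding it, so that no disc-moving step arises. Alternatively, simply cite that theorem.
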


\begin{proof}
    Let $\mathcal{G}$ be a Heegaard diagram for the link $\mathbb{L}\backslash\mathbb{L}_0$. Now perform a free stabilisation to get $\mathcal{G}'$. Then this Heegaard diagram can be seen as a sutured diagram for,
    \begin{equation*}
        (Y,\tilde \gamma)=(S^3\backslash (N(L\backslash L_0)\cup B^3),\tilde{\gamma}),
    \end{equation*}
    where there is one suture on the $S^2$ and sutures on meridians at each basepoint on the boundary of the link. From Remark \ref{rem: sutured stuff} this is also the sutured manifold obtained by forgetting the basepoint $z_0$ from a Heegaard diagram $\mathcal{H}$ for $\mathbb{L}$. Let $\tilde{ \mathcal{H}}$ be another Heegaard diagram for $\mathbb{L}$. Further, let $\tilde{\mathcal{G}}$ be another Heegaard diagram for $\mathbb{L}\backslash\mathbb{L}_0$ and $\tilde{\mathcal{G}}'$ be a free stabilisation of it. Then by naturality from \cite{Naturality} we have a commutative diagram,
    \begin{equation}
        \begin{tikzcd}
            (\widehat{CFL}(\mathcal{H}),\partial_{z_0})\arrow[rr,"\Phi_{\mathcal{H}\to \tilde{\mathcal{H}}}"]\arrow[dd,"\Phi_{\mathcal{H}\to \mathcal{G}'}"] && (\widehat{CFL}(\tilde{\mathcal{H}}),\partial_{z_0})\arrow[dd,"\Phi_{\tilde{\mathcal{H}}\to \tilde{\mathcal{G}}'}"]\\
            && \\
            \widehat{CFL}(\mathcal{G}') \arrow[rr,"\Phi_{\mathcal{G}'\to \tilde{\mathcal{G}}'}"]& &\widehat{CFL}(\tilde{\mathcal{G}}')
        \end{tikzcd}
    \end{equation}
    where all of the maps are chain homotopy equivalences. Note that $\{\widehat{CFL}(\mathcal{G}')\}_{\mathcal{G}}$ forms a transitive system as it is a subset of all Heegaard diagrams that are equivalent to $\mathcal{G}'$. So there is a canonical isomorphism from $\widehat{HFL}_{z_0}(\mathbb{L})$ to the canonical vector space $H$ associated to the homology of this transitive system.

    Define the map $\varphi^{\pm}\colon \widehat{CFL}(\mathcal{G}')\to \widehat{CFL}(\mathcal{G})$ by,
    \begin{equation*}
        \varphi^\pm(\mathbf{x}\times \theta^{\pm})=\mathbf{x} \qquad \text{and} \qquad \varphi^{\pm}(\mathbf{x}\times \theta^{\mp})=0.
    \end{equation*}
    It was shown in \cite{GraphCobZemke} that $\varphi_{\text{stab}}^+$ defines a morphism of transitive systems. As we are working with the complexes $\widehat{CFL}$  the proof can be adapted, by using an argument similar to the proof of Proposition \ref{prop: triple free stabilisation}, to show that $\varphi_{\text{stab}}^-$ is also a morphism of transitive systems. Note that this adaptation does not work for the case of the complexes $CF^{-}$. Now note that,
    \begin{equation*}
        \phi_{\text{stab}}(\mathbf{x}\times \theta^{\pm})=\varphi^{+}(\mathbf{x}\times \theta^{\pm})\otimes \theta^++\varphi^{-}(\mathbf{x}\times \theta^{\pm})\otimes \theta^-.
    \end{equation*}
    Therefore, $\phi_{\text{stab}}$ is an isomorphism of transitive systems. That is we have the following commutative diagram,
    \begin{equation}
        \begin{tikzcd}
            \widehat{CFL}(\mathcal{G}') \arrow[rr,"\Phi_{\mathcal{G}'\to \tilde{\mathcal{G}}'}"]\arrow[dd,"\phi_{\text{stab}}"]& &\widehat{CFL}(\tilde{\mathcal{G}}')\arrow[dd,"\phi_{\text{stab}}"] \\
            && \\
            \widehat{CFL}(\mathcal{G})\otimes V \arrow[rr,"\Phi_{\mathcal{G}\to \tilde{\mathcal{G}}}\otimes \text{Id}_V"]& &\widehat{CFL}(\tilde{\mathcal{G}})\otimes V
        \end{tikzcd}
    \end{equation}
    The canonical vector space associated to the homology of the transitive system of the bottom row is $\widehat{HFL}(\mathbb{L}\backslash\mathbb{L}_0)\otimes V$. Therefore, composing the isomorphisms we get the desired canonical isomorphism.
\end{proof}

\subsection{Forgetting a Cylinder Component of Certain Link Cobordisms}
\label{subsec: Forgetting Components of Band Maps}
\hfill

Using the previous subsections we can now explain how the spectral sequences interact with cobordism maps. There are two parts to each of the claims. First we claim that there is a well-defined map when we ignore the $z_0$-basepoint. Then we identify the map this induces on homology with the cobordisms after forgetting a cylinder under the isomorphisms in Proposition \ref{prop: canonical identification}. We do  this for the map associated to an $\alpha$-band, a quasi-stabilisation and a pair of pants cobordism. We start with the band map.

\begin{lem}
\label{lem: triangle map is well defined and a filtred map}
    Let $\mathcal{T}=(\Sigma,\boldsymbol{\alpha'},\boldsymbol{\alpha},\boldsymbol{\beta},\mathbf{w}\cup\{w_{0}\},\mathbf{z}\cup\{z_{0}\})$ be a Heegaard triple subordinate to an $\alpha$-band $B$, and suppose $w_{0},z_{0}$ are the only basepoints on the component $L_{0}$, which is disjoint from the band. Then there is a unique maximal Maslov graded element $\boldsymbol{\Theta}^{\mathbf{w}}\in \widehat{HFL}(\mathcal{T}_{\alpha',\alpha})$ and the map,
    \begin{equation*}
        F_{\mathcal{T},z_{0}}(\boldsymbol{\theta}^{\mathbf{w}}\otimes -)\colon \widehat{CFL}(\mathcal{T}_{\alpha,\beta})\to \widehat{CFL}(\mathcal{T}_{\alpha',\beta})
    \end{equation*}
    where $\boldsymbol{\theta}^{\mathbf{w}}$ is any representative of $\boldsymbol{\Theta}^{\mathbf{w}}$, is a degree $0$ filtered chain map with respect to the $\partial_{z_{0}}$ differential, is well-defined up to chain homotopy and induces a morphism of transitive systems.
\end{lem}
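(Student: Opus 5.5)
The plan has four steps: identify the top generator $\boldsymbol{\Theta}^{\mathbf{w}}$, check the map is a filtered chain map, show independence of the choice of representative, and obtain naturality from the minus-flavour maps.

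\textbf{Step 1: the top generator.} Since $L_0$ is disjoint from the band, the curves $\alpha_i'$ are small Hamiltonian translates of $\alpha_i$ away from $\alpha_n,\alpha_n'$. So $\mathcal{T}_{\alpha',\alpha}$ is the diagram described after Definition~\ref{defn: subordinate to B}: a $(|\mathbf{w}|-1)$-component unlink in $(S^1\times S^2)^{\#g(\Sigma)}$. Near $L_0$ the two basepoints $w_0,z_0$ sit in a region exactly as in the band-free case. Hence $\widehat{HFL}(\mathcal{T}_{\alpha',\alpha})$ has a unique element of highest Maslov grading, and this is $\boldsymbol{\Theta}^{\mathbf{w}}$.

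I will use the standard representative $\boldsymbol{\theta}^{\mathbf{w}}$ built from the top intersection points of each $\alpha_i\cap\alpha_i'$. I expect it is also a cycle for the $\partial_{z_0}$ differential on $\widehat{CFL}(\mathcal{T}_{\alpha',\alpha})$. The reason is that the bigons from the top generator come in canceling pairs, or cross some $w$ basepoint, in the standard small-translate picture. More invariantly, I will use the $\mathbb{F}[U,V]$-version: $\partial^-\boldsymbol{\theta}^{\mathbf{w}}$ lies in the ideal generated by the variables. On the small-translate diagram the relevant bigons cancel in pairs, so the terms involving $V_0$ alone vanish as well.

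\textbf{Step 2: filtered chain map.} Lemma~\ref{lem: forgetting z0 triangle count is a chain map} gives that $F_{\mathcal{T},z_0}$ is a chain map, obtained from $F^-_{\mathcal{T}}$ by setting $V_0=1$ and all other variables to $0$. Applying it to a cycle $\boldsymbol{\theta}^{\mathbf{w}}$ then gives a chain map
\begin{equation*}
F_{\mathcal{T},z_0}(\boldsymbol{\theta}^{\mathbf{w}}\otimes-)\colon \bigl(\widehat{CFL}(\mathcal{T}_{\alpha,\beta}),\partial_{z_0}\bigr)\to\bigl(\widehat{CFL}(\mathcal{T}_{\alpha',\beta}),\partial_{z_0}\bigr).
\end{equation*}

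For the filtration, I need the $A_0$ grading change along a triangle $\psi\in\pi_2(\boldsymbol{\theta}^{\mathbf{w}},\mathbf{x},\mathbf{y})$ with $n_{\mathbf{w}}(\psi)=0$. I expect
\begin{equation*}
A_0(\mathbf{x})-A_0(\mathbf{y})=n_{z_0}(\psi)-n_{w_0}(\psi)+c,
\end{equation*}
where $c=A_0(\boldsymbol{\theta}^{\mathbf{w}})$ contribution is $0$. The reason $c=0$ is that $L_0$ is a cylinder component of the cobordism, whose Alexander shift is $0$ by \eqref{eq: AlexShift}, and the $\hat{}$-map (triangles with $n_{z_0}=0$) preserves $A_0$. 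Since $n_{z_0}(\psi)\ge0$ for holomorphic triangles, $A_0$ can only decrease. In the convention where the filtration index is $-A_0$, the map therefore has the form $f_0+f_1+\cdots$, so it is filtered of degree $0$. Maslov grading and the other $A_i$ are preserved exactly as in the $\widehat{CFL}$ case.

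\textbf{Step 3: well-definedness up to homotopy.} Suppose $\boldsymbol{\theta}'$ is another representative of $\boldsymbol{\Theta}^{\mathbf{w}}$ that is a $\partial_{z_0}$-cycle. The main subtlety is that $\boldsymbol{\Theta}^{\mathbf{w}}$ is defined in $\widehat{HFL}$, not in the $\partial_{z_0}$-homology. I will handle this by noting two things.
\begin{itemize}
\item The top Maslov grading is preserved by $\partial_{z_0}$.
\item In the top grading, $\mathbb{T}_{\alpha'}\cap\mathbb{T}_{\alpha}$ has no boundaries from above, so both differentials agree in that degree. This uses the standard model, where the top-degree part is one-dimensional modulo boundaries coming from higher degrees, and there are none.
\end{itemize}
Then $\boldsymbol{\theta}-\boldsymbol{\theta}'=\partial_{z_0}\eta$, and the associativity-type identity for $F_{\mathcal{T},z_0}$, namely the chain-map property from Lemma~\ref{lem: forgetting z0 triangle count is a chain map} applied to $\eta\otimes-$, gives a chain homotopy $F_{\mathcal{T},z_0}(\eta\otimes-)$ between the two maps. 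This homotopy is also filtered of degree $0$ by the same $n_{z_0}\ge0$ argument.

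\textbf{Step 4: naturality.} The final clause of Lemma~\ref{lem: forgetting z0 triangle count is a chain map} gives the morphism of transitive systems for a fixed cycle. Compatibility between different triples subordinate to $B$ comes from the corresponding statement for $F^-$ in \cite{CobMapPaper}, specialised by setting $V_0=1$ and all other variables to $0$.

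\textbf{Main obstacle.} I expect Step 1/3 to be the hardest: showing $\boldsymbol{\theta}^{\mathbf{w}}$ is a $\partial_{z_0}$-cycle, uniquely determined up to $\partial_{z_0}$-boundaries. I would first try to verify this directly on the standard small-Hamiltonian-translate diagram by an explicit bigon count.
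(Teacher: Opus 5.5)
Your Steps 2 and 4 agree with the paper's proof. The filtration comes from the $\infty$-version of the triangle map being homogeneous of $A_0$-degree $0$, since the $L_0$ cylinder has zero Alexander shift. So a surviving term $V_0^{n_{z_0}(\psi)}\mathbf{y}$ forces $A_0(\mathbf{x})-A_0(\mathbf{y})=n_{z_0}(\psi)\geq 0$. Naturality is inherited from $F^-_{\mathcal{T}}$ by specialising variables.

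The genuine gap is the one you flag yourself. You never show that $\boldsymbol{\theta}^{\mathbf{w}}$ is a $\partial_{z_0}$-cycle, nor that two representatives differ by a $\partial_{z_0}$-boundary. Without the first of these, Step 2 does not even produce a chain map. The missing idea is short. Because $L_0$ carries only $w_0,z_0$ and is disjoint from the band, $w_0$ and $z_0$ lie in the same component of $\Sigma\backslash(\boldsymbol{\alpha}'\cup\boldsymbol{\alpha})$. The paper deduces this from $\mathcal{T}_{\alpha',\alpha}$ being an unlink diagram; concretely, $\alpha_i'$ for $i<n$ are small translates of $\alpha_i$, and $\alpha_n,\alpha_n'$ lie in the region containing the ends of the band. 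Hence $n_{w_0}(\phi)=n_{z_0}(\phi)$ for every domain $\phi$ on $\mathcal{T}_{\alpha',\alpha}$. The constraint $n_{w_0}(\phi)=0$ therefore already forces $n_{z_0}(\phi)=0$, and $\partial_{z_0}$ coincides with $\widehat{\partial}$ on $\widehat{CFL}(\mathcal{T}_{\alpha',\alpha})$. Since the two complexes are literally the same, the following all reduce to the hat case, for every subordinate triple and not just a standard model:
\begin{itemize}
\item the unique top class;
\item the cycle property of every representative;
\item independence of the representative, via \cite[Lemma 6.5]{CobMapPaper} or via your homotopy $F_{\mathcal{T},z_0}(\eta\otimes -)$, which is now legitimate because $\widehat{\partial}\eta=\partial_{z_0}\eta$.
\end{itemize}

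The substitutes you propose do not close this gap. In Step 3, homogeneity of $\partial_{z_0}$ in the Maslov grading and \emph{no boundaries from above} concern maps into the top degree. What you need is that the two differentials agree on maps out of the top degree (for the cycle property) and out of the degree of $\eta$ (for the homotopy). Moreover, if nothing lay above the top degree, representatives would be unique and there would be no $\eta$ at all. For a general subordinate triple, the top grading of $\widehat{HFL}(\mathcal{T}_{\alpha',\alpha})$ need not be the top grading of the chain complex, so that model is not available anyway. You also silently restrict to representatives that are already $\partial_{z_0}$-cycles, whereas the statement allows any representative. In Step 1, \emph{bigons cancel in pairs or cross a $w$ basepoint} is an expectation about one diagram, not a proof. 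The terms weighted by a pure positive power of $V_0$ vanish not by pairwise cancellation but because no domain can have $n_{z_0}>0=n_{w_0}$.
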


\begin{proof}
    To start we claim that the complexes $(\widehat{CFL}(\mathcal{T}_{\alpha',\alpha}),\partial_{z_0})$ and $\widehat{CFL}(\mathcal{T}_{\alpha',\alpha})$ are isomorphic. To see this note that by \cite[Lemma 6.4]{CobMapPaper} $\mathcal{T}_{\alpha',\alpha}$ is a Heegaard diagram for an unlink in $(S^1\times S^2)^{\#m}$ for some $m$. It follows that $w_0$ and $z_0$ lie in the same component of $\Sigma\backslash(\boldsymbol{\alpha'}\cup \boldsymbol{\alpha})$. Therefore, for any holomorphic disc $u$ we have $n_{w_0}(u)=n_{z_0}(u)$. Thus, by the definition of the differentials we have the isomorphism. Further, from this it follows that there is a unique element of maximum grading $\boldsymbol{\Theta}^{\mathbf{w}}$. Finally, independence of the choice of representative $\boldsymbol{\theta}$ then also follows from \cite[Lemma 6.5]{CobMapPaper}.
    
    Note that by Lemma \ref{lem: forgetting z0 triangle count is a chain map} we know that $F_{\mathcal{T},z_0}(\boldsymbol{\theta}^{\mathbf{w}}\otimes -)$ is a chain map. It remains to show that it is a degree 0 filtered map. Recall for the complex $\mathcal{CFL}^{\infty}$ the variable $V_{z_i}$ has grading $+1$ in the $A_i$ Alexander grading. The statement of Theorem \ref{thm: grading change formulas} in \cite{CobGradings} is for this complex. Let $\mathbf{x}\in \mathbb{T}_{\boldsymbol{\alpha}} \cap \mathbb{T}_{\boldsymbol{\beta}}$ then the map in this complex is,
    \begin{equation*}
        F_{\mathcal{T}}^{\infty}(\boldsymbol{\theta}^{\mathbf{w}}\otimes \mathbf{x})= \sum_{\mathbf{y}\in \mathbb{T}_{\boldsymbol{\alpha'}}\cap\mathbb{T}_{\boldsymbol{\beta}}} \sum_{\substack{\psi \in \pi_{2}(\boldsymbol{\theta}^{\mathbf{w}},\mathbf{x},\mathbf{y}) \\ \mu(\psi)=0 \\ }}\#\mathcal{M}(\psi) U_{\mathbf{w}}^{\mathbf{n_{\mathbf{w}}}(\psi)}V_{\mathbf{z}}^{\mathbf{n_{\mathbf{z}}}(\psi)} \mathbf{y},
    \end{equation*}
    where $U_{\mathbf{w}}^{\mathbf{n_{\mathbf{w}}}(\psi)}=U_0^{n_{w_0}(\psi)}\cdots U_n^{n_{w_n}(\psi)}$ and $V_{\mathbf{z}}^{\mathbf{n_{\mathbf{z}}}(\psi)}$ is defined similarly. Now set $U_i=0$ for all $i$, $V_{i}=0$ for $i>0$ and $V_0=1$. Suppose that the coefficient of $\mathbf{y}$ is non-zero in $F_{\mathcal{T},z_{0}}(\boldsymbol{\theta}^{\mathbf{w}}\otimes \mathbf{x})$. Then $V_{z_0}^{n_{z_0}(\psi)}\mathbf{y}$ is non-zero in $F_{\mathcal{T}}^{\infty}(\boldsymbol{\theta}^{\mathbf{w}}\otimes \mathbf{x})$ with $n_{z_0}(\psi)\geq0$. Therefore, as $F_{\mathcal{T}}^{\infty}$ is graded, and by the grading formula drops the grading by 0, we have that $A_0(\mathbf{x})\geq A_0(\mathbf{y})$. Thus, $F_{\mathcal{T},z_{0}}(\boldsymbol{\theta}^{\mathbf{w}}\otimes -)$ is a filtered map of degree 0. Finally, by Lemma \ref{lem: forgetting z0 triangle count is a chain map} we have that $F_{\mathcal{T},z_0}(\boldsymbol{\theta}^{\mathbf{w}}\otimes -)$ induces a morphism of transitive systems.
\end{proof}

\begin{prop}
\label{prop: spec maps for forgetting components of bands}
    Let $\mathcal{T}$, $B$ and $\boldsymbol{\theta}^{\mathbf{w}}$ be as above. Then, the induced map on homology $(F_{\mathcal{T},z_{0}}(\boldsymbol{\theta}^{\mathbf{w}}\otimes -))_{*}$ is identified with
    $F_{\Sigma\backslash L_0\times I}\otimes \text{Id}_{V}$ under the canonical isomorphisms, where $\Sigma$ is the cobordism induced by $B$.
\end{prop}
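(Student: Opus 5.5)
Because $F_{\mathcal{T},z_0}(\boldsymbol{\theta}^{\mathbf{w}}\otimes -)$ is a morphism of transitive systems (Lemma \ref{lem: triangle map is well defined and a filtred map}), it suffices to check the identification for one convenient Heegaard triple subordinate to $B$. The plan is to build a triple in which $L_0$ appears only through a free stabilisation. Then Proposition \ref{prop: triple free stabilisation} computes the map, and the canonical isomorphism $g$ of Proposition \ref{prop: canonical identification} is exactly $\phi_{\text{stab}}$.

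First, choose a Heegaard triple $\mathcal{G}=(\Sigma,\boldsymbol{\alpha}',\boldsymbol{\alpha},\boldsymbol{\beta},\mathbf{w},\mathbf{z})$ subordinate to $B$, viewed as a band on $\mathbb{L}\backslash\mathbb{L}_0$. Its free stabilisation $\mathcal{G}'$ near some $w_i$ adds the curves $\alpha_0',\alpha_0,\beta_0$ and the basepoint $w_0$. The key topological step is Remark \ref{rem: sutured stuff}. Forgetting $z_0$ turns $\mathcal{T}$ into a sutured triple for the complement of $(L\backslash L_0)\cup B$ with a ball removed. This is the same sutured data that $\mathcal{G}'$ represents. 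Since $L_0$ is disjoint from the band, we can connect $\mathcal{T}$ (with $z_0$ forgotten) to $\mathcal{G}'$ by Heegaard moves in the complement of the band region, so away from the $\alpha_n,\alpha_n'$ curves. Moves of this kind act simultaneously on the three pairs of curves.

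By naturality of triangle maps under such moves, the square consisting of $F_{\mathcal{T},z_0}(\boldsymbol{\theta}^{\mathbf{w}}\otimes-)$, the transition maps $\Phi$ to $\mathcal{G}'_{\alpha,\beta}$ and $\mathcal{G}'_{\alpha',\beta}$, and $F_{\mathcal{G}'}(\boldsymbol{\theta}'\otimes -)$ commutes up to homotopy. Here $\boldsymbol{\theta}'=\phi^+_{\text{stab}}(\boldsymbol{\theta}^{\mathbf{w}}_{\mathcal{G}})$ is the top generator. This uses that triangle maps commute with the $\partial_{z_0}$-versions of the transition maps; those versions come from $CFL^-$ by specialising variables, as in Lemma \ref{lem: forgetting z0 triangle count is a chain map}. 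We also need that the top-degree class is carried to the top-degree class, which holds because the maps preserve the Maslov grading.

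Now apply Proposition \ref{prop: triple free stabilisation}. It gives
\[
F_{\mathcal{G}'}(\phi^+_{\text{stab}}(\boldsymbol{\theta})\otimes\phi^{\pm}_{\text{stab}}(\mathbf{x}))=\phi^\pm_{\text{stab}}(F_{\mathcal{G}}(\boldsymbol{\theta}\otimes\mathbf{x})).
\]
Equivalently, $\phi_{\text{stab}}\circ F_{\mathcal{G}'}(\boldsymbol{\theta}'\otimes-)=(F_{\mathcal{G}}(\boldsymbol{\theta}\otimes-)\otimes \mathrm{Id}_V)\circ\phi_{\text{stab}}$ at the chain level. By Lemma \ref{lem: defn of band map}, $F_{\mathcal{G}}(\boldsymbol{\theta}\otimes -)$ represents $F_{\Sigma\backslash L_0\times I}$. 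Composing with the vertical isomorphisms that define $g$ on the source and target then gives the claim on homology.

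The main obstacle is the naturality step. The canonical isomorphism $g$ was built using transition maps between diagrams for a single link, so we must show the triple $\mathcal{T}$ can be related to $\mathcal{G}'$ by moves compatible with all three curve sets. Each move should also induce a $\partial_{z_0}$-chain homotopy intertwining the triangle maps. I would handle this the way the band map's independence of triple is proved in \cite{CobMapPaper}. That proof uses associativity of triangle maps for isotopies and handleslides among the $\boldsymbol{\alpha}$ and $\boldsymbol{\beta}$ curves away from the band, together with stabilisation invariance. We would run it in the $CFL^-$ setting and then specialise $V_0=1$, with all other variables set to $0$. Admissibility and the behaviour of boundary degenerations after specialisation should be checked along the way.
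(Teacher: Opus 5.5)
Your proposal is correct and follows essentially the same route as the paper. You free-stabilise a triple subordinate to the band on $L\backslash L_0$ and use Proposition \ref{prop: triple free stabilisation} to intertwine the stabilised triangle map with $F_{\Sigma\backslash L_0\times I}\otimes\text{Id}_V$ via $\phi_{\text{stab}}$; you get the remaining square from Remark \ref{rem: sutured stuff}, Heegaard moves between the resulting sutured diagrams, and the triple-independence of the band map in \cite[Lemma 6.5]{CobMapPaper} after specialising variables, which the paper reaches through \cite[Proposition 2.15]{OGSuturedPaper}. One small imprecision: these moves generally also require handleslides of $\alpha_n$ and $\alpha_n'$, not only moves away from them, but such handleslides are among the moves covered by \cite[Lemma 6.5]{CobMapPaper}, which you already invoke.
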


\begin{proof}
    By the Lemma above we have that $F_{\mathcal{T},z_0}(\boldsymbol{\theta}^{\mathbf{w}}\otimes -)$ induces a morphism of transitive systems, $F\colon \widehat{HFL}_{z_0}(\mathbb{L}) \to \widehat{HFL}_{z_0}(\mathbb{L}')$. Let,
    \begin{equation*}
        g\colon\widehat{HFL}_{z_0}(\mathbb{L})\to \widehat{HFL}(\mathbb{L}\backslash\mathbb{L}_0)\otimes V \qquad \text{and} \qquad g'\colon\widehat{HFL}_{z_0}(\mathbb{L}')\to \widehat{HFL}(\mathbb{L}'\backslash\mathbb{L}_0)\otimes V
    \end{equation*}
    be the canonical isomorphisms from Proposition \ref{prop: canonical identification}. Then the proposition claims that,
    \begin{equation*}
        g'\circ F\circ g^{-1}=F_{\Sigma\backslash L_0\times I}\otimes \text{Id}_{V}.        
    \end{equation*}

    Let $B_0$ be the $\alpha$-band $B$ but on the link $L\backslash L_0$. Then take $\mathcal{Q}_0$ to be a Heegaard triple subordinate to $B_0$ and let $\boldsymbol{\theta}^{\mathbf{w}}_{\mathcal{Q}_0}\in \widehat{CFL}(\mathcal{Q}_0)$ be a representative of the element of highest grading. Further, let $\mathcal{Q}$ be a free stabilisation of $\mathcal{Q}_0$ and let $\boldsymbol{\theta}^{\mathbf{w}}_{\mathcal{Q}}\in \widehat{CFL}(\mathcal{Q})$ be a representative of the element of highest grading. Then by Proposition \ref{prop: free stabilisation} we have that,
    \begin{equation*}
        \phi_{\text{stab}}(\boldsymbol{\theta}^{\mathbf{w}}_{\mathcal{Q}})=\boldsymbol{\theta}^{\mathbf{w}}_{\mathcal{Q}_0}\otimes \theta^+, \qquad \phi_{\text{stab}}(\mathbf{x}\times\eta^{\pm})=\mathbf{x}\otimes \eta^{\pm} \qquad \text{and} \qquad \phi_{\text{stab}}(\mathbf{y}\times\xi^{\pm})=\mathbf{y}\otimes \xi^{\pm},
    \end{equation*}
    where $\mathbf{x}\in \mathbb{T}_{\boldsymbol{\alpha}} \cap \mathbb{T}_{\boldsymbol{\beta}}$, $\mathbf{y}\in \mathbb{T}_{\boldsymbol{\alpha}'} \cap \mathbb{T}_{\boldsymbol{\beta}}$ and $\theta^{+},\eta^{\pm}$ and $\xi^{\pm}$ are the intersection points coming from the stabilisation, as in Figure \ref{fig: Heegaard Triple Triangle example} and the discussion under Definition \ref{defn: triangle stabilisation}. Now applying Proposition \ref{prop: triple free stabilisation} we have,
    \begin{equation*}
        F_{\mathcal{Q}}(\boldsymbol{\theta}^{\mathbf{w}}_{\mathcal{Q}}\otimes (\mathbf{x},\eta^{\pm}))=\phi_{\text{stab}}^{\pm}(F_{\mathcal{Q}_0}(\boldsymbol{\theta}^{\mathbf{w}}_{\mathcal{Q}_0}\otimes \mathbf{x}))=F_{\mathcal{Q}_0}(\boldsymbol{\theta}^{\mathbf{w}}_{\mathcal{Q}_0}\otimes \mathbf{x})\otimes \xi^{\pm}=(F_{\Sigma\backslash(L_{0}\times I)}\otimes \text{Id}_V)(\mathbf{x}\otimes \eta^{\pm}).
    \end{equation*}
    Here, the final equality comes from the definition of $\mathcal{Q}_0$ and the gradings of $\eta^{\pm}$ and $\xi^{\pm}$. That is the following diagram commutes,
    \begin{equation}
    \label{eq: com diagram for forget map 1}
        \begin{tikzcd}
            \widehat{CFL}(\mathcal{Q}_{\alpha,\beta})\arrow[dd,"\phi_{\text{stab}}"]\arrow[rr,"F_{\mathcal{Q}}"] && \widehat{CFL}(\mathcal{Q}_{\alpha',\beta})\arrow[dd,"\phi_{\text{stab}}"] \\
            &&\\
            \widehat{CFL}((\mathcal{Q}_0)_{\alpha,\beta})\otimes V \arrow[rr,"F_{\mathcal{Q}_0}\otimes \text{Id}_V"] && \widehat{CFL}((\mathcal{Q})_{\alpha',\beta})\otimes V
        \end{tikzcd}
    \end{equation}

    Recall that $g$ is defined on the chain level as the composition $\phi_{\text{stab}}\circ \Phi_{\mathcal{T}_{\alpha,\beta}\to \mathcal{Q}_{\alpha,\beta}}$. Hence, if we prove that the following diagram commutes up to chain homotopy equivalence then we are done,
    \begin{equation}
    \label{eq: com diagram for forget map 2}
        \begin{tikzcd}
            (\widehat{CFL}(\mathcal{T}_{\alpha,\beta}),\partial_{z_0})\arrow[rr,"F_{\mathcal{T},z_0}(\boldsymbol{\theta}^{\mathbf{w}}\otimes -)"]\arrow[dd,"\Phi_{\mathcal{T}_{\alpha,\beta}\to \mathcal{Q}_{\alpha,\beta}}"] && (\widehat{CFL}(\mathcal{T}_{\alpha',\beta}),\partial_{z_0})\arrow[dd,"\Phi_{\mathcal{T}_{\alpha',\beta}\to \mathcal{Q}_{\alpha',\beta}}"]\\
            && \\
            \widehat{CFL}(\mathcal{Q}_{\alpha,\beta}) \arrow[rr,"F_{\mathcal{Q}}"]& &\widehat{CFL}(\tilde{\mathcal{Q}}_{\alpha',\beta})
        \end{tikzcd}
    \end{equation}
    To see that this is enough note that commutativity of the above plus commutativity of diagram \eqref{eq: com diagram for forget map 1} gives,
    \begin{equation*}
        (F_{\mathcal{Q}_0}\otimes \text{Id}_V)\circ\phi_{\text{stab}}\circ \Phi_{\mathcal{T}_{\alpha,\beta}\to \mathcal{Q}_{\alpha,\beta}}\simeq\phi_{\text{stab}}\circ \Phi_{\mathcal{T}_{\alpha',\beta}\to \mathcal{Q}_{\alpha',\beta}}\circ F_{\mathcal{T},z_0}(\boldsymbol{\theta}^{\mathbf{w}}\otimes -),
    \end{equation*}
    which gives the result on homology.

    To prove commutativity of diagram \eqref{eq: com diagram for forget map 2} we begin with an observation similar to the one in Remark \ref{rem: sutured stuff}. By definition of being subordinate to $B$ we have that, 
    \begin{equation*}
        \mathcal{T}^n_{\alpha,\beta}:=(\Sigma,\boldsymbol{\alpha}\backslash\{\alpha_n\} ,\boldsymbol{\beta},\mathbf{w},\mathbf{z}),
    \end{equation*}
    is a sutured Heegaard diagram for $S^3\backslash N(L\cup B)$ with meridional sutures $\gamma$. Therefore, as in Remark \ref{rem: sutured stuff} we see that   
    \begin{equation*}
        \mathcal{T}^n_{\alpha,\beta,z_0}:=(\Sigma,\boldsymbol{\alpha}\backslash\{\alpha_n\},\boldsymbol{\beta},\mathbf{w},\mathbf{z}\backslash\{z_0\})
    \end{equation*}
    is a sutured Heegaard diagram for,
    \begin{equation*}
        (Y,\tilde{\gamma}):=(S^3\backslash (N((L\backslash L_0)\cup B)\cup B^3),\tilde{\gamma})
    \end{equation*}
    where the sutures are meridional sutures on the link components and one suture on the sphere component. Now this is the same sutured manifold as the one from $\mathcal{Q}^n_{\alpha,\beta}$ which is defined similarly. Hence, by \cite[Proposition 2.15]{OGSuturedPaper} there exists a sequence of Heegaard moves taking $\mathcal{T}^n_{\alpha,\beta,z_0}$ to $\mathcal{Q}^n_{\alpha,\beta}$. Further, by definition of being subordinate to a band $\mathcal{T}^n_{\alpha',\beta,z_0}$ and $\mathcal{Q}^n_{\alpha',\beta}$ are also sutured Heegaard diagrams for $(Y,\tilde{\gamma})$. Therefore, we can perform the corresponding sequence of Heegaard moves to go between these diagrams. Next, to get Heegaard moves from $\mathcal{T}_{\alpha,\beta}$ to $\mathcal{Q}_{\alpha,\beta}$ we need to do handleslides of the curve $\alpha_n$. Similarly we do handleslides of the $\alpha_n'$ to get from $\mathcal{T}_{\alpha',\beta}$ to $\mathcal{Q}_{\alpha',\beta}$. The handleslides on the $\alpha_n,\alpha_n'$ and the moves between the sutured manifolds are exactly the Heegaard moves that the band map is invariant under by \cite[Lemma 6.5]{CobMapPaper}. Therefore, after setting the variables as before we get the commutativity of diagram \eqref{eq: com diagram for forget map 2}. This completes the proof. 
\end{proof}

We now prove similar results for the quasi-stabilisation map $T^+$. Recall that this map is defined by introducing two new basepoints and two new curves that intersect in two points $\xi^\mathbf{w}$ and $\xi^{\mathbf{z}}$. Then $T^{+}(\mathbf{x})=\mathbf{x}\otimes \xi^{\mathbf{w}}$.

\begin{lem}
\label{lem: quasi-stab maps are degree 0 and chain maps}
    Let $\mathcal{H}=(\Sigma,\boldsymbol{\alpha},\boldsymbol{\beta},\mathbf{w}\cup\{w_{0}\},\mathbf{z}\cup\{z_{0}\})$ be a Heegaard diagram for $\mathbb{L}$, $L_0$ be a component of $L$ with two basepoints and suppose $\mathcal{H}^+$ is a quasi-stabilisation on a component that is not $L_0$. Then there is a well-defined degree 0 filtered chain map with respect to the $\partial_{z_0}$ differential, 
    \begin{equation*}
        T^+_{z_0}\colon \widehat{CFL}(\mathcal{H}) \to \widehat{CFL}(\mathcal{H}^+),
    \end{equation*}
    given by $T^{+}_{z_0}(\mathbf{x})=\mathbf{x}\otimes \xi^{\mathbf{w}}$, and it defines a morphism of transitive systems.
\end{lem}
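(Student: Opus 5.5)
The plan is to follow the template of Lemma \ref{lem: forgetting z0 triangle count is a chain map} and Lemma \ref{lem: triangle map is well defined and a filtred map}: realise $T^+_{z_0}$ as a specialisation of Zemke's quasi-stabilisation map on the minus (or $\infty$) complex, and deduce each property by setting variables. Recall from \cite{CobMapPaper} that there is an $\mathbb{F}[U_{\mathbf{w}},V_{\mathbf{z}}]$-equivariant quasi-stabilisation map
\begin{equation*}
    T^{+,-}\colon CFL^-(\mathcal{H})\to CFL^-(\mathcal{H}^+),
\end{equation*}
which is a chain map and part of a morphism of transitive systems, and which for suitably stretched almost complex structures (neck stretching along the circle $\beta_s$, using the cylindrical boundary degeneration analysis) is given on generators by $\mathbf{x}\mapsto \mathbf{x}\times\xi^{\mathbf{w}}$. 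Since the quasi-stabilisation takes place on a component different from $L_0$, the new basepoints $w_s,z_s$ are distinct from $w_0,z_0$, and the variable $V_0$ keeps its meaning on both sides.

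First step: set $V_0=1$ and all other variables (including $U_s,V_s$ for the new basepoints) to zero. As in the proof of the transitive-system lemma for $\widehat{CFL}_{z_0}$, the differentials $\partial^-$ on $\mathcal{H}$ and $\mathcal{H}^+$ specialise to $\partial_{z_0}$, so $T^{+,-}$ specialises to a chain map $(\widehat{CFL}(\mathcal{H}),\partial_{z_0})\to(\widehat{CFL}(\mathcal{H}^+),\partial_{z_0})$. Since the formula $\mathbf{x}\mapsto\mathbf{x}\times\xi^{\mathbf{w}}$ involves no variables, the specialisation is exactly $T^+_{z_0}$. Similarly, specialising the homotopy-commutative squares relating $T^{+,-}$ to the change-of-diagram maps $\Phi^-$ gives that $T^+_{z_0}$ is a morphism of the transitive systems $\widehat{CFL}_{z_0}(\mathbb{L})\to\widehat{CFL}_{z_0}(\mathbb{L}^+)$. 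Well-definedness, i.e.\ independence of choices up to chain homotopy, comes from the same specialisation.

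Second step: show that $T^+_{z_0}$ is filtered of degree $0$ in $A_0$. This is immediate from Lemma \ref{lem: Quasi stab link}: $A_0(\xi^{\mathbf{w}})=-\tfrac12\delta_{i0}=0$ because $i\neq 0$. Hence $A_0(\mathbf{x}\times\xi^{\mathbf{w}})=A_0(\mathbf{x})$, so $T^+_{z_0}$ preserves $A_0$ exactly, which in particular makes it a filtered map of degree $0$. Alternatively, one can use the grading formula \eqref{eq: AlexShift} applied to the cylinder $L_0\times I$, which carries the trivial decoration.

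The main obstacle is the chain-map claim. This needs the statement that the minus-flavour quasi-stabilisation map has the simple form $\mathbf{x}\mapsto\mathbf{x}\times\xi^{\mathbf{w}}$, with no correction terms, and that $\partial_{z_0}$ on $\mathcal{H}^+$ satisfies $\partial_{z_0}(\mathbf{x}\times\xi^{\mathbf{w}})=(\partial_{z_0}\mathbf{x})\times\xi^{\mathbf{w}}$.

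If invoking \cite{CobMapPaper} directly is unsatisfactory, the first thing to try is a direct argument. Discs counted by $\partial_{z_0}$ on $\mathcal{H}^+$ have $n_{w_s}=n_{z_s}=0$, since $z_s\neq z_0$. The regions near $\alpha_s,\beta_s$ are then forced to have trivial multiplicity, as the disc bounded by $\beta_s$ contains $w_s$ and $z_s$. For a stretched complex structure, the standard quasi-stabilisation neck-stretching argument then identifies the relevant moduli spaces with those of $\mathcal{H}$ times the constant at $\xi^{\mathbf{w}}$. The same multiplicity argument also rules out discs from $\mathbf{x}\times\xi^{\mathbf{w}}$ to $\mathbf{y}\times\xi^{\mathbf{z}}$.
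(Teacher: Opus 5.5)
Your proposal is correct and follows essentially the same route as the paper. You realise $T^+_{z_0}$ as the specialisation $U_i=0$, $V_i=0$ for $i\neq 0$, $V_0=1$ of Zemke's minus-flavour quasi-stabilisation map, inherit the chain-map and transitive-system properties from it, and read off degree $0$ from $A_0(\xi^{\mathbf{w}})=0$; the only difference is that the paper cites \cite[Proposition 5.3]{QuasiMap} rather than \cite{CobMapPaper}, so your fallback neck-stretching sketch is not needed.
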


\begin{proof}
    In \cite[Proposition 5.3]{QuasiMap} it is shown that the map,
    \begin{equation*}
        (T^+)^-\colon CFL^{-}(\mathcal{H}) \to CFL^{-}(\mathcal{H}^+),
    \end{equation*}
    is a well defined $\mathbb{F}[U_0,\ldots, V_n]$ equivariant filtered chain map. Specialising the variables so that $U_i=0$ for all $i$, $V_i=0$ for $i>0$ and $V_0=1$ gives that $T^+_{z_0}$ is a well defined filtered chain map. That it is a degree 0 filtered map follows from the fact that $A_0(\xi^{\mathbf{w}})=0$. That is
    \begin{equation*}
        A_0(T^+(\mathbf{x}))=A_0(\mathbf{x}\otimes \xi^{\mathbf{w}})=A_0(\mathbf{x})+A_0(\xi^{\mathbf{w}})=A_0(\mathbf{x}).
    \end{equation*}
    Finally, as $(T^+)^-$ is a morphism of transitive systems it follows that $T^+_{z_0}$ is too.
\end{proof}

To prove the statement about the map on homology we will need to understand how a quasi-stabilisation and free stabilisation interact. To do this we will need a double neck stretching argument. The main point is that quasi-stabilisations and free stabilisations are defined by taking particular almost complex structures $J(T)$ such that the annulus, or neck, where the stabilisation is taking place is of length $T$. See \cite[Definition 5.1]{QuasiMap} for the conditions on the quasi-stabilisation and \cite[Definition 5.1]{GraphCobZemke} for the free stabilisation. Therefore, as in the proofs of \cite[Proposition 5.14]{GraphCobZemke} and \cite[Lemma 8.2]{QuasiMap} we need to consider the map associated to the change in almost complex structure. We will do this by combining the two arguments previously mentioned. Throughout this we will use Lipshitz's cylindrical reformulation, see \cite{CylindricalReformulation} for details.

\begin{lem}
\label{lem: commutativity from double neck stretching}
    Let $\mathcal{H}_0$ be a Heegaard diagram for a link $\mathbb{L}$. Further, suppose that $\mathcal{H}^+$ is the result of both a quasi-stabilisation and a free stabilisation. Then we have the following chain homotopy equivalence,
    \begin{equation*}
        T^+_0\circ \varphi^{\pm}\simeq \varphi^{\pm}_+\circ T^+.
    \end{equation*}
    
\end{lem}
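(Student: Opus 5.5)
The plan is to compute both composites on a single Heegaard diagram, with almost complex structures chosen so that both stabilisation regions are stretched at once. Set up the notation as follows. Write $\mathcal{H}_0^+$ for the quasi-stabilisation of $\mathcal{H}_0$ alone, and $\mathcal{H}_0'$ for the free stabilisation of $\mathcal{H}_0$ alone. Then $\mathcal{H}^+$ is the quasi-stabilisation of $\mathcal{H}_0'$, and also the free stabilisation of $\mathcal{H}_0^+$. In this notation
\begin{equation*}
    \varphi^{\pm}\colon\widehat{CFL}(\mathcal{H}_0')\to\widehat{CFL}(\mathcal{H}_0), \qquad T^+\colon\widehat{CFL}(\mathcal{H}_0')\to\widehat{CFL}(\mathcal{H}^+),
\end{equation*}
\begin{equation*}
    T^+_0\colon\widehat{CFL}(\mathcal{H}_0)\to\widehat{CFL}(\mathcal{H}_0^+), \qquad \varphi^{\pm}_+\colon\widehat{CFL}(\mathcal{H}^+)\to\widehat{CFL}(\mathcal{H}_0^+).
\end{equation*}
As vector spaces, $\widehat{CFL}(\mathcal{H}^+)$ is spanned by the generators $\mathbf{x}\times\theta^{\pm}\times\xi^{\mathbf{w}/\mathbf{z}}$. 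On these generators both composites send $\mathbf{x}\times\theta^{\pm}$ to $\mathbf{x}\times\xi^{\mathbf{w}}$ and kill $\mathbf{x}\times\theta^{\mp}$. So the identity holds formally at the level of generators.

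The actual content is that each map is only a chain map, and is only a morphism of transitive systems, for an almost complex structure adapted to its own stabilisation. Each of $T^+$ and $\varphi^\pm$ is defined using a family $J(T)$ in which the corresponding neck has length $T\gg0$. Implicitly, each one is composed with a change of almost complex structure map, as in \cite[Lemma 8.2]{QuasiMap} and \cite[Proposition 5.14]{GraphCobZemke}. I would therefore fix a two-parameter family $J(T_1,T_2)$ on the diagram of $\mathcal{H}^+$. Here $T_1$ is the length of the neck around the free stabilisation near $w_i$, and $T_2$ is the length of the neck around the quasi-stabilisation region. The two necks can be chosen disjoint, since the quasi-stabilisation lies on a component other than the one near $w_i$, or at least away from the small disc containing $w_0$. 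The claim then reduces to showing two things. First, for $T_1,T_2$ both sufficiently large, the differential on $\widehat{CFL}(\mathcal{H}^+)$ computed with $J(T_1,T_2)$ has the form predicted simultaneously by Proposition \ref{prop: free stabilisation} and by Lemma \ref{lem: Quasi stab link}. Second, the change of almost complex structure maps from $J(T_1,T_2)$ to $J(T_1,T_2')$, and from $J(T_1,T_2)$ to $J(T_1',T_2)$, are the identity on the relevant generators, modulo terms that vanish in the hat flavour.

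To carry this out, I would argue by Gromov compactness in Lipshitz's cylindrical setting \cite{CylindricalReformulation}. Take index-one discs, or index-zero discs for the continuation maps, with $J(T_1,T_2)$ and send both $T_1,T_2\to\infty$. The curves degenerate into a curve on the unstabilised surface of $\mathcal{H}_0$ together with curves on the two sphere ends: the sphere containing $\alpha_0,\beta_0$, and the sphere containing $\alpha_s,\beta_s$. The two necks are disjoint, so the matching conditions at the two necks are independent. Hence the analysis of each end is exactly the one already done in the single-stabilisation proofs: \cite[Proposition 5.14]{GraphCobZemke} for the free end and \cite[Lemma 8.2]{QuasiMap} for the quasi-stabilised end. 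I would also check the possible cylindrical boundary degenerations. Any such degeneration must cover a $\mathbf{w}$ basepoint or a $\mathbf{z}$ basepoint other than $z_0$. Since we have set $U_i=0$ and $V_i=0$ for $i>0$ in the hat flavour, such contributions vanish, just as in the proof of Proposition \ref{prop: triple free stabilisation}. It follows that the continuation maps take the upper-triangular form ``identity plus terms with positive $U$ or $V$ powers'', and the extra terms die after specialising the variables. This gives the chain homotopy $T^+_0\circ\varphi^\pm\simeq\varphi^\pm_+\circ T^+$.

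The main obstacle is the $\varphi^-$ case. Here there is no a priori reason for the $\theta^+$ component of the continuation map to vanish in general flavours: this is exactly the failure noted for $CF^-$ in the proof of Proposition \ref{prop: canonical identification}. I would handle it with the index computation of Proposition \ref{prop: triple free stabilisation}, namely $\mu(\psi')=\mu(\psi)-1+2n_{w_0}(\psi_0)$, applied to discs rather than triangles. This shows that any contribution to the $\theta^-\mapsto\theta^+$ component has $n_{w_i}+n_{w_0}>0$, so it vanishes in $\widehat{CFL}$.
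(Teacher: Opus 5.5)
Your proposal follows essentially the same route as the paper: a two-parameter neck-stretching family, reduction to showing that the change-of-almost-complex-structure map between the two neck regimes is chain homotopic to the identity (the paper uses a single such map, from $T_1'<T_2'$ to $T_1>T_2$). The degeneration into a curve on $\mathcal{H}_0$, the two sphere pieces and boundary degenerations is also the paper's, with the only potentially nonzero cross term ($\theta^-\to\theta^+$) killed because the free-stabilisation index formula forces $n_{w_i}>0$. The one blemish is your mention of $z_0$ and $V_0$: it is irrelevant, since the lemma lives in the pure hat flavour on diagrams of $\mathbb{L}\backslash\mathbb{L}_0$, and in any case every region of $\Sigma\backslash\boldsymbol{\alpha}$ contains a $\mathbf{w}$ basepoint, so boundary degenerations always die.
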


The following proof is an adaptation of \cite[Proposition 5.14]{GraphCobZemke} and \cite[Lemma 8.2]{QuasiMap} with a few tweaks as these deal with two free stabilisations and two quasi-stabilisations respectively. Therefore, we omit a few details and mainly explain the adaptations. 

\begin{proof}
    Let $T_1,T_2,T_1',T_2'>0$ be neck lengths such that,
    \begin{enumerate}
        \item $T_1$ and $T_1'$ are neck lengths for the free stabilisation;
        \item $T_2$ and $T_2'$ are neck lengths for the quasi-stabilisation;
        \item $T_1>T_2$ and $T_1'<T_2'$.
    \end{enumerate}
    Then we will show that the following diagram commutes,
    \begin{equation}
        \begin{tikzcd}
            \widehat{CFL}(\mathcal{H},J(T_1'))\arrow[rr,"T^+"] \arrow[dd,"\varphi^{\pm}"]&& \widehat{CFL}(\mathcal{H}^+,J(T_1',T_2'))\arrow[d,"\Phi_{J(T_1',T_2')\to J(T_1,T_2)}"] \\
            &&\widehat{CFL}(\mathcal{H}^+,J(T_1,T_2))\arrow[d,"\varphi^{\pm}_+"]\\
            \widehat{CFL}(\mathcal{H}_0)\arrow[rr,"T_0^+"] &&\widehat{CFL}(\mathcal{H}_0^+,J(T_2))
        \end{tikzcd}
    \end{equation}
    where $\Phi_{J(T_1',T_2')\to J(T_1,T_2)}$ is the map associated to the change in almost complex structure. Then we will show that,
    \begin{equation*}
        \Phi_{J(T_1',T_2')\to J(T_1,T_2)}\simeq \text{Id}
    \end{equation*}
    where $\text{Id}$ is understood to identify the Heegaard states between the two complexes. Note that this is sufficient to prove the statement of the lemma.
    
    Recall that the map $\Phi_{J(T_1',T_2')\to J(T_1,T_2)}$ is obtained by counting Maslov index 0 pseudo-holomorphic curves. Let $\phi$ be the homology class of a holomorphic curve contributing to this. It is shown in \cite{LinkSurgeryFormulaOG} that by taking longer and longer neck lengths and holomorphic representatives of $\phi$ then the limiting curves are broken holomorphic curves $\phi_{\mathcal{H}_0}$ in $\mathcal{H}_0^s=(\Sigma,\boldsymbol{\alpha}\cup \alpha_s',\boldsymbol{\beta},\mathbf{w},\mathbf{z})$; $\phi_+$ in $(S^2,\alpha_s,\beta_s)$; $\phi_0$ in $(S^2,\alpha_0,\beta_0)$; and a collection of cylindrical boundary degenerations $\mathcal{A}$ with boundary degenerations $\boldsymbol{\alpha}\cup \alpha_s$. Further, let $R$ be the region of $\Sigma\backslash\boldsymbol{\alpha}$ where the quasi-stabilisation takes place. Then note that one region of $R\backslash\alpha_s$ contains $w_i$, call it $R_{w_i}$. The other region contains $z_i$, call it $R_{z_i}$. Then we denote by $m_{w_i}(\mathcal{A})$ the coefficient in the domain $R_{w_i}$ and similarly $m_{z_i}(\mathcal{A})$ for $R_{z_i}$.

    Then, combining formulas from the proofs of \cite[Proposition 5.3]{GraphCobZemke} and \cite[Proposition 5.3]{QuasiMap} we get,
    \begin{align*}
        \mu(\phi)=\mu&(\phi_{\mathcal{G}_0})+n_{w_s}(\phi)+n_{z_s}(\phi)+m_{z_i}(\mathcal{A})+ m_{w_i}(\mathcal{A}) +
        \\ &\text{gr}(\theta_1,\theta_2) +2n_{w_0}(\phi_0)+2\sum_{\substack{\mathcal{D}\in C(\Sigma\backslash\boldsymbol{\alpha}) \\ \alpha_s\cap \mathcal{D=\emptyset}}}n_{\mathcal{D}}(\phi),
    \end{align*}
    where $\theta_1,\theta_2\in \{\theta^+,\theta^-\}$. It is shown between the two previously mentioned papers that all of these terms are non-negative except possibly $\text{gr}(\theta_1,\theta_2)$ in the case $\text{gr}(\theta^-,\theta^+)=-1$. Hence, in the cases $\theta_1=\theta_2$ we have that all the above terms are 0. Now by the argument in \cite{QuasiMap} we have that only constant discs are counted in these cases. Further, in the final case we have $\text{gr}(\theta^+,\theta^-)=1$ so no such domains exist.

    Now in the case of $\text{gr}(\theta^-,\theta^+)=-1$ we see that exactly one of $\mu(\phi_{\mathcal{G}_0}),n_{w_s}(\phi),n_{z_s}(\phi),m_{z_i}(\mathcal{A})$ or $m_{w_i}(\mathcal{A})$ equals one. Note that as we are counting holomorphic discs that do not cross any basepoints we can conclude that $\mu(\phi_{\mathcal{G}_0})=1$ as otherwise the disc would not be counted. Now, similar to in Proposition \ref{prop: triple free stabilisation} we also have the following equality,
    \begin{align*}
        \mu(\phi)=\mu&(\phi_{\mathcal{H}_0})+n_{w_s}(\phi)+n_{z_s}(\phi)+m_{z_i}(\mathcal{A})+ m_{w_i}(\mathcal{A}) +
        \\ &\mu(\phi_0) -2n_{w_j}(\phi_0)+2\sum_{\substack{\mathcal{D}\in C(\Sigma\backslash\boldsymbol{\alpha}) \\ \alpha_s\cap \mathcal{D=\emptyset}}}n_{\mathcal{D}}(\phi),
    \end{align*}
    where $w_j$ is the basepoint where the free stabilisation takes place. So we have,
    \begin{equation*}
        0=1+\mu(\phi_0) -2n_{w_j}(\phi_0).
    \end{equation*}
    However, as $\mu(\phi_0)$ is non-negative we must have that $n_{w_j}(\phi_0)>0$. Therefore, as we are considering the hat complex we do not count any domains in this case. Combining all of these points shows that the map is chain homotopic to the identity, which completes the proof.
\end{proof}

Using this we can prove the following.

\begin{prop}
\label{prop: spec map for forgetting components of quasi-stab}
    Let $\mathcal{H}$, $\mathcal{H}^+$, $\mathbb{L}$ and $L_0$ be as in Lemma \ref{lem: quasi-stab maps are degree 0 and chain maps}. Then, the induced map on homology, $(T^+_{z_0})_*$ is equal to $T^+_{L_0}\otimes \text{Id}_V$ under the canonical identifications, where $T^+_{L_0}$ is the quasi-stabilisation on $\mathbb{L}\backslash\mathbb{L}_0$.
\end{prop}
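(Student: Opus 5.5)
The plan is to mirror the proof of Proposition \ref{prop: spec maps for forgetting components of bands}. The first step is to reduce to a convenient diagram. By Lemma \ref{lem: quasi-stab maps are degree 0 and chain maps}, $T^+_{z_0}$ is a morphism of transitive systems, and by Proposition \ref{prop: canonical identification} the isomorphism $g$ is canonical. It is therefore enough to check, for one well-chosen pair of diagrams, that
\begin{equation*}
g^+\circ (T^+_{z_0})_*\circ g^{-1}=T^+_{L_0}\otimes \text{Id}_V .
\end{equation*}
To set this up, take a Heegaard diagram $\mathcal{H}_0$ for $\mathbb{L}\backslash\mathbb{L}_0$ and let $\mathcal{G}'$ be a free stabilisation of it near some $w_i$. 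The stabilisation disc and the quasi-stabilisation region should be chosen disjoint, so that the quasi-stabilisation can be performed on both $\mathcal{H}_0$ and $\mathcal{G}'$. Write the results $\mathcal{H}_0^+$ and $\mathcal{H}^+$ respectively; the latter is then both quasi-stabilised and freely stabilised.

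The second step is to reduce to the stabilised diagram. By Remark \ref{rem: sutured stuff} and the proof of Proposition \ref{prop: canonical identification}, $g$ is represented on chains by $\phi_{\text{stab}}\circ\Phi_{\mathcal{H}\to\mathcal{G}'}$. Quasi-stabilisation commutes up to chain homotopy with the transition maps $\Phi$: this is the invariance of $T^+$ from \cite{QuasiMap}, specialised by setting $V_0=1$ and all other variables to $0$. It therefore suffices to prove the following identity on $\widehat{CFL}(\mathcal{G}')$:
\begin{equation*}
\phi_{\text{stab}}\circ T^+\simeq (T^+_0\otimes \text{Id}_V)\circ \phi_{\text{stab}} .
\end{equation*}
Here $T^+_0$ denotes the quasi-stabilisation map for $\mathcal{H}_0$. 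Since $\phi_{\text{stab}}(\mathbf{x})=\varphi^+(\mathbf{x})\otimes\theta^++\varphi^-(\mathbf{x})\otimes\theta^-$, this identity splits into its two components. They are precisely the two relations
\begin{equation*}
T^+_0\circ\varphi^{\pm}\simeq\varphi^{\pm}_+\circ T^+,
\end{equation*}
where $\varphi^\pm_+$ are the corresponding projections on $\mathcal{H}^+$, and these are the content of Lemma \ref{lem: commutativity from double neck stretching}.

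The third step is the gradings. The formula $T^+(\mathbf{x})=\mathbf{x}\otimes\xi^{\mathbf{w}}$ visibly commutes with tensoring by $\theta^\pm$ as sets of generators, so the only thing to check is that the identification of $\mathbb{F}\langle\theta^+,\theta^-\rangle$ with $V$ matches on both sides. This follows from Proposition \ref{prop: free stabilisation}.

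The main obstacle is the second step. One has to make sure that the almost complex structures used to define $\phi_{\text{stab}}$ (a long free-stabilisation neck) and $T^+$ (a long quasi-stabilisation neck) are compatible. The change-of-almost-complex-structure map between the two orderings of neck lengths must be homotopic to the identity, and this is exactly what the double neck stretching argument in Lemma \ref{lem: commutativity from double neck stretching} provides. Apart from that, the work is bookkeeping: checking that the transition maps relating $\mathcal{H}$ to $\mathcal{G}'$ and $\mathcal{H}^+$ to $\mathcal{H}^+$ (freely stabilised) can be taken away from the quasi-stabilisation region, so that the diagram commutes with $T^+$ applied throughout.
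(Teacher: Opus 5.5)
Your proposal is correct and follows essentially the same route as the paper. Both arguments commute $T^+_{z_0}$ past the transition maps to a freely stabilised diagram, using naturality of $(T^+)^-$ after setting $V_0=1$ and the other variables to $0$, then split $\phi_{\text{stab}}$ into its $\varphi^{\pm}$ components and invoke Lemma \ref{lem: commutativity from double neck stretching}. One notational point: you reuse $\mathcal{H}^+$ for the doubly stabilised diagram, and at one point write ``$\mathcal{H}^+$ to $\mathcal{H}^+$''; this clashes with the $\mathcal{H}^+$ in the statement, and the paper calls these diagrams $\mathcal{G}$ and $\mathcal{G}^+$ instead.
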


\begin{proof}
    As in the proof of Proposition \ref{prop: spec maps for forgetting components of bands} we want to show that,
    \begin{equation*}
        g'\circ T^+_{z_0} \circ g^{-1}=T_{L_0}^+\otimes \text{Id}_V.
    \end{equation*}
    To start let $\mathcal{G}_0$ be a Heegaard diagram for $\mathbb{L}\backslash\mathbb{L}_0$. Then we let $\mathcal{G}_0^+$ be the result of quasi-stabilisation, $\mathcal{G}$ the result of a free stabilisation and $\mathcal{G}^+$ the result of both stabilisations. Further, let $T^+_{\mathcal{G}}\colon \widehat{CFL}(\mathcal{G}) \to \widehat{CFL}(\mathcal{G}^+)$ be the quasi-stabilisation map.
    
    First, we want to show that,
    \begin{equation}
    \label{eq: com diagram for stab 1}
        \phi_{\text{stab}}\circ T^+_{\mathcal{G}}\simeq (T^+_{L_0}\otimes \text{Id}_V)\circ \phi_{\text{stab}}.
    \end{equation}
    Recall that for $i \in \{+,-\}$ we have,
    \begin{equation*}
        \phi_{\text{stab}}(\mathbf{x}\times \theta^i)=\varphi^+(\mathbf{x}\times \theta^i)\otimes \theta^++\varphi^-(\mathbf{x}\times \theta^i)\otimes\theta^-.
    \end{equation*}
    Then by applying Lemma \ref{lem: commutativity from double neck stretching} we get the commutativity of equation \eqref{eq: com diagram for stab 1}.

    Now as in the proof of Proposition \ref{prop: spec maps for forgetting components of bands} we need to show that the following diagram commutes,
    \begin{equation}
    \label{eq: com diagram for stab 2}
        \begin{tikzcd}
            (\widehat{CFL}(\mathcal{H}),\partial_{z_0}) \arrow[rr,"T^+_{z_0}"] \arrow[dd,"\Phi_{\mathcal{H}\to \mathcal{G}}"] && (\widehat{CFL}(\mathcal{H}^+),\partial_{z_0}) \arrow[dd,"\Phi_{\mathcal{H}^+\to \mathcal{G}^+}"] \\
            && \\
            \widehat{CFL}(\mathcal{G}) \arrow[rr,"T^+_{\mathcal{G}}"] &&\widehat{CFL}(\mathcal{G}^+)
        \end{tikzcd}
    \end{equation}
    This follows from identical arguments to the previous proofs by noting that $(T^+)^-$ is a map of transitive systems so commutes with the maps from Heegaard moves and then specialising the variables. Further, the vertical maps come from Remark \ref{rem: sutured stuff}. Now combining equation \eqref{eq: com diagram for stab 1} and diagram \eqref{eq: com diagram for stab 2} we get the result.
\end{proof}

Combining these results for the band map and the quasi-stabilisation gives the result for the pair of pants map. Formally, we have the following.

\begin{cor}
\label{cor: spec maps for pants}
    Let $P\colon \mathbb{L}\to \mathbb{L}'$ be a pair of pants cobordism, that is a composition of a quasi-stabilisation and then an $\alpha$-band $B$. Then,
    \begin{equation*}
        F_P\colon \widehat{CFL}_{z_0}(\mathbb{L})\to \widehat{CFL}_{z_0}(\mathbb{L}'),
    \end{equation*}
    is a morphism of transitive systems and is a degree 0 filtered chain map. Further, the induced map on homology,
    \begin{equation*}
        (F_{P})_*\colon \widehat{HFL}_{z_0}(\mathbb{L})\to \widehat{HFL}_{z_0}(\mathbb{L}'),
    \end{equation*}
    is equal to $F_{P\backslash(L_0\times I)}\otimes \text{Id}_V$ under the canonical isomorphisms.
\end{cor}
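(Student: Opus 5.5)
We write $F_P = F_{\Sigma_B}\circ T^+$ at the chain level, where $T^+$ is the quasi-stabilisation on the component(s) of $\mathbb{L}$ meeting the annulus and $\Sigma_B$ is the $\alpha$-band cobordism. The component $L_0$ is assumed disjoint from the annulus, so it is a cylinder component of $P$. In the split-unknot description of Definition \ref{defn: pair of pants cobordism}, the unknot $U$ with two basepoints is first quasi-stabilised, and then the band joins it to the rest. On the $\partial_{z_0}$ complexes we define
\begin{equation*}
F_{P,z_0}:=F_{\mathcal{T},z_0}(\boldsymbol{\theta}^{\mathbf{w}}\otimes -)\circ T^+_{z_0},
\end{equation*}
where $\mathcal{T}$ is a Heegaard triple subordinate to $B$ whose $\alpha\beta$ diagram is the quasi-stabilised diagram $\mathcal{H}^+$. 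Such a triple can be chosen, since after quasi-stabilisation the band connects arcs of $L\backslash(\mathbf{w}\cup\mathbf{z})$ and $L_0$ still carries exactly the basepoints $w_0,z_0$.

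The first claim follows by combining earlier lemmas. Lemma \ref{lem: quasi-stab maps are degree 0 and chain maps} shows that $T^+_{z_0}$ is a degree $0$ filtered chain map and a morphism of transitive systems. Lemma \ref{lem: triangle map is well defined and a filtred map} shows the same for $F_{\mathcal{T},z_0}(\boldsymbol{\theta}^{\mathbf{w}}\otimes -)$. Composites of degree $0$ filtered chain maps are degree $0$ filtered, and composites of morphisms of transitive systems are again morphisms of transitive systems, because the defining squares \eqref{eq: morphism of transitive systems} can be stacked up to chain homotopy. We also need the composite to be independent of the choices of $\mathcal{H}^+$ and $\mathcal{T}$ up to chain homotopy. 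This follows because $F_P$ on $CFL^-$ is a well-defined morphism of transitive systems, and specialising $V_0=1$ and all other variables to $0$ preserves this, exactly as in those lemmas.

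For the homology statement, we take homology of the composite, giving $(F_{P,z_0})_*=(F_{\mathcal{T},z_0}(\boldsymbol{\theta}^{\mathbf{w}}\otimes-))_*\circ(T^+_{z_0})_*$. We then insert the canonical isomorphism $g''$ of Proposition \ref{prop: canonical identification} for the intermediate link $\mathbb{L}^+$:
\begin{equation*}
g'\circ (F_P)_*\circ g^{-1}=\big(g'\circ (F_{\mathcal{T},z_0})_*\circ g''^{-1}\big)\circ\big(g''\circ (T^+_{z_0})_*\circ g^{-1}\big).
\end{equation*}
By Proposition \ref{prop: spec maps for forgetting components of bands} the first factor is $F_{\Sigma_B\backslash L_0\times I}\otimes\text{Id}_V$. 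By Proposition \ref{prop: spec map for forgetting components of quasi-stab} the second factor is $T^+_{L_0}\otimes\text{Id}_V$. Their composite is $(F_{\Sigma_B\backslash L_0\times I}\circ T^+_{L_0})\otimes \text{Id}_V$. By the composition law for link cobordism maps \cite{CobMapPaper}, this equals $F_{P\backslash (L_0\times I)}\otimes\text{Id}_V$, since $P\backslash(L_0\times I)$ is exactly the quasi-stabilisation followed by the band on $\mathbb{L}\backslash\mathbb{L}_0$.

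The main point to check is that the identification $g''$ used as the target in Proposition \ref{prop: spec map for forgetting components of quasi-stab} agrees with the one used as the source in Proposition \ref{prop: spec maps for forgetting components of bands}. Both are defined through Proposition \ref{prop: canonical identification}, which is canonical and so independent of the diagrams chosen. So this reduces to observing that the two propositions use the same $g''$. A second point to verify is that the hypotheses of those propositions hold for the intermediate link: $L_0$ must still have exactly two basepoints and must not meet the quasi-stabilised component or the band. This holds because $L_0$ is a cylinder component of $P$.
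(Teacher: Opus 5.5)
Your proposal is correct and is essentially the paper's proof: the first claim comes from Lemmas \ref{lem: quasi-stab maps are degree 0 and chain maps} and \ref{lem: triangle map is well defined and a filtred map}, and the second from composing Propositions \ref{prop: spec map for forgetting components of quasi-stab} and \ref{prop: spec maps for forgetting components of bands}; you simply make explicit the intermediate identification $g''$ and the functoriality step, which the paper leaves implicit. One harmless slip is that in the pair of pants the band splits the quasi-stabilised unknot into $K_+$ and $K_-$ rather than joining it to the rest of the link, and this does not affect the argument.
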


\begin{proof}
    The first statement that $F_P$ is a morphism of transitive systems and degree 0 follows from Lemmas \ref{lem: triangle map is well defined and a filtred map} and \ref{lem: quasi-stab maps are degree 0 and chain maps}. Propositions \ref{prop: spec maps for forgetting components of bands} and \ref{prop: spec map for forgetting components of quasi-stab} together prove the second claim about the induced map on homology.
\end{proof}

\subsection{Example of Forgetting Components}
\hfill

We now consider an example  that illustrates the difference between the map $f_*$ and $E^{\infty}(f)$ for band maps. This example is inspired by the calculations in \cite[Proposition 4.2]{floerlasagna}.

\begin{exa}
\label{ex: forgetting cobordisms}
Let $\mathbb{L}$ be two disjoint unknots one labelled $L_0$ with two basepoints $w_0$ and $z_0$, and the other labelled $L_1$ with four basepoints. Further, let $\mathbb{L}'$ be the three component link where one component is the unknot $L_0$ with the same two basepoints and the other components $L_1'$ and $L_2'$ are meridians of $L_0$ oriented in opposite directions both with two basepoints. Now there is a link cobordism $\Sigma\colon \mathbb{L}\to \mathbb{L}'$ which is an $\alpha$-band, where the band $B$ intersects just the component $L_1$ in $L$. See Figure \ref{fig: example cob for spec seq sec} for a diagram.

\begin{figure}[h]
    \centering
    \def\svgwidth{1\textwidth}
\begingroup%
  \makeatletter%
  \providecommand\color[2][]{%
    \errmessage{(Inkscape) Color is used for the text in Inkscape, but the package 'color.sty' is not loaded}%
    \renewcommand\color[2][]{}%
  }%
  \providecommand\transparent[1]{%
    \errmessage{(Inkscape) Transparency is used (non-zero) for the text in Inkscape, but the package 'transparent.sty' is not loaded}%
    \renewcommand\transparent[1]{}%
  }%
  \providecommand\rotatebox[2]{#2}%
  \newcommand*\fsize{\dimexpr\f@size pt\relax}%
  \newcommand*\lineheight[1]{\fontsize{\fsize}{#1\fsize}\selectfont}%
  \ifx\svgwidth\undefined%
    \setlength{\unitlength}{568.61830343bp}%
    \ifx\svgscale\undefined%
      \relax%
    \else%
      \setlength{\unitlength}{\unitlength * \real{\svgscale}}%
    \fi%
  \else%
    \setlength{\unitlength}{\svgwidth}%
  \fi%
  \global\let\svgwidth\undefined%
  \global\let\svgscale\undefined%
  \makeatother%
  \begin{picture}(1,0.35211543)%
    \lineheight{1}%
    \setlength\tabcolsep{0pt}%
    \put(0,0){\includegraphics[width=\unitlength,page=1]{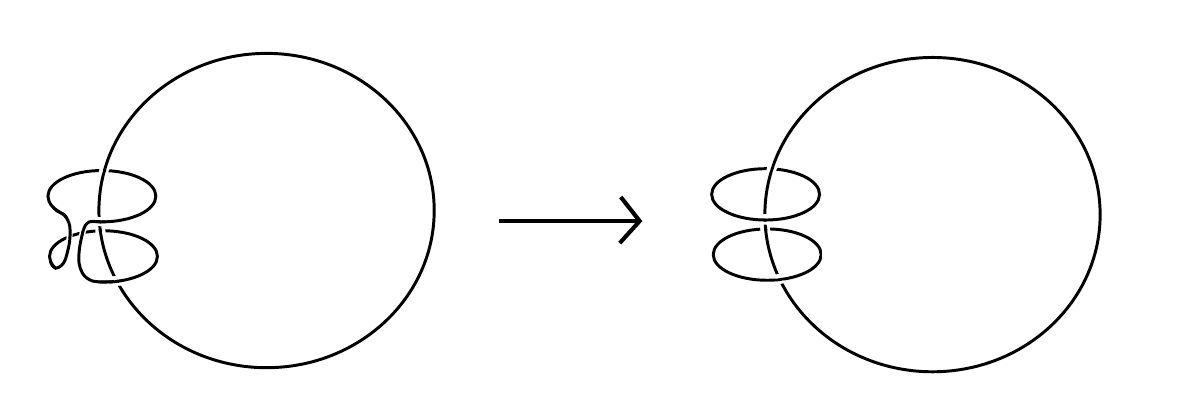}}%
    \put(0.47672472,0.18583666){\color[rgb]{0,0,0}\makebox(0,0)[lt]{\lineheight{1.25}\smash{\begin{tabular}[t]{l}$\Sigma$\end{tabular}}}}%
    \put(0.37854952,0.25305573){\color[rgb]{0,0,0}\makebox(0,0)[lt]{\lineheight{1.25}\smash{\begin{tabular}[t]{l}$L_0$\end{tabular}}}}%
    \put(0.03891631,0.22121514){\color[rgb]{0,0,0}\makebox(0,0)[lt]{\lineheight{1.25}\smash{\begin{tabular}[t]{l}$L_1$\end{tabular}}}}%
    \put(0.60408717,0.21856176){\color[rgb]{0,0,0}\makebox(0,0)[lt]{\lineheight{1.25}\smash{\begin{tabular}[t]{l}$L_1'$\end{tabular}}}}%
    \put(0.59878041,0.10181283){\color[rgb]{0,0,0}\makebox(0,0)[lt]{\lineheight{1.25}\smash{\begin{tabular}[t]{l}$L_2'$\end{tabular}}}}%
    \put(0.94283586,0.2450956){\color[rgb]{0,0,0}\makebox(0,0)[lt]{\lineheight{1.25}\smash{\begin{tabular}[t]{l}$L_0$\end{tabular}}}}%
    \put(0,0){\includegraphics[width=\unitlength,page=2]{Cobordism_map_example_for_spec_seq_section.pdf}}%
  \end{picture}%
\endgroup%

    \caption{Diagram of band map in Example \ref{ex: forgetting cobordisms}.}
    \label{fig: example cob for spec seq sec}
\end{figure}

Note that $L_0\times I$ is a component of $\Sigma$. Hence, we also consider the cobordism $\Sigma\backslash (L_0\times I)$, which is also a band map. For this example we denote the associated maps by,
\begin{equation*}
    F:=F_{\Sigma}\colon \widehat{HFL}(\mathbb{L})\to \widehat{HFL}(\mathbb{L}') \qquad \text{and} \qquad G:=F_{\Sigma\backslash (L_0\times I)}\colon \widehat{HFL}(\mathbb{L}_1)\to \widehat{HFL}(\mathbb{L}_1' \cup \mathbb{L}_2').
\end{equation*}
We will also write $f\colon \widehat{CFL}_{z_0}(\mathbb{L})\to \widehat{CFL}_{z_0}(\mathbb{L}')$ for the morphism of transitive systems. Then, we want to compare $f_*$ and $E^\infty(f)$, and show how this gives us some information about $F=E^{1}(f)$. Note that by Proposition \ref{prop: spec maps for forgetting components of bands} we have $g'\circ f_*\circ g^{-1}=G\otimes \text{Id}_V$.

\begin{figure}[h]
    \centering
    \def\svgwidth{0.5\textwidth}
    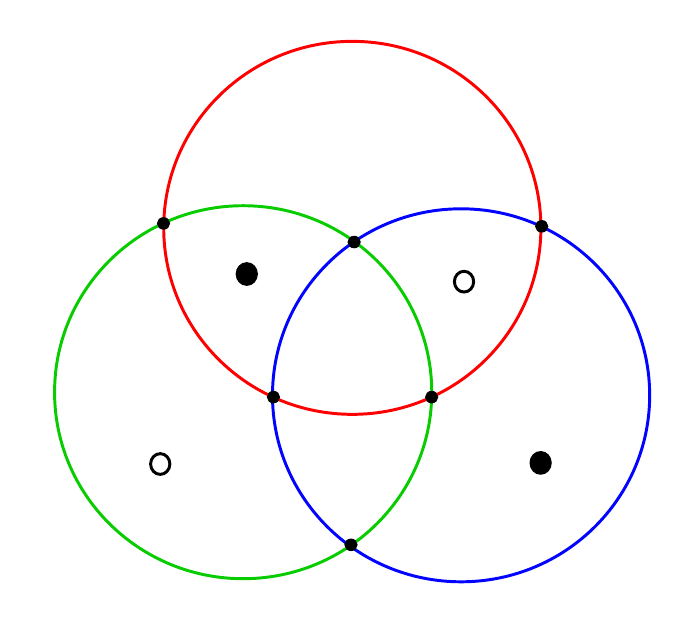
    \caption{Heegaard triple for $\Sigma\backslash(L_0\times I)$ that computes the map $G$.}
    \label{fig: Heegaard Triple for simple band map}
\end{figure}

To start we analyse the cobordism $\Sigma\backslash (L_0\times I)$. Note that we have the identifications,
\begin{equation*}
    \widehat{HFL}(\mathbb{L}_1)\cong \mathbb{F}\langle x,y \rangle \qquad \text{and} \qquad \widehat{HFL}(\mathbb{L}_1'\cup \mathbb{L}_2')\cong \mathbb{F}\langle a,b\rangle,
\end{equation*}
where $M(x)=-1,M(y)=0,A(x)=-\frac{1}{2}$ and $A(y)=\frac{1}{2}$, and $M(a)=-1,M(b)=0$ and $A(a)=A(b)=0$. Then we claim the map associated to this cobordism is given by $G(x)=a$ and $G(y)=0$. To show this we first compute using Theorem \ref{thm: grading change formulas} that the shift in the Alexander grading is $\frac{1}{2}$ and the shift in the Maslov grading is $0$. Note that this implies $G(y)=0$. To compute $G(x)$ we consider the Heegaard triple in Figure \ref{fig: Heegaard Triple for simple band map}. We claim that the grey triangle is the only holomorphic triangle contributing to $G(x)$. To see this we first note that by the grading change formula there are no holomorphic triangles connecting $x$, $\Theta^{\mathbf{w}}$ and $b$. Next we consider, 
\begin{equation*}
    D:=c_1T_1+c_2T_2 +c_3T_3+c_4T_4,
\end{equation*}
and note that we must have $\partial(\partial D \cap \alpha')=a-\theta^{\mathbf{w}}$ so that it represents a class in $\pi_2(\theta^{\mathbf{w}},x,a)$.  Further, from the diagram we have 
\begin{equation*}
    \partial(\partial D \cap \alpha')=(c_4-c_1)\theta^{\mathbf{w}} +(c_1+c_2)a-(c_3+c_4)b+(c_3-c_2)\theta^{\mathbf{z}},
\end{equation*}
and by the fact that for a holomorphic disc to exist we must have $c_i\geq 0$ for all $i$, we have that the only solution is $c_1=1$ and $c_2=c_3=c_4=0$. Finally, we note that by the Riemann mapping theorem there is a unique holomorphic representative in this class of Whitney triangles, and this class clearly has Maslov index $0$. Therefore, $G(x)=a$ and so by Proposition \ref{prop: spec maps for forgetting components of bands} we get a description of $f_*$.

We now want to examine both of the spectral sequences. To do this we first note that,
\begin{equation*}
    \widehat{HFL}(\mathbb{L})\cong \widehat{HFL}(\mathbb{L}_1)\otimes V\cong \mathbb{F}\langle \tilde{x},\tilde{y}\rangle\otimes V,
\end{equation*}
where the first isomorphism uses Lemma \ref{lem:  unknot component} and we add the tilde to distinguish the generators of the $E^1$- and $E^{\infty}$-page. The second isomorphism is from the previous paragraph. Thus as $L\backslash L_0=L_1$ we have that the spectral sequence collapses on the $E^1$-page. Therefore, we will abuse notation and drop the tildes from now. 

We now compute the second spectral sequence and the grading on the $E^\infty$-page. First, from \cite[Proposition 4.2]{floerlasagna} we have that,
\begin{equation*}
    \dim\left(\widehat{HFL}(\mathbb{L}')\right)=16,
\end{equation*}
this comes from the fact it is alternating and from computing the Alexander polynomial. To write the gradings we define the following notation,
\begin{equation*}
    \mathbb{F}_{(i,(j_0,j_1+j_2))}^d\cong \widehat{HFL}_i(\mathbb{L}',(j_0,j_1+j_2)),
\end{equation*}
where $d$ is the dimension, $i$ the Maslov grading and $(j_0,j_1+j_2)$ is the collapsed Alexander grading. Using this we can write the gradings of $\widehat{HFL}(\mathbb{L}')$, which is the $E^1$-page, with the last two Alexander multigradings collapsed,
\begin{equation}
\label{eq: HFL from example}
    \begin{tikzcd}
        \mathbb{F}_{(-1,(-1,1))} & \mathbb{F}^2_{(0,(0,1))} \arrow[l, two heads] & \mathbb{F}_{(1,(1,1))} \arrow[l,hook',"i"'] \\
        \mathbb{F}_{(-2,(-1,0))}^2 & \mathbb{F}^4_{(-1,(0,0))} \arrow[l] & \mathbb{F}_{(0,(1,0))}^2 \arrow[l]\\
        \mathbb{F}_{(-3,(-1,-1))} & \mathbb{F}^2_{(-2,(0,-1))} \arrow[l, two heads] & \mathbb{F}_{(-1,(1,-1))}. \arrow[l,hook']
    \end{tikzcd}
\end{equation}
Further, the $E^\infty$-page is,
\begin{equation*}
    E^{\infty}\left(\widehat{CFL}(\mathbb{L'})\right)\cong\widehat{HFL}(\mathbb{L}_1'\cup \mathbb{L}_2')\otimes V\cong \mathbb{F}\langle a,b \rangle\otimes V,
\end{equation*}
and is supported in Alexander grading $0$. Using this we see that as a bigraded vector space the $E^{\infty}$-page is,
\begin{equation}
\label{eq: H*C' for example}
\mathbb{F}_{(-2,0)}\oplus\mathbb{F}_{(-1,0)}^2\oplus\mathbb{F}_{(0,0)}
\end{equation}
where $\mathbb{F}_{(i,j)}^d$ is the part supported in Maslov grading $i$ and Alexander grading $j$. From this it is clear that the spectral sequence collapses on the $E^2$-page. Further, from inspection of equation \eqref{eq: HFL from example} we can see the grading on $E^\infty$-page is given by,
\begin{equation}
\label{eq: Einfty' for example}
    \mathbb{F}_{(-2,(-1,0))}\oplus \mathbb{F}^2_{(-1,(0,0))}\oplus \mathbb{F}_{(0,(1,0))},
\end{equation}
where the grading notation is as before.

We now claim that,
\begin{equation*}
    E^{\infty}(f)(x\otimes B)=E^{\infty}(f)(y\otimes B)=E^{\infty}(f)(y\otimes T)=0 \qquad \text{and} \qquad E^{\infty}(f)(x\otimes T)\neq 0.
\end{equation*}
Note that this differs from $f_{*}$ as $f_*(x\otimes B)=a\otimes B$. To see the above equalities note that the first $E^1$-page is isomorphic to,
\begin{equation*}
    \mathbb{F}_{(-2,(0,-\frac{1}{2}))}\oplus \mathbb{F}_{(-1,(0,-\frac{1}{2}))}\oplus \mathbb{F}_{(-1,(0,\frac{1}{2}))} \oplus \mathbb{F}_{(0,(0,\frac{1}{2}))},
\end{equation*}
where each of these is generated in order by $x\otimes B,x\otimes T,y\otimes B,y\otimes T$. Further, by Theorem \ref{thm: grading change formulas} $f$ increases the final grading by $\frac{1}{2}$. Also by Lemma \ref{lem: f_* and E^infty (f)} $E^{\infty}(f)$ is graded with respect to the first Alexander grading. These together give the equalities to $0$.

To see non-vanishing of $E^{\infty}(f)(x\otimes T)$ we note that $\widehat{HFL}_{z_0}(\mathbb{L})$ is isomorphic to,
\begin{equation*}
    \mathbb{F}_{(-2,-\frac{1}{2})}\oplus \mathbb{F}_{(-1,-\frac{1}{2})}\oplus \mathbb{F}_{(-1,\frac{1}{2})} \oplus \mathbb{F}_{(0,\frac{1}{2})}.
\end{equation*}
So there is a canonical isomorphism to the associated graded vector space by Lemma \ref{lem: filtration and associated grading in single degree} and the fact that the filtration respects these gradings. Similarly applying Lemma \ref{lem: filtration and associated grading in single degree} to $\widehat{HFL}_{z_0}(\mathbb{L}')$ and using equation \eqref{eq: H*C' for example} and \eqref{eq: Einfty' for example} we get a canonical isomorphism. So abusing notation we have,
\begin{equation*}
    E^{\infty}(f)(x\otimes T)=f_*(x\otimes T)=(G\otimes \text{Id}_V)(x\otimes T)=a\otimes T.
\end{equation*}

We can use this to compute some values of $F=E^1(f)$. First, from a grading change argument we see that $F(x\otimes B)=F(y\otimes B)=0$. Further, we see that $F(x\otimes T)\neq 0$ as,
\begin{equation*}
[F(x\otimes T)]^{\infty}=E^{\infty}(f)(x\otimes T)\neq 0,
\end{equation*}
where by $[\cdot]^{\infty}$ we mean the representative of that element on the $E^{\infty}$-page. This example also shows a limitation of this trick as we cannot determine $F(y\otimes T)$. To see this first note that by grading arguments $F(y\otimes T)\in\mathbb{F}^2_{(0,(0,1))}$ and so by the above calculations $E^{\infty}(f)(y\otimes T)=0$. Now there is no way to tell from this information if
\begin{equation*}
    F(y\otimes T)=0 \qquad \text{or} \qquad 0\neq F(y\otimes T)\in \text{im}(i),
\end{equation*}
where $i$ is the map in equation \eqref{eq: HFL from example}. This completes the example.
\end{exa}

The above example gives an outline of how the rest of the paper will proceed. The main idea is that we will be using the same trick as in the last paragraph to show that infinitely many pair of pants maps are non-vanishing. To get to this trick we will need to know both the $E^{1}$- and $E^{\infty}$-pages of the spectral sequences. We spend the next section setting this up. Then an inductive argument is used to prove that if one of these pair of pants maps is non-vanishing the higher ones are too. Finally, we specialise to the case of the $(-n)$-framed unknot, where we compute a simple cobordism. This then allows us to prove non-vanishing of $\mathcal{FL}(X_{-n}(U))$.

\section{Link Floer Homology of $K_{-n}(a,b)$}
\label{sec: Link Floer Homology of T(r,rn)}

Our main aim is to use Theorem \ref{thm: cable and lasagna} to prove some non-vanishing results about Floer lasagna modules of traces of negative L-space knots. To do this we will use Proposition \ref{prop: spec maps for forgetting components of bands}, which means as in Example \ref{ex: forgetting cobordisms} we need to understand the $E^1$- and $E^\infty$-pages of the forgetting components link cobordisms. This is the primary aim of this section. Also note to ease notation from now on we will use $a$ and $b$ rather than $a_+$ and $a_-$.

We start by fully computing the link Floer homology of $U_{-n}(a,b)$ for all $n,a$ and $b$. This is a generalisation of computations of the link Floer homology of $T(n,n)$-torus links started in \cite{nntorus} and completed in \cite{linkLspacernrm}. The main tool of this section is \cite[Theorem 3]{linkLspacernrm} which gives a formula for the link Floer homology of cables of L-space knots. Our contribution is explicitly applying this to the cables of the unknot with varying orientation and certain Alexander multigradings for all L-space knots. We start by recalling the symmetry of link Floer homology in the Alexander multigrading about $0$. For a proof see \cite{OSLinkFloer}.

\begin{lem}
\label{lem: alexander symmetry}
    Let $\mathbb{L}$ be an $r$-component link, $i\in \mathbb{Z}$ and $\mathbf{k}\in\mathbb{Z}^r$. Then $\widehat{HFL}(\mathbb{L})$ has the following symmetry,
    \begin{equation}
        \widehat{HFL}_{i}(L,\textbf{k})\cong \widehat{HFL}_{i-2|\textbf{k}|}(L,-\textbf{k}).
    \end{equation}
\end{lem}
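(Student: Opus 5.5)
The plan is to realise the symmetry at the chain level by a conjugation argument on a Heegaard diagram, and then read off the grading shifts. Take a multipointed Heegaard diagram $\mathcal{H}=(\Sigma,\boldsymbol{\alpha},\boldsymbol{\beta},\mathbf{w},\mathbf{z})$ for $\mathbb{L}$. Form the conjugate diagram $\overline{\mathcal{H}}=(-\Sigma,\boldsymbol{\beta},\boldsymbol{\alpha},\mathbf{z},\mathbf{w})$. The two changes are made together: reversing the orientation of $\Sigma$ swaps the roles of the handlebodies, and exchanging the $\mathbf{w}$ and $\mathbf{z}$ basepoints reverses the orientation of every component. Two effects then undo each other. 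Swapping $\boldsymbol{\alpha}$ with $\boldsymbol{\beta}$ and reversing the orientation of $\Sigma$ gives the same $3$-manifold $S^3$. Swapping $\mathbf{w}$ with $\mathbf{z}$ reverses the link orientation once more, so $\overline{\mathcal{H}}$ is again a diagram for $\mathbb{L}$ with its original orientation.

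The generators $\mathbb{T}_{\boldsymbol{\alpha}}\cap\mathbb{T}_{\boldsymbol{\beta}}$ coincide for the two diagrams. A Whitney disc $\phi\in\pi_2(\mathbf{x},\mathbf{y})$ in $\mathcal{H}$ corresponds to a disc $\bar\phi\in\pi_2(\mathbf{x},\mathbf{y})$ in $\overline{\mathcal{H}}$ obtained by precomposing with complex conjugation on the strip. It has the same domain, the same Maslov index, and the same moduli space when the conjugate almost complex structure is used. Under this correspondence the roles of $n_{\mathbf{w}}$ and $n_{\mathbf{z}}$ are exchanged, but for $\widehat{CFL}$ both must vanish. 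Hence the identity on generators is a chain isomorphism $\widehat{CFL}(\mathcal{H})\to\widehat{CFL}(\overline{\mathcal{H}})$, and by Lemma \ref{lem: transitive system for hat} it induces an isomorphism on $\widehat{HFL}(\mathbb{L})$.

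It remains to compute the gradings of a generator $\mathbf{x}$ in $\overline{\mathcal{H}}$ relative to $\mathcal{H}$. For the Alexander multigrading, relative differences are given by $A_i(\mathbf{x})-A_i(\mathbf{y})=n_{z_i}(\phi)-n_{w_i}(\phi)$, which flips sign once the basepoints are swapped. So $\overline{A}_i=-A_i+c_i$ for constants $c_i$. The constants are fixed by the normalisation of the Alexander grading via the symmetrised multivariable Alexander polynomial, or equivalently the Euler characteristic, which forces $c_i=0$. For the Maslov grading, use the absolute grading formula $M_{\mathbf{w}}$ from the $\mathbf{w}$-basepoints. In $\overline{\mathcal{H}}$ it becomes $M_{\mathbf{z}}$ computed on the original diagram, and the standard identity $M_{\mathbf{z}}(\mathbf{x})=M_{\mathbf{w}}(\mathbf{x})-2|A(\mathbf{x})|$ for links in $S^3$ gives $\overline{M}=M-2|\mathbf{k}|$. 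Putting these together, a generator in bigrading $(i,\mathbf{k})$ for $\mathcal{H}$ sits in bigrading $(i-2|\mathbf{k}|,-\mathbf{k})$ for $\overline{\mathcal{H}}$, which is the claimed isomorphism.

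The main obstacle is fixing the absolute constants. This means checking that the conjugation really preserves the link orientation, and that the additive constants in both the Alexander and the Maslov gradings vanish, rather than only the relative statements. I would handle this by appealing to the characterisation of absolute gradings in \cite{OSLinkFloer}: the Alexander grading is pinned down by the symmetry of the multivariable Alexander polynomial, and the Maslov grading by the $\widehat{HF}(S^3)$ normalisation after forgetting the $\mathbf{z}$ basepoints, respectively the $\mathbf{w}$ basepoints.
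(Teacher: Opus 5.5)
Your strategy is the standard conjugation argument that the paper's citation of \cite{OSLinkFloer} points to; the paper gives no proof of its own. You pass to $\overline{\mathcal H}=(-\Sigma,\boldsymbol\beta,\boldsymbol\alpha,\mathbf z,\mathbf w)$, check that it presents the same oriented link, identify the two hat complexes generator by generator, and compare gradings. The chain-level identification and the Maslov bookkeeping are sound. One small slip: the anti-holomorphic map you want is the reflection $s+it\mapsto(1-s)+it$ of the strip. It swaps the $\boldsymbol\alpha$- and $\boldsymbol\beta$-boundaries while fixing both ends. Literal complex conjugation reverses $t$, so it would send $\pi_2(\mathbf x,\mathbf y)$ to $\pi_2(\mathbf y,\mathbf x)$.

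The genuine gap is the step you flag yourself, $c_i=0$. Your proposed mechanism does not work for the lemma as stated, which covers arbitrary links.

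\emph{Euler characteristic fails.} Normalising by the Euler characteristic only yields $\chi(T_1,\dots,T_r)=\pm\prod_i T_i^{c_i}\,\chi(T_1^{-1},\dots,T_r^{-1})$. This says nothing about the $c_i$ when $\chi\equiv 0$, which happens whenever the multivariable Alexander polynomial vanishes. Examples are boundary links and every split link $\mathbb L\sqcup U$: there $\widehat{HFL}(\mathbb L\sqcup U)\cong\widehat{HFL}(\mathbb L)\otimes V$, and $B,T$ sit in the same Alexander grading with opposite Maslov parity.

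\emph{Possible circularity.} Even when $\Delta_L\neq 0$, you would need $\chi$ to equal the symmetrised polynomial exactly, not just up to a unit $\pm\prod_i T_i^{h_i}$. That exact normalisation is typically derived from the symmetry you are trying to prove.

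\emph{What your argument does give.} Applying $M_{\mathbf z}=M_{\mathbf w}-2|A|$ in both $\mathcal H$ and $\overline{\mathcal H}$ intrinsically gives $\sum_i c_i=0$. So the collapsed grading is fine, but the individual $c_i$ are not controlled.

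\emph{The fix.} Use the intrinsic definition of the absolute multigrading through relative $\mathrm{Spin}^c$ structures:
\begin{itemize}
\item the Alexander multigrading of $\mathbf x$ is determined by $c_1(\underline{\mathfrak s}_{\mathbf w,\mathbf z}(\mathbf x))\in H^2(S^3,L)\cong\mathbb Z^r$;
\item the conjugate diagram assigns to $\mathbf x$ the conjugate relative $\mathrm{Spin}^c$ structure, up to fixed orientation bookkeeping;
\item $c_1(J\underline{\mathfrak s})=-c_1(\underline{\mathfrak s})$.
\end{itemize}
Together these give $\overline A_i=-A_i$ for every link, with no assumption on $\Delta_L$. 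This is how the argument in \cite{OSLinkFloer} closes.
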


Note that for $n>0$ the link $U_{n}(r,0)$ is the torus link $T(r,rn)$. Using this we have the following proposition about the link Floer homology of cables of torus links. We state this in terms of $T(r,rn)$ to draw the comparison to \cite{nntorus}.

\begin{prop}
\label{prop: HFL of Kn(r,0)}
    Let $n\geq 1$, $r\geq 1$ and $k=\frac{n}{2}(r-1)-ni-j$ be such that $-\frac{n}{2}(r-1)\leq k \leq \frac{n}{2}(r-1)$,  $i \in \mathbb{Z}_{\geq0}$ and $0\leq j \leq n-1$ . Then:
    \begin{itemize}
        \item for $i\leq\frac{r-2}{2}$ we have,
            \begin{align}
            \label{eq: Kn(r,0) i small}
                \widehat{HFL}\left(T(r,rn),(k,\ldots,k)\right)\cong \bigoplus_{p=0}^{i}\mathbb{F}_{-ni(i+1)-2j(i+1)-p}^{\binom{r-1}{p}}\oplus \bigoplus_{p=0}^{i-\delta_{j,0}}\mathbb{F}_{-ni(i+1)-2j(i+1)-r+2+p}^{\binom{r-1}{p}},
            \end{align}
        \item for $i>\frac{r-2}{2}$ we have,
            \begin{align}
            \label{eq: Kn(r,0) i large}
                \widehat{HFL}\left(T(r,rn),(k,\ldots,k)\right)\cong \bigoplus_{p=0}^{r-2-i+\delta_{j,0}}\mathbb{F}_{-ni(i+1)-2j(i+1)-p}^{\binom{r-1}{p}}\oplus \bigoplus_{p=0}^{r-2-i}\mathbb{F}_{-ni(i+1)-2j(i+1)-r+2+p}^{\binom{r-1}{p}},
            \end{align}
        \item finally for $\mathbf{k}\in \{k-1,k\}^r$ with $1\leq p\leq r-1$ coordinates equal to $k-1$ we have,
            \begin{equation}
            \label{eq: Kn(r,0) not a meeting point}
                \widehat{HFL}(T(r,rn),\mathbf{k})\cong \mathbb{F}_{-ni(i+1)-2j(i+1)-i-p}^{\binom{r-2}{i}}.
            \end{equation}
    \end{itemize}
    where $\delta_{j,0}=1$ if $j=0$ and $\delta_{j,0}=0$ otherwise.
\end{prop}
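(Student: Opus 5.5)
The plan is to use \cite[Theorem 3]{linkLspacernrm}, specialised to the unknot, which is an $L$-space knot with $g=0$; the hypothesis $n\geq 2g(K)=0$ then holds trivially. $T(r,rn)=U_n(r,0)$ is an $L$-space link, so its hat link Floer homology in multigrading $\mathbf{k}$ is determined by the $H$-function $H(\mathbf{v})$, $\mathbf{v}\in\mathbb{Z}^r$. In the form I would use, $\widehat{HFL}(T(r,rn),\mathbf{k})$ is the homology of a cube complex with one generator for each subset $B\subseteq\{1,\ldots,r\}$, supported at the lattice point $\mathbf{k}-e_B$. Its Maslov grading is determined by $-2H(\mathbf{k}-e_B)+|B|$ up to a global normalisation. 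The component of the differential along an edge $B\to B\cup\{m\}$ is nonzero exactly when $H$ is constant along that edge. I would first record this reduction and fix the normalisation so that the top generator in Alexander grading $\frac{n}{2}(r-1)\cdot(1,\ldots,1)$ has Maslov grading $0$. This should be checked against $T(2,2n)$ and the known answers for $T(n,n)$ from \cite{nntorus}.

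Second, I would write $H$ explicitly for $T(r,rn)$. By the symmetry of the link under permuting components, $H(\mathbf{v})$ depends only on the multiset of coordinates. The cabling formula of \cite{linkLspacernrm}, or equivalently the generalised Alexander polynomial $\prod$-formula for $T(r,rn)$, gives $H$ on the sorted vector $v_{(1)}\leq\cdots\leq v_{(r)}$. It is a sum of the one-variable staircase $H$-functions of the torus knot pieces, which is piecewise quadratic in the coordinates. Parametrising $k=\frac{n}{2}(r-1)-ni-j$ with $0\leq j\leq n-1$ is exactly what puts $k$ in the $i$-th "step". On the diagonal cube $\{k-1,k\}^r$, $H(\mathbf{k}-e_B)$ then depends only on $p=|B|$. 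Concretely, I expect it to equal $\frac{1}{2}\bigl(ni(i+1)+2j(i+1)\bigr)$ plus a correction $\min(p,i+1)$-type term. This correction is where $i$ versus $r-2-i$ enters.

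Third, I would compute the homology of the resulting Koszul-type complex. Edges on which $H$ is constant contribute acyclic pairs. After cancelling these, what survives on the diagonal are the subsets with $|B|\leq i$ (or $\leq r-2-i$) at the top, and a dual family at the bottom. The survivors in each size $p$ contribute a $\binom{r-1}{p}$. This binomial comes from cancelling along a fixed coordinate, say the first, which leaves subsets of the remaining $r-1$ coordinates. Their gradings are $-ni(i+1)-2j(i+1)-p$ and $-ni(i+1)-2j(i+1)-r+2+p$ respectively. The boundary term $\delta_{j,0}$ arises because when $j=0$ the point $k$ sits at a corner of the staircase, so one more edge is non-constant. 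The two regimes $i\leq\frac{r-2}{2}$ and $i>\frac{r-2}{2}$ are related by the Alexander symmetry of Lemma \ref{lem: alexander symmetry}. I would compute one regime and deduce the other by $\mathbf{k}\mapsto-\mathbf{k}$, checking that the Maslov shift $-2|\mathbf{k}|$ matches. For the off-diagonal multigradings $\mathbf{k}\in\{k-1,k\}^r$ with $p$ coordinates equal to $k-1$, the same cube is used with some coordinates already lowered. Sorting shows that $H$ now varies along all but $i$ of the free directions, and the homology collapses to a single grading of dimension $\binom{r-2}{i}$.

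The main obstacle is the second step: getting the exact closed form of $H$ near the diagonal, including the boundary behaviour at $j=0$. Every binomial count and grading offset depends on which edges of the cube have constant $H$. I would first verify the formula for small cases ($r=2,3$ and $n=1,2$) against \cite{nntorus} before running the general cancellation argument.
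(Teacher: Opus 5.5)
\textbf{There is a genuine gap in your first step, and the rest of the argument depends on it.} The cube you describe does not compute $\widehat{HFL}(L,\mathbf{k})$. Recall its shape: one copy of $\mathbb{F}$ at each $\mathbf{k}-e_B$, in grading $-2H(\mathbf{k}-e_B)+|B|$, with an edge map nonzero exactly when $H$ is constant along that edge.

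\begin{itemize}
\item \emph{What it actually is.} Up to conventions, it is the mod-$U$ reduction of the cube of $\mathbb{F}[U]$'s, with edge maps $U^{\Delta H}$, that computes $HFL^-(L,\mathbf{k})$ for an L-space link. Setting $U=0$ there does not give the hat theory.
\item \emph{Parity check.} Your complex has $2^r$ generators, so over $\mathbb{F}_2$ its homology is always even dimensional. The statement itself gives $\widehat{HFL}(T(2,2),(\tfrac12,\tfrac12))\cong\mathbb{F}_0$, which is one dimensional.
\item \emph{Smallest case.} Take $r=1$ and the unknot at Alexander grading $0$. Since $H(-1)=1\neq 0=H(0)$, there is no differential, and you get $\mathbb{F}_0\oplus\mathbb{F}_{-1}$ instead of $\mathbb{F}_0$.
\end{itemize}

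Getting from $HFL^-$ to $\widehat{HFL}$ requires the Koszul complex in $U_1,\ldots,U_r$. That needs the $U_i$-actions and the values of $H$ on the larger cube $\mathbf{k}+\{-1,0,1\}^r$, not just on $\mathbf{k}-\{0,1\}^r$. So the cancellations in your third step are not justified: the $\binom{r-1}{p}$ survivors, the $\delta_{j,0}$ edge, and the collapse to $\binom{r-2}{i}$. Your second step, the closed form of $H$ near the diagonal, is also only conjectured.

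\textbf{What the paper does instead.} The analysis you would need is exactly what \cite[Theorem 3]{linkLspacernrm} already packages, and the paper uses it as a black box. That theorem states the answer in terms of two one-variable functions:
\begin{itemize}
\item $\mathbf{h}$, read off from $q(t)=\frac{t^{-1}(t^{nr/2}-t^{-nr/2})}{(1-t^{-1})^2(t^{n/2}-t^{-n/2})}$;
\item $\beta(k)=\mathbf{h}(k-1)-\mathbf{h}(k)-1$.
\end{itemize}
The steps are then:
\begin{enumerate}
\item Expand $(1-t^{-1})^{-2}$ and $(1-t^{-n})^{-1}$. For $k=\frac n2(r-1)-ni-j$ this gives $2\mathbf{h}(k)=ni(i+1)+2j(i+1)$ and $\beta(k)=i$.
\item Compute $\beta(k+1)$: it equals $i$ when $j\geq 1$ and $i-1$ when $j=0$. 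This is the source of $\delta_{j,0}$.
\item Compare $\beta(k)+\beta(k+1)$ with $r-2$. Whether it is $\leq r-2$ or $\geq r-2$ selects between the two diagonal formulas.
\item The off-diagonal formula is the remaining case of the theorem.
\item Above $\frac n2(r-1)$ one has $\beta=-1$, so the groups vanish there. Lemma~\ref{lem: alexander symmetry} then discards the second term of $q(t)$.
\end{enumerate}

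If you follow your own first sentence and apply Theorem 3 this way, your second and third steps become unnecessary. If you want a derivation from the $H$-function alone, you must replace your cube by a correct model for the hat theory.
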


\begin{proof}
    Using \cite[Theorem 3]{linkLspacernrm} we can compute the link Floer homology of $T(r,rn)$, for all $n\geq 1$ and $r\geq1$. In their notation this is $U_{r,rn}$. To do this we consider the following Laurent series in $\mathbb{Z}[t^{\pm\frac{1}{2}}]$,
    \begin{equation}
        q(t)=\sum_{k\in\mathbb{Z}} \mathbf{h}(k)t^k=\frac{t^{-1}(t^{\frac{nr}{2}}-t^{-\frac{nr}{2}})}{(1-t^{-1})^2(t^{\frac{n}{2}}-t^{-\frac{n}{2}})},
    \end{equation}
    and the function $\beta\colon \mathbb{Z}\to \mathbb{Z}$ given by $\beta(k):=\mathbf{h}(k-1)-\mathbf{h}(k)-1$. We also have the following two identifications,
    \begin{equation*}
        (1-t^{-1})^{-2}=\sum_{i=0}^{\infty}(i+1)t^{-i} \qquad \text{and} \qquad (1-t^{-n})^{-1}=\sum_{i=0}^{\infty}t^{-ni}.
    \end{equation*}
    Now substituting these in and rearranging we get,
    \begin{align}
        q(t)&=(t^{\frac{n}{2}(r-1)-1}-t^{-\frac{n}{2}(r+1)-1})\left(\sum_{i=0}^{\infty}\sum_{j=0}^{n-1}\left(\frac{ni(i+1)}{2}+(i+1)(j+1)\right)t^{-(ni+j)}\right).
    \end{align}
    Note that for $k=\frac{n}{2}(r-1)+l$ with $l\geq 0$ we have that $\mathbf{h}(k)=0$. Therefore, we have $\beta(\frac{n}{2}(r-1)+l)=-1$ for $l>0$ and so by \cite[Theorem 3]{linkLspacernrm} we have,
    \begin{equation*}
        \widehat{HFL}(T(r,rn),(k,\ldots,k))\cong \{0\}.
    \end{equation*}
    Now using this, the fact that $-\frac{n}{2}(r+1)-1<-k$ and Lemma \ref{lem: alexander symmetry} we see that we only need to consider the following Laurent polynomial,
    \begin{equation}
    \label{eq: unknot simp poly}
        p(t)=\sum_{i=0}^{\infty}\sum_{j=0}^{n-1}\left(\frac{ni(i+1)}{2}+(i+1)(j+1)\right)t^{\frac{n(r-1)}{2}-1-ni-j}.
    \end{equation}
    From the above we can extract the functions $\mathbf{h}\colon \frac{1}{2}\mathbb{Z}\to \mathbb{Z}$ and $\beta\colon\frac{1}{2}\mathbb{Z}\to \mathbb{Z}$. In fact for 
    \begin{equation*}
        k=\frac{n}{2}(r-1)-ni-j,
    \end{equation*}
    we have,
    \begin{equation*}
        2\mathbf{h}(k)=ni(i+1)+2j(i+1) \qquad \text{and} \qquad \beta(k)=i
    \end{equation*}
    Note that we have reindexed $j$ here. Thus, for $i\leq\frac{r-2}{2}$ we have that $\beta(k)+\beta(k+1)\leq r-2$ so by \cite[Theorem 3]{linkLspacernrm} we get equation \eqref{eq: Kn(r,0) i small}. Similarly, for $i>\frac{r-2}{2}$ we have $\beta(k)+\beta(k+1)\geq r-2$ and again applying \cite[Theorem 3]{linkLspacernrm} we get equation \eqref{eq: Kn(r,0) i large}. Finally, again applying \cite[Theorem 3]{linkLspacernrm} we get equation \eqref{eq: Kn(r,0) not a meeting point}.
\end{proof}

Using this proposition we can fully compute the link Floer homology of $U_{n}(a,b)$ and $U_{-n}(a,b)$. To do this we simply need to know how link Floer homology changes under changing orientations and mirroring. This is the content of the following two lemmas from \cite{OSHFOG}, although we use the formulation from \cite{LemmasPaper}. The first tells us about orientation reversals.

\begin{lem}
\label{lem: orientation reversing}
    Let $\mathbb{L}$ be a pointed $n$-component link and $L_{k}$ be the $k^{th}$ component of $L$. Let $\mathbb{L}'$ be the result of reversing the orientation of $L_{k}$. Then the link Floer homologies of $\mathbb{L}$ and $\mathbb{L}'$ are related by the following isomorphism,
    \begin{equation}
    \label{eq: orientation reversal iso}
        \widehat{HFL}_{i}(\mathbb{L},(j_1,\ldots, j_n))\cong \widehat{HFL}_{i-2j_k+l_{k}}(\mathbb{L}',(j_1,\ldots, -j_k,\ldots , j_n))
    \end{equation}
    where $l_k$ is the linking number of $L_{k}$ with the rest of $L$.
\end{lem}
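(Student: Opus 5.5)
This is a standard fact, cited from \cite{OSHFOG} in the formulation of \cite{LemmasPaper}. I would prove it at the level of Heegaard diagrams. The plan is to use one diagram for both links and check that the grading functions transform as claimed, generator by generator.

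Let $\mathcal{H}=(\Sigma,\boldsymbol{\alpha},\boldsymbol{\beta},\mathbf{w},\mathbf{z})$ be a diagram for $\mathbb{L}$. Let $\mathcal{H}'$ be the diagram obtained by swapping the roles of the $w$ and $z$ basepoints lying on $L_k$, leaving all other basepoints alone. Since the link is recovered by joining $\mathbf{w}$ to $\mathbf{z}$ in the $\alpha$-handlebody and $\mathbf{z}$ to $\mathbf{w}$ in the $\beta$-handlebody, swapping the basepoints on $L_k$ reverses exactly its orientation. Hence $\mathcal{H}'$ is a diagram for $\mathbb{L}'$ with the same generators $\mathbb{T}_{\boldsymbol{\alpha}}\cap\mathbb{T}_{\boldsymbol{\beta}}$. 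The hat differential counts discs with $n_{\mathbf{w}}=n_{\mathbf{z}}=0$, and this condition is symmetric under the swap. So the identity on generators is a chain isomorphism $\widehat{CFL}(\mathcal{H})\to\widehat{CFL}(\mathcal{H}')$, and everything reduces to comparing gradings.

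\textbf{Alexander gradings.} The relative $A_k$ grading is $n_{z_k}(\phi)-n_{w_k}(\phi)$, which changes sign under the swap. The other $A_m$ are unchanged relatively. For the absolute normalisation I would use the symmetry of Lemma \ref{lem: alexander symmetry}, i.e.\ that $\widehat{HFL}$ is symmetric about $0$ in each multigrading, or equivalently the description of $A$ via relative $\mathrm{Spin}^c$ structures and $c_1$ evaluated on Seifert surfaces. This gives $A_k'=-A_k$ exactly. The remaining $A_m$ may pick up a linking-number correction from $\mathrm{lk}(L_m,L_k)$ changing sign; I would check this and, if necessary, absorb it into the convention of \cite{LemmasPaper}, whose statement suppresses it.

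\textbf{Maslov grading.} This is the main step. The Maslov grading on $\widehat{CFL}(\mathcal{H})$ is computed by the $\mathbf{w}$ basepoints, via $\widehat{HF}(S^3)$ with the $\mathbf{w}$ points. On $\mathcal{H}'$ it is computed using $\mathbf{w}'=(\mathbf{w}\setminus\{w_k\})\cup\{z_k\}$. For a single knot one has $M_{\mathbf z}-M_{\mathbf w}=2A$. For links the $\mathbf z$-only grading differs from the $\mathbf w$ grading by $2A_k$ corrected by linking with the other components. I would establish the swap of one basepoint pair by the relative formula
\[
M'(\mathbf x)-M'(\mathbf y)=\mu(\phi)-2n_{\mathbf w'}(\phi)=M(\mathbf x)-M(\mathbf y)+2\big(n_{w_k}(\phi)-n_{z_k}(\phi)\big),
\]
which shows $M'-M+2A_k$ is constant. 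I would then pin down the constant by evaluating on a model. One option is the split case, where the constant vanishes. Another is to compare with the grading shift under the $c_1$ formula, which introduces the $\mathrm{lk}(L_k,L\setminus L_k)$ term via $\langle c_1(\mathfrak s),[\widehat F]\rangle$ for a capped Seifert surface that meets the other components.

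Fixing this constant, i.e.\ confirming it equals $l_k$ rather than, say, $-l_k$ or $l_k/2$, is the obstacle. I would verify it against the Hopf link, where both sides of \eqref{eq: orientation reversal iso} are known explicitly, before asserting the general statement. Passing to homology then gives the isomorphism.
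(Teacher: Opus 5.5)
Your reduction is the right one, and it is the standard argument behind this result (the paper gives no proof and only cites it). Swapping the roles of $w_k$ and $z_k$ gives a diagram for $\mathbb{L}'$ with the same generators and the same hat differential. Your relative computation then correctly gives $M'=M-2A_k+c$ for some constant $c$.

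\textbf{The gap.} The lemma's entire content is the value $c=l_k$, and you never establish it. Checking the Hopf link (where one does get $c=l_k$) or split links cannot prove an identity whose constant changes from link to link.

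Nor can $c$ be recovered the way you recovered the Alexander normalisation. Suppose $M'=M-2A_k+c$ and $A'$ is $A$ with its $k$-th coordinate negated. Then the symmetry of Lemma~\ref{lem: alexander symmetry} for $\mathbb{L}'$ follows from the symmetry for $\mathbb{L}$ for \emph{every} value of $c$.

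So you need genuinely absolute input: the normalisation of $M'$ by $\widehat{HF}(S^3)$ computed with the basepoints $(\mathbf{w}\setminus\{w_k\})\cup\{z_k\}$. This is what the cited sources supply. One self-contained route is through grid diagrams, where $M_{\mathbb{O}}(\mathbf{x})=\mathcal{J}(\mathbf{x}-\mathbb{O},\mathbf{x}-\mathbb{O})+1$ and $A_k$ have closed formulas. Replacing $\mathbb{O}$ by $(\mathbb{O}\setminus\mathbb{O}_k)\cup\mathbb{X}_k$ makes all dependence on $\mathbf{x}$ cancel against $2A_k$. What remains is an expression in the terms $\mathcal{J}(\mathbb{X}_m-\mathbb{O}_m,\mathbb{X}_k-\mathbb{O}_k)$ for $m\neq k$, which one identifies with the linking number, plus a term depending only on the number of basepoints on $L_k$. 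Some computation of this kind has to be carried out before the proof is complete.

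\textbf{Smaller points.} Your heuristic ``$M_{\mathbf z}-M_{\mathbf w}=2A$'' for knots has the wrong sign; it should be $M_{\mathbf w}-M_{\mathbf z}=2A$. Your displayed relative formula has the correct sign, so the error does not propagate.

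There is also no linking correction to the other Alexander gradings to ``absorb into a convention''. The relative gradings $A_m$ for $m\neq k$ are unchanged, so $A_m'=A_m+c_m$. The support of $\widehat{HFL}(\mathbb{L})$ is a nonempty finite set symmetric about the origin, and such a set stays symmetric only under the zero translation. So the same symmetry argument you used for $A_k$ forces every $c_m=0$. The statement is a precise claim about gradings, so this should be proved rather than hedged.
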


In the above we keep the basepoints in the same place when we reverse the orientation. However, the result still holds if we do not do this but it will change the isomorphism. This nuance will appear again in the next section, but we can ignore it for now. The next lemma is about mirroring the link. In this context by mirroring we mean take any diagram of $L$ and swap the sign of all the crossings. 

\begin{lem}
\label{lem: mirror}
    Let $m(\mathbb{L})$ denote the mirror of $\mathbb{L}$. Then the following isomorphism holds for all $i\in \mathbb{Z}$ and $\mathbf{j}\in \mathbb{Z}^{n}$,
    \begin{equation}
        \widehat{HFL}_{i}(\mathbb{L},\textbf{j})\cong \widehat{HFL}_{1-|L|-i+2|\textbf{j}|}(m(\mathbb{L}),\textbf{j}).
    \end{equation}
\end{lem}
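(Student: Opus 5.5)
The plan is to realise the mirror by reversing the orientation of a Heegaard surface, and to identify the resulting chain complex with the dual of the original one. The mirror isomorphism then follows from universal coefficients over $\mathbb{F}$, combined with the Alexander symmetry of Lemma \ref{lem: alexander symmetry}. Let $\mathcal{H}=(\Sigma,\boldsymbol{\alpha},\boldsymbol{\beta},\mathbf{w},\mathbf{z})$ be a Heegaard diagram for $\mathbb{L}\subset S^3$. Then $-\mathcal{H}:=(-\Sigma,\boldsymbol{\alpha},\boldsymbol{\beta},\mathbf{w},\mathbf{z})$ is a diagram for $m(\mathbb{L})\subset -S^3\cong S^3$, with the same orientation conventions on the link, since the basepoints keep their roles. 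The generators $\mathbb{T}_{\boldsymbol{\alpha}}\cap\mathbb{T}_{\boldsymbol{\beta}}$ are identical for both diagrams.

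\textbf{Step 1: the complex of $-\mathcal{H}$ is the dual complex.} Precomposing with complex conjugation on the domain gives a bijection between holomorphic discs $u\in\pi_2(\mathbf{x},\mathbf{y})$ for $(\Sigma,j)$ and discs in $\pi_2(\mathbf{y},\mathbf{x})$ for $(-\Sigma,-j)$. This bijection preserves the Maslov index and the multiplicities $n_{\mathbf{w}},n_{\mathbf{z}}$. Hence $\widehat{CFL}(-\mathcal{H})$ is the dual complex $\operatorname{Hom}(\widehat{CFL}(\mathcal{H}),\mathbb{F})$. Since we work over a field, $\widehat{HFL}(-\mathcal{H})\cong \widehat{HFL}(\mathcal{H})^{*}$ generator by generator.

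\textbf{Step 2: the gradings.} Relative gradings are negated. Indeed, $M(\mathbf{x})-M(\mathbf{y})=\mu(\phi)-2n_{\mathbf{w}}(\phi)$ and $A(\mathbf{x})-A(\mathbf{y})=n_{\mathbf{z}}(\phi)-n_{\mathbf{w}}(\phi)$ componentwise, and both change sign when $\phi$ is read backwards. The absolute Alexander multigrading is pinned down by the symmetry of Lemma \ref{lem: alexander symmetry} (equivalently, by symmetry of the multivariable Alexander polynomial, which is unchanged up to sign under mirroring). Since the negated grading is again symmetric, we get $A_{m(\mathbb{L})}(\mathbf{x})=-A_{\mathbb{L}}(\mathbf{x})$. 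For the Maslov grading we get $M_{m(\mathbb{L})}(\mathbf{x})=-M_{\mathbb{L}}(\mathbf{x})+c$ for some constant $c$. This constant should depend only on $|L|$, given the fixed count of basepoints on each component. This yields
\begin{equation*}
\widehat{HFL}_i(\mathbb{L},\mathbf{j})\cong \widehat{HFL}_{c-i}(m(\mathbb{L}),-\mathbf{j}).
\end{equation*}

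\textbf{Step 3: assemble and normalise.} Applying Lemma \ref{lem: alexander symmetry} to $m(\mathbb{L})$ with $\mathbf{k}=-\mathbf{j}$ gives
\begin{equation*}
\widehat{HFL}_{c-i}(m(\mathbb{L}),-\mathbf{j})\cong\widehat{HFL}_{c-i+2|\mathbf{j}|}(m(\mathbb{L}),\mathbf{j}).
\end{equation*}
It remains to show that $c=1-|L|$. I would check this on the $|L|$-component unlink $U^{|L|}$, which is its own mirror. By Lemma \ref{lem:  unknot component}, $\widehat{HFL}(U^{|L|})\cong V^{\otimes(|L|-1)}$ in Alexander grading $\mathbf{0}$, with Maslov gradings ranging over $0,\dots,-(|L|-1)$. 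Negation sends this range to $0,\dots,|L|-1$, so the shift $1-|L|$ is forced.

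The main obstacle is justifying that $c$ is a universal constant depending only on $|L|$, rather than something that must be checked separately for each link. For this I would use the absolute Maslov grading characterised via $\widehat{HF}(S^3)$ after forgetting all $\mathbf{z}$ basepoints. Under that reduction, the duality of Step 1 becomes the standard duality $\widehat{HF}(-Y)\cong\widehat{HF}(Y)^*$ together with $|L|-1$ free stabilisations, each contributing a copy of $V$. This pins down $c$ uniformly, and the unlink computation in Step 3 then confirms its value.
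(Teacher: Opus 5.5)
The paper gives no proof of this lemma; it quotes it from Ozsv\'ath--Szab\'o via Baldwin--Grigsby. Your argument is correct and is essentially the standard proof behind that citation: reversing the orientation of $\Sigma$ dualises $\widehat{CFL}$, the constant $c=1-|L|$ is forced uniformly by normalising the $\mathbf{w}$-only complex as $\widehat{HF}(S^3)\otimes V^{\otimes(|L|-1)}$ (so the unlink computation is only a consistency check), and Lemma \ref{lem: alexander symmetry} turns $-\mathbf{j}$ into $\mathbf{j}$. The one loose point is your parenthetical appeal to the multivariable Alexander polynomial, which cannot fix the absolute Alexander grading when it vanishes (e.g.\ for split links); your main argument uses the symmetry of $\widehat{HFL}$ itself and does not need it.
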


Note that as a pointed link $m(U)\simeq U$ so as pointed links 
\begin{equation*}
    m(U_{n}(a,b))\simeq U_{-n}(a,b).
\end{equation*}
Using this, we now apply these lemmas to Proposition \ref{prop: HFL of Kn(r,0)} to get the following corollary about certain Alexander gradings. Note that this corollary actually gives the more general cases of,
\begin{equation*}
    U_{n}(a+t,b+t) \qquad \text{and} \qquad U_{-n}(a+t,b+t),
\end{equation*}
as these will be useful for the cobordism calculations.

\begin{cor}
\label{cor: HFL of Kn(a+t,b+t)}
    Let $r=a+b$, $k=\frac{n}{2}(r-1)-ni-j$ and $\mathbf{k}=(k,\ldots,k,-k,\ldots,-k)\in \mathbb{Q}^{r+2t}$ where the first $a+t$ terms are $k$ and the last $b+t$ terms are $-k$. Then if $k\geq 0$ we have for all $t\geq 0$,
    \begin{equation}
    \label{eq: Kn(a+t,b+t)}
        \widehat{HFL}_{*}(U_{n}(a+t,b+t),\mathbf{k})\cong \bigoplus_{p=0}^{i+t}\mathbb{F}_{(nb-ni-2j)(i+1-b)-p}^{\binom{r+2t-1}{p}}\oplus\bigoplus_{p=0}^{i+t-\delta_{j,0}}\mathbb{F}_{(nb-ni-2j)(i+1-b)-r+2+p-2t}^{\binom{r+2t-1}{p}}
    \end{equation}
    \begin{equation}
    \label{eq: K-n(a+t,b+t)}
        \widehat{HFL}_{*}(U_{-n}(a+t,b+t),\mathbf{k})\cong \bigoplus_{p=0}^{i+t}\mathbb{F}_{-(na-ni-2j)(i+1-a)-r+1+p-2t}^{\binom{r+2t-1}{p}}\oplus\bigoplus_{p=0}^{i+t-\delta_{j,0}}\mathbb{F}_{-(na-ni-2j)(i+1-a)-p-1}^{\binom{r+2t-1}{p}}.
    \end{equation}
    Further, if $k<0$ then the above equations hold but with $i+t$ replaced with $r-2-i-t$ in the summands, and the $-\delta_{j,0}$'s in the second summand are changed to a $+\delta_{j,0}$'s on the first summand.
\end{cor}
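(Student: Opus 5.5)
The corollary follows from Proposition \ref{prop: HFL of Kn(r,0)}, applied to $T(r+2t,(r+2t)n)=U_n(r+2t,0)$, by reversing the orientation of the last $b+t$ components one at a time with Lemma \ref{lem: orientation reversing}. For \eqref{eq: K-n(a+t,b+t)} we then pass to the mirror with Lemma \ref{lem: mirror}. One subtlety is that the $k$ in the statement is defined using $r=a+b$, not $r+2t$. So the first task is to rewrite $k=\frac{n}{2}(r-1)-ni-j$ in the form $\frac{n}{2}(r+2t-1)-ni'-j$ with $i'=i+t$. This explains why the summation ranges are in terms of $i+t$ and why the binomial coefficients involve $r+2t-1$.

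\textbf{Step 1: Alexander grading in $U_n(r+2t,0)$.} Start from $\widehat{HFL}(T(r+2t,(r+2t)n),\mathbf{k}')$ at a diagonal grading $\mathbf{k}'=(k',\ldots,k')$. The components are oriented in parallel, so each pairwise linking number is $n$, and each component has linking number $l=n(r+2t-1)$ with the rest. Reversing one component $L_m$ sends its Alexander grading $k'$ to $-k'$. It also changes the other components' gradings, because in the convention of \cite{LemmasPaper} the Alexander gradings of the others shift by $\mp\frac{1}{2}\mathrm{lk}$. I would track this carefully. The key check is that after reversing $b+t$ components, the diagonal grading $(k',\ldots,k')$ lands exactly on $\mathbf{k}=(k,\ldots,k,-k,\ldots,-k)$ with $k$ as in the statement. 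This is what forces $k'$ to be shifted relative to $k$ by a multiple of $n$ depending on $b+t$. I then solve for the parameters $(i',j)$ of $k'$ as in the proposition. I expect the $b$-dependence $(nb-ni-2j)(i+1-b)$ in the Maslov grading to come from this reindexing $i'=i-b+\cdots$, combined with the Maslov shifts in the next step.

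\textbf{Step 2: Maslov shifts.} Each application of \eqref{eq: orientation reversal iso} shifts the Maslov grading by $-2j_m+l_m$, where $j_m$ is the current Alexander grading of the component being reversed and $l_m$ is its current linking number with the rest, which changes sign as components are flipped. Summing these $b+t$ shifts gives a quadratic expression in $b+t$. I add it to the base value $-ni'(i'+1)-2j(i'+1)$ from \eqref{eq: Kn(r,0) i small} and simplify. The target is $(nb-ni-2j)(i+1-b)$ in the first summand, with the second summand offset by $-r+2-2t$. The case split $i'\le\frac{r+2t-2}{2}$ versus $i'>\frac{r+2t-2}{2}$ should correspond to $k\ge0$ versus $k<0$. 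This gives the stated replacement of $i+t$ by $r-2-i-t$, and the movement of the $\delta_{j,0}$, as a direct transcription of \eqref{eq: Kn(r,0) i large}.

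\textbf{Step 3: mirror.} Since $m(U_n(a,b))\simeq U_{-n}(a,b)$, Lemma \ref{lem: mirror} gives $M\mapsto 1-(r+2t)-M+2|\mathbf{k}|$, with $|\mathbf{k}|=(a-b)k$. The mirror also swaps the roles of $a$ and $b$ in the Maslov formula. To see this, reverse all orientations (or use Lemma \ref{lem: alexander symmetry}), exchanging $k\leftrightarrow -k$. The result should turn $p$ into $r+2t-2-p$ and swap the two summands, giving \eqref{eq: K-n(a+t,b+t)}; the $-1$ and the $-r+1-2t$ come from the $1-|L|$ term. The main obstacle is the bookkeeping in Steps 1–2: getting the Alexander shifts of the non-reversed components right under the chosen basepoint convention, and confirming that the accumulated quadratic Maslov shift simplifies exactly to the claimed closed form. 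I would verify the result against small cases, such as $a=1,b=1,t=0$ (the Hopf link), before trusting the general algebra.
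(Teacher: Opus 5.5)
Your skeleton matches the paper's proof: Proposition \ref{prop: HFL of Kn(r,0)} for $T(r+2t,(r+2t)n)$, then Lemma \ref{lem: orientation reversing} on the last $b+t$ components, the reindexing $i'=i+t$, and finally Lemma \ref{lem: mirror}. However, Step 1 rests on a false premise, and carried out as you plan it, it would not close. In \eqref{eq: orientation reversal iso} only the reversed coordinate changes, $j_k\mapsto -j_k$; the Alexander gradings of the other components are untouched. (They must be: swapping $w_k$ and $z_k$ leaves the complex and the relative gradings $A_i$, $i\neq k$, unchanged, and a nonzero constant shift of some $A_i$ would be incompatible with Lemma \ref{lem: alexander symmetry} holding for both links.)

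With your $\mp\frac12\mathrm{lk}$ shifts, an induction shows that, for either sign, a reversed and an unreversed coordinate always sum to $\pm\frac n2$. So $\mathbf{k}$ is never reached from any diagonal $(k',\ldots,k')$. Moreover, a $b$-dependent shift of $k'$ by a multiple of $n$ would move $i'$ off $i+t$, contradicting the summation ranges you correctly explained in your opening paragraph.

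In fact $k'=k$. The diagonal $(k,\ldots,k)$ of $T(r+2t,(r+2t)n)$ maps directly onto $\mathbf{k}$, there is no reindexing $i'=i-b+\cdots$, and all of the $b$-dependence comes from the Maslov shifts. At the $p$-th reversal the component has grading $k$ and linking number $n(r+2t-1-2p)$ with the rest. So the total shift is $-2(b+t)k+n(a+t)(b+t)$, and with $i'=i+t$ one gets
\[
-ni'(i'+1)-2j(i'+1)-2(b+t)k+n(a+t)(b+t)=(nb-ni-2j)(i+1-b).
\]
The paper does exactly this at $t=0$, where the shift is $-2bk+nab$, and then substitutes $a+t$, $b+t$, $r+2t$, $i+t$.

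Step 3 also misdescribes the mechanism. Lemma \ref{lem: mirror} acts summand by summand, $\mathbb{F}^{\binom{r+2t-1}{p}}_{M}\mapsto\mathbb{F}^{\binom{r+2t-1}{p}}_{1-(r+2t)-M+2(a-b)k}$. So the first (resp.\ second) summand of \eqref{eq: Kn(a+t,b+t)} becomes the first (resp.\ second) summand of \eqref{eq: K-n(a+t,b+t)}. There is no reindexing $p\mapsto r+2t-2-p$ and no swap of summands; such a reindexing would not even preserve the coefficients $\binom{r+2t-1}{p}$. The replacement of $b$ by $a$ is not an orientation-reversal symmetry. It is just the identity
\[
-(nb-ni-2j)(i+1-b)+(a-b)\bigl(n(r-1)-2ni-2j\bigr)=-(na-ni-2j)(i+1-a),
\]
which follows from $2k=n(r-1)-2ni-2j$ and $r=a+b$.

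Finally, for $k<0$, a literal transcription of \eqref{eq: Kn(r,0) i large} with $r+2t$ and $i+t$ gives upper limits $(r+2t)-2-(i+t)=r-2-i+t$, not the $r-2-i-t$ written in the statement. That sign appears to be a typo in the statement. With $-t$, the formula would vanish at $k=-\frac n2(r+2t-1)$ for $t\geq1$, whereas that group corresponds to the rank-one bottom diagonal group of $T(r+2t,(r+2t)n)$. So your claim that the stated form is a direct transcription is not accurate. With these corrections your outline is the paper's proof.
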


\begin{proof}
     We first prove the isomorphism in equation \eqref{eq: Kn(a+t,b+t)} in the case $t=0$ using Proposition \ref{prop: HFL of Kn(r,0)} and Lemma \ref{lem: orientation reversing}. To apply these we need to compute $2\sum_{p=0}^{b-1} \mathbf{k}_{r-p}$ and $\sum_{p=0}^{b-1} l_{r-p}$, where 
     \begin{equation*}
         l_{r-p}=lk\big(U_{n}(r-p,p)\backslash L_{r-p},L_{r-p}\big)
     \end{equation*}
     and $L_{r-p}$ is the $(r-p)$-th component. First, we note that $\mathbf{k}_{p}=k$ for all $p$ so 
     \begin{equation*}
         2\sum_{p=0}^{b-1} \mathbf{k}_{r-p}=2bk=bn(r-1)-2nbi-2bj.
     \end{equation*}
     Further, we have that $l_{r-p}=n((r-p-1)-(p))$ so $\sum_{p=0}^{b-1} l_{r-p}=nab$, by substituting $r=a+b$. Finally, we note that $k=\frac{n}{2}(r-1)-ni-j$ so using equation \eqref{eq: Kn(r,0) i small} or \eqref{eq: Kn(r,0) i large}, as appropriate, Lemma \ref{lem: orientation reversing} and the two above sums we obtain equation \eqref{eq: Kn(a+t,b+t)}, for the case $t=0$. To obtain the general case we simply substitute $a'=a+t$, $b'=b+t$, $r'=r+2t$ and $i'=i+t$ into the equation and obtain equation \eqref{eq: Kn(a+t,b+t)}. Note that we are using $k=\frac{n}{2}(r-1)-ni-j$ so to use the case $t=0$ we need to take $k=\frac{n}{2}(r'-1)-ni'-j$, which is why we take $i'=i+t$.

     To prove the isomorphism in equation \eqref{eq: K-n(a+t,b+t)} we note that $2|\mathbf{k}|=2(a-b)k$ and that $l=r+2t$. Now applying Lemma \ref{lem: mirror} to equation \eqref{eq: Kn(a+t,b+t)} and rearranging gives equation \eqref{eq: K-n(a+t,b+t)}.
\end{proof}

We now deal with the other Alexander multigradings that we will need to analyse. The proof of the following corollary is very similar to the above. The only additional detail is that we have some more complicated conditions on the Alexander multigradings.

\begin{cor}
\label{cor:HFL of Kn(a+t,b+t) non meeting}
    Let $r=a+b$, $k=\frac{n}{2}(r-1)-ni-j$ and $\mathbf{k}_0\in \{k,k-1,-k,-k+1\}^{r}$, where $p$-coordinates, with $1\leq p\leq r-1$, are equal to $k-1$ or $-k+1$. Further, let $q$ be the number of $-k$ and $k-1$ coordinates in $\mathbf{k}_0$. Now define $\mathbf{k}_t\in \{k,k-1,-k,-k+1\}^{r+2t}$ such that the coordinates $(\mathbf{k}_0)_s=(\mathbf{k}_t)_s$ for $1\leq s\leq a$, $(\mathbf{k}_0)_s=(\mathbf{k}_t)_{s+t}$ for $a+1\leq s \leq a+b$, and for all $1\leq s \leq t$ we have $(\mathbf{k}_t)_{a+s}=-(\mathbf{k}_t)_{a+t+b+s}$. Then we have the following isomorphisms for all $t\geq 0$:
    \begin{align}
        \widehat{HFL}(U_{n}(a+t,b+t),\mathbf{k}_t)&\cong \mathbb{F}^{\binom{r+2t-2}{i}}_{(nb-ni-2j)(i+1-b)+b-i-q-t} \label{eq: Kn(a,b) not a meeting point} \\
         \widehat{HFL}(U_{-n}(a+t,b+t),\mathbf{k}_t)&\cong \mathbb{F}^{\binom{r+2t-2}{i}}_{-(na-ni-2j)(i+1-a)+1-a+i-q-t} \label{eq: K-n(a,b) not a meeting point}
    \end{align}
\end{cor}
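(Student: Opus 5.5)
The plan is to follow the proof of Corollary \ref{cor: HFL of Kn(a+t,b+t)} closely. We start from equation \eqref{eq: Kn(r,0) not a meeting point} of Proposition \ref{prop: HFL of Kn(r,0)} for the all-positively oriented link $U_n(r',0)=T(r',r'n)$, where $r'=r+2t$ and $i'=i+t$, so that $k=\frac{n}{2}(r'-1)-ni'-j$. There the relevant Alexander multigrading $\mathbf{k}'\in\{k-1,k\}^{r'}$ has $p'$ coordinates equal to $k-1$, and the homology is $\mathbb{F}^{\binom{r'-2}{i'}}$ supported in the single Maslov grading $-ni'(i'+1)-2j(i'+1)-i'-p'$. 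The dimension $\binom{r+2t-2}{i}$ in the statement should be read with this reindexing in mind. Concretely, one checks that $\binom{r'-2}{i'}$ becomes the stated binomial after substituting back, exactly as in the previous corollary.

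Next we reverse the orientations of the last $b+t$ components using Lemma \ref{lem: orientation reversing}, one component at a time. Reversing a component with Alexander grading $k$ turns it into $-k$, and one with grading $k-1$ into $-k+1$. So a multigrading $\mathbf{k}'$ for $T(r',r'n)$ corresponds to $\mathbf{k}_t$ for $U_n(a+t,b+t)$.

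The bookkeeping for $q$ goes as follows.
\begin{itemize}
\item Among the first $a+t$ coordinates, the $k-1$ entries of $\mathbf{k}_t$ come from $k-1$ entries of $\mathbf{k}'$.
\item Among the last $b+t$ coordinates, the $-k+1$ entries come from $k-1$ entries of $\mathbf{k}'$, and the $-k$ entries come from $k$ entries.
\end{itemize}
So $p'$ equals the number of $k-1$ entries plus the number of $-k+1$ entries in $\mathbf{k}_t$. The Maslov shift from each reversal is $2j_s-l_s$. Summed over the reversed components, this gives
\[
2\sum_{s>a+t}(\mathbf{k}')_s-\sum l_s .
\]
Here $\sum l_s=n(a+t)(b+t)$ is computed as in the previous corollary. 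The first sum is $2(b+t)k$ minus twice the number of $k-1$ entries among the reversed coordinates. Combining $-p'$ with this correction should produce $-q$ up to a constant, since $q$ counts the $-k$ and $k-1$ entries.

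The symmetry condition $(\mathbf{k}_t)_{a+s}=-(\mathbf{k}_t)_{a+t+b+s}$ on the $2t$ added coordinates makes those coordinates contribute in cancelling pairs. In each pair, exactly one of the two entries is counted by $q$ and one by $p'$. This accounts for the $-t$ term and ensures the formula depends on $\mathbf{k}_0$ only through $q$ and $t$.

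We then simplify the resulting Maslov grading. Write $-ni'(i'+1)-2j(i'+1)$ plus the reversal shift in the form
\[
(nb-ni-2j)(i+1-b)+b-i-q-t .
\]
This is pure algebra, using $r=a+b$ and the same identities as in the previous corollary. In particular, the $t=0$ case there already gives $(nb-ni-2j)(i+1-b)$ from the quadratic terms. The remaining linear terms $-i'-p'$, together with the count of $k-1$ entries in the reversed block, give $b-i-q-t$. This proves \eqref{eq: Kn(a,b) not a meeting point}.

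For \eqref{eq: K-n(a,b) not a meeting point} we apply Lemma \ref{lem: mirror}, using $m(U_n(a+t,b+t))\simeq U_{-n}(a+t,b+t)$. The new Maslov grading is $1-(r+2t)-M+2|\mathbf{k}_t|$. Here $|\mathbf{k}_t|=(a-b)k-(\#(k-1)\text{ entries})+(\#(-k+1)\text{ entries})$, and the $t$ symmetric pairs cancel in this sum.

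The main obstacle is this final simplification. Rewriting $1-r-2t-M+2|\mathbf{k}_t|$ in the symmetric form with $a$ in place of $b$ requires care with the linear terms, especially expressing the difference of the two defect counts through $q$ and $p$. I would first verify the $t=0$, $p=1$ cases numerically to pin down sign conventions before doing the general algebra.
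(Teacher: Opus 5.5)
Your route is the paper's: start from \eqref{eq: Kn(r,0) not a meeting point} for $T(r',r'n)$, reverse the last $b+t$ strands with Lemma \ref{lem: orientation reversing} (linking sum $n(a+t)(b+t)$), rewrite the defect count through $q$, and then apply Lemma \ref{lem: mirror}. The paper first treats $t=0$. There $p_1,p_2$ count the $k-1$ and $-k+1$ entries, so $p=p_1+p_2$, $q=p_1+b-p_2$ and $-p+2p_2=b-q$. It then substitutes $a+t,\,b+t,\,r+2t,\,i+t$ and $q_t=q+t$. Working with $r'$ throughout, as you do, is equivalent. Several points in your write-up are wrong as stated, however.

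\emph{The binomial.} Your substitution gives $\binom{r'-2}{i'}=\binom{r+2t-2}{i+t}$, and this differs from $\binom{r+2t-2}{i}$ once $t>0$. For example, take $a=b=1$, $t=1$, $i=0$. The corresponding grading of $T(4,4n)$ has $\beta=1$, so the group has rank $\binom{2}{1}=2$, not $1$. Your argument, like the paper's own substitution $i'=i+t$, proves the statement with $i+t$ in the binomial. That matches the summation limits $i+t$ in Corollary \ref{cor: HFL of Kn(a+t,b+t)}. You should say this, rather than assert that the two binomials agree.

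\emph{The sign in the orientation reversal.} Lemma \ref{lem: orientation reversing} gives new grading $=$ old $-2(\mathbf{k}')_s+l_s$, which is the opposite of the sign you wrote. Only with the correct sign do the defect terms combine:
\[
-p'+2p_2'=-p_1'+p_2'=(b+t)-q_t .
\]
Here $p_1',p_2'$ count the $k-1$ entries in the first block and the $-k+1$ entries in the reversed block. Moreover $q_t=q+t$, because each added pair $(k,-k)$ or $(k-1,-k+1)$ has exactly one entry counted by $q$. Such a pair contributes $0$ or $2$ to $p'$, not $1$ as you say, but this plays no role.

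\emph{The mirror step.} The step you flag as the main obstacle reduces to two facts. The first is $2|\mathbf{k}_t|=2(a-b)k+2(b-q)$. The second is the identity
\[
(nb-ni-2j)(i+1-b)-(na-ni-2j)(i+1-a)=2(a-b)k,
\]
which follows from $2k=n(r-1)-2ni-2j$. Together with $1-r+b=1-a$, these turn $1-(r+2t)-M+2|\mathbf{k}_t|$ into the stated grading.
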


\begin{proof}
    Again we first deal with the case $t=0$. First, define $p_{1}$ and $p_2$ to be the number of $k-1$ and $-k+1$ terms in $\mathbf{k}_0$ respectively. Then we note that $p=p_{1}+p_{2}$ and $q=p_{1}+b-p_{2}$. We now, as in the previous proof, compute $2\sum_{s=0}^{b-1} (\mathbf{k}_0)_{r-s}$ and $\sum_{s=0}^{b-1} l_{r-s}$. As previously proven we have $\sum_{s=0}^{b-1} l_{r-s}=nab$. Further, we have 
    \begin{equation*}
        2\sum_{s=0}^{b-1} (\mathbf{k}_0)_{r-s}=2(k-1)p_{2}+2k(b-p_{2})=2bk-2p_2.
    \end{equation*}
    Therefore, applying Lemma \ref{lem: orientation reversing} to equation \eqref{eq: Kn(r,0) not a meeting point} we get that the Maslov grading is,
    \begin{equation*}
        (nb-ni-2j)(i+1-b)-i-p+2p_{2},
    \end{equation*}
    but note that $-p+2p_2=-p_1+p_2=-q+b$. This proves the isomorphism in equation \eqref{eq: Kn(a,b) not a meeting point}, for the case $t=0$. Finally, we have that $2|\mathbf{k}_0|=2(a-b)k+2p_2-2p_1$ and using this, Lemma \ref{lem: orientation reversing} and equation \eqref{eq: Kn(a,b) not a meeting point} for the case $t=0$ we get the isomorphism in equation \eqref{eq: K-n(a,b) not a meeting point}, for the case $t=0$.

    To prove the general case we first define $q_t$ be the number of $-k$ and $k-1$ coordinates in $\mathbf{k}_t$. We claim that $q_t=q+t$. To see this note that we have the $q$, $-k$ and $k-1$ terms coming from the fact that the coordinates $(\mathbf{k}_0)_s=(\mathbf{k}_t)_s$ for $1\leq s\leq a$ and $(\mathbf{k}_0)_s=(\mathbf{k}_t)_{s+t}$ for $a+1\leq s \leq a+b$. Further, for all $1\leq s \leq t$ the coordinate $(\mathbf{k}_t)_{a+s}$ either contributes to the count or $(\mathbf{k}_t)_{a+t+b+s}$ does as $(\mathbf{k}_t)_{a+s}=-(\mathbf{k}_t)_{a+t+b+s}$. Now simply substitute $a'=a+t$, $b'=b+t$, $r'=r+2t$, $i'=i+t$ and $q'=q_t$ into the equations as in the previous proof. This then gives the general form of the above equations.
\end{proof}

Combining these two corollaries together gives a complete description of both 
\begin{equation*}
    \widehat{HFL}(U_{n}(a+t,b+t)) \qquad \text{and} \qquad \widehat{HFL}(U_{-n}(a+t,b+t)),
\end{equation*}
as bi-graded vector spaces for all $n>0$. We will use these calculations in the following section. First though, we compute $\widehat{HFL}(K_{-n}(a,b),\mathbf{0})$ for $K$ a negative L-space knot that is not the unknot, $n\geq2g(K)$, and $a+b$ odd. To do this we recall the following result from \cite[Corollary 1.3]{LSpaceAlexMonic}.

\begin{prop}
\label{prop: alex poly of L-space knots}
    Let $K$ be a negative L-space knot and $\Delta_{K}(t)$ be the Alexander polynomial. Then there is a sequence of positive integers $l_{1}<\ldots< l_{d}$ such that,
    \begin{equation}
        \Delta_{K}(t)=(-1)^d+\sum_{j=1}^d (-1)^{d-j}(t^{l_{j}}+t^{-l_{j}}).
    \end{equation}
\end{prop}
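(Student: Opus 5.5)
Since this statement is quoted from \cite[Corollary 1.3]{LSpaceAlexMonic}, the plan is to reconstruct the standard argument. First reduce to the positive case by mirroring, then extract the Alexander polynomial from the structure of $\widehat{HFK}$ of an L-space knot.

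\textbf{Reduction to positive L-space knots.} If $K$ admits a negative surgery $S^3_{-p/q}(K)$ that is an L-space, then $S^3_{p/q}(m(K))\cong -S^3_{-p/q}(K)$. Since $\widehat{HF}(-Y)\cong\widehat{HF}(Y)$ as ungraded groups, this is again an L-space, so $m(K)$ is a positive L-space knot. The Alexander polynomial is symmetric and unchanged under mirroring, so $\Delta_K=\Delta_{m(K)}$. It therefore suffices to prove the formula for a positive L-space knot.

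\textbf{Structure of $\widehat{HFK}$.} Let $K$ be a positive L-space knot. L-space surgeries exist for all sufficiently large slopes, so take $N$ larger than $2g(K)-1$. By the large surgery formula, for each $\mathrm{Spin}^c$ structure $\mathfrak{s}_s$ on $S^3_N(K)$ with $|s|$ small, the group $\widehat{HF}(S^3_N(K),\mathfrak{s}_s)$ is the homology of the subquotient complex $C\{\max(i,j-s)=0\}$ of $CFK^\infty(K)$. The L-space condition says each of these has rank one. Together with the analogous statement for $HF^+$, namely that $H_*(C\{\max(i,j-s)\ge 0\})\cong\mathcal{T}^+$, an induction on $s$ downward from $g(K)$ gives the following. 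Each group $\widehat{HFK}(K,s)$ has rank at most one. Moreover, if the nonzero groups sit in Alexander gradings $n_0>n_1>\dots>n_{2d}$, then $\widehat{HFK}(K,n_0)$ lies in Maslov grading $0$, and the Maslov gradings satisfy $\delta_{k}-\delta_{k+1}=1$ for $k$ even and $\delta_{k}-\delta_{k+1}=2(n_k-n_{k+1})-1$ for $k$ odd. In other words, $CFK^\infty$ is a staircase, and consecutive generators have Maslov gradings of opposite parity. I expect this to be the main obstacle. The key is to compare ranks of $\widehat{HF}$ of the large surgery with the filtration pieces, and then use the $\widehat{HF}(S^3)=\mathbb{F}$ spectral sequence to force the differentials to alternate between horizontal and vertical arrows of length one in the complementary direction.

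\textbf{Euler characteristic.} Taking the graded Euler characteristic $\Delta_K(t)=\sum_s\chi(\widehat{HFK}(K,s))t^s$ gives $\Delta_K(t)=\sum_{k=0}^{2d}(-1)^k t^{n_k}$. The number of terms is odd because $\Delta_K(1)=1$ and the signs alternate. By the symmetry $n_{2d-k}=-n_k$ (Lemma~\ref{lem: alexander symmetry}), the middle exponent is $n_d=0$. Setting $l_j=n_{d-j}$ for $1\le j\le d$ gives positive integers $l_1<\dots<l_d$. The coefficient of $t^{\pm l_j}$ is $(-1)^{d-j}$, and the constant term is $(-1)^d$, which is exactly the stated formula.
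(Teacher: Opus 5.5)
Your argument is the standard one behind the cited \cite[Corollary 1.3]{LSpaceAlexMonic}, and the paper gives no proof of its own: mirror to a positive L-space knot, use the Ozsv\'ath--Szab\'o description of $\widehat{HFK}$ of L-space knots, and read off $\Delta_K$ as the graded Euler characteristic using Alexander symmetry, so it is correct and essentially the same route. One slip: your Maslov recursion has the parities reversed. The top generator is linked to the next one by a horizontal arrow, since it must survive in the vertical complex, so $\delta_k-\delta_{k+1}=2(n_k-n_{k+1})-1$ for $k$ even and $=1$ for $k$ odd. For example, for $T_{3,4}$ the Alexander-grading-$0$ generator sits in Maslov grading $-2$, not $-4$. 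Since both differences are odd, only the parity $\delta_k\equiv k \pmod 2$ enters your Euler characteristic computation, so the conclusion is unaffected.
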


The following calculations will depend on the parity of $d$ so we define $\delta_d=1$ when $d$ is odd and $\delta_d=0$ when $d$ is even. Also recall the classical fact that $\Delta_K(t)=\Delta_{m(K)}(t)$. Then we have the following.

\begin{lem}
\label{lem: Kn(a,b) odd}
    Let $K$ be a negative L-space knot that is not the unknot. Then for all $n\geq 2g(K)$ and all $a,b\geq 0$ such that $a+b=2s+1$ with $s\geq0$, we have the following isomorphism,
    \begin{equation}
    \label{eq: K_n(a,b) general}
        \widehat{HFL}(K_{-n}(a,b),\mathbf{0})\cong\bigoplus_{p=0}^{s-\delta_d}\mathbb{F}_{\frac{n}{4}((a-b)^2-1)-2s+p+2c_K(0)}^{\binom{2s}{p}}\oplus \bigoplus_{p=0}^{s-1+\delta_d}\mathbb{F}_{\frac{n}{4}((a-b)^2-1)-1-p+2c_K(0)}^{\binom{2s}{p}}
    \end{equation}
    where $c_K(0)=\sum_{p=1}^d(-1)^{d-p}l_p$.
\end{lem}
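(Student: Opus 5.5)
The plan is to reduce to the positive case $m(K)$, which is a positive $L$-space knot. We have $m(K_{-n}(a,b))=m(K)_n(a,b)$ as pointed links, so Lemma \ref{lem: mirror} converts the computation into one for the $(a+b)$-component cable $m(K)_n(a,b)$ in multigrading $\mathbf{0}$. Reversing the orientations of the $b$ negatively oriented strands with Lemma \ref{lem: orientation reversing} then reduces us further to $m(K)_n(r,0)$ with $r=2s+1$, still in multigrading $\mathbf{0}$, since reversal negates only the relevant coordinates. Along the way we record the Maslov shifts: the total linking term is $nab$ exactly as in the proof of Corollary \ref{cor: HFL of Kn(a+t,b+t)}, and the term $2|\mathbf{k}|$ vanishes at $\mathbf{k}=\mathbf{0}$. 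This is what produces the $\frac{n}{4}((a-b)^2-1)$ term after combining with the mirror shift $1-r-i$.

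For $m(K)_n(r,0)$ with $n\geq 2g$, we apply \cite[Theorem 3]{linkLspacernrm} exactly as in Proposition \ref{prop: HFL of Kn(r,0)}, now with the generating function for $\mathbf{h}$ twisted by the Alexander polynomial. Concretely, $q(t)$ gets multiplied by $\Delta_K(t^r)$, or by the appropriate cabling factor of the form $\Delta_K$ evaluated at $t^{r}$ arising in Gorsky--Hom's formula. We then expand using Proposition \ref{prop: alex poly of L-space knots}. The key simplification is the hypothesis $n\geq 2g(K)=2l_d$: the shifted copies of the unknot series coming from the $t^{\pm r l_j}$ terms do not interfere near the diagonal point $k=0$. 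Because $r$ is odd, the point $k=0$ corresponds to $k=\frac{n}{2}(r-1)-ni-j$ with $i=s$ and $j=0$, so it is a meeting point, and we are in the borderline case $i\approx\frac{r-2}{2}$ of Proposition \ref{prop: HFL of Kn(r,0)}.

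We then compute $\mathbf{h}(0)$ and $\beta(0),\beta(1)$ (equivalently $\beta(0)+\beta(1)$ relative to $r-2=2s-1$). We expect $2\mathbf{h}(0)$ to equal the unknot value $ns(s+1)$ minus $2c_K(0)$, where $c_K(0)=\sum(-1)^{d-p}l_p$ is the value at $0$ of the torsion coefficient series of $\Delta_K$. We also expect $\beta$ at the meeting point to shift by the parity $\delta_d$ relative to the unknot case, because the alternating coefficients at the bottom of the staircase begin with sign $(-1)^d$. Plugging these into the meeting-point formula of \cite[Theorem 3]{linkLspacernrm} gives two towers of binomial coefficients $\binom{2s}{p}$, truncated at $s-\delta_d$ and $s-1+\delta_d$, which matches the shape of \eqref{eq: K_n(a,b) general}. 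After that, we apply the mirror and orientation shifts and simplify the grading, using $(a+b)^2-4ab=(a-b)^2$.

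The main obstacle is the bookkeeping in the second step. We must extract $\mathbf{h}(0),\beta(0),\beta(1)$ from the product series correctly, and in particular verify that $n\geq 2g(K)$ prevents overlaps. Then we must decide which of the two truncation regimes in \cite[Theorem 3]{linkLspacernrm} applies, depending on the parity of $d$. To do this, I would first test the cases $d=1$ and $d=2$ by hand, for instance on $T(2,3)$ and $T(3,4)$ mirrored, to fix signs, and then do the general induction on $d$.
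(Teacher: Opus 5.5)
Your outer reduction is the same as the paper's. You mirror to $m(K)_n(a,b)$, reverse the $b$ strands (total linking $nab$, no $2|\mathbf{k}|$ term at $\mathbf{0}$), apply \cite[Theorem 3]{linkLspacernrm} to $m(K)_n(2s+1,0)$, and undo the shifts using $(a+b)^2-4ab=(a-b)^2$. The gap is in the middle step, which is the whole content of the lemma. The specifics you commit to there would not produce \eqref{eq: K_n(a,b) general}, for two reasons.

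\textbf{The twisting factor.} It is $\Delta_K(t)$, not $\Delta_K(t^r)$. In the normalisation of Proposition \ref{prop: HFL of Kn(r,0)}, the variable $t$ records the common coordinate $k$ of $(k,\dots,k)$. The satellite factor $\Delta_K(t_1\cdots t_r)$ shifts every coordinate by $l_p$, so the paper multiplies the unknot series by $\Delta_K(t)$. The role of $n\geq 2g(K)=2l_d$ is to keep the shifts $\pm l_p$ (and $\pm l_p\pm1$) within one block of length $n$ around $0$. There the unknot coefficient $f$, given by $f(n(s-i)-j)=\frac{ni(i+1)}{2}+(i+1)j$, is linear on each side of $0$. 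This gives $f(l)+f(-l)=2f(0)+l$, and hence a contribution linear in the $l_p$. Shifts by $rl_p$ can exceed $n$ and cross block boundaries, producing extra terms, so you would not arrive at $c_K(0)$.

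\textbf{The sign.} The correct value is $2\mathbf{h}(0)=ns(s+1)+2c_K(0)$. Here $c_K(0)>0$ is the torsion coefficient of $m(K)$, which raises $\mathbf{h}$. With your minus sign, the mirror step would put $-2c_K(0)$ into the final grading, contradicting the statement.

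What is missing is the direct extraction, which needs neither test cases nor an induction on $d$. Solving $n(s-i)-j\pm l_p=k$ for $k\in\{-1,0,1\}$ (with $l_0=0$) gives
\[
\mathbf{h}(0)=\tfrac{ns(s+1)}{2}+c_K(0),\qquad \mathbf{h}(1)=\tfrac{ns(s+1)}{2}-s+c_K(1),\qquad \mathbf{h}(-1)=\tfrac{ns(s+1)}{2}+s+c_K(-1),
\]
where $c_K(1)=\sum_{p=1}^d(-1)^{d-p}(l_p-1)$ and $c_K(-1)=\sum_{p=0}^d(-1)^{d-p}(l_p+1)$. Since $\sum_{p=1}^d(-1)^{d-p}$ equals $1$ for $d$ odd and $0$ for $d$ even, this yields $\beta(0)=s-\delta_d$ and $\beta(1)=s-1+\delta_d$. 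These are exactly the two truncations in \eqref{eq: K_n(a,b) general}.

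Also, $\beta(0)+\beta(1)=2s-1=r-2$ in both parities, so there is no choice of regime in \cite[Theorem 3]{linkLspacernrm}; the parity of $d$ enters only through the individual values of $\beta$. The edge cases $n=2$ (the trefoil) and $l_1=1$, where $l_p\pm1$ reaches a block boundary, give the same values, because the linear formulas for $f$ on adjacent blocks agree at the boundary.
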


\begin{proof}
    We again use \cite[Theorem 3]{linkLspacernrm} and let $r=a+b$, then we are interested in the polynomial,
    \begin{equation}
        q(t)=\sum_{k\in\mathbb{Z}} \mathbf{h}(k)t^k=\frac{t^{-1}\Delta_{K}(t)(t^{\frac{nr}{2}}-t^{-\frac{nr}{2}})}{(1-t^{-1})^2(t^{\frac{n}{2}}-t^{-\frac{n}{2}})}.
    \end{equation}
    From the fact that $l_d\leq g(K)\leq \frac{n}{2}$ we have the following inequality,
    \begin{equation*}
        l_d-\frac{n}{2}(r+1)-1\leq l_d-n-1\leq -g(K)-1<-1.
    \end{equation*}
    The last inequality holds as it can only be an equality if $g(K)=0$, which by a classical result implies $K$ is the unknot. As we are trying to determine Alexander grading $\mathbf{0}$ we only need to consider the polynomial in degrees $1,0$ and $-1$. Therefore, using similar arguments to the proof of Proposition \ref{prop: HFL of Kn(r,0)} we can reduce to considering the following polynomial,
    \begin{equation}
    \label{eq: any knot simp poly}
        p_{r}(t)=\Delta_{K}(t)\sum_{i=0}^{\infty}\sum_{j=0}^{n-1}\left(\frac{ni(i+1)}{2}+(i+1)j\right)t^{\frac{n(r-1)}{2}-ni-j}.
    \end{equation}
    
    We now need to extract the function $\mathbf{h}\colon \mathbb{Z}\to \mathbb{Z}$ for the cases $k \in \{-1,0,1\}$ from this. To do this we need to find $i_{\pm p,k}\geq 0$ and $0\leq j_{\pm p,k}\leq n-1$, for $0\leq p\leq d$ and $k \in \{-1,0,1\}$, satisfying,
    \begin{equation}
    \label{eq: coefficients}
        n(s-i_{\pm p,k})-j_{\pm p,k}\pm l_p=k
    \end{equation}
    where we have substituted $r=2s+1$ and $l_0=0$. There are two cases, when $n=2$ and when $n>2$. These are the only cases as otherwise $1=n\geq2g(K)$ so $g(K)=0$, which again implies $K$ is the unknot. Further, note that the case $n=2$ is equivalent to $\Delta_{K}(t)=t-1+t^{-1}$ by Proposition \ref{prop: alex poly of L-space knots}, which is the Alexander polynomial of the trefoil. However, it was shown in \cite[Theorem 1.2]{LSpaceAlexMonic} that $\widehat{HFK}(K)$ of L-space knots is completely determined by the Alexander polynomial, and in \cite[Corollary 1.5]{HFKdetectstrefoil} it is shown that $\widehat{HFK}(K)$ detects the trefoil. Hence, the only L-space knot with $\Delta_{K}(t)=t-1+t^{-1}$ is the trefoil and this is the only knot in the case $n=2$.

    We start with the case $n\neq2$ and $s\neq 0$. Then solving the equation \eqref{eq: coefficients} for $k=0$ gives for $p\geq 0$, 
    \begin{equation*}
        j_{p,0}=l_p \qquad \text{and} \qquad i_{p,0}=s,
    \end{equation*}
    and for $p> 0$ we have,
    \begin{equation*}
        j_{-p,0}=n-l_p \qquad \text{and} \qquad i_{-p,0}=s-1.
    \end{equation*}
    For $k=1$ we get for $p>0$,
    \begin{equation*}
        j_{p,1}=l_p-1 \qquad \text{and} \qquad i_{p,1}=s,
    \end{equation*}
    and for $p\geq 0$,
    \begin{equation*}
        j_{-p,1}=n-1-l_p\qquad \text{and} \qquad i_{-p,1}=s-1.
    \end{equation*}
    Finally, for $k=-1$ we get for $p\geq 0$,
    \begin{equation*}
        j_{p,-1}=l_p+1 \qquad \text{and} \qquad  i_{p,-1}=s,
    \end{equation*}
    for $p>0$ and $l_1\neq 1$ we get,
    \begin{equation*}
        j_{-p,-1}=n-1-l_p\qquad \text{and} \qquad  i_{-p,-1}=s-1,
    \end{equation*}
    and if $l_1=1$ we get,
    \begin{equation*}
        j_{-1,-1}=0\qquad \text{and} \qquad i_{-1,-1}=s.
    \end{equation*}
    From this we get,
    \begin{align}
        \mathbf{h}(0)&=(-1)^d\left( \frac{ns(s+1)}{2}  \right)+\sum_{p=1}^{d}(-1)^{d-p}\left( \frac{ns(s+1)}{2}+l_p(s+1) + \frac{ns(s-1)}{2} +(n-l_p)s \right) \nonumber\\
        &=(-1)^d\left( \frac{ns(s+1)}{2} \right)+ ns(s+1)\sum_{p=1}^{d}(-1)^{d-p} + \sum_{p=1}^{d}(-1)^{d-p}l_p \\
        &= \frac{ns(s+1)}{2} +c_{K}(0), \nonumber
    \end{align}
    where to get the final equality we use that $\sum_{p=1}^{d}(-1)^{d-p}$ equals $0$ if $d$ is even and $1$ if $d$ is odd. Similarly we can obtain the following equalities,
    \begin{align}
        \mathbf{h}(1)&= \frac{ns(s+1)}{2} -s +c_{K}(1),\\
        \mathbf{h}(-1)&= \frac{ns(s+1)}{2} +s +c_{K}(-1),
    \end{align}
    where $c_{K}(1)=\sum_{p=1}^d(-1)^{d-p}(l_p-1)$ and $c_{K}(-1)=\sum_{p=0}^d(-1)^{d-p}(l_p+1)$. Further, it is useful to note that, $c_{K}(1)=c_{K}(0)$, $c_{K}(-1)=c_{K}(0)+1$ if $d$ is even and $c_{K}(1)=c_{K}(0)-1$, $c_{K}(-1)=c_{K}(0)$ if $d$ is odd. We can then compute that,
    \begin{equation*}
        \beta(0)=\mathbf{h}(-1)-\mathbf{h}(0)-1=s-1+c_{K}(-1)-c_{K}(0)=s-1+(-1)^{d}+\sum_{p=1}^d(-1)^{d-p}.
    \end{equation*}
    Therefore, $\beta(0)=s-1$ for $d$ odd and $\beta(0)=s$ for $d$ even. Similarly, we get $\beta(1)=s$ for $d$ odd and $\beta(1)=s-1$ for $d$ even. Then \cite[Theorem 3]{linkLspacernrm} gives,
    \begin{equation}
    \label{eq: K_n(r,0) general}
        \widehat{HFL}(m(K)_{n}(r,0),\mathbf{0})\cong\bigoplus_{p=0}^{s-\delta_d}\mathbb{F}_{-ns(s+1)-2c_K(0)-p}^{\binom{2s}{p}}\oplus \bigoplus_{p=0}^{s-1+\delta_d}\mathbb{F}_{-ns(s+1)-2c_K(0)+1-2s+p}^{\binom{2s}{p}}
    \end{equation}
    All of the above arguments work for the case of $s=0$ and the descriptions of $\mathbf{h}$ and $\beta$ are just given by substituting $s=0$ into the ones above.

    We now deal with the case $n=2$, which as previously mentioned is equivalent to $K$ being the trefoil. In this case the equation we are interested in is,
    \begin{equation*}
        2(s-i_{\pm p,k})-j_{\pm p,k}\pm l_p=k,
    \end{equation*}
    for $p=0$ and $p=1$. The computations are then similar and use that $c_{K}(0)=1$, $c_{K}(1)=0$ and $c_{K}(-1)=1$. Then we get $\beta(0)=s-1$ and $\beta(1)=s$, which agrees with the case where $d$ is odd.

    To obtain equation \eqref{eq: K_n(a,b) general} we apply Lemmas \ref{lem: orientation reversing} and \ref{lem: mirror} in a similar way to in the proof of Corollary \ref{cor: HFL of Kn(a+t,b+t)}. To apply Lemma \ref{lem: orientation reversing} we compute $\sum_{s=0}^{b-1} l_{r-s}=nab$ and note that the coordinates are all $0$. Combining these and rearranging gives equation \eqref{eq: K_n(a,b) general}.
\end{proof}

The proofs of the next three lemmas are nearly identical to the above so we omit the full proofs but we do explain the differences. The following lemma is for a middle step in an inductive argument later, and is similar to the above but with $r$ even.

\begin{lem}
\label{lem: Kn(a,b) even}
    Let $K$ be a negative L-space knot that is not the unknot. Then for all $n\geq 2g(K)$ and all $a,b\geq 0$ such that $a+b=2s$ with $s\geq1$, we have the following isomorphism,
    \begin{equation}
    \label{eq: K_n(a,b) even general}
        \widehat{HFL}(K_{-n}(a,b),\mathbf{k}_{a,b})\cong\bigoplus_{p=0}^{s-1-\delta_d}\mathbb{F}_{\frac{n}{4}((a+1-b)^2-1)+1-2s+p+2c_K(0)}^{\binom{2s-1}{p}}\oplus \bigoplus_{p=0}^{s-2+\delta_d}\mathbb{F}_{\frac{n}{4}((a+1-b)^2-1)-1-p+2c_K(0)}^{\binom{2s-1}{p}}
    \end{equation}
    where $\mathbf{k}_{a,b}=(\frac{n}{2},\ldots,\frac{n}{2},-\frac{n}{2},\ldots,-\frac{n}{2})$ with $b$ negative terms.
\end{lem}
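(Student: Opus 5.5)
We follow the proof of Lemma \ref{lem: Kn(a,b) odd} essentially line by line, with $r=a+b=2s$ in place of $2s+1$. First we compute $\widehat{HFL}(m(K)_n(r,0),\mathbf{k})$ with all coordinates of $\mathbf{k}$ equal to $\frac{n}{2}$, using \cite[Theorem 3]{linkLspacernrm}. Then we pass to $K_{-n}(a,b)$ by reversing the last $b$ components with Lemma \ref{lem: orientation reversing} and mirroring with Lemma \ref{lem: mirror}. Reversal turns the last $b$ coordinates $\frac{n}{2}$ into $-\frac{n}{2}$, which is exactly $\mathbf{k}_{a,b}$. This is why we work at the diagonal grading $k=\frac{n}{2}$ rather than $0$: in the parametrisation $k=\frac{n}{2}(r-1)-ni-j$ it corresponds to $i=s-1$, $j=0$. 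So it is a "meeting point" with $j=0$, which explains the $\delta$-shifts appearing in \eqref{eq: K_n(a,b) even general}.

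For the first step we take the same generating function $q(t)$ with the factor $\Delta_K(t)$. We use $l_d\le g(K)\le \frac{n}{2}$ to discard the second term $t^{-\frac{n}{2}(r+1)-1}$ near degree $\frac{n}{2}$, as before. This reduces to the polynomial $p_r(t)$ of \eqref{eq: any knot simp poly}, and we only need its coefficients in degrees $\frac{n}{2}-1,\frac{n}{2},\frac{n}{2}+1$. For each $0\le p\le d$ and sign $\pm$ we solve
\begin{equation*}
n(s-i_{\pm p,k})-j_{\pm p,k}\pm l_p=\tfrac{n}{2}+k,\qquad k\in\{-1,0,1\},
\end{equation*}
with $i\ge 0$ and $0\le j\le n-1$. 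These are the same case analyses as before, shifted by $\frac{n}{2}$; the solutions split into $i=s-1$ or $i=s-2$ according to the sign. Summing the coefficients with the signs of $\Delta_K$ and using $\sum_{p=1}^d(-1)^{d-p}\in\{0,1\}$ should give
\begin{equation*}
\mathbf{h}(\tfrac{n}{2})=\tfrac{n(s-1)s}{2}+c_K(0),
\end{equation*}
with analogous formulas for $\mathbf{h}(\frac n2\pm1)$ involving $c_K(\pm1)$. From these we expect $\beta(\frac{n}{2})$ and $\beta(\frac{n}{2}+1)$ to be $s-1$ and $s-2$ in an order determined by the parity of $d$. Then \cite[Theorem 3]{linkLspacernrm} gives two summands of binomial dimensions $\binom{2s-1}{p}$, with ranges $p\le s-1-\delta_d$ and $p\le s-2+\delta_d$.

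For the second step we apply Lemma \ref{lem: orientation reversing} to the last $b$ components. The coordinate sum contributes $2\cdot b\cdot\frac n2=bn$ and the linking numbers contribute $nab$, as computed in Corollary \ref{cor: HFL of Kn(a+t,b+t)}. We then apply Lemma \ref{lem: mirror} with $|L|=2s$ and $2|\mathbf{k}_{a,b}|=(a-b)n$. Collecting terms with $r=a+b$ should produce the quadratic $\frac{n}{4}((a+1-b)^2-1)$, and we will check this algebra directly.

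The main obstacle is the boundary bookkeeping in solving for $(i,j)$. The cases $l_p=\frac{n}{2}$ (possible when $n=2g(K)$ and $l_d=g(K)$) and $l_1=1$ make some $j$ hit $0$ or $n-1$ and shift $i$. We will also need to treat separately the trefoil case $n=2$ (as in the previous proof) and the small case $s=1$, where $i=s-2<0$ is not allowed. We would first handle the generic case $l_d<\frac{n}{2}$, $l_1>1$, and then verify that each boundary case changes $\mathbf{h}(\frac n2-1),\mathbf{h}(\frac n2),\mathbf{h}(\frac n2+1)$ by compensating amounts, so that the values of $\beta$ are unchanged.
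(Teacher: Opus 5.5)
Your plan is the paper's proof. You use the same reduction of $q(t)$ to $p_{2s}(t)$, the same value $\mathbf{h}(\frac n2)=\frac{ns(s-1)}{2}+c_K(0)$ together with $\mathbf{h}(\frac n2\pm1)$, and the same $\beta$-values $s-1,s-2$, interchanged by the parity of $d$. You also feed the same inputs $2\sum_{p<b}\mathbf{k}_{r-p}=nb$, $\sum_{p<b}l_{r-p}=nab$ and $2|\mathbf{k}_{a,b}|=n(a-b)$ into Lemmas \ref{lem: orientation reversing} and \ref{lem: mirror}, and the algebra does collapse to $\frac n4((a+1-b)^2-1)$.

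One correction is needed. For $r=2s$ the exponents of $p_{2s}(t)$ are $ns-\frac n2-ni-j\pm l_p$, so the equation to solve is $n(s-1-i)-j\pm l_p=k$. Your displayed equation $n(s-i)-j\pm l_p=\frac n2+k$ picks out degrees near $0$, not near $\frac n2$.

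Your stated solutions $i\in\{s-1,s-2\}$ are in fact those of the corrected equation. That equation is just \eqref{eq: coefficients} with $s$ replaced by $s-1$, so $\mathbf{h}(\frac n2+k)$ can be read off directly from the proof of Lemma \ref{lem: Kn(a,b) odd}. With it, the case $l_p=\frac n2$ you flag causes no trouble unless $n=2$. The cases $n=2$, $l_1=1$ and $s=1$ are where $(i,j)$ leaves its range or splits, but they all reproduce the generic values, so no compensation argument is needed.
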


\begin{proof}
    The whole beginning of the proof of Lemma \ref{lem: Kn(a,b) odd} works here with the only change being that equation \eqref{eq: coefficients} becomes $n(s-1-i_{\pm p,k})-j_{\pm p,k}\pm l_p=k$. Using this we get that, 
    \begin{align}
        \mathbf{h}\left(\frac{n}{2}\right)&= \frac{ns(s-1)}{2}  +c_{K}(0), \\
        \mathbf{h}\left(\frac{n}{2}+1\right)&= \frac{ns(s-1)}{2} -s+1 +c_{K}(1),\\
        \mathbf{h}\left(\frac{n}{2}-1\right)&= \frac{ns(s-1)}{2} +s -1+c_{K}(-1).
    \end{align}
    Again in the above there are subtleties in the cases when $n=2$ or $s=1$. However, as in the previous proof they give the same result in the end. Then substituting these into the definition of $\beta$ gives: $\beta(\frac{n}{2})=s-2$ for $d$ odd and $\beta(\frac{n}{2})=s-1$ for $d$ even; and $\beta(\frac{n}{2}+1)=s-1$ for $d$ odd and $\beta(\frac{n}{2}+1)=s-2$ for $d$ even. A difference appears in the applications of Lemma \ref{lem: orientation reversing} and \ref{lem: mirror}. We need $2\sum_{p=0}^{b-1} \mathbf{k}_{r-p}=nb$, where $\mathbf{k}=(\frac{n}{2},\ldots, \frac{n}{2})$. Also after reversing the orientations we have that $2|\mathbf{k}_{a,b}|=n(a-b)$. Combining these gives the desired result.
\end{proof}

The next two lemmas are the equivalent of Lemma \ref{cor:HFL of Kn(a+t,b+t) non meeting} but for general negative L-space knots. This first one is needed in the computation of the spectral sequence for forgetting components.

\begin{lem}
\label{lem: Kn(a,b) nonmeeting general}
    Let $K$ be a negative L-space knot that is not the unknot. Then for all $n\geq 2g(K)$ and all $a,b\geq 0$ such that $a+b=2s$ with $s\geq1$, we have the following isomorphisms,
    \begin{align}
        \widehat{HFL}(K_{-n}(a+1,b),\mathbf{e}_1)&\cong \mathbb{F}_{\frac{n}{4}((a+1-b)^2-1)-s-\delta_d+2c_{K}(0)+1}^{\binom{2s-1}{s-1+\delta_d}}, \label{eq: e1 for spec seq}\\
        \widehat{HFL}(K_{-n}(a+1,b),-\mathbf{e}_1)&\cong \mathbb{F}_{\frac{n}{4}((a+1-b)^2-1)-s-\delta_d+2c_{K}(0)-1}^{\binom{2s-1}{s-\delta_d}}, \label{eq: -e1 for spec seq}\\
        \widehat{HFL}(K_{-n}(a,b),\mathbf{e}_{a+1}+\mathbf{k}_{a,b})&\cong \mathbb{F}_{\frac{n}{4}((a+1-b)^2-1)-s-\delta_d+2c_{K}(0)+1}^{\binom{2s-2}{s-1-\delta_d}},\label{eq: ea+1 for spec seq}\\
        \widehat{HFL}(K_{-n}(a,b),-\mathbf{e}_{a+1}+\mathbf{k}_{a,b})&\cong \mathbb{F}_{\frac{n}{4}((a+1-b)^2-1)-s-\delta_d+2c_{K}(0)}^{\binom{2s-2}{s-2+\delta_d}}, \label{eq: -ea+1 for spec seq}
    \end{align}
    where as before $\mathbf{k}_{a,b}=(\frac{n}{2},\ldots,\frac{n}{2},-\frac{n}{2},\ldots,-\frac{n}{2})$ with $b$ negative terms, $\mathbf{e}_k=(0,\ldots,1,\ldots,0)$ with the $1$ in the $k$-th position.
\end{lem}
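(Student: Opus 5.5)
The plan is to run the same machinery as in Lemmas \ref{lem: Kn(a,b) odd} and \ref{lem: Kn(a,b) even}, now at Alexander multigradings that are \emph{not} meeting points. The idea is to reduce each of the four multigradings to a computation for the all-positively-oriented cable $m(K)_n(r,0)$, use the non-meeting-point part of \cite[Theorem 3]{linkLspacernrm}, and then transport the answer with Lemmas \ref{lem: orientation reversing} and \ref{lem: mirror}.

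\textbf{Reduction to $m(K)_n(r,0)$.} For \eqref{eq: e1 for spec seq} and \eqref{eq: -e1 for spec seq} we have $r=a+b+1=2s+1$. Undo the mirror first, then reverse the $b$ negatively oriented strands. By Lemma \ref{lem: orientation reversing} the multigrading $\pm\mathbf{e}_1$ becomes a vector whose entries are all $0$ except one entry equal to $\pm1$. Because of the $S_r$-symmetry of the cable, and the symmetry of Lemma \ref{lem: alexander symmetry}, this is of the form $\mathbf{k}\in\{k-1,k\}^r$ with $k=1$ (one coordinate equal to $1$, so $r-1$ coordinates equal to $k-1=0$), or with $k=0$ and a single coordinate equal to $k-1=-1$. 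For \eqref{eq: ea+1 for spec seq} and \eqref{eq: -ea+1 for spec seq} we have $r=2s$. After reversing strands the multigrading becomes $(\tfrac n2,\ldots,\tfrac n2)\pm\mathbf{e}_{a+1}$, up to the sign bookkeeping from reversal. This is a non-meeting point adjacent to $k=\tfrac n2$ or $k=\tfrac n2+1$, exactly the values treated in Lemma \ref{lem: Kn(a,b) even}.

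\textbf{The $\mathbf{h}$-function and dimension.} In the non-meeting case, \cite[Theorem 3]{linkLspacernrm} gives a single summand $\mathbb{F}^{\binom{r-2}{\beta(k)}}$. Its Maslov grading is $-2\mathbf{h}(k)-\beta(k)-p$, where $p$ is the number of coordinates equal to $k-1$; this is the same pattern as in \eqref{eq: Kn(r,0) not a meeting point}. So the $\mathbf{h}$ and $\beta$ values can be reused directly from the proofs of Lemmas \ref{lem: Kn(a,b) odd} and \ref{lem: Kn(a,b) even}:
\begin{itemize}
\item For $r=2s+1$: $\beta(0)=s-\delta_d$ in the appropriate indexing and $\beta(1)=s-1+\delta_d$.
\item For $r=2s$: $\beta(\tfrac n2)$ and $\beta(\tfrac n2+1)$ equal $s-1-\delta_d$ and $s-2+\delta_d$ in some order.
\end{itemize}
The binomial exponents $\binom{2s-1}{\cdot}$ and $\binom{2s-2}{\cdot}$ then come out as $\binom{r-2}{\beta}$. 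I expect the assignment of $\beta(k)$ versus $\beta(k+1)$ to the $+$ and $-$ versions of each pair to match the stated exponents; this parity check in $\delta_d$ is the first thing I would verify.

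\textbf{Maslov gradings.} Insert $2\mathbf{h}(k)$, namely $ns(s+1)+2c_K(0)$ or its variants $c_K(\pm1)$, together with $\beta$ and $p$. Then apply Lemma \ref{lem: orientation reversing}, with total linking correction $nab$ (here $n(a+1)b$ for the $(a+1,b)$ cable) and the coordinate sum of the reversed strands. Finally apply Lemma \ref{lem: mirror}, with shift $1-r+2|\mathbf{k}|$. Simplifying should give $\frac n4((a+1-b)^2-1)$ plus the constants shown. As in Corollary \ref{cor:HFL of Kn(a+t,b+t) non meeting}, the count of shifted coordinates on reversed strands enters only through a quantity like $q$, which is fixed here.

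\textbf{Main obstacle.} The hard step is the bookkeeping. The relations $c_K(\pm1)$ versus $c_K(0)$ depend on the parity of $d$, and the exceptional cases $n=2$ (the trefoil) and $s=1$ must be checked separately, exactly as in the earlier lemmas. I would handle these by directly recomputing $\mathbf{h}$ from \eqref{eq: any knot simp poly} at the three relevant degrees and confirming that the resulting $\beta$ values agree with the generic case.
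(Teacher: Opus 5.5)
Your proposal is correct and follows essentially the same route as the paper. You reduce to the positively oriented cable $m(K)_n(r,0)$ at the non-meeting multigradings $\mathbf{e}_1$ ($k=1$, $p=2s$), $-\mathbf{e}_1$ ($k=0$, $p=1$), $\mathbf{k}-\mathbf{e}_{a+1}$ ($k=\frac n2$, $p=1$) and $\mathbf{k}+\mathbf{e}_{a+1}$ ($k=\frac n2+1$, $p=2s-1$), where $\mathbf{k}=(\frac n2,\ldots,\frac n2)$ is the all-positive version of $\mathbf{k}_{a,b}$. You read off $\mathbb{F}^{\binom{r-2}{\beta(k)}}$ in Maslov grading $-2\mathbf{h}(k)-\beta(k)-p$ from \cite[Theorem 3]{linkLspacernrm}, using the $\mathbf{h}$ and $\beta$ values from Lemmas \ref{lem: Kn(a,b) odd} and \ref{lem: Kn(a,b) even}, and then transport the answer with Lemmas \ref{lem: orientation reversing} and \ref{lem: mirror}. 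Carrying out your bookkeeping, with $c_K(1)=c_K(0)-\delta_d$ and your linking sum $n(a+1)b$ for the $(a+1,b)$ cable, does reproduce all four stated gradings and dimensions. The sign flip you mention is exactly that strand $a+1$ is reversed, so $\mathbf{e}_{a+1}+\mathbf{k}_{a,b}$ on $K_{-n}(a,b)$ corresponds to $\mathbf{k}-\mathbf{e}_{a+1}$ on $m(K)_n(2s,0)$; hence it picks up $\beta(\frac n2)=s-1-\delta_d$, as the statement requires.
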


\begin{proof}
    A very similar proof to that of equation \eqref{eq: Kn(r,0) not a meeting point} combined with Corollary \ref{cor:HFL of Kn(a+t,b+t) non meeting} works here. First we have,
    \begin{equation}
        \widehat{HFL}(m(K)_{n}(a+1+b,0),\mathbf{e}_1)\cong \mathbb{F}_{-2\mathbf{h}(1)-\beta(1)-2s}^{\binom{2s-1}{\beta(1)}}.
    \end{equation}
    The computations from the proof of Lemma \ref{lem: Kn(a,b) odd} then give $\mathbf{h}(1)$ and $\beta(1)$. Then we apply Lemma \ref{lem: orientation reversing} and use that $\sum_{s=0}^{b-1} l_{r-s}=nab$. Finally, applying Lemma \ref{lem: mirror} gives equation \eqref{eq: e1 for spec seq} after using that $2|\mathbf{e}_1|=2$ and rearranging. The next equation comes from,
    \begin{equation}
        \widehat{HFL}(m(K)_{n}(a+1+b,0),-\mathbf{e}_1)\cong \mathbb{F}_{-2\mathbf{h}(0)-\beta(0)-1}^{\binom{2s-1}{\beta(0)}}.
    \end{equation}
    and the proof then follows identically, except that $2|-\mathbf{e}_1|=-2$.

    Equation \eqref{eq: ea+1 for spec seq} is similar but with a few key differences. We start with,
    \begin{equation}
        \widehat{HFL}(m(K)_{n}(a+b,0),-\mathbf{e}_{a+1}+\mathbf{k})\cong \mathbb{F}_{-2\mathbf{h}(\frac{n}{2})-\beta(\frac{n}{2})-1}^{\binom{2s-2}{\beta(\frac{n}{2})}}.
    \end{equation}
    Then use the computations of $\mathbf{h}(\frac{n}{2})$ and $\beta(\frac{n}{2})$ from the proof of Lemma \ref{lem: Kn(a,b) even}. Then again apply Lemmas \ref{lem: orientation reversing} and \ref{lem: mirror} using the computations,
    \begin{equation*}
        2\sum_{p=0}^{b-1} (\mathbf{k}-\mathbf{e}_{a+1})_{r-p}=nb-2, \qquad \sum_{s=0}^{b-1} l_{r-s}=nab \qquad \text{and} \qquad 2|\mathbf{k}_{a,b}+\mathbf{e}_{a+1}|=n(a-b)+2.
    \end{equation*}
    Finally equation \eqref{eq: -ea+1 for spec seq} comes from,
    \begin{equation}
        \widehat{HFL}(m(K)_{n}(a+b,0),\mathbf{e}_{a+1}+\mathbf{k})\cong \mathbb{F}_{-2\mathbf{h}(\frac{n}{2}+1)-\beta(\frac{n}{2}+1)-2s+1}^{\binom{2s-2}{\beta(\frac{n}{2}+1)}},
    \end{equation}
    and then a similar argument. Note that for all of the equations part of the computation uses $\beta$, which varies depending on the parity of $d$. These are the source of the $\delta_d$'s appearing in the lemma.
\end{proof}

This next lemma will be needed to compute the pair of pants cobordism maps.

\begin{lem}
\label{lem: Kn(a,b) nonmeeting general 2}
    Let $K, n, a$ and $b$ be as above with the additional assumption that $a,b>0$. Then we have the following isomorphisms:
    \begin{align}
        \widehat{HFL}(K_{-n}(a+1,b),\mathbf{e}_{a+1}-\mathbf{e}_{a+b+1})&\cong \mathbb{F}_{\frac{n}{4}((a+1-b)^2-1)-s-\delta_d+2c_{K}(0)}^{\binom{2s-1}{s-1+\delta_d}}, \label{eq: ea-eab1}\\
        \widehat{HFL}(K_{-n}(a+1,b),-\mathbf{e}_{a+1}+\mathbf{e}_{a+b+1})&\cong \mathbb{F}_{\frac{n}{4}((a+1-b)^2-1)-s-\delta_d+2c_{K}(0)}^{\binom{2s-1}{s-\delta_d}}, \label{eq: -ea+eab1} \\
        \widehat{HFL}(K_{-n}(a,b),-\mathbf{e}_{a}+\mathbf{e}_{a+b}+\mathbf{k}_{a,b})&\cong \mathbb{F}_{\frac{n}{4}((a+1-b)^2-1)-s-\delta_d+2c_{K}(0)}^{\binom{2s-2}{s-1-\delta_d}},  \label{eq: -ea+eab+k}\\
        \widehat{HFL}(K_{-n}(a,b),\mathbf{e}_{a}-\mathbf{e}_{a+b}+\mathbf{k}_{a,b})&\cong \mathbb{F}_{\frac{n}{4}((a+1-b)^2-1)-s-\delta_d+2c_{K}(0)+1}^{\binom{2s-2}{s-2+\delta_d}},\label{eq: ea-eab+k}
    \end{align}
\end{lem}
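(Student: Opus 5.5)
The plan is to follow the template of Lemma \ref{lem: Kn(a,b) nonmeeting general}. First compute the relevant group for the all-positively-oriented cable $m(K)_n(r,0)$ via \cite[Theorem 3]{linkLspacernrm}, where $r=a+b+1$ for the first two equations and $r=a+b$ for the last two. Then transport the result to $K_{-n}(\cdot,\cdot)$ by reversing the last $b$ components with Lemma \ref{lem: orientation reversing} and mirroring with Lemma \ref{lem: mirror}. The one new feature is that each multigrading has two nonzero perturbations, one on a positively oriented strand and one on a negatively oriented strand. Before orientation reversal, these become two perturbations of the same sign relative to the meeting point.

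For \eqref{eq: ea-eab1}, reversing the component $a+b+1$ flips its coordinate. So the relevant multigrading on $m(K)_n(2s+1,0)$ has two coordinates equal to $1$ and the rest $0$. This is a non-meeting point lying in the cube $\{0,1\}^{2s+1}$ with $p=2$ coordinates equal to $0=k'-1$ for $k'=1$. Equivalently, after the symmetry of Lemma \ref{lem: alexander symmetry}, it lies in $\{-1,0\}^{2s+1}$. By the non-meeting case of \cite[Theorem 3]{linkLspacernrm}, as in \eqref{eq: Kn(r,0) not a meeting point}, the group is $\mathbb{F}^{\binom{2s-1}{\beta(1)}}$ in Maslov grading $-2\mathbf{h}(1)-\beta(1)-p'$ for the appropriate count $p'$. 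The values $\mathbf{h}(1)=\frac{ns(s+1)}{2}-s+c_K(1)$ and $\beta(1)$ are already computed in the proof of Lemma \ref{lem: Kn(a,b) odd}. This accounts for the binomial $\binom{2s-1}{s-1+\delta_d}$. Equation \eqref{eq: -ea+eab1} works the same way, using the cube based at $0$ with $\mathbf{h}(0)$ and $\beta(0)$, which gives $\binom{2s-1}{s-\delta_d}$.

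For \eqref{eq: -ea+eab+k} and \eqref{eq: ea-eab+k}, I shift the multigrading by $\mathbf{k}_{a,b}$ before reversing orientations. This gives the meeting point $(\frac n2,\dots,\frac n2)$ on $m(K)_n(2s,0)$ with two coordinates perturbed by $\mp1$. I then use $\mathbf{h}(\frac n2)$, $\mathbf{h}(\frac n2+1)$ and $\beta(\frac n2)$, $\beta(\frac n2+1)$ from the proof of Lemma \ref{lem: Kn(a,b) even}. These give ranks $\binom{2s-2}{s-1-\delta_d}$ and $\binom{2s-2}{s-2+\delta_d}$ respectively. The Maslov shifts then follow from bookkeeping. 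I will use $\sum_{s'=0}^{b-1}l_{r-s'}=nab$, and $2\sum$ of the reversed coordinates, which equals $nb$ plus or minus $2$ according to whether the perturbed negative strand goes up or down. The total Alexander grading $2|\mathbf{k}|$ equals $0$ in the first two cases and $n(a-b)$ in the last two, since the two perturbations cancel. Substituting these into the two lemmas and simplifying should reproduce $\frac n4((a+1-b)^2-1)-s-\delta_d+2c_K(0)$ plus the stated offset of $0$ or $+1$.

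I expect the main obstacle to be identifying correctly which cube vertex of \cite[Theorem 3]{linkLspacernrm} the doubly perturbed multigrading corresponds to. This determines whether the rank uses $\beta$ at $k$ or at $k+1$ and fixes the $-p$ term in the Maslov grading. To settle it, I would check the sign conventions against the already-proved single-perturbation cases \eqref{eq: e1 for spec seq}--\eqref{eq: -ea+1 for spec seq}. I would also sanity-check the outcome with Lemma \ref{lem: alexander symmetry}, which exchanges \eqref{eq: ea-eab1} and \eqref{eq: -ea+eab1} up to the expected Maslov shift. The degenerate cases $n=2$ (the trefoil) and $s=1$ would be handled exactly as in Lemma \ref{lem: Kn(a,b) odd}.
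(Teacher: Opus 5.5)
Your proposal is the paper's proof. You evaluate \cite[Theorem 3]{linkLspacernrm} on $m(K)_n(r,0)$ at the same-sign doubly perturbed vertices $\pm(\mathbf{e}_{a+1}+\mathbf{e}_{a+b+1})$ and $\mathbf{k}\mp(\mathbf{e}_a+\mathbf{e}_{a+b})$, using $\mathbf{h},\beta$ at $1,0,\frac n2,\frac n2+1$, and then transport via Lemmas \ref{lem: orientation reversing} and \ref{lem: mirror}. Two bookkeeping fixes are needed. First, $\mathbf{e}_{a+1}+\mathbf{e}_{a+b+1}$ has $2s-1$ (not $2$) coordinates equal to $k-1=0$, so the term is $-2\mathbf{h}(1)-\beta(1)-2s+1$. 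Second, for the first two equations the cable is $K_{-n}(a+1,b)$, so the linking sum is $n(a+1)b$ rather than $nab$, and the reversed coordinates contribute only $\mp 2$; this is exactly what produces $\frac n4((a+1-b)^2-1)$.
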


\begin{proof}
    We use a very similar proof to that of the above lemma with a few key caveats. For equation \eqref{eq: ea-eab1} we start with,
    \begin{equation}
        \widehat{HFL}(m(K)_{n}(a+1+b,0),\mathbf{e}_{a+1}+\mathbf{e}_{a+b+1})\cong \mathbb{F}_{-2\mathbf{h}(1)-\beta(1)-2s+1}^{\binom{2s-1}{\beta(1)}}.
    \end{equation}
    The sum of the linking numbers is the same and we have to subtract 2 in the use of Lemma \ref{lem: orientation reversing} because of the $\mathbf{e}_{a+b+1}$. Then in the use of Lemma \ref{lem: mirror} we have $|\mathbf{e}_{a+1}-\mathbf{e}_{a+b+1}|=0$. Combining these gives the result. Then for equation \eqref{eq: -ea+eab1} we start with,
    \begin{equation}
        \widehat{HFL}(m(K)_{n}(a+1+b,0),-\mathbf{e}_{a+1}-\mathbf{e}_{a+b+1})\cong \mathbb{F}_{-2\mathbf{h}(0)-\beta(0)-2}^{\binom{2s-1}{\beta(0)}}.
    \end{equation}
    A nearly identical proof then gives the result, the only difference being we add 2 in the use of Lemma \ref{lem: orientation reversing}.
    
    For equation \eqref{eq: -ea+eab+k} the proof is very similar. We start with,
    \begin{equation}
        \widehat{HFL}(m(K)_{n}(a+b,0),-\mathbf{e}_{a}-\mathbf{e}_{a+b}+\mathbf{k})\cong \mathbb{F}_{-2\mathbf{h}(\frac{n}{2})-\beta(\frac{n}{2})-2}^{\binom{2s-2}{\beta(\frac{n}{2})}}.
    \end{equation}
    Then we apply Lemmas \ref{lem: orientation reversing} and \ref{lem: mirror} using,
    \begin{equation*}
        2\sum_{p=0}^{b-1} (\mathbf{k}-\mathbf{e}_a-\mathbf{e}_{a+b})_{r-p}=nb-2, \qquad \sum_{s=0}^{b-1} l_{r-s}=nab \qquad \text{and} \qquad 2|\mathbf{k}_{a,b}-\mathbf{e}_a+\mathbf{e}_{a+b}|=n(a-b).
    \end{equation*}
    Finally, equation \eqref{eq: ea-eab+k} comes from
    \begin{equation}
        \widehat{HFL}(m(K)_{n}(a+b,0),\mathbf{e}_{a}+\mathbf{e}_{a+b}+\mathbf{k})\cong \mathbb{F}_{-2\mathbf{h}(\frac{n}{2}+1)-\beta(\frac{n}{2}+1)-2s+2}^{\binom{2s-2}{\beta(\frac{n}{2}+1)}},
    \end{equation}
    and then a very similar argument. As in the previous lemma the $\delta_d$'s come from the appearance of the $\beta$ in the above.
\end{proof}

The final lemma of this section is needed in the proof of the main theorem.

\begin{lem}
\label{lem: Kn(a,b) nonmeeting general 3}
    Let $K$ and $n$ be as in the previous lemmas and let $r=2s+1$, $t\geq0$. Further, let $\mathbf{k}\in \{-1,0,1\}^{r+2t}$ such that $\mathbf{k}_i\in\{0,1\}$ for $ i \leq r+t$, $\mathbf{k}_i\in \{-1,0\}$ for $i>r+t$ and $|\mathbf{k}|=0$. Then we have the following isomorphisms,
    \begin{align}
        \widehat{HFL}(K_{-n}(r+t,t),\mathbf{k})&\cong \mathbb{F}^{\binom{2(s+t)-1}{s+t-1+\delta_d}}_{n(s^2+s)-s-t-\delta_d+2c_{K}(0)}, \label{eq: HFK 0 not meeting +ve} \\
        \widehat{HFL}(K_{-n}(r+t,t),-\mathbf{k})&\cong \mathbb{F}^{\binom{2(s+t)-1}{s+t-1+\delta_d}}_{n(s^2+s)-s-t-\delta_d+2c_{K}(0)}. \label{eq: HFK 0 not meeting -ve}
    \end{align}
\end{lem}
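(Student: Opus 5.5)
The plan is to follow the template of Lemmas \ref{lem: Kn(a,b) nonmeeting general} and \ref{lem: Kn(a,b) nonmeeting general 2}. We first compute the corresponding group for the all-positively oriented cable $m(K)_{n}(r+2t,0)$ using \cite[Theorem 3]{linkLspacernrm}. We then reverse the orientation of the last $t$ components with Lemma \ref{lem: orientation reversing}, and finally mirror with Lemma \ref{lem: mirror}. Equation \eqref{eq: HFK 0 not meeting -ve} should then follow from \eqref{eq: HFK 0 not meeting +ve} by Lemma \ref{lem: alexander symmetry}: since $|\mathbf{k}|=0$, the Maslov grading is unchanged under $\mathbf{k}\mapsto-\mathbf{k}$, which is exactly why the two right-hand sides agree.

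\textbf{Step 1: the all-positive cable.} Reversing the last $t$ components negates their Alexander coordinates. So $\mathbf{k}$ corresponds on $m(K)_n(r+2t,0)$ to a vector $\mathbf{k}'\in\{0,1\}^{r+2t}$, obtained by keeping the first $r+t$ coordinates and negating the last $t$. Thus $\mathbf{k}'$ lies in $\{k-1,k\}^{r+2t}$ with $k=1$. Provided $\mathbf{k}'$ is not constant, it is a non-meeting point in the sense of \eqref{eq: Kn(r,0) not a meeting point}. The degenerate case $\mathbf{k}=\mathbf{0}$ would instead be a meeting point already covered by Lemma \ref{lem: Kn(a,b) odd}, so I would assume, and if necessary state, that $\mathbf{k}\neq\mathbf{0}$. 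For non-constant $\mathbf{k}'$, \cite[Theorem 3]{linkLspacernrm} gives a single summand
\begin{equation*}
\widehat{HFL}(m(K)_n(r+2t,0),\mathbf{k}')\cong \mathbb{F}^{\binom{r+2t-2}{\beta(1)}}_{-2\mathbf{h}(1)-\beta(1)-p'},
\end{equation*}
where $p'$ is the number of zero coordinates of $\mathbf{k}'$. The values $\mathbf{h}(1)$ and $\beta(1)$ come from the proof of Lemma \ref{lem: Kn(a,b) odd} with $s$ replaced by $s+t$, since $r+2t=2(s+t)+1$. This gives $\beta(1)=s+t-1+\delta_d$ and $\mathbf{h}(1)=\frac{n(s+t)(s+t+1)}{2}-(s+t)+c_K(1)$. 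The dimension $\binom{2(s+t)-1}{s+t-1+\delta_d}$ already matches the claim.

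\textbf{Step 2: grading bookkeeping.} We apply Lemma \ref{lem: orientation reversing} to the last $t$ components, with linking-number contribution $\sum l = n(r+t)t$ as in earlier proofs. The coordinate sum over these components is $-q_1$, where $q_1$ is the number of $-1$'s in $\mathbf{k}$ among the last $t$ entries. Since $|\mathbf{k}|=0$, this equals the number of $1$'s among the first $r+t$ entries. We then apply Lemma \ref{lem: mirror} with $|L|=r+2t$ and $2|\mathbf{k}|=0$. The crucial check is that all dependence on $p'$ and $q_1$ cancels. Here $p'=(r+t-q_1)+(t-q_1)$, and the orientation-reversal term contributes $+2q_1$, so $p'$ and $q_1$ should combine to something independent of the particular $\mathbf{k}$. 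Likewise the $t$-dependence should collapse, through $c_K(1)=c_K(0)-\delta_d$, to $n(s^2+s)-s-t-\delta_d+2c_K(0)$.

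\textbf{Main obstacle.} The main obstacle is this final arithmetic simplification, in particular the quadratic terms in $n$. One must get $\frac{n}{4}\big((a-b)^2-1\big)$ with $a-b=r$, which equals $n(s^2+s)$, from $-n(s+t)(s+t+1)$ together with the linking term $n(r+t)t$ and the mirror sign flip. This works because $(s+t)(s+t+1)-(2s+1+t)t=s^2+s$. A sign slip in the direction of the orientation-reversal shift, as in the earlier lemmas, is the likely failure point, so I would cross-check against the $t=0$ case, comparing with \eqref{eq: e1 for spec seq}-type formulas.
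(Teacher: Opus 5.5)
Your proposal is the paper's proof. You compute $\widehat{HFL}(m(K)_n(r+2t,0),\mathbf{k}')$ at the non-meeting point $k=1$ via \cite[Theorem 3]{linkLspacernrm}, with $\mathbf{h}(1),\beta(1)$ taken from Lemma \ref{lem: Kn(a,b) odd} at $s+t$. You then reverse the last $t$ components (linking sum $n(r+t)t$), mirror using $|\mathbf{k}|=0$, and get \eqref{eq: HFK 0 not meeting -ve} from Lemma \ref{lem: alexander symmetry}. Your caveat $\mathbf{k}\neq\mathbf{0}$ is genuinely needed, and the paper implicitly assumes it, since the non-meeting formula requires $\mathbf{k}'$ to be non-constant.

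The only correction is a sign. The reversed coordinates of $\mathbf{k}'$ sum to $+q_1$, so Lemma \ref{lem: orientation reversing} shifts the grading by $-2q_1+n(r+t)t$, not $+2q_1$. It is this $-2q_1$ that cancels the $+2q_1$ in $-p'=-(r+2t)+2q_1$; with your sign you would be left with a $4q_1$ dependence. After that cancellation, mirroring and $c_K(1)=c_K(0)-\delta_d$ give $n(s^2+s)-s-t-\delta_d+2c_K(0)$.
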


\begin{proof}
    The argument is similar to the previous two lemmas. We start by considering $\mathbf{k}'\in \{0,1\}$ such that $\mathbf{k}'_i=\mathbf{k}_i$ for $i\leq r+t$ and $\mathbf{k}'_i=-\mathbf{k}_i$ for $i> r+t$. Then by \cite[Theorem 3]{linkLspacernrm},
    \begin{equation*}
        \widehat{HFL}(m(K)_{n}(r+2t,0),\mathbf{k}')\cong \mathbb{F}^{\binom{2(s+t)-1}{\beta(1)}}_{-2\mathbf{h}(1)-\beta(1)-2s-2t-1+2q}.
    \end{equation*}
    We have computed $\mathbf{h}(1)$ and $\beta(1)$ in the proof of Lemma \ref{lem: Kn(a,b) odd}, although we substitute $(s+t)$ for $s$ in the formulas. Let $q$ be the number of 1's in $\mathbf{k}$. We then want to apply Lemma \ref{lem: orientation reversing} so we compute,
    \begin{equation*}
        \sum_{p=0}^{t-1} l_{r+2t-p}=n(2s+t+1)t \qquad \text{and} \qquad 2\sum_{p=0}^{t-1}\mathbf{k}'_{r+2t-p}=2q.
    \end{equation*}
    To apply Lemma \ref{lem: mirror} we note that $|\mathbf{k}|=0$. Combining all of these computations and rearranging gives equation \eqref{eq: HFK 0 not meeting +ve}. Then to get equation \eqref{eq: HFK 0 not meeting -ve} we simply apply Lemma \ref{lem: alexander symmetry}.  
\end{proof}

\section{Non-Vanishing of Floer Lasagna Modules}
\label{sec: Non-Vanishing of Floer Lasagna Modules}

We are now ready to prove the main theorem. We do this in the first subsection and then apply it to the case of the unknot in the final subsection.

\subsection{Proof of Main Theorem}
\label{subsec: Proof Main Theorem}
\hfill

In this subsection we will prove that for a negative L-space knot $K$ and an integer $n\geq2g(K)$ if a certain cobordism map, depending on the framing and parity of $d$, is an isomorphism then the vector space $\mathcal{FL}(X_{-n}(K))$ is infinite dimensional. Recall that $d$ is the number of positive exponents in the symmetrised Alexander polynomial of $K$. We start with a definition to set up notation.

\begin{defn}
\label{defn: Fab maps}
    Let $K$ be a negative L-space knot, $n \geq 2g(K)$ and $a,b\geq0$ such that $a+b=2s+1$ with $s\geq0$ for $d$ even and $s\geq2$ for d odd. Define,
    \begin{equation*}
        i_{a,b}:=\frac{n}{4}((a-b)^2-1)-2s+2c_{K}(0),
    \end{equation*}
    that is the minimal Maslov grading such that $\widehat{HFL}_{i}(K_{-n}(a,b),\mathbf{0})$ is non-zero.  Further, denote by
    \begin{equation*}
        F_{a,b}\colon \widehat{HFL}_{i_{a,b}-1}(K_{-n}(a,b)\sqcup U,\mathbf{0})\to \widehat{HFL}_{i_{a+1,b+1}}(K_{-n}(a+1,b+1),\mathbf{0}),
    \end{equation*}
    the restriction of the pair of pants maps $F_P$ defined in Definition \ref{defn: pair of pants cobordism}.
\end{defn}

To justify the Maslov grading in the above definition, note that $i_{a+1,b+1}=i_{a,b}-2$ and that $F_{a,b}$ drops the Maslov grading by one as noted after Definition \ref{defn: pair of pants cobordism}. Further, we can treat the second $\mathbf{0}$ as an element of $\mathbb{Z}^{a+b+2}$ by noting that by Lemma \ref{lem: Kn(a,b) nonmeeting general 2} the support of the Maslov grading in the other possible Alexander multigradings is strictly greater than $i_{a+1,b+1}$. We now prove a key inductive lemma relating $F_{a,b}$ to $F_{a+1,b+1}$.

\begin{lem}
\label{lem: inductive step}
    Let $K,n,a$ and $b$ be as above and suppose $s\geq 0$ if $d$ is even or $s\geq 2$ if $d$ is odd. Then $F_{a+1,b+1}$ is an isomorphism, if $F_{a,b}$ is an isomorphism.
\end{lem}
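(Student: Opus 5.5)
The plan is to use the forgetting-components spectral sequence, as in the final paragraph of Example \ref{ex: forgetting cobordisms}, to deduce the behaviour of $F_{a+1,b+1}$ from that of $F_{a,b}$. The cables are nested: $K_{-n}(a+2,b+2)\sqcup U$ contains $K_{-n}(a+1,b+1)\sqcup U$ together with one extra positively oriented component $L_0$. The pair of pants cobordism $P$ defining $F_{a+1,b+1}$ can be chosen disjoint from $L_0$, with $L_0\times I$ a cylinder component of $P$. Forgetting $L_0$ then turns $P$ into the pants cobordism $K_{-n}(a+1,b)\sqcup U\to K_{-n}(a+2,b+1)$. To make the parity of $a+b$ compatible, I would instead forget one component of each orientation in two stages, or pass through the even case using Lemma \ref{lem: Kn(a,b) even}; this is where that lemma enters as the ``middle step''. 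So the concrete route is to forget a positive component and use the even-case cables $K_{-n}(a+1,b)$ with Alexander grading $\mathbf{k}_{a,b}$, and then forget a negative component to land back at $F_{a,b}$.

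By Corollary \ref{cor: spec maps for pants}, $F_P$ is a degree $0$ filtered chain map with respect to $\partial_{z_0}$. On homology it is identified with $F_{P\backslash(L_0\times I)}\otimes\text{Id}_V$. By Lemma \ref{lem: f_* and E^infty (f)}, $E^\infty(F_P)=\text{gr}(F_P)_*$ under the canonical isomorphisms.

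\textbf{Step 1: the pages in the relevant gradings.} I would compute the $E^1$- and $E^\infty$-pages in the multigradings that matter. The $E^1$-page is $\widehat{HFL}$ of the larger cable, summed over the $L_0$-Alexander grading with the other gradings fixed. The $E^\infty$-page is the smaller cable tensor $V$, shifted by $l_i=\frac12 lk(L_i,L_0)=\pm\frac n2$; this shift is exactly why the gradings $\mathbf{k}_{a,b}$ appear. Lemmas \ref{lem: Kn(a,b) odd}, \ref{lem: Kn(a,b) even}, \ref{lem: Kn(a,b) nonmeeting general}, \ref{lem: Kn(a,b) nonmeeting general 2} and \ref{lem: Kn(a,b) nonmeeting general 3} give every summand. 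The goal is to show that in the minimal Maslov grading $i_{a+1,b+1}$ (and $i_{a+1,b+1}-1$ on the source side) only one filtration level survives. Indeed, the nonmeeting gradings $\pm\mathbf{e}_1$, $\pm\mathbf{e}_{a+1}+\mathbf{k}_{a,b}$ all sit in Maslov gradings strictly above the minimum. Lemma \ref{lem: filtration and associated grading in single degree} then identifies homology with its associated graded there. It also shows no differential can hit or leave the bottom grading, so $E^1=E^\infty$ in that bidegree.

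\textbf{Step 2: comparing the maps.} Once $E^1=E^\infty$ in the relevant bidegree on both source and target, $F_{a+1,b+1}=E^1(F_P)$ agrees with $E^\infty(F_P)$. This in turn equals the restriction of $F_{P'}\otimes\text{Id}_V$ to the $B$- or $T$-summand landing in the minimal grading. After the two forgetting stages, $P'$ is the pants map $F_{a,b}$ in bottom degree. Since $F_{a,b}$ is an isomorphism, so is the tensor with the identity, and therefore so is $F_{a+1,b+1}$. The dimensions $\binom{2s}{0}$-type counts from Lemma \ref{lem: Kn(a,b) odd} give an independent consistency check.

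\textbf{Main obstacle.} I expect Step 1 to be the hard part. One must show that the minimal Maslov grading of the $E^1$-page, restricted to the relevant total Alexander grading, has contributions only from the single $L_0$-Alexander grading $\mathbf{0}$ (respectively $\pm\frac n2$), and that these survive. This requires careful comparison of the Maslov gradings in the various lemmas, including the $\delta_d$ terms. The hypothesis $s\ge2$ for $d$ odd is presumably needed here, so that the $\binom{2s-2}{s-2+\delta_d}$ summands sit strictly above the bottom grading. If a stray summand does appear at the bottom, I would try to rule it out via the Alexander-grading shift of $F_P$ from Theorem \ref{thm: grading change formulas}.
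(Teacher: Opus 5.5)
Your overall route matches the paper's: a two-stage reduction through the even cable, with each stage using Corollary \ref{cor: spec maps for pants}, Lemma \ref{lem: filtration and associated grading in single degree} (to get $E^\infty(F)=F_*$ in a one-dimensional grading) and $E^1=E^\infty$ in the bottom bidegree. Two corrections to the setup:
\begin{itemize}
\item The indexing is off. Forgetting a positive component of $K_{-n}(a+1,b+1)$ gives $K_{-n}(a,b+1)$ with grading $\mathbf{k}_{a,b+1}$, not $K_{-n}(a+1,b)$ with $\mathbf{k}_{a,b}$. Your opening sentence about $K_{-n}(a+2,b+2)\sqcup U$ is also wrong.
\item Corollary \ref{cor: spec maps for pants} forgets one component at a time. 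So the intermediate statement must be proved as its own claim: the bottom-degree pants map $K_{-n}(a,b+1)\sqcup U\to K_{-n}(a+1,b+2)$ in grading $\mathbf{k}_{a,b+1}$ is an isomorphism, obtained from $F_{a,b}$ by forgetting a negative component. Only then is $F_{a+1,b+1}$ deduced from it by forgetting a positive component.
\end{itemize}

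The genuine gap is your Step 1 assertion that the non-meeting summands always lie strictly above the bottom Maslov grading. This fails for $s=0$ with $d$ even, i.e.\ $a+b=1$. That is precisely the step $F_{1,0}\Rightarrow F_{2,1}$ on which the unknot and every knot with $d$ even depend. In the second stage the source $E^1$-page is $\bigoplus_{x}\widehat{HFL}(K_{-n}(a+1,b+1)\sqcup U,x\mathbf{e}_1)$. By \eqref{eq: -e1 for spec seq} with $s=1$, the summand $\widehat{HFL}(K_{-n}(a+1,b+1),-\mathbf{e}_1)\otimes B$ is one-dimensional in Maslov grading $-3+2c_K(0)$. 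That is exactly the grading of $v_{a+1,b+1}\otimes B$.

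This summand really exists, so it cannot be ruled out. The Alexander-grading shift of $F_P$ is also irrelevant here, since the issue is the structure of the spectral sequence, not the image of the map. What rescues the argument is the paper's count:
\begin{itemize}
\item $\partial_{z_0}$ lowers the Maslov grading by one and lowers the Alexander grading of the forgotten component. So $v_{a+1,b+1}\otimes B$ is automatically a cycle.
\item It could only be hit from the $\mathbf{e}_1$-level in Maslov grading $-2+2c_K(0)$. That level is $\mathbb{F}_{-1+2c_K(0)}\oplus\mathbb{F}_{2c_K(0)}$, so nothing hits it.
\item The $E^\infty$-page $\widehat{HFL}(K_{-n}(a,b+1)\sqcup U,\mathbf{k}_{a,b+1})\otimes V$ is one-dimensional in grading $-3+2c_K(0)$. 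Hence the stray class must be killed by $d_1$ from the $x=0$ level, and the associated graded remains concentrated in the single level $x=0$.
\end{itemize}
With this patch, Lemma \ref{lem: filtration and associated grading in single degree} still applies and the rest of your argument goes through as in the paper.
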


\begin{proof}
    We prove this in two stages. Define $i_{a,b+1}$ to be the minimal Maslov grading such that $\widehat{HFL}(K_{-n}(a,b+1),\mathbf{k}_{a,b+1})$ is non-zero, where recall $\mathbf{k}_{a,b+1}=(\frac{n}{2},\ldots, \frac{n}{2},-\frac{n}{2},\ldots,-\frac{n}{2})$ with $b+1$ negative terms. We first show that the map 
    \begin{equation*}
        F_{a,b+1}\colon \widehat{HFL}_{i_{a,b+1}}(K_{-n}(a,b+1)\sqcup U,(\mathbf{k}_{a,b+1},0))\to  \widehat{HFL}_{i_{a+1,b+2}}(K_{-n}(a+1,b+2),\mathbf{k}_{a+1,b+2}),
    \end{equation*}
    is an isomorphism, where the $0$ in $(\mathbf{k}_{a,b+1},0)$ corresponds to the unknotted component. Then, we show that the map in the lemma is an isomorphism. These isomorphisms will be a consequence of Proposition \ref{prop: spec maps for forgetting components of bands} and the calculations in Section \ref{sec: Link Floer Homology of T(r,rn)}. We write the proof out for $K$ not the unknot as the proof for the unknot is the same except using Corollaries \ref{cor: HFL of Kn(a+t,b+t)} and \ref{cor:HFL of Kn(a+t,b+t) non meeting} in place of the general results.

    Recall that the spectral sequence in Theorem \ref{thm: OS Spec seq} for forgetting a component of a link preserves the Maslov grading and shifts the Alexander grading by half the linking number. Consider this spectral sequence for the link $K_{-n}(a,b+1)\sqcup U$ forgetting the component $L_{a+1}$. Note that $lk(L_{a+1},L_{i})$ equals $n$ for $i\in\{1,\ldots,a\}$, $-n$ for $i\in\{a+2,\ldots,a+b+1\}$ and $lk(L_{a+1},U)=0$, so the spectral sequence is from,
    \begin{equation*}
        \bigoplus_{x\in\{-1,0,1\}}\widehat{HFL}(K_{-n}(a,b+1),(\mathbf{k}_{a,b+1},0)+x\mathbf{e}_{a+1})\qquad \text{to} \qquad \widehat{HFL}(K_{-n}(a,b),\mathbf{0})\otimes V,
    \end{equation*}
    where we are using \cite[Theorem 3]{linkLspacernrm} to limit the values of $x$. 

    From the assumptions of the lemma we have that $a+b+1\geq2$ if $d$ even or $a+b+1\geq6$ if $d$ is odd. Then using Lemmas \ref{lem: Kn(a,b) even} and \ref{lem: Kn(a,b) nonmeeting general} to limit the gradings of $\widehat{HFL}(K_{-n}(a,b+1))$ and Lemma \ref{lem:  unknot component} to understand the unknotted component we can see that the minimal Maslov grading of,
    \begin{equation*}
        \bigoplus_{x\in\{-1,0,1\}}\widehat{HFL}(K_{-n}(a,b+1)\sqcup U,(\mathbf{k}_{a,b+1},0)+x\mathbf{e}_{a+1})
    \end{equation*}
    is 1-dimensional and is supported in Maslov grading
    \begin{equation*}
        i_{a,b+1}-1=\frac{n}{4}((a-b)^2-1)-2-2s+2c_{K}(0).
    \end{equation*}
    Let $v_{a,b+1}\otimes B$ be the unique non-zero element in this grading, again using Lemma \ref{lem:  unknot component} for the tensor product. Again the minimal graded part of $\widehat{HFL}(K_{-n}(a,b)\sqcup U,\mathbf{0})$ is isomorphic to $\mathbb{F}$ and is supported in grading, 
    \begin{equation*}
        i_{a,b}-1=\frac{n}{4}((a-b)^2-1)-1-2s+2c_{K}(0)
    \end{equation*}
    so define $v_{a,b}\otimes B$ to be the unique non-zero element. Therefore, in this Maslov and Alexander grading the spectral sequence restricts to a spectral sequence $\mathbb{F}\implies \mathbb{F}$. Hence, $v_{a,b+1}\otimes B$ is sent to $v_{a,b}\otimes B\otimes B$.

    Now we consider the image of $F_{a,b+1}(v_{a,b+1}\otimes B)$. From Theorem \ref{thm: grading change formulas} we see that the Maslov grading satisfies,
    \begin{equation*}
        M(F_{a,b+1}(v_{a,b+1}\otimes B))=i_{a+1,b+2}=\frac{n}{4}((a-b)^2-1)-3-2s+2c_{K}(0),
    \end{equation*}
    and the collapsed Alexander grading is $(\mathbf{k}_{a,b+1},0)$, where we have collapsed the gradings coming from the two components in the pair of pants cobordism. Using the calculations in Lemma \ref{lem: Kn(a,b) nonmeeting general 2} we see that it is supported in Alexander grading $\mathbf{k}_{a+1,b+2}$. In fact by using a similar argument to the previous paragraph we have,
    \begin{equation*}
        F_{a,b+1}(v_{a,b+1}\otimes B)=v_{a+1,b+2}\qquad \text{or} \qquad 0,
    \end{equation*}
    where $v_{a+1,b+2}$ is the minimal Maslov graded element in $\widehat{HFL}(K_{-n}(a+1,b+2),\mathbf{k}_{a+1,b+2})$. 

    A similar argument to the above shows that the spectral sequence, 
    \begin{equation*}
        \bigoplus_{x\in\{-1,0,1\}}\widehat{HFL}(K_{-n}(a+1,b+2),\mathbf{k}_{a+1,b+2}+x\mathbf{e}_{a+1})\qquad \text{to} \qquad \widehat{HFL}(K_{-n}(a+1,b+1),\mathbf{0})\otimes V,
    \end{equation*}
    restricted to the minimal Maslov grading is $\mathbb{F}\implies \mathbb{F}$. Further, using the calculations in Section \ref{sec: Link Floer Homology of T(r,rn)} we see that the $E^1$-page is generated by $v_{a+1,b+2}$ and the $E^\infty$-page by $v_{a+1,b+1}$, where again $v_{a+1,b+1}$ is the element with minimal Maslov grading in $\widehat{HFL}(K_{-n}(a+1,b+1),\mathbf{0})$.

    We will now apply Corollary \ref{cor: spec maps for pants} to prove that $F_{a,b+1}$ is an isomorphism. To start we let,
    \begin{equation*}
        F\colon \widehat{CFL}_{z_{a+1}}(K_{-n}(a,b+1)\sqcup U) \to \widehat{CFL}_{z_{a+1}} (K_{-n}(a+1,b+2))
    \end{equation*}
    denote the morphism of transitive systems coming from the pair of pants cobordism, where $z_{a+1}$ is the basepoint corresponding to $L_{a+1}$. Further, let $\nu_{a,b+1}$ and $\nu_{a+1,b+2}$ be representatives of $v_{a,b+1}\otimes B$ and $v_{a+1,b+2}$ respectively at the chain level. Finally, let,
    \begin{align*}
        g_{a,b+1}&\colon \widehat{HFL}_{z_{a+1}}(K_{-n}(a,b+1)\sqcup U)\to \widehat{HFL}(K_{-n}(a,b)\sqcup U)\otimes V, \\
        g_{a+1,b+2}&\colon \widehat{HFL}_{z_{a+1}}(K_{-n}(a+1,b+2))\to \widehat{HFL}(K_{-n}(a+1,b+1))\otimes V,
    \end{align*}
    be the canonical isomorphisms from Proposition \ref{prop: canonical identification}. Then applying Corollary \ref{cor: spec maps for pants} gives,
    \begin{align*}
        F_{*}([\nu_{a,b+1}])=g_{a+1,b+2}^{-1} (F_{a,b}\otimes \text{Id}_{V}) g_{a,b+1}([\nu_{a,b+1}])&=g_{a+1,b+2}^{-1}(F_{a,b}\otimes \text{Id}_{V})((v_{a,b}\otimes B)\otimes B)\\
        &=g_{a+1,b+2}^{-1}(v_{a+1,b+1}\otimes B)=[\nu_{a+1,b+2}],
    \end{align*}
    where $[\cdot]$ means the class in homology and the equality between the lines comes from the assumption in the Lemma that $F_{a,b}(v_{a,b}\otimes B)=v_{a+1,b+1}$.

    Note that in this grading both the domain and codomain of $F_*$ are 1-dimensional so by Lemma \ref{lem: filtration and associated grading in single degree} there is a canonical identification between the homology and the associated graded vector space. Hence, in these gradings we have
    \begin{equation}
        E^{\infty}(F)=F_{*},
    \end{equation}
    where we are using Lemma \ref{lem: f_* and E^infty (f)} and slightly abusing notation. Now, above we showed that in this grading the $E^{1}$-page and $E^{\infty}$-page are isomorphic, say, by an isomorphism $\phi$, for both the target and source spectral sequences. Therefore,
    \begin{equation*}
        F_{a,b+1}(v_{a,b+1}\otimes B)=E^{1}(F)(v_{a,b+1}\otimes B)=\phi^{-1}F_*((v_{a,b}\otimes B)\otimes B)=\phi^{-1}(v_{a+1,b+1}\otimes B)=v_{a+1,b+2}.
    \end{equation*}
    This proves the claim.

     We now prove the map $F_{a+1,b+1}$ is an isomorphism using the previous claim. As before we start by considering the spectral sequence for the link $K_{-n}(a+1,b+1)\sqcup U$ forgetting the component $L_{1}$. Then note, $lk(L_{1},L_{i})$ equals $-n$ for $i\in\{2,\ldots,a+1\}$, $n$ for $i\in\{a+2,\ldots,a+b+2\}$ and $lk(L_1,U)=0$. Therefore as before the spectral sequence is from,
     \begin{equation*}
         \bigoplus_{x\in\{-1,0,1\}}\widehat{HFL}(K_{-n}(a+1,b+1)\sqcup U,x\mathbf{e}_{1}) \qquad \text{to} \qquad \widehat{HFL}(K_{-n}(a,b+1)\sqcup U,\mathbf{k}_{a,b+1})\otimes V.
     \end{equation*}
     For all the cases except $a+b+2=3$ we have by Lemmas \ref{lem: Kn(a,b) odd} and \ref{lem: Kn(a,b) nonmeeting general} that there is a unique non-zero element in the $E^1$-page of minimum Maslov grading, and it is $v_{a+1,b+1}\otimes B$ from the previous paragraphs. Further, the minimal graded element on the $E^\infty$-page is $v_{a,b+1}\otimes B\otimes B$.

     In the case of $a+b+2=3$, we see from Lemma \ref{lem: Kn(a,b) even} that the $E^\infty$-page is isomorphic to,
     \begin{equation*}
         \mathbb{F}_{-3 +2c_{K}(0)}\oplus\mathbb{F}^2_{-2 +2c_{K}(0)}\oplus\mathbb{F}_{-1 +2c_{K}(0)}.
     \end{equation*}
    Additionally by counting dimensions we see that the spectral sequence collapses on the $E^2$-page and that the $E^1$-page is,
    \begin{equation*}
        \mathbb{F}_{-3 +2c_{K}(0)}\oplus\mathbb{F}_{-2 +2c_{K}(0)} \leftarrow\mathbb{F}_{-3 +2c_{K}(0)}\oplus\mathbb{F}^4_{-2 +2c_{K}(0)}\oplus \mathbb{F}_{-1 +2c_{K}(0)}^3 \leftarrow \mathbb{F}_{-1 +2c_{K}(0)}\oplus \mathbb{F}_{-0 +2c_{K}(0)}
    \end{equation*}
    where both arrows reduce the Maslov grading by one. Therefore, as $v_{a+1,b+1}\otimes B$ is the lowest graded Maslov term in the middle term, there is clearly no way for it to be a boundary. Hence, we have that $v_{a+1,b+1}\otimes B$ and $v_{a,b+1}\otimes B \otimes B$ are identified under the spectral sequence.

    Finally, by using Lemma \ref{lem: Kn(a,b) nonmeeting general 2} we see that $F_{a+1,b+1}(v_{a+1,b+1}\otimes B)$ is either $v_{a+2,b+2}$ or $0$. Then by a similar argument to those in the previous paragraphs we can conclude,
    \begin{equation*}
        F_{a+1,b+1}(v_{a+1,b+1}\otimes B)=v_{a+2,b+2},
    \end{equation*}
    which completes the proof.
\end{proof}

Before stating the next lemmas recall the following observation from \cite{OSLinkFloer}. If 
\begin{equation*}
    \mathcal{H}=(\Sigma,\boldsymbol{\alpha},\boldsymbol{\beta},\{w_1,\ldots,w_n\},\{z_{1},\ldots,z_n\})
\end{equation*}
is a Heegaard diagram for the oriented link $L$, then,
\begin{equation*}
    \tilde{\mathcal{H}}=(\Sigma,\boldsymbol{\alpha},\boldsymbol{\beta},\{z_1,w_2\ldots,w_n\},\{w_{1},z_2,\ldots,z_n\})
\end{equation*}
is a Heegaard diagram for the oriented link $\tilde{L}$, which is $L$ with the orientation of $L_1$ reversed. Note that by swapping these basepoints there is a canonical isomorphism, 
\begin{equation*}
    s_1\colon \widehat{CFL}(\mathcal{H})\to \widehat{CFL}(\tilde{\mathcal{H}}),
\end{equation*}
given by the identification of the Heegaard states.

\begin{prop}
\label{prop: s1 is a morphism of transitive system}
    The map $s_1$ is a morphism of transitive systems.
\end{prop}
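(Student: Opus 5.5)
The plan is to check that $s_1$ commutes, up to chain homotopy, with the distinguished maps $\Phi_{\mathcal{H}\to\mathcal{H}'}$ of Lemma \ref{lem: transitive system for hat}. That is, for any two diagrams $\mathcal{H},\mathcal{H}'$ of $\mathbb{L}$ with swapped diagrams $\tilde{\mathcal{H}},\tilde{\mathcal{H}}'$, we want
\begin{equation*}
s_1^{\mathcal{H}'}\circ\Phi_{\mathcal{H}\to\mathcal{H}'}\simeq \Phi_{\tilde{\mathcal{H}}\to\tilde{\mathcal{H}}'}\circ s_1^{\mathcal{H}}.
\end{equation*}
The key observation is that $\mathcal{H}$ and $\tilde{\mathcal{H}}$ have the same surface, the same attaching curves and the same set of basepoints $\mathbf{w}\cup\mathbf{z}$. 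The hat differential counts Maslov index one discs with $n_{\mathbf{w}}=n_{\mathbf{z}}=0$, and this condition is symmetric in the roles of $w_1$ and $z_1$. Hence $s_1$ is literally the identity on generators and is a chain isomorphism. The same symmetry holds for triangle and rectangle counts with the condition $n_{\mathbf{w}}=n_{\mathbf{z}}=0$.

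I would first reduce the general case to elementary moves. By \cite{Naturality} (in the form used in \cite{CobMapPaper}), $\Phi_{\mathcal{H}\to\mathcal{H}'}$ is, up to chain homotopy, a composite of maps associated to the following moves:
\begin{enumerate}
\item changes of almost complex structure and Hamiltonian isotopies of the curves, computed by counting index zero discs or continuation maps that avoid all basepoints;
\item handleslides, computed by triangle maps $F_{\mathcal{T}}(\boldsymbol{\Theta}^{+}\otimes -)$ with $n_{\mathbf{w}}=n_{\mathbf{z}}=0$;
\item index one/two (de)stabilisations, computed by $\mathbf{x}\mapsto\mathbf{x}\times c$;
\item diffeomorphisms isotopic to the identity in $S^3$ preserving $L$ and the basepoints.
\end{enumerate}
Any sequence of moves between $\mathcal{H}$ and $\mathcal{H}'$ in the complement of $\mathbf{w}\cup\mathbf{z}$ is simultaneously a sequence of moves between $\tilde{\mathcal{H}}$ and $\tilde{\mathcal{H}}'$. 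For the distinguished diagrams, the underlying sutured manifold $S^3\setminus N(L)$, with meridional sutures at the basepoints, is the same; only the labelling of $R_\pm$ near $L_1$ changes. So we may use the same chain of moves for both links.

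Next I check that each move map intertwines $s_1$, move by move. For (1), (3) and (4) this is immediate, since the counts only depend on the multiset $\mathbf{w}\cup\mathbf{z}$, and the stabilisation generators $c$ are the same. For (2), I need the top-graded generator $\boldsymbol{\Theta}^{+}$ for $\mathcal{T}_{\alpha',\alpha}$ to be the same intersection point in both labellings. The $\mathcal{T}_{\alpha',\alpha}$ diagram represents an unlink in $\#(S^1\times S^2)$. Swapping $w_1$ and $z_1$ reverses the orientation of one unknot, and $w_1$ and $z_1$ lie in the same component of $\Sigma\setminus(\boldsymbol{\alpha}'\cup\boldsymbol{\alpha})$, as in the argument of Lemma \ref{lem: triangle map is well defined and a filtred map}. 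So relative Maslov gradings, computed via $\mu-2n_{\mathbf{w}}$, are unchanged, and the maximal element is the same point. Thus $F_{\mathcal{T}}(\boldsymbol{\Theta}^+\otimes -)$ commutes with $s_1$ on the nose.

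The main obstacle is the reduction step. The transitive system of \cite{Naturality} is defined through all diagrams, including moves that might pass through the basepoint-swapped configuration differently. One has to be sure that the distinguished maps for $\tilde{L}$ can be computed by the identical move sequence, i.e. that the graph of diagrams for $\tilde{\mathbb{L}}$ is canonically isomorphic to that for $\mathbb{L}$ via the swap. I would argue this by noting that both transitive systems are the sutured Floer transitive systems of \cite{Naturality} for the same balanced sutured manifold, with the swap inducing the identity on the diagram graph. Naturality then gives independence of the path chosen.

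Finally, gradings are not claimed to be preserved, since Lemma \ref{lem: orientation reversing} shows they shift. So the statement is only about ungraded chain homotopy commutativity, which completes the argument.
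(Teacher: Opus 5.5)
Your proposal is correct and follows essentially the same route as the paper: $s_1$ is the identity on generators, and the disc and triangle counts only see the set $\mathbf{w}\cup\mathbf{z}$. Stabilisations commute by definition, and for handleslides the top-graded generator is preserved because $w_1$ and $z_1$ lie in the same component of the complement of the two sets of attaching curves. The only difference is that you make explicit the reduction to elementary moves via the common underlying sutured manifold, which the paper leaves implicit; note that $R_\pm$ are also unchanged by the swap, since they are determined by the $\boldsymbol{\alpha}$- and $\boldsymbol{\beta}$-sides of the diagram rather than by the $w$/$z$ labels.
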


\begin{proof}
    As the boundary maps count holomorphic discs that do not cross the $\mathbf{w}$ and $\mathbf{z}$ basepoints it is clear that $s_1$ is a chain map. The main thing we need to analyse is the interaction between $s_1$ and holomorphic triangle counts. To this end let $\mathcal{T}=(\Sigma,\boldsymbol{\alpha}',\boldsymbol{\alpha},\boldsymbol{\beta},\mathbf{w},\mathbf{z})$ be a Heegaard triple and let $\tilde{\mathcal{T}}$ be the result of swapping $w_1$ and $z_1$. Then we claim that,
    \begin{equation}
    \label{eq: holomorphic triangle and s1}
        F_{\tilde{\mathcal{T}}}(s_{1}(\boldsymbol{\theta}),s_1(\mathbf{x}))=s_1(F_{\mathcal{T}}(\boldsymbol{\theta},\mathbf{x}))
    \end{equation}
    for all $\boldsymbol{\theta}\in \mathbb{T}_{\boldsymbol{\alpha}'}\cap \mathbb{T}_{\boldsymbol{\alpha}} $ and $ \mathbf{x}\in \mathbb{T}_{\boldsymbol{\alpha}}\cap \mathbb{T}_{\boldsymbol{\beta}}$. This is again simply a consequence of the fact that the maps are defined by counting holomorphic triangles that do not cross the $\mathbf{w}$ and $\mathbf{z}$ basepoints.

    That $s_1$ commutes with the stabilisation maps follows directly from the definition. Let $\mathcal{T}_{\alpha,\beta}=(\Sigma,\boldsymbol{\alpha},\boldsymbol{\beta},\mathbf{w},\mathbf{z})$ be a Heegaard diagram for $\mathbb{L}$ and suppose that $\boldsymbol{\beta}'$ is the result of a handle slide of the $\beta $ curves. Denote by $\mathcal{T}_{\beta,\beta'}=(\Sigma,\boldsymbol{\beta},\boldsymbol{\beta}',\mathbf{w},\mathbf{z})$ then we want to show that,
    \begin{equation*}
        s_1\colon \widehat{HFL}(\mathcal{T}_{\beta,\beta'})\to\widehat{HFL}(\tilde{\mathcal{T}}_{\beta,\beta'}),
    \end{equation*}
    sends the highest graded element $\boldsymbol{\theta}_{\beta,\beta'}$ to the highest graded element $\tilde{\boldsymbol{\theta}}_{\beta,\beta'}$. To see this note that $w_1$ and $z_1$ are in the same connected component of $\Sigma\backslash\boldsymbol{\beta}$ and hence also in the same component of $\Sigma\backslash\boldsymbol{\beta}'$ as the handleslide is in the complements of the basepoints. Therefore, they are in the same component of $\Sigma\backslash(\boldsymbol{\beta}\cup \boldsymbol{\beta}')$. It then follows that $s_1$ preserves the graded elements. Thus, $s_1$ commutes with the $\boldsymbol{\beta}$ handleslides. The $\boldsymbol{\alpha}$ handleslides follow similarly. This completes the proof.
\end{proof}

Let $\mathbb{L}$ be a link and $\tilde{\mathbb{L}}$ be the result of swapping the basepoints and orientation on $L_1$. Then the above shows that the map,
\begin{equation*}
    s_1\colon \widehat{HFL}(\mathbb{L}) \to \widehat{HFL}(\tilde{\mathbb{L}})
\end{equation*}
is well defined. Using this map we will consider the effect of swapping orientations on a cylinder component of band maps, quasi-stabilisations and pair of pants maps. For notation let $\Sigma$ be a cobordism such that $L_1\times I$ is a component. Then, define $\tilde{\Sigma}$ to be the cobordism obtained by swapping the orientation of $L_1\times I\subseteq\Sigma$ and swapping the decoration on this cylinder. We start with the band map.

\begin{lem}
\label{lem: band orientation reversal}
    Let $F_{\Sigma}\colon\widehat{HFL}(\mathbb{L})\to \widehat{HFL}(\mathbb{L}')$ be the map associated to an $\alpha$-band $B$ and suppose $L_1$ does not intersect the band $B$. Then the following diagram commutes,
    \begin{equation}
        \begin{tikzcd}
            \widehat{HFL}(\mathbb{L}) \arrow[rr,"F_{\Sigma}"] \arrow[dd,"s_1"] && \widehat{HFL}(\mathbb{L}')\arrow[dd,"s_1"]\\
            &&\\
            \widehat{HFL}(\tilde{\mathbb{L}})\arrow[rr,"F_{\tilde{\Sigma}}"]&&\widehat{HFL}(\tilde{\mathbb{L}}')
        \end{tikzcd}
    \end{equation}
\end{lem}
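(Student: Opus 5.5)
The plan is to verify the commutativity at the chain level on a single well-chosen Heegaard triple. Since all four maps are morphisms of transitive systems ($F_\Sigma$ and $F_{\tilde\Sigma}$ by Lemma \ref{lem: defn of band map}, and $s_1$ by Proposition \ref{prop: s1 is a morphism of transitive system}), it suffices to compute on one diagram. Fix a Heegaard triple $\mathcal{T}=(\Sigma,\boldsymbol{\alpha}',\boldsymbol{\alpha},\boldsymbol{\beta},\mathbf{w},\mathbf{z})$ subordinate to $B$. Let $\tilde{\mathcal{T}}$ be the triple obtained by swapping $w_1$ and $z_1$. The first step is to check that $\tilde{\mathcal{T}}$ is subordinate to $B$ viewed as a band on $\tilde{\mathbb{L}}$. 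Condition (1) of Definition \ref{defn: subordinate to B} only depends on $\Sigma\backslash N(\mathbf{w}\cup\mathbf{z})$ and the curves, so it is unchanged. Condition (2) is likewise unchanged. For condition (3), $L_1$ is disjoint from $B$, so the arcs $Q$ and the curves $\alpha_n,\alpha_n'$ are untouched, and the ends of $B$ still run from $\mathbf{w}$ to $\mathbf{z}$ basepoints on the other components. Thus $B$ is still an $\alpha$-band for $\tilde{\mathbb{L}}$. The induced decoration on the cylinder $L_1\times I$ is precisely the swapped one, so the cobordism is $\tilde\Sigma$. Hence $\tilde{\mathcal{T}}_{\alpha,\beta}$ and $\tilde{\mathcal{T}}_{\alpha',\beta}$ are diagrams for $\tilde{\mathbb{L}}$ and $\tilde{\mathbb{L}}'$, and $F_{\tilde\Sigma}=F_{\tilde{\mathcal{T}}}(\tilde{\boldsymbol\theta}^+\otimes-)$.

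Next, I will use equation \eqref{eq: holomorphic triangle and s1} from the proof of Proposition \ref{prop: s1 is a morphism of transitive system}. Triangle counts in the hat flavour only count classes with $n_{w_1}=n_{z_1}=0$, a condition symmetric in $w_1,z_1$. This gives $F_{\tilde{\mathcal{T}}}(s_1\boldsymbol\theta,s_1\mathbf{x})=s_1F_{\mathcal{T}}(\boldsymbol\theta,\mathbf{x})$. It then remains to show that $s_1$ sends a representative $\boldsymbol\theta^+$ of the top-graded class of $\widehat{HFL}(\mathcal{T}_{\alpha',\alpha})$ to a representative of the top-graded class of $\widehat{HFL}(\tilde{\mathcal{T}}_{\alpha',\alpha})$.

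I expect this to be the main obstacle, and I would handle it as in the handleslide step of Proposition \ref{prop: s1 is a morphism of transitive system}. Since $L_1$ avoids the band, $w_1$ and $z_1$ lie in the same component of $\Sigma\backslash\boldsymbol\alpha$, and hence of $\Sigma\backslash(\boldsymbol\alpha\cup\boldsymbol\alpha')$, because $\alpha_n'$ differs from $\alpha_n$ only near $N(Q\cup B)$. Consequently every domain in $\mathcal{T}_{\alpha',\alpha}$ has $n_{w_1}=n_{z_1}$. Two conclusions follow. First, $s_1$ is an isomorphism of chain complexes there. Second, relative Maslov gradings, which are computed via $\mu(\phi)-2n_{\mathbf{w}}(\phi)$, agree on both sides. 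So $s_1$ preserves relative Maslov grading and carries the unique top class to the unique top class. If the absolute grading is needed, the top element $\Theta^{\mathbf{w}}$ is characterised by relative grading alone, so this suffices.

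Combining these, for any cycle $\mathbf{x}$ we get
\[
s_1F_\Sigma(\mathbf{x})=s_1F_{\mathcal{T}}(\boldsymbol\theta^+\otimes\mathbf{x})=F_{\tilde{\mathcal{T}}}(s_1\boldsymbol\theta^+\otimes s_1\mathbf{x})=F_{\tilde\Sigma}(s_1\mathbf{x}).
\]
Passing to homology and using naturality gives the commutative square.
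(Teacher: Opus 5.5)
Your proposal is correct and follows essentially the same route as the paper. Both reduce to a single subordinate triple, using that $F_\Sigma$ and $s_1$ are morphisms of transitive systems. Both show that $s_1$ carries the top-graded generator of $\widehat{HFL}(\mathcal{T}_{\alpha',\alpha})$ to that of $\widehat{HFL}(\tilde{\mathcal{T}}_{\alpha',\alpha})$, because $w_1$ and $z_1$ lie in the same region of $\Sigma\backslash(\boldsymbol{\alpha}\cup\boldsymbol{\alpha}')$, and then conclude via equation \eqref{eq: holomorphic triangle and s1}. Your check that $\tilde{\mathcal{T}}$ is subordinate to $B$ on $\tilde{\mathbb{L}}$, and your relative-grading argument, spell out details the paper leaves implicit.
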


\begin{proof}
    Recall from Lemma \ref{lem: defn of band map} and Proposition \ref{prop: s1 is a morphism of transitive system} both $F_\Sigma$ and $s_1$ are morphisms of transitive systems. Fix a Heegaard triple $\mathcal{T}$ subordinate to $B$. Then, we only need to prove that the following diagram commutes up to chain homotopy,
    \begin{equation}
        \begin{tikzcd}
            \widehat{CFL}(\mathcal{T}_{\alpha,\beta}) \arrow[rr,"F_{\mathcal{T}}"] \arrow[dd,"s_1"] && \widehat{CFL}(\mathcal{T}_{\alpha',\beta})\arrow[dd,"s_1"]\\
            &&\\
            \widehat{CFL}(\tilde{\mathcal{T}}_{\alpha,\beta})\arrow[rr,"F_{\tilde{\mathcal{T}}}"]&&\widehat{CFL}(\tilde{\mathcal{T}}_{\alpha',\beta})
        \end{tikzcd}
    \end{equation}
    By the same argument in the previous proof we see that the map,
    \begin{equation*}
        s_1\colon \widehat{HFL}(\mathcal{T}_{\alpha'\alpha})\to \widehat{HFL}(\tilde{\mathcal{T}}_{\alpha'\alpha}),
    \end{equation*}
    preserves the maximal graded element. Therefore using equation \eqref{eq: holomorphic triangle and s1} we get the commutativity.
\end{proof}

Next we prove a similar result for the quasi-stabilisation map.

\begin{lem}
\label{lem: quasi-stab orientation reversal}
    Let $T^+\colon \widehat{HFL}(\mathbb{L})\to \widehat{HFL}(\mathbb{L}^+)$ be the map associated to a quasi-stabilisation on a component other than $L_1$. Then the following diagram commutes,
    \begin{equation}
        \begin{tikzcd}
            \widehat{HFL}(\mathbb{L}) \arrow[rr,"T^+"] \arrow[dd,"s_1"] && \widehat{HFL}(\mathbb{L}^+)\arrow[dd,"s_1"]\\
            &&\\
            \widehat{HFL}(\tilde{\mathbb{L}})\arrow[rr,"\tilde{T}^+"]&&\widehat{HFL}(\tilde{\mathbb{L}}^+)
        \end{tikzcd}
    \end{equation}
\end{lem}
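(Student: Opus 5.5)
The plan is to follow the proof of Lemma \ref{lem: band orientation reversal}. By Lemma \ref{lem: Quasi stab link} and Proposition \ref{prop: s1 is a morphism of transitive system}, both $T^+$ and $s_1$ are morphisms of transitive systems. It therefore suffices to check commutativity on the chain level for one convenient choice of Heegaard diagram. Fix a diagram $\mathcal{H}$ for $\mathbb{L}$ and let $\mathcal{H}^+$ be a quasi-stabilisation on the component $L_i\neq L_1$. Use the neck length and almost complex structure of \cite[Definition 5.1]{QuasiMap}, so that $\widehat{CFL}(\mathcal{H}^+)\cong \widehat{CFL}(\mathcal{H})\otimes\mathbb{F}\langle\xi^{\mathbf{w}},\xi^{\mathbf{z}}\rangle$ and $T^+(\mathbf{x})=\mathbf{x}\otimes\xi^{\mathbf{w}}$.

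Let $\tilde{\mathcal{H}}$ be obtained from $\mathcal{H}$ by swapping $w_1$ and $z_1$. The key observation is that the quasi-stabilisation data can be chosen to avoid $w_1$ and $z_1$: the curves $\alpha_s,\beta_s$ and the new basepoints $w_s,z_s$ sit in a small region $R$ adjacent to the basepoints of $L_i$. Performing the same quasi-stabilisation on $\tilde{\mathcal{H}}$ gives exactly the diagram obtained from $\mathcal{H}^+$ by swapping $w_1,z_1$, so $\tilde{\mathcal{H}}^+=\widetilde{\mathcal{H}^+}$. With this choice, the map $\tilde{T}^+$ associated to $\tilde{\Sigma}$ is $\mathbf{x}\mapsto\mathbf{x}\otimes\xi^{\mathbf{w}}$ in $\tilde{\mathcal{H}}^+$. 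Here one checks that the decoration on the $L_i$ cylinder is unchanged by the swap, so the same intersection point $\xi^{\mathbf{w}}$ is selected. Since $s_1$ is the identity on Heegaard states, we get
\[
s_1(T^+(\mathbf{x}))=s_1(\mathbf{x}\otimes\xi^{\mathbf{w}})=s_1(\mathbf{x})\otimes\xi^{\mathbf{w}}=\tilde{T}^+(s_1(\mathbf{x})).
\]
So the square commutes on the nose at chain level, and hence on homology.

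The main obstacle is justifying that the identification $\widehat{CFL}(\mathcal{H}^+)\cong\widehat{CFL}(\mathcal{H})\otimes\mathbb{F}\langle\xi^{\mathbf{w}},\xi^{\mathbf{z}}\rangle$ is compatible with $s_1$. Concretely, the differentials on both sides must be computed using the same almost complex structure, with the same neck-stretching degenerations. This follows because both differentials count the same holomorphic discs: the hat differentials count exactly the discs avoiding all basepoints, and swapping the labels $w_1\leftrightarrow z_1$ does not change that set. The same reasoning, as in equation \eqref{eq: holomorphic triangle and s1}, shows that the change-of-almost-complex-structure maps used in \cite{QuasiMap} commute with $s_1$. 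Hence the choice of a large neck length is harmless, and naturality of both sides finishes the proof.
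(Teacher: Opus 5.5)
Your proposal is correct and follows the paper's argument: both maps are morphisms of transitive systems, so it suffices to work on one diagram whose quasi-stabilisation region avoids $w_1,z_1$, where the distinguished point $\xi^{\mathbf{w}}$ is unaffected by the swap and the square commutes at chain level. The paper phrases this as $s_1(\mathbf{x}\times\xi^{\mathbf{w}})$ still having lower Maslov grading than $s_1(\mathbf{x}\times\xi^{\mathbf{z}})$; your extra remarks on almost complex structures spell out a point the paper leaves implicit and do no harm.
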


\begin{proof}
    As in the previous proof as both maps are morphisms of transitive systems it is enough to prove commutativity for a single Heegaard diagram. Let $\mathcal{H}$ be a Heegaard diagram for $\mathbb{L}$ and $\mathcal{H}^+$ for the result of quasi-stabilisation. Then we need to show that $s_1(\mathbf{x\times \xi^{\mathbf{w}}})$ is the image of $s_1(\mathbf{x})$ under the stabilisation map $\tilde{T}^+$. To see this we note that $w_1$ and $z_1$ are not in the quasi-stabilisation map so the Maslov grading of $s_1(\mathbf{x\times \xi^{\mathbf{w}}})$ is lower than $s_1(\mathbf{x\times \xi^{\mathbf{z}}})$. Therefore, the maps commute.
\end{proof}

Finally as a consequence of these we have the same result for pair of pants cobordisms.

\begin{lem}
\label{lem: pants orientation reversal}
    Let $P\colon \widehat{HFL}(\mathbb{L})\to \widehat{HFL}(\mathbb{L}')$ be the map associated to a pair of pants that is disjoint from $L_1$. Then the following diagram commutes,
    \begin{equation}
        \begin{tikzcd}
            \widehat{HFL}(\mathbb{L}) \arrow[rr,"F_{P}"] \arrow[dd,"s_1"] && \widehat{HFL}(\mathbb{L}')\arrow[dd,"s_1"]\\
            &&\\
            \widehat{HFL}(\tilde{\mathbb{L}})\arrow[rr,"F_{\tilde{P}}"]&&\widehat{HFL}(\tilde{\mathbb{L}}')
        \end{tikzcd}
    \end{equation}
\end{lem}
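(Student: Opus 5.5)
The plan is to reduce the statement to the two previous lemmas, using the factorisation of the pair of pants map recorded after Definition \ref{defn: pair of pants cobordism}: $F_P$ is a quasi-stabilisation $T^+$ on one of the two components of the pants, followed by the map of an $\alpha$-band $B$ joining the quasi-stabilised component to the unknotted component $U$. Both steps are supported away from $L_1$. The pants $P$ is disjoint from $L_1$, so the quasi-stabilisation is on a component other than $L_1$ and the band $B$ does not meet $L_1$. I will also decompose the reversed cobordism the same way, $\tilde P=\tilde B\circ\tilde T$. Here $\tilde T$ and $\tilde B$ are the same quasi-stabilisation and band, but with the orientation and decoration of the cylinder $L_1\times I$ swapped. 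This is legitimate because swapping the decoration on a cylinder component commutes with composing cobordisms that are disjoint from that cylinder.

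Write $F_P=F_{\Sigma_B}\circ T^+$ and $F_{\tilde P}=F_{\tilde\Sigma_B}\circ\tilde T^+$, both as morphisms of transitive systems. Then
\begin{equation*}
s_1\circ F_P=s_1\circ F_{\Sigma_B}\circ T^+=F_{\tilde\Sigma_B}\circ s_1\circ T^+=F_{\tilde\Sigma_B}\circ\tilde T^+\circ s_1=F_{\tilde P}\circ s_1 .
\end{equation*}
The second equality is Lemma \ref{lem: band orientation reversal}, applied to the intermediate link $\mathbb{L}^+$, where $L_1$ still misses $B$. The third equality is Lemma \ref{lem: quasi-stab orientation reversal}. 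Proposition \ref{prop: s1 is a morphism of transitive system} ensures that $s_1$ is well defined on the canonical groups at each stage, so the equalities hold on $\widehat{HFL}$ rather than only for particular diagrams.

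The unknotted component needs a little care. In Definition \ref{defn: pair of pants cobordism} the domain is written as $\widehat{HFL}(\mathbb{L})\otimes V$ via Lemma \ref{lem:  unknot component}, and $U$ is one of the components of the pants. So $U\neq L_1$ and $s_1$ acts trivially on the $V$ factor. The identification of Lemma \ref{lem:  unknot component} comes from a split diagram whose basepoints on $L_1$ are untouched, so it commutes with $s_1$.

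I expect the main obstacle to be checking that the decorations match after the swap. Reversing the orientation of $L_1\times I$ and swapping its decoration must produce exactly the decorated cobordism whose maps appear in Lemmas \ref{lem: band orientation reversal} and \ref{lem: quasi-stab orientation reversal}, with the new $\mathbf{w}$ and $\mathbf{z}$ basepoints on $L_1$ in the positions dictated by $s_1$. Since those lemmas are stated with the same convention for $\tilde\Sigma$, this should amount to unwinding definitions.
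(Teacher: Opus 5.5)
Your proposal is the paper's argument: the paper proves this lemma in one line by factoring $F_P$ as a quasi-stabilisation followed by a band map and applying Lemmas \ref{lem: band orientation reversal} and \ref{lem: quasi-stab orientation reversal}, exactly as in your chain of equalities. One small correction to your geometric description: in Definition \ref{defn: pair of pants cobordism} the quasi-stabilisation and the band both act on the single small unknot $U$, and the band splits $U$ into the two output components $K_+\sqcup K_-$ rather than joining a quasi-stabilised component to $U$; this does not affect the argument, since only disjointness from $L_1$ is used.
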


\begin{proof}
    This follows directly from Lemmas \ref{lem: band orientation reversal} and \ref{lem: quasi-stab orientation reversal}.
\end{proof}

Using these lemmas we are ready to prove Theorem \ref{thm: main theorem intro version}. First though we define the cobordism maps mentioned in it.

\begin{defn}
\label{defn: F_K map}
    Let $K$ be a negative L-space knot and $n\geq 2g(K)$. Then define $F_{K_{-n}}=F_{1,0}$ when $d$ is even and $F_{K_{-n}}=F_{3,2}$ when $d$ is odd.
\end{defn}

Using this we can restate the main theorem.

\begin{thm}
\label{thm: main thm body}
    Let $K$ be a negative L-space knot and $n\geq2g(K)$. Then $\mathcal{FL}(X_{-n}(K))$ is infinite dimensional if $F_{K_{-n}}$ is an isomorphism.
\end{thm}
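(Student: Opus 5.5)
Assuming $F_{K_{-n}}$ is an isomorphism, I will show that infinitely many distinct classes survive in $c\widehat{HFK}(K_{-n})$, and then conclude by Theorem \ref{thm: cable and lasagna}. Let $(a_0,b_0)=(1,0)$ if $d$ is even and $(3,2)$ if $d$ is odd. By Definition \ref{defn: F_K map}, $F_{a_0,b_0}$ is an isomorphism. Lemma \ref{lem: inductive step} applies to every $(a_0+t,b_0+t)$: the sum $a+b=2s+1$ stays odd and $s$ only grows, so the hypotheses on $s$ keep holding. Induction on $t$ then shows that every $F_{a_0+t,b_0+t}$ is an isomorphism. Write $v_t:=v_{a_0+t,b_0+t}$ for the unique nonzero element of minimal Maslov grading $i_{a_0+t,b_0+t}$ in $\widehat{HFL}(K_{-n}(a_0+t,b_0+t),\mathbf{0})$. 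The isomorphisms give $F_P(v_t\otimes B)=v_{t+1}$, so relation (2) in the definition of $c\widehat{HFK}$ gives $v_t\sim v_{t+1}$. All $v_t$ therefore define one class in homological degree $\alpha=a_0-b_0$, and we must show this class is nonzero.

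A single nonzero class only gives dimension at least one. To get infinite dimensionality I would pass to the Maslov and Alexander bigrading that $c\widehat{HFK}$ inherits. After the shift $[a+b]$, the grading of $v_t$ is $i_{a_0+t,b_0+t}+(a_0+b_0+2t)=i_{a_0,b_0}+a_0+b_0$. This is constant in $t$, so the classes above do not separate by grading, and more input is needed. I would instead vary the homology class $\alpha=a-b$. Run the same argument starting from other pairs with $a+b$ odd and $a-b=\alpha$ arbitrary. In Lemma \ref{lem: inductive step} only $a+b$ and $s$ matter, not $a-b$ (the gradings $i_{a,b}$ involve $(a-b)^2$ but the structure is the same). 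Alternatively, use the orientation-swapping maps $s_1$ and Lemma \ref{lem: pants orientation reversal} to move from $(a,b)$ to $(a-1,b+1)$-type cables while intertwining the pair of pants maps. Either way each $\alpha$ of a fixed parity gets a class. Since $\mathcal{FL}$ splits as a direct sum over $\alpha\in H_2$, infinitely many nonzero summands give infinite dimensionality.

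The main step is nonvanishing of the class $[v_t]$ in each $\alpha$-summand. Fix the bigrading $(i,j)$ with $i=i_{a,b}+a+b$, $j=0$. The relation $\sim$ is the transitive closure of: braid maps $F_\tau$, which are grading-preserving isomorphisms; basepoint-moving maps $F_{m_j}$, also isomorphisms; and pair of pants relations. Under the pair of pants relations, $v$ in level $(a,b)$ is identified with $F_P(v\otimes B)$ in level $(a+1,b+1)$, and $F_P(v\otimes T)\sim 0$. I would argue that the bigraded piece is a direct limit along the maps $F_P(-\otimes B)$ of the minimal-grading pieces, modulo braid and basepoint actions. The key point is that $F_P(w\otimes T)$ never hits $v_{t+1}$. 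The $T$-part of the $(a,b)$ level lies in Maslov grading one higher than the minimum, so its image under $F_P$ (which drops Maslov grading by one) lands in grading $i_{a,b}$, which is strictly above $i_{a+1,b+1}=i_{a,b}-2$. So no relation kills $v_{t+1}$. The braid and basepoint maps act on the one-dimensional minimal piece by the identity, since they are grading-preserving isomorphisms of $\mathbb{F}$. The other Alexander multigradings with $|\mathbf{k}|=0$ sit in strictly higher Maslov grading, by Lemma \ref{lem: Kn(a,b) nonmeeting general 3}, so they cannot interact.

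The step I expect to be hardest is making the direct-limit description rigorous. Relations coming from levels with $a+b$ much smaller, composed through several pants maps and braid actions, must not produce an identification $v_t\sim 0$. The lowest levels $a+b<a_0+b_0$ need care, since there Lemma \ref{lem: inductive step} is unavailable. I would handle this by grading: the minimal grading at level $(a,b)$ after shifting is $\frac{n}{4}((a-b)^2-1)-2s+2c_K(0)+2s+1$, and I would check that no lower level can reach it except through the chain of $B$-relations.
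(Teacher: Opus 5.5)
Your overall plan is the paper's plan: propagate $F_{a_0,b_0}$ along $t$ with Lemma \ref{lem: inductive step}, reach other homological degrees $\alpha$ by reversing cylinder components, and show the resulting class survives in each $\alpha$-summand. However, the step that isolates the $\mathbf{0}$-summand is wrong as stated.

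Lemma \ref{lem: Kn(a,b) nonmeeting general 3} only covers multigradings with entries in $\{-1,0,1\}$. By the support pattern of these cables, that is exactly the case where some coordinate vanishes. It says nothing about multigradings with every coordinate nonzero and $|\mathbf{k}|=0$, such as $(-1,-1,2)$ on $K_{-n}(2,1)$. The paper explicitly says such summands may be nonzero in Maslov grading $i_{\alpha+t,t}$.

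So your claim that they "cannot interact" is unproven. A priori a braid image, or a $T$-relation $F_P(w\otimes T)$, could have components in both pieces. That would allow a chain $v_t\sim w'\sim\cdots\sim 0$ running through them, and this is exactly what your unfinished "direct-limit" paragraph would have to exclude.

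The missing idea is the paper's splitting
$\widehat{HFK}_{i_{\alpha+t,t}}(K_{-n}(\alpha+t,t),0)=\widehat{HFL}_{i_{\alpha+t,t}}(K_{-n}(\alpha+t,t),\mathbf{0})\oplus W_{\alpha+t,t}$,
where $W_{\alpha+t,t}$ sums over the multigradings with all coordinates nonzero. You then check that every generator of $\sim$ respects this splitting:
\begin{itemize}
\item $F_{m_j}$ preserves the multigrading.
\item $F_\tau$ never mixes strands of opposite orientation, so it preserves the sums over positive and negative strands, and these sums separate the two pieces.
\item $F_P(-\otimes B)$ preserves the cylinder coordinates. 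A target multigrading with some but not all coordinates zero vanishes in this Maslov grading by Lemma \ref{lem: Kn(a,b) nonmeeting general 3}.
\item $F_P(w\otimes T)$ can have a $\mathbf{0}$-component only if $w$ has a component in $\widehat{HFL}_{i_{\alpha+t,t}-1}(K_{-n}(\alpha+t,t),\mathbf{0})$, which is zero by Lemma \ref{lem: Kn(a,b) odd}.
\end{itemize}
Your own count in the last case is off by one: $F_P(w\otimes T)$ sits in grading $M(w)-1$, so it lands in $i_{a,b}-1$, not $i_{a,b}$. This is harmless. With the splitting in place the quotient decomposes, and the $\mathbf{0}$-piece is $\bigl(\bigoplus_t\mathbb{F}\bigr)/(v_{\alpha+t,t}\sim v_{\alpha+t+1,t+1})\cong\mathbb{F}$.

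Second, your first option for other $\alpha$ does not work. Lemma \ref{lem: inductive step} passes from $(a,b)$ to $(a+1,b+1)$, so it preserves $a-b$. The hypothesis only supplies a base case for $a_0-b_0=1$, and you have no $F_{a,b}$ with $a-b\neq 1$ known to be an isomorphism.

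Your alternative is the paper's route and the one to use. Apply Lemma \ref{lem: pants orientation reversal} to cylinder components of $F_{a_0+t,b_0+t}$. The paper reverses negatively oriented strands to raise $\alpha$; lowering it is symmetric. You must then compose with basepoint-moving isomorphisms on the reversed cylinders, because $s_1$ swaps $w$ and $z$. Without this step, the reversed maps are not the pair of pants maps between the standardly pointed cables that appear in the definition of $c\widehat{HFK}(K_{-n})$.

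Finally, for $d$ odd the paper avoids your low-level worry by claiming only $\alpha\geq 5$. Then every level $(\alpha+t,t)$ has $a+b\geq 5$ and is reached by reversing strands of some $F_{3+t',2+t'}$.
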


\begin{proof}
    First recall from Theorem \ref{thm: cable and lasagna} that we have an isomorphism,
    \begin{equation*}
        \mathcal{FL}_{i}(X_{-n}(K);\alpha,0)\cong c\widehat{HFK}_{i}(K_{-n};\alpha,0).
    \end{equation*}
    From Section \ref{sec: Link Floer Homology of T(r,rn)} we have computed the groups $\widehat{HFL}(K_{-n}(a,b),\mathbf{0})$ so it remains to understand the cobordism maps. From this section we have descriptions of $F_{P}$ from understanding $F_{K_{-n}}$ in certain gradings. Using these we will show that for $\alpha=2s+1$, with $s\geq0$ for $d$ even and $s\geq 2$ when $d$ is odd, we have,
    \begin{equation}
    \label{eq: not 0 for main proof}
        c\widehat{HFK}_{n(s^2+s)+2c_{K}(0)+1}(K_{-n};\alpha,0)\neq\{0\}.
    \end{equation}
    Here, the Maslov grading comes from the fact that for $t\geq0$ we have,
    \begin{equation*}
        i_{\alpha+t,t}=\frac{n}{4}((\alpha+t-t)^2-1)-2s-2t+2c_{K}(0)=n(s^2+s)-2(s+t)+2c_{K}(0),
    \end{equation*}
    which is exactly the Maslov grading we have studied. To get the claim we want to understand the vector space,
    \begin{equation*}
        \widehat{HFK}_{i_{\alpha+t,t}}(K_{-n}(\alpha+t,t),0)=\bigoplus_{\substack{\mathbf{k}\in \mathbb{Z}^{\alpha+2t}\\ |\mathbf{k}|=\mathbf{0}}}\widehat{HFL}_{i_{\alpha+t,t}}(K_{-n}(\alpha+t,t),\mathbf{k}).
    \end{equation*}
    
    Now recall from Lemma \ref{lem: Kn(a,b) nonmeeting general 3} that if $\mathbf{k}_i=0$ for any term then to be supported in the above grading we must have $\mathbf{k}_i=0$ for all $i$. However, we have not ruled out the case when all of the terms are non-zero but they sum to zero, for example $(-1,-1,2)$. In fact it is entirely possible that there could be non-zero elements there. Hence, we define the following vector space,
    \begin{equation*}
        W_{\alpha+t,t}=\bigoplus_{\substack{\mathbf{k}\in \mathbb{Z}^{\alpha+2t}\\ |\mathbf{k}|=0\\\mathbf{k}_i\neq0, \forall i}} \widehat{HFL}_{i_{\alpha+t,t}}(K_{-n}(\alpha+t,t),\mathbf{k}).
    \end{equation*}
    Then we have the following splitting,
    \begin{equation}
    \label{eq: splitting on each summand for main proof}
        \widehat{HFK}_{i_{\alpha+t,t}}(K_{-n}(\alpha+t,t),0)=\widehat{HFL}_{i_{\alpha+t,t}}(K_{-n}(\alpha+t,t),\mathbf{0})\oplus W_{\alpha+t,t}.
    \end{equation}
    Let $\sim$ be the equivalence relation defining $c\widehat{HFK}(K_{-n})$ then we claim that we have the following isomorphism,
    \begin{align}
        c\widehat{HFK}_{i_{\alpha,0}+\alpha}(K_{-n};\alpha,0)\cong \left(\bigoplus_{t\geq0}\widehat{HFL}_{i_{\alpha+t,t}}(K_{-n}(\alpha+t,t),\mathbf{0})[\alpha+2t] \right)/\sim
        \oplus W_\alpha, \label{eq: splitting for main proof}
    \end{align}
    where $W_{\alpha}$ is some vector space related to the $W_{\alpha+t,t}$ vector spaces. Then we will show the first summand is isomorphic to $\mathbb{F}$, which proves equation \eqref{eq: not 0 for main proof}. Hence, proving these claims is enough to prove the entire theorem.

    We start by examining how the maps defining the equivalence relation interact with the splitting in equation \eqref{eq: splitting on each summand for main proof}. First, we have that
    \begin{equation*}
        F_{m_i}\colon \widehat{HFK}_{i_{\alpha+t,t}}(K_{-n}(\alpha+t,t),0)\to \widehat{HFK}_{i_{\alpha+t,t}}(K_{-n}(\alpha+t,t),0),
    \end{equation*}
    respects the Alexander multigrading so it respects the splitting. Next we have that,
    \begin{equation*}
        F_P(-\otimes B)\colon \widehat{HFK}_{i_{\alpha+t,t}}(K_{-n}(\alpha+t,t),0)\to \widehat{HFK}_{i_{\alpha+t+1,t+1}}(K_{-n}(\alpha+t+1,t+1),0),
    \end{equation*}
    respects the Alexander multigradings except the two new ones appearing. However, this is enough to respect the splitting as new $0$ coordinates cannot be introduced. The map,
    \begin{equation*}
        F_{\tau}\colon \widehat{HFK}_{i_{\alpha+t,t}}(K_{-n}(\alpha+t,t),0)\to \widehat{HFK}_{i_{\alpha+t,t}}(K_{-n}(\alpha+t,t),0),
    \end{equation*}
    is slightly more complicated. First, collapse the Alexander multigrading to $(k_1,k_2)$ where,
    \begin{equation*}
       k_1=\sum_{i=1}^{\alpha+t}\mathbf{k}_i \qquad \text{and} \qquad k_2=\sum_{i=1}^{t}\mathbf{k}_{i+\alpha+t}.
    \end{equation*}
    Then $F_{\tau}$ respects this grading as it does not swap components of opposite orientation. Therefore, it respects the splitting. Finally, the map,
    \begin{equation*}
        F_P(-\otimes T)\colon \widehat{HFK}_{i_{\alpha+t,t}-1}(K_{-n}(\alpha+t,t),0)\to \widehat{HFK}_{i_{\alpha+t+1,t+1}}(K_{-n}(\alpha+t+1,t+1),0),
    \end{equation*}
    must be considered. Suppose there exists an element $v$ such that,
    \begin{equation*}
        F_{P}(v\otimes T)\in \widehat{HFL}_{i_{\alpha+t+1,t+1}}(K_{-n}(\alpha+t+1,t+1),\mathbf{0}).
    \end{equation*}
    Then by Lemma \ref{lem: Kn(a,b) odd} and the fact that $F_{P}$ preserves all the gradings not on the pair of pants,
    \begin{equation*}
        v \in \widehat{HFL}_{i_{\alpha+t,t}-1}(K_{-n}(\alpha+t,t),\mathbf{0}).
    \end{equation*}
    However, the above vector space is $\{0\}$ by Lemma \ref{lem: Kn(a,b) odd}.

    Using these observations about the maps we can define,
    \begin{equation*}
        W_{\alpha}=\left(\bigoplus_{t\geq0}W_{\alpha+t,t}\right)/\sim
    \end{equation*}
    where the equivalence relation is given by the transitive and linear closure of $F_{m_i}(v)\sim v$, $F_{\tau}(v)\sim v$, $F_{P}(v\otimes B)\sim v$ and $v\sim 0$ if there exists $w$ such that $F_{P}(w\otimes T)=v$. The splitting in equation \eqref{eq: splitting for main proof} then follows.
    
    We now show the first summand is isomorphic to $\mathbb{F}$. To see this we note by Lemma \ref{lem: Kn(a,b) odd} we have,
    \begin{equation*}
        \widehat{HFL}_{i_{\alpha+t,t}}(K_{-n}(\alpha+t,t),\mathbf{0})\cong \mathbb{F},
    \end{equation*}
    for all $t\geq 0$. Let $v_{\alpha+t,t}$ be the generator. By the previous paragraphs we see that $F_{m_i}$ and $F_{\tau}$ act trivially. Further, as previously noted using the same lemma we have that there does not exist a $w\in \widehat{HFL}(K_{-n}(\alpha+t-1,t-1))$ such that $F_{P}(w\otimes T)=v_{\alpha+t,t}$. Hence, it remains to understand the maps $F_{P}(-\otimes B)$. Note that by the assumptions these are exactly the maps $F_{\alpha+t,t}$ from Definition \ref{defn: Fab maps}.

    By the assumption of the theorem we have that $F_{1,0}$ or $F_{3,2}$ is an isomorphism, as they are non-vanishing maps between 1-dimensional vector spaces. Further, by Lemma \ref{lem: inductive step} we have that $F_{1+t,t}$ is an isomorphism for all $t\geq 0$ or $t\geq 2$ depending on the parity of $d$. Now by applying Lemma \ref{lem: pants orientation reversal} repeatedly we see that $\tilde{F}_{\alpha+t,t}$ is an isomorphism for $\alpha\geq 1$ or $\alpha\geq 5$ and all $t\geq 0$. Now by applying isomorphisms coming from basepoint moving maps to the top and bottom to each of the cylinders that have had their decoration swapped we see that $F_{\alpha+t,t}$ is an isomorphism. Therefore, the first summand is the quotient of,
    \begin{equation*}
        \left(\bigoplus_{t\geq 0}\mathbb{F}\right)/\sim
    \end{equation*}
    where the equivalence relation is given by $v_{\alpha+t,t}\sim v_{\alpha+t+1,t+1}$. The quotient is clearly $\mathbb{F}$, which completes the proof.
\end{proof}

\subsection{Cobordism Maps}
\label{subsec: Cobordism Maps}
\hfill

We now apply Theorem \ref{thm: main thm body} to the case of the unknot. The main technical result will be that the band map in Figure \ref{fig: diagram for pair of pants} is non-vanishing for $n<0$. For comparison we will first show that for $n>0$ the band map vanishes. 

\begin{figure}[h]
    \centering
    \def\svgwidth{1\textwidth}
\begingroup%
  \makeatletter%
  \providecommand\color[2][]{%
    \errmessage{(Inkscape) Color is used for the text in Inkscape, but the package 'color.sty' is not loaded}%
    \renewcommand\color[2][]{}%
  }%
  \providecommand\transparent[1]{%
    \errmessage{(Inkscape) Transparency is used (non-zero) for the text in Inkscape, but the package 'transparent.sty' is not loaded}%
    \renewcommand\transparent[1]{}%
  }%
  \providecommand\rotatebox[2]{#2}%
  \newcommand*\fsize{\dimexpr\f@size pt\relax}%
  \newcommand*\lineheight[1]{\fontsize{\fsize}{#1\fsize}\selectfont}%
  \ifx\svgwidth\undefined%
    \setlength{\unitlength}{838.68928203bp}%
    \ifx\svgscale\undefined%
      \relax%
    \else%
      \setlength{\unitlength}{\unitlength * \real{\svgscale}}%
    \fi%
  \else%
    \setlength{\unitlength}{\svgwidth}%
  \fi%
  \global\let\svgwidth\undefined%
  \global\let\svgscale\undefined%
  \makeatother%
  \begin{picture}(1,0.31421724)%
    \lineheight{1}%
    \setlength\tabcolsep{0pt}%
    \put(0,0){\includegraphics[width=\unitlength,page=1]{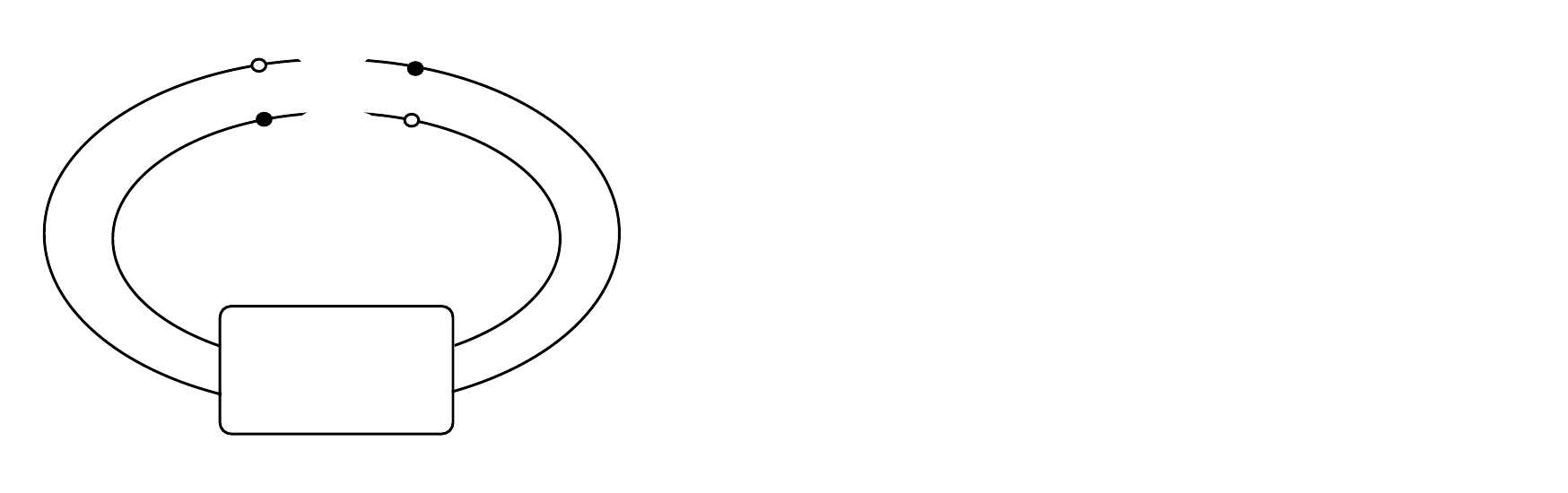}}%
    \put(0.2080784,0.07351851){\color[rgb]{0,0,0}\makebox(0,0)[lt]{\lineheight{1.25}\smash{\begin{tabular}[t]{l}$n$\end{tabular}}}}%
    \put(0,0){\includegraphics[width=\unitlength,page=2]{Pants_cobordism.pdf}}%
    \put(0.78854069,0.07316644){\color[rgb]{0,0,0}\makebox(0,0)[lt]{\lineheight{1.25}\smash{\begin{tabular}[t]{l}$n$\end{tabular}}}}%
    \put(0,0){\includegraphics[width=\unitlength,page=3]{Pants_cobordism.pdf}}%
    \put(0.49694838,0.19930853){\color[rgb]{0,0,0}\makebox(0,0)[lt]{\lineheight{1.25}\smash{\begin{tabular}[t]{l}$\Sigma$\end{tabular}}}}%
    \put(0.35278248,0.2718155){\color[rgb]{0,0,0}\makebox(0,0)[lt]{\lineheight{1.25}\smash{\begin{tabular}[t]{l}$\mathbb{U}'$\end{tabular}}}}%
    \put(0.92181384,0.27605567){\color[rgb]{0,0,0}\makebox(0,0)[lt]{\lineheight{1.25}\smash{\begin{tabular}[t]{l}$U_n(1,1)$\end{tabular}}}}%
  \end{picture}%
\endgroup%

    \caption{The cobordism corresponding to the band in Lemma \ref{lem: band map vanishes n>0} and Proposition \ref{prop: band map injective for n<0}. The $n$ in the box means $n$ full right twists if $n>0$ and $|n|$ full left twists if $n<0$.}
    \label{fig: diagram for pair of pants}
\end{figure}

\begin{lem}
\label{lem: band map vanishes n>0}
    Let $\mathbb{U}$ be the unknot with four basepoints, $B$ be the $\alpha$-band in Figure \ref{fig: diagram for pair of pants} and let $n>0$. Then the induced map $F\colon\widehat{HFL}(\mathbb{U})\to \widehat{HFL}(U_{n}(1,1))$ is the $0$ map. 
\end{lem}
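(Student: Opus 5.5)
The plan is a pure grading argument: compare the possible gradings of the image of each generator of $\widehat{HFL}(\mathbb{U})$, computed from Theorem \ref{thm: grading change formulas}, with the support of $\widehat{HFL}(U_n(1,1))$, computed from Corollaries \ref{cor: HFL of Kn(a+t,b+t)} and \ref{cor:HFL of Kn(a+t,b+t) non meeting}. No holomorphic triangle count should be needed.

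\textbf{The source and the grading shift.} As in Example \ref{ex: forgetting cobordisms}, $\widehat{HFL}(\mathbb{U})\cong\mathbb{F}\langle x,y\rangle$ with $M(x)=-1$, $A(x)=-\tfrac12$, $M(y)=0$ and $A(y)=\tfrac12$. The band cobordism $\Sigma$ of Figure \ref{fig: diagram for pair of pants} is a saddle from a one-component unknot with four basepoints to a two-component link with two basepoints per component. Its decoration is the same as in Example \ref{ex: forgetting cobordisms}, since the twisting box does not affect $\chi(\Sigma_{\mathbf{w}})$ or $\chi(\Sigma_{\mathbf{z}})$. Using the colouring by the single connected component of $\Sigma$, formulas \eqref{eq: MasShift} and \eqref{eq: AlexShift} give a Maslov shift of $0$ and a shift of $+\tfrac12$ in the collapsed Alexander grading; I will double-check this directly from the decoration. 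Hence $F(x)$ must lie in Maslov grading $-1$ and collapsed Alexander grading $0$, and $F(y)$ in Maslov grading $0$ and collapsed Alexander grading $1$.

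\textbf{The target.} I compute $\widehat{HFL}(U_n(1,1))$ from the corollaries with $a=b=1$, $r=2$, $t=0$.
\begin{itemize}
\item \emph{Meeting-point gradings $(k,-k)$.} These occur for $k\geq 0$, which forces $i=0$ and $0\leq j\leq n-1$, together with the $k<0$ variant. The factor $(nb-ni-2j)(i+1-b)$ vanishes. Equation \eqref{eq: Kn(a+t,b+t)} then shows that every such summand sits in Maslov grading $0$: the first sum has only $p=0$, and the second sum contributes $-r+2+p=0$ for $p=0$ when $j\neq 0$. So collapsed Alexander grading $0$ is supported only in Maslov grading $0$.
\item \emph{Non-meeting gradings.} Here exactly one coordinate is shifted. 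Equation \eqref{eq: Kn(a,b) not a meeting point} gives Maslov grading $1-q$.
  \begin{itemize}
  \item For $(k-1,-k)$ we have $q=2$, giving Maslov grading $-1$ and collapsed Alexander grading $-1$.
  \item For $(k,-k+1)$ we have $q=0$, giving Maslov grading $1$ and collapsed Alexander grading $+1$.
  \end{itemize}
\item \emph{Negative Alexander gradings.} The $k<0$ cases follow from the "Further" clause of Corollary \ref{cor: HFL of Kn(a+t,b+t)}, or by Lemma \ref{lem: alexander symmetry}, and should give the same conclusion. I will verify this carefully.
\end{itemize}

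\textbf{Conclusion.} The target has nothing in bigrading (Maslov $-1$, collapsed Alexander $0$), where $F(x)$ would have to live. It also has nothing in bigrading (Maslov $0$, collapsed Alexander $1$), where $F(y)$ would have to live, since collapsed Alexander grading $1$ occurs only in Maslov grading $1$. Therefore $F(x)=F(y)=0$ and $F$ is the zero map.

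\textbf{Main obstacle.} The step most likely to cause trouble is bookkeeping. First, the orientation and sign conventions must be right: that $U_n(1,1)$ with $n>0$ really corresponds to the $U_n$ case of the corollaries and not its mirror. Second, the boundary cases $j=0$ and $k<0$ must be checked so that no stray summand lands in the forbidden bigradings. I would sanity-check the whole computation against $n=1$, where $U_1(1,1)$ is a Hopf link with dimension $4$ homology. Indeed, this vanishing should fail for $n<0$, where the mirror shifts the Maslov gradings, which is why the negative case requires Proposition \ref{prop: band map injective for n<0}.
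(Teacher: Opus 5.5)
Your proposal is correct and is essentially the paper's own argument. The grading change formulas force $F(x)$ into bigrading $(M,A)=(-1,0)$ and $F(y)$ into $(0,1)$. Meanwhile, Corollaries \ref{cor: HFL of Kn(a+t,b+t)} and \ref{cor:HFL of Kn(a+t,b+t) non meeting} show that the collapsed homology of $U_n(1,1)$ is supported only in $(1,1)$, $(0,0)$ and $(-1,-1)$, so both images vanish. Your Alexander shift of $+\tfrac12$ is the one the paper's computation actually uses, even though its text says ``reduces''.
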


\begin{proof}
    First note that by Theorem \ref{thm: grading change formulas} we have that $F$ reduces the Alexander grading by $\frac{1}{2}$ and preserves the Maslov grading. We claim this is enough to see that the maps vanish. To see this recall that $\widehat{HFL}(\mathbb{U})\cong\mathbb{F}\langle x,y \rangle$, where $M(x)=-1,M(y)=0,A(x)=-\frac{1}{2}$ and $A(y)=\frac{1}{2}$. Further, by Corollary \ref{cor: HFL of Kn(a+t,b+t)} we have,
    \begin{align}
        &\widehat{HFL}_{*}\left(U_{n}(1,1),\left(\frac{n}{2},-\frac{n}{2}\right)\right)\cong \widehat{HFL}_{*}\left(U_{n}(1,1),\left(-\frac{n}{2},\frac{n}{2}\right)\right)\cong \mathbb{F}_{0} \\
        &\widehat{HFL}_{*}\left(U_{n}(1,1),\left(\frac{n}{2}-j,-\frac{n}{2}+j\right)\right)\cong \mathbb{F}_{0}^2,
    \end{align}
    for $1\leq j \leq n-1$. Similarly by Corollary \ref{cor:HFL of Kn(a+t,b+t) non meeting} we have,
    \begin{align}
        &\widehat{HFL}_{*}\left(U_{n}(1,1),\left(\frac{n}{2}-j,-\frac{n}{2}+j+1\right)\right)\cong \mathbb{F}_{1} \\
        &\widehat{HFL}_{*}\left(U_{n}(1,1),\left(\frac{n}{2}-j-1,-\frac{n}{2}+j\right)\right)\cong \mathbb{F}_{-1},
    \end{align}
    for $0\leq j \leq n-1$. Finally by \cite[Theorem 3]{linkLspacernrm} $\widehat{HFL}(U_{n}(1,1))$ is $\{0\}$ for all other Alexander gradings. Collapsing these to a single grading gives that $\widehat{HFK}(U_{n}(1,1))$ is supported in gradings $(1,1),(0,0)$ and $(-1,-1)$, where $(M,A)$ is the Maslov and Alexander grading. However, by the grading change we have $\big(M(F(x)),A(F(x))\big)=(-1,0)$ and $\big(M(F(y)),A(F(y))\big)=(0,1)$, so $F$ is the zero map.
\end{proof}

We now move onto the more complicated computation in the case of $n<0$. In this case the map will be injective and we prove this by an explicit computation of the map using a holomorphic triangle count. To do this we recall the following combinatorial description of the Maslov index for Whitney triangles from \cite{MaslovIndexPaper}.

\begin{thm}
\label{thm: formula for maslov index}
    Let $(\Sigma,\boldsymbol{\alpha}',\boldsymbol{\alpha},\boldsymbol{\beta},\mathbf{w},\mathbf{z})$ be a Heegaard triple and let $\boldsymbol{\theta}\in \mathbb{T}_{\boldsymbol{\alpha}'}\cap \mathbb{T}_{\boldsymbol{\alpha}}$, $\mathbf{x}\in \mathbb{T}_{\boldsymbol{\alpha}} \cap  \mathbb{T}_{\boldsymbol{\beta}}$ and $\mathbf{y}\in  \mathbb{T}_{\boldsymbol{\alpha}'} \cap  \mathbb{T}_{\boldsymbol{\beta}}$. Further, let $\psi \in \pi_{2}(\boldsymbol{\theta},\mathbf{x},\mathbf{y})$ and define,
    \begin{equation*}
        a(D(\psi))=\partial D(\psi)\cap\boldsymbol{\alpha}' \qquad \text{and} \qquad c(D(\psi))= \partial D(\psi)\cap\boldsymbol{\beta}
    \end{equation*}
    Finally, define $a(D(\psi))\cdot c(D(\psi))$ to be the average of the algebraic intersections between the push-off of $a(D(\psi))$ in the four diagonal directions and $c(D(\psi))$. Then,
    \begin{equation}
        \mu(\psi)=e(D(\psi))+n_{\boldsymbol{\theta}}(D(\psi))+n_{\mathbf{x}}(D(\psi))-a(D(\psi))\cdot c(D(\psi))-\frac{|\boldsymbol{\alpha}'|}{2},
    \end{equation}
    where $e(D(\psi))$, $n_{\boldsymbol{\theta}}(D(\psi))$ and $n_{\mathbf{x}}(D(\psi))$ are the Euler and point measures.
\end{thm}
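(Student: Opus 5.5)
This formula is imported from \cite{MaslovIndexPaper}, so in the paper it can simply be cited. If I were to prove it, I would follow Sarkar's additivity strategy. Write $R(\psi)$ for the right-hand side. The plan is to show that $\mu$ and $R$ change in the same way under every operation that relates Whitney triangles, and then to check that they agree on a single model triangle.

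\textbf{Step 1: the model triangle.} Start with a triangle $\psi_0$ that is a disjoint union of $|\boldsymbol{\alpha}'|$ small embedded triangles, one for each triple of curves $(\alpha'_i,\alpha_i,\beta_i)$. Such a triangle has a holomorphic representative of index $0$. For each small triangle:
\begin{itemize}
\item its three convex corners give $e=1-\tfrac34=\tfrac14$;
\item the point measures give $n_{\boldsymbol{\theta}}=n_{\mathbf{x}}=\tfrac14$;
\item the $\alpha'$-edge and $\beta$-edge share the corner at $\mathbf{y}$, and averaging over the four diagonal push-offs gives $a\cdot c=\tfrac14$.
\end{itemize}
Each small triangle therefore contributes $\tfrac14+\tfrac14+\tfrac14-\tfrac14=\tfrac12$. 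This cancels against the $\tfrac12$ per curve in $|\boldsymbol{\alpha}'|/2$, so $R(\psi_0)=0=\mu(\psi_0)$. I expect that a general configuration can be reached from this model by adding domains and changing corners.

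\textbf{Step 2: changing corners by splicing discs.} Any two triangles with possibly different corners differ by Whitney discs in $\pi_2$ of the three Heegaard diagrams $(\alpha',\alpha)$, $(\alpha,\beta)$ and $(\alpha',\beta)$, together with a triply periodic domain. For a disc $\phi\in\pi_2(\mathbf{x},\mathbf{x}')$, Lipshitz's formula gives $\mu(\phi)=e(\phi)+n_{\mathbf{x}}(\phi)+n_{\mathbf{x}'}(\phi)$, and $\mu(\psi\ast\phi)=\mu(\psi)+\mu(\phi)$. I would check directly that
\[
R(\psi\ast\phi)-R(\psi)=\mu(\phi).
\]
The Euler measure is additive. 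The change in the vertex measures $n_{\mathbf{x}}$ splits into the vertex terms of $\phi$ plus correction terms. Those correction terms are exactly cancelled by the change in $a\cdot c$, which comes from the new boundary arcs running along $\boldsymbol{\alpha}'$ or $\boldsymbol{\beta}$ meeting the old ones. The case of a disc spliced at $\mathbf{y}$ on the $(\alpha',\beta)$ side is the one where $a\cdot c$ changes most. It should be handled with the same local count at intersection points that appears in Sarkar's proof of $\mu(\phi)=e+n_{\mathbf{x}}+n_{\mathbf{x}'}$ via $n_{\mathbf{x}}(\phi)=n_{\mathbf{x}'}(\phi)+\partial_\alpha\phi\cdot\partial_\beta\phi$ type identities.

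\textbf{Step 3: triply periodic domains (main obstacle).} It remains to show that adding a triply periodic domain $P$ changes both sides equally. On the analytic side, $\mu(\psi+P)-\mu(\psi)=\langle c_1(\mathfrak{s}),H(P)\rangle$, evaluated on the class of $P$ in the filled $4$-manifold $X_{\alpha'\alpha\beta}$. This requires expressing $c_1$ combinatorially as $e(P)+2n_{\boldsymbol{\theta}}(P)$-type terms plus the self-intersection of the class $H(P)$. The self-intersection is computed by pushing $\partial_{\alpha'}P$ off itself and intersecting with $\partial_\beta P$, and this is precisely where the $a\cdot c$ term is needed. The term $a\cdot c$ is quadratic, so its cross terms $\partial_{\alpha'}\psi\cdot\partial_\beta P+\partial_{\alpha'}P\cdot\partial_\beta\psi$ must match the change $n_{\boldsymbol{\theta}}(P)+n_{\mathbf{x}}(P)$.

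My first approach to this bookkeeping would be to reduce to the case where $P$ is a generator of the triply periodic domains: either a doubly periodic domain for one of the pairs, or a genuine triply periodic domain. I would then compute both sides locally, using the diagonal-averaging convention to make corner contributions symmetric. Combining Steps 1--3 gives $\mu=R$ for all $\psi$.
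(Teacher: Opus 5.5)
Citing \cite{MaslovIndexPaper} is exactly what the paper does, and that suffices there. The additivity proof you sketch in its place does not close, for two reasons. The first is the base case. The moves in Steps 2 and 3 never leave a fixed family of triangles. Adding a triply periodic domain moves no corner. Splicing a Whitney disc at a corner only replaces that corner by a generator in the same $\mathrm{Spin}^c$ structure ($\epsilon$-class) of $Y_{\alpha'\alpha}$, $Y_{\alpha\beta}$ or $Y_{\alpha'\beta}$. So it is false that any two triangles differ by discs and a triply periodic domain. Granting Steps 2 and 3, additivity only shows that $\mu-R$ is constant on each family. A general triple has many families. For example, take a genus one triple with $Y_{\alpha'\alpha}\cong Y_{\alpha\beta}\cong S^3$ and $Y_{\alpha'\beta}\cong L(p,q)$: the unique $\boldsymbol\theta$ and $\mathbf x$ are joined by triangles to generators $\mathbf y$ in all $p$ $\mathrm{Spin}^c$ structures of the lens space. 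One model anchors at most one family. In an arbitrary triple there is also no reason for a family, or the triple at all, to contain $|\boldsymbol\alpha'|$ small embedded triangles, or any other class whose index you can compute by hand. What is missing is an evaluation of $\mu$ that works uniformly for every class, not an anchor at one model.

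The second problem is that Step 3 is set up incorrectly. Adding $P$ changes the $\mathrm{Spin}^c$ structure, $\mathfrak s_{\mathbf w}(\psi+P)=\mathfrak s_{\mathbf w}(\psi)\pm\mathrm{PD}[H(P)]$. Hence $\mu(\psi+P)-\mu(\psi)=\pm\langle c_1(\mathfrak s_{\mathbf w}(\psi)),H(P)\rangle+H(P)\cdot H(P)$, which is quadratic in $P$, and no formula of the form $\langle c_1(\mathfrak s),H(P)\rangle$ can hold. Concretely, take the genus one triple whose curves $\boldsymbol\alpha',\boldsymbol\alpha,\boldsymbol\beta$ have slopes $(1,1),(1,0),(0,1)$. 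Its complement consists of two triangles $T,T'$ and a hexagon containing the basepoint. With $\Sigma$ suitably oriented, both $T,T'\in\pi_2(\boldsymbol\theta,\mathbf x,\mathbf y)$ have index $0$, and $T'=T-P$ for the generator $P$ of the triply periodic domains. But $\langle c_1(\mathfrak s),H(P)\rangle$ is odd for every $\mathfrak s$, because $c_1$ is characteristic and $H(P)\cdot H(P)=\pm1$. The quadratic term $-a(P)\cdot c(P)$ of $R(\psi+P)-R(\psi)$ (equal to $\pm1$ here) must be matched with $H(P)\cdot H(P)$, not with something inside a $c_1$-pairing. The linear part needs a genuine combinatorial formula for $\langle c_1(\mathfrak s_{\mathbf w}(\psi)),H(P)\rangle$, for example Ozsv\'ath--Szab\'o's dual spider number formula. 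That pairing is not local and the change is not additive in $P$, so reducing to generators of the triply periodic domains and computing locally cannot work as stated.
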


Using this and Figure \ref{fig: Heegaard Triple}, which can be seen to be a Heegaard triple subordinate to the band in question, we can prove the following.

\begin{prop}
\label{prop: band map injective for n<0}
    Let $\mathbb{U}$ be the unknot with four basepoints and let $n>0$. Then the induced map $F\colon\widehat{HFL}(\mathbb{U})\to \widehat{HFL}(U_{-n}(1,1))$ of the $\alpha$-band map in Figure \ref{fig: diagram for pair of pants} is injective. 
\end{prop}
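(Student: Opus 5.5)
First I will use the grading constraints to determine what each generator must be sent to, mirroring the proof of Lemma \ref{lem: band map vanishes n>0}. Write $\widehat{HFL}(\mathbb{U})\cong\mathbb{F}\langle x,y\rangle$ with $(M,A)(x)=(-1,-\frac12)$ and $(M,A)(y)=(0,\frac12)$. By Theorem \ref{thm: grading change formulas}, $F$ preserves the Maslov grading and raises the collapsed Alexander grading by $\frac12$. So $F(x)$ lies in bigrading $(-1,0)$ and $F(y)$ lies in $(0,1)$.

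Next I will compute $\widehat{HFK}(U_{-n}(1,1))$ in these bigradings. Using Corollaries \ref{cor: HFL of Kn(a+t,b+t)} and \ref{cor:HFL of Kn(a+t,b+t) non meeting} with $a=b=1$, $t=0$ (or, more simply, Lemma \ref{lem: mirror} applied to the computation in the proof of Lemma \ref{lem: band map vanishes n>0}), the collapsed groups are supported in $(M,A)\in\{(-1,0),(0,1),(-2,-1)\}$. After mirroring, $(M,A)$ goes to $(1-2-M+2A,A)$. Concretely, $(0,0)\mapsto(-1,0)$, $(1,1)\mapsto(0,1)$ and $(-1,-1)\mapsto(-2,-1)$. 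Each nonzero part is nontrivial, so there is no grading obstruction, and injectivity reduces to showing $F(x)\neq0$ and $F(y)\neq0$. Since $x$ and $y$ land in different bigradings, this is enough.

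To prove nonvanishing, I will fix the Heegaard triple of Figure \ref{fig: Heegaard Triple}, which is subordinate to the band, and use Lemma \ref{lem: defn of band map} with a representative $\boldsymbol{\theta}^{\mathbf{w}}$ of the top-graded class of $\mathcal{T}_{\alpha',\alpha}$. For each of $x$ and $y$ I will proceed as in Example \ref{ex: forgetting cobordisms}:
\begin{enumerate}
\item list the intersection points $\mathbf{y}\in\mathbb{T}_{\boldsymbol{\alpha}'}\cap\mathbb{T}_{\boldsymbol{\beta}}$ in the target bigrading;
\item write a general domain $D=\sum c_iR_i$ avoiding the basepoints;
\item impose the corner conditions $\partial(\partial D\cap\boldsymbol{\alpha}')=\mathbf{y}-\boldsymbol{\theta}^{\mathbf{w}}$ (and similarly on $\boldsymbol{\alpha}$, $\boldsymbol{\beta}$) together with $c_i\ge0$;
\item compute the Maslov index of each surviving domain with Theorem \ref{thm: formula for maslov index}, keeping only those with $\mu=0$.
\end{enumerate}
The aim is to show that, for a suitable target generator, exactly one domain survives and it is an embedded triangle or a simple polygon. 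Such a domain has a unique holomorphic representative by the Riemann mapping theorem, so it contributes $1$.

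I also need the image to be nonzero in homology, not merely at chain level. For this I will check that the target generator is not a boundary. The simplest route is to pick $\mathbf{y}$ so that the diagram $\mathcal{T}_{\alpha',\beta}$ has the same number of generators in that bigrading as the homology; then the differential vanishes there. Alternatively, I will use a dual pairing argument.

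The main obstacle is the domain enumeration for general $n$. The $|n|$ twists create a family of regions whose number grows with $n$, so the linear system in step (3) and the index computation must be handled uniformly in $n$. I would first look for a periodic structure in the twist region and argue that any positive domain with the correct corners must have zero multiplicity throughout that region. That would reduce the problem to a fixed local picture near the band, as in the $n$-independent count in Example \ref{ex: forgetting cobordisms}. Treating $y$ is probably harder than treating $x$, since its image sits in the top Alexander grading, where a longer polygon through the twist region may be required.
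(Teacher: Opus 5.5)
Your plan is essentially the paper's proof. The paper uses the same Heegaard triple (Figure~\ref{fig: Heegaard Triple}), the same corner-and-positivity linear algebra and Sarkar's index formula to obtain $F(\theta^+\otimes a_1)=x_3^1+\cdots+x_3^n$ and $F(\theta^+\otimes a_2)=x_4^1+\cdots+x_4^n+x_1^1$ (with $a_1=y$ and $a_2=x$). The target differential vanishes because $\alpha'\cap\beta$ has exactly $4n=\dim\widehat{HFL}(U_{-n}(1,1))$ points.

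Your one-coefficient shortcut is valid and a little cheaper than computing the full map. The unique positive classes in $\pi_2(\theta^+,a_1,x_3^n)$ and $\pi_2(\theta^+,a_2,x_4^n)$ have domains the embedded triangles $T_1$ and $T_1+S_{\mathrm{IV}}$. Neither enters the twist region, so $y$ is in fact the easy case, and you avoid the annular domains that the paper handles with a universal-cover argument.
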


\begin{figure}[h]
        \centering
        \def\svgwidth{1\textwidth}
        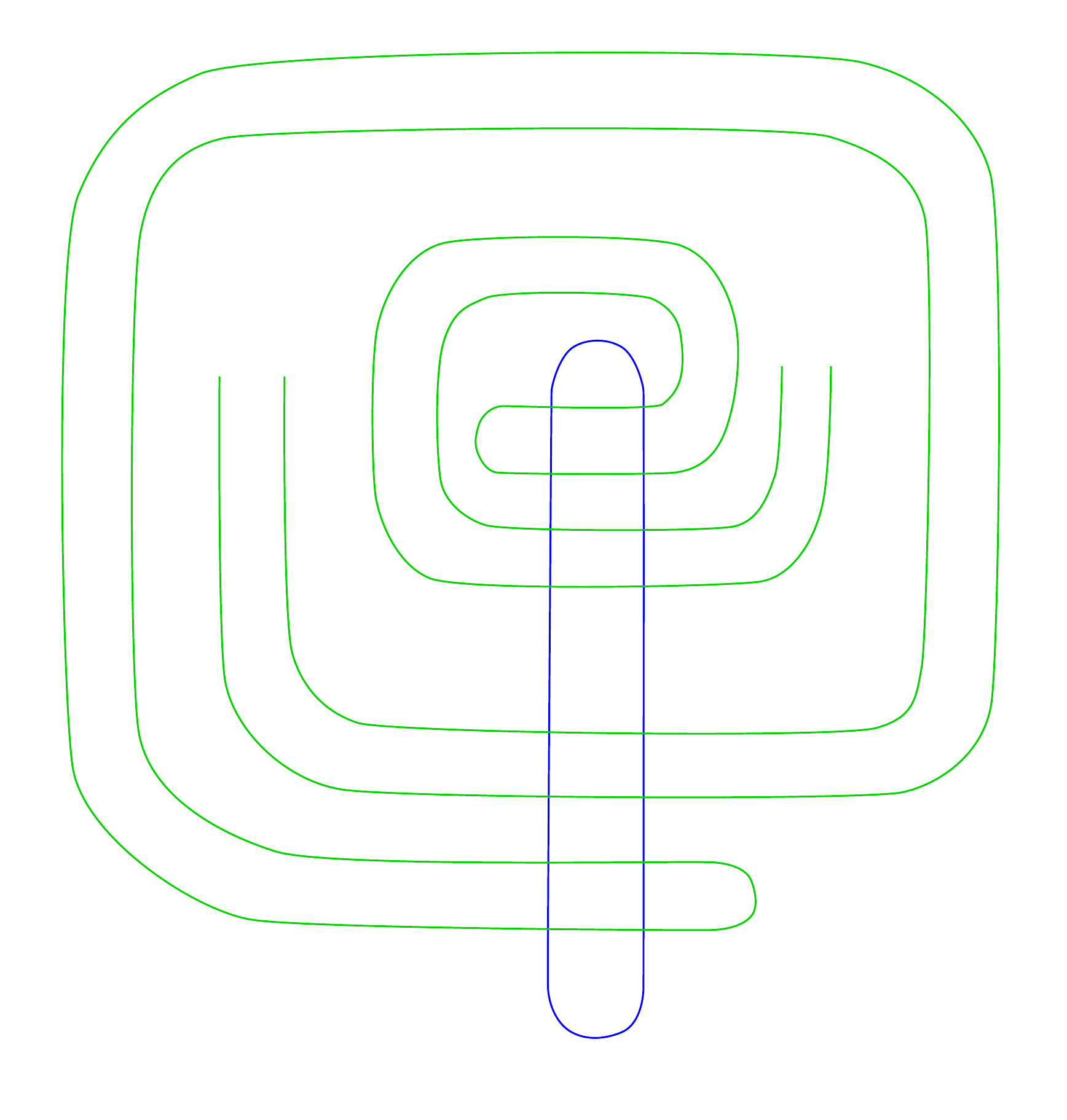
        \caption{Heegaard triple for Proposition \ref{prop: band map injective for n<0}.}
        \label{fig: Heegaard Triple}
\end{figure}

\begin{proof}
     We will compute the map $F$ using the Heegaard triple $(S^2,\alpha',\alpha,\beta,\mathbf{w},\mathbf{z})$ in Figure \ref{fig: Heegaard Triple}. To start we want to determine what tuples $(\theta^{+},a_{k},x_{j}^{i})$ have classes of Whitney triangles with positive domains between them. Recall that if there exists a $\psi_{i,j,k}\in \pi_{2}(\theta^{+},a_{k},x_{j}^{i})$ then we have that,
     \begin{equation*}
         \partial(\partial D(\psi_{i,j,k})\cap \alpha')=x_{j}^i-\theta^{+},\,\,\,\, \partial(\partial D(\psi_{i,j,k})\cap \alpha)=\theta^{+}-a_{k} \,\,\,\, \text{and} \,\,\,\, \partial(\partial D(\psi_{i,j,k})\cap \beta)=a_{k}-x_{j}^i.
     \end{equation*}
     To rule out possible combinations of domains we have given the calculations of boundary conditions of each region in Table \ref{tab:boundary of regions}.
     \begin{table}[h!]
    \centering
    \begin{tabular}{|c||c|c|c|}
        \hline
        Region D & $\partial(\partial D\cap \alpha')$ & $\partial(\partial D\cap \alpha)$ & $\partial(\partial D\cap \beta)$ \\
        \hline
        $S_{1}^i$ & $x_{1}^i-x_{2}^i+x_{4}^i-x_{3}^i$ &$0$& $x_{3}^i-x_{1}^i+x_{2}^i-x_{4}^i$ \\
        $S_{2}^i$ & $x_{2}^i-x_{1}^{i+1}+x_{3}^{i+1}-x_{4}^i$ & 0  & $x_{4}^i-x_{2}^i+x_{1}^{i+1}-x_{3}^{i+1}$\\
        $S_{3}^i$ & $x_{3}^i-x_{4}^i+x_{2}^{i+1}-x_{1}^{i+1}$ & 0 & $x_{1}^{i+1}-x_{3}^i+x_{4}^i-x_{2}^{i+1}$ \\
        $S_{4}^i$ & $x_{4}^i-x_{3}^{i+1}+x_{1}^{i+2}-x_{2}^{i+1}$ & 0 & $x_{2}^{i+1}-x_{4}^i+x_{3}^{i+1}-x_{1}^{i+2}$ \\
        $S$ & $x_{1}^1-x_{3}^1+x_{1}^2-x_{2}^1$ & 0 & $x_{2}^1-x_{1}^1+x_{3}^1-x_{1}^2$ \\
        $S_{\text{I}}$ & $x_{4}^{n-1}-\theta^{+}+\theta^{-}-x_{2}^n$ & $\theta^{+}-\theta^{-}$ & $x_{2}^n-x_{4}^{n-1}$\\
        $S_{\text{II}}$ & $x_{4}^n-\theta^{-}+\theta^{+}-x_{3}^n$ & $\theta^{-}-\theta^{+}$ & $x_{3}^n-x_{4}^n$ \\
        $S_{\text{III}}$ & $x_{1}^n-x_{2}^n$ & $a_{2}-a_{1}$ & $a_{1}-x_{1}^n+x_{2}^n-a_{2}$ \\
        $S_{\text{IV}}$ & $x_{4}^n-x_{3}^n$ & $a_{1}-a_{2}$ & $x_{3}^n-a_{1}+a_{2}-x_{4}^n$ \\
        $T_{1}$ & $x_{3}^n-\theta^{+}$ & $\theta^{+}-a_{1}$ & $a_{1}-x_{3}^n$\\
        $T_{2}$ & $x_{2}^n-\theta^{-}$ & $\theta^{-}-a_{2}$ & $a_{2}-x_{2}^n$ \\
        $P$ & $x_{2}^{n-1}-x_{1}^n+\theta^{+}-x_{4}^{n-1}$ & $a_{1}-\theta^{+}$ & $x_{1}^n-a_{1}+x_{4}^{n-1}-x_{2}^{n-1}$ \\
        \hline
    \end{tabular} 
    \caption{Boundary conditions of regions in $\Sigma\backslash\alpha'\cup\alpha\cup\beta$.}
    \label{tab:boundary of regions}
    \end{table}
    Let $c_{D}$ be the coefficient of the region $D$ in the domain $D(\psi_{i,j,k})$, and set $c_{b}^a:=c_{S_{b}^a}$.
    Then from Table \ref{tab:boundary of regions} we have the following,
    \begin{align}
        \partial(\partial D(\psi_{i,j,k})\cap \alpha)=& (c_{S_\text{I}}-c_{P}+c_{T_1}-c_{S_{\text{II}}}) \theta^+ + (c_{T_2}-c_{S_{\text{I}}}+c_{S_{\text{II}}})\theta^- \nonumber \\
        &-(c_{S_{\text{III}}}-c_{S_{\text{IV}}}+c_{T_{1}}-c_{P})a_{1}-(c_{T_{2}}+c_{S_{\text{IV}}}-c_{S_{\text{III}}})a_{2}. \label{eq: alpha prime coeff}
    \end{align}
    We similarly collate the data for coefficients in $\partial(\partial D(\psi_{i,j,k})\cap \alpha')$ in Table \ref{tab: coef of intersects}. From this there are some useful initial observations. First, if $D(\psi_{i,j,k})$ is positive and $x_{j}^i\neq x_{4}^n$, we must have $c_{S_{II}}=c_{S_{IV}}=0$. Otherwise the coefficient of $x_{4}^n$ will be non-zero. Similarly, if $x_{j}^i\neq x_{1}^1$ then $c_{S}=c_{1}^1=0$. Finally, note that this implies that if $x_{j}^i\notin\{x_{3}^n,x_{4}^n\}$ then $c_{T_{1}}=0$ as well. Using these observations and the calculations in Table \ref{tab: coef of intersects} we now show for all tuples $(i,j,k)$, that in $\pi_{2}(\theta^{+},a_{k},x_{j}^{i})$ there are either no class with positive domain or a unique class $\psi_{i,j,k}$ that has a positive domain. 
    \begin{table}[h!]
    \centering
        \begin{tabular}{|c|c|}
        \hline
            Intersection & Coefficient \\
            \hline
            $x_{1}^i$ & $c_{1}^i-c_{2}^{i-1}-c_{3}^{i-1}+c_{4}^{i-2}$\\
            $x_{2}^i$ & $-c_{1}^i+c_{2}^i+c_{3}^{i-1}-c_{4}^{i-1}$\\
            $x_{3}^i$ & $-c_{1}^i+c_{2}^{i-1}+c_{3}^{i}-c_{4}^{i-1}$\\
            $x_{4}^i$ & $c_{1}^i-c_{2}^i-c_{3}^i+c_{4}^i$\\

            $x_{1}^1$ & $c_{S}+c_{1}^1$ \\
            $x_{2}^1$ &  $-c_{S}-c_{1}^1+c_{2}^1$ \\
            $x_{3}^1$ &  $-c_{S}-c_{1}^1+c_{3}^1$ \\
            $x_{1}^2$ & $c_{1}^2-c_{2}^1-c_{3}^1+c_{S}$\\

            $x_{2}^{n-1}$ & $-c_{1}^{n-1}+c_{P}+c_{3}^{n-2}-c_{4}^{n-2}$\\
            $x_{4}^{n-1}$ & $c_{1}^{n-1}-c_{P}-c_{3}^{n-1}+c_{S_{I}}$\\

            $x_{1}^n$ & $c_{S_{III}}-c_{P}-c_{3}^{n-1}+c_{4}^{n-2}$\\
            $x_{2}^n$ & $-c_{S_{III}}+c_{T_{2}}+c_{3}^{n-1}-c_{P}$\\
            $x_{3}^n$ & $-c_{S_{IV}}+c_{T_{1}}-c_{S_{II}}$\\
            $x_{4}^n$ & $c_{S_{IV}}+c_{S_{II}}$\\

            $\theta^+$ & $-c_{T_{1}}+c_{S_{II}}-c_{S_{I}}+c_{P}$\\
            $\theta^-$ & $-c_{T_2}+c_{S_{I}}-c_{S_{II}}$\\
            \hline
            
        \end{tabular}
        \caption{Coefficients of intersection points in $\partial(\partial D(\psi_{i,j,k})\cap \alpha')$. The first four rows hold for all $i \in \{1,\ldots,n\}$ that do not appear in the lower rows.}
        \label{tab: coef of intersects}
    \end{table}

    We will first show that there is no $\psi_{n,3,2}\in \pi_{2}(\theta^{+},a_{2},x_{3}^n)$ such that $D(\psi_{n,3,2})$ is positive. Note that by the condition $\partial(\partial D(\psi_{n,3,2})\cap \alpha')=x^n_{3}-\theta^+$ all other $x_{j}^i$ must have coefficient 0. Using the above observations, we have $c_{S_{II}}=c_{S_{IV}}=c_{S}=c_{1}^1=0$. Therefore, from Table \ref{tab: coef of intersects} we have $c_{T_{1}}=1$ and $c_{2}^1=c_{3}^1=0$. This implies that $c_{4}^1=0$ using the coefficient of $x_{4}^1$, also  define $c_{4}^0:=c_{S}$. We now induct to show that $c_{j}^i=0$ for all $i,j$. Suppose that for all $0\leq i\leq k$ and $j\in \{1,2,3, 4\}$ we have $c_{j}^i=0$. Then by the coefficient of $x_{1}^{k+1}$ we have that $c_{1}^{k+1}=0$ and this implies that $c_{2}^{k+1}=c_{3}^{k+1}=0$ using the coefficients of $x_{2}^{k+1}$ and $x_{3}^{k+1}$ respectively. Further, this implies that $c_{4}^{k+1}=0$, completing the induction. Now using the coefficient of $x_{2}^{n-1}$ we have that $c_{P}=0$ and the coefficients of $x_{4}^{n-1}$ and $x_{1}^n$ imply that $c_{S_{I}}=c_{S_{III}}=0$ respectively. The coefficient of $x_{2}^n$ then implies that $c_{T_{2}}=0$, which then in turn implies that $D(\psi_{n,3,2})=T_{1}$. However, this is a contradiction as $\partial(\partial D(\psi_{n,3,2})\cap \alpha)=\theta^+-a_{1}$ which is not $\theta^+-a_{2}$. This completes the claim and further tells us that there are no holomorphic triangles connecting these points.

    Now using a similar argument we will prove that for $i\in \{2,\ldots,n\}$ and $k=1$ or $2$ there is no class in $\pi_{2}(\theta^{+},a_{k},x_{1}^i)$ that has a holomorphic representative. A similar argument to the previous paragraph shows that for $p<i$ we have $c_{j}^p=0$ for all $j \in \{1,2,3, 4\}$. Suppose first that $i \neq n$ then we have $c_{1}^i=1$ by the coefficient of $x_{1}^i$ in $\partial(\partial D(\psi_{i,1,k})\cap \alpha')$. We now claim that $c_{1}^{n-1}=c_{P}=c_{3}^{n-1}=c_{S_{I}}=n-i$. To see this note that the coefficients of $x_{2}^i,x_{3}^i$ and $x_{4}^i$ imply that $c_{2}^i=c_{3}^i=c_{4}^i=1$ respectively. We can then argue inductively in the same way as the previous paragraph that $c_{j}^p=p+1-i$ for all $i\leq p < n-1$. The coefficients of $x_{1}^{n-1},x_{2}^{n-1},x_{3}^{n-1}$ and $x_{4}^{n-1}$ imply the claim. Using the claim and the coefficients of $x_{1}^n$ and $x_{2}^n$ we then have that $c_{S_{III}}=c_{T_{2}}=n+1-i$. A similar argument shows that these equations hold for the case $i=n$. Now applying these to equation \eqref{eq: alpha prime coeff} we get that $\partial(\partial D(\psi_{i,1,k})\cap \alpha)=\theta^--a_{1}$, which is a contradiction. A similar argument shows that for all $i\in \{1,\ldots,n\}$ and $\psi_{i,2,k}\in \pi_{2}(\theta^{+},a_{k},x_{2}^i)$ that is positive we have $\partial(\partial D(\psi_{i,2,k})\cap \alpha)=\theta^--a_{2}$, which is again a contradiction. Therefore, for all of these cases there are no classes with a holomorphic representative.

    \begin{figure}[h]
        \centering
        \def\svgwidth{1\textwidth}
        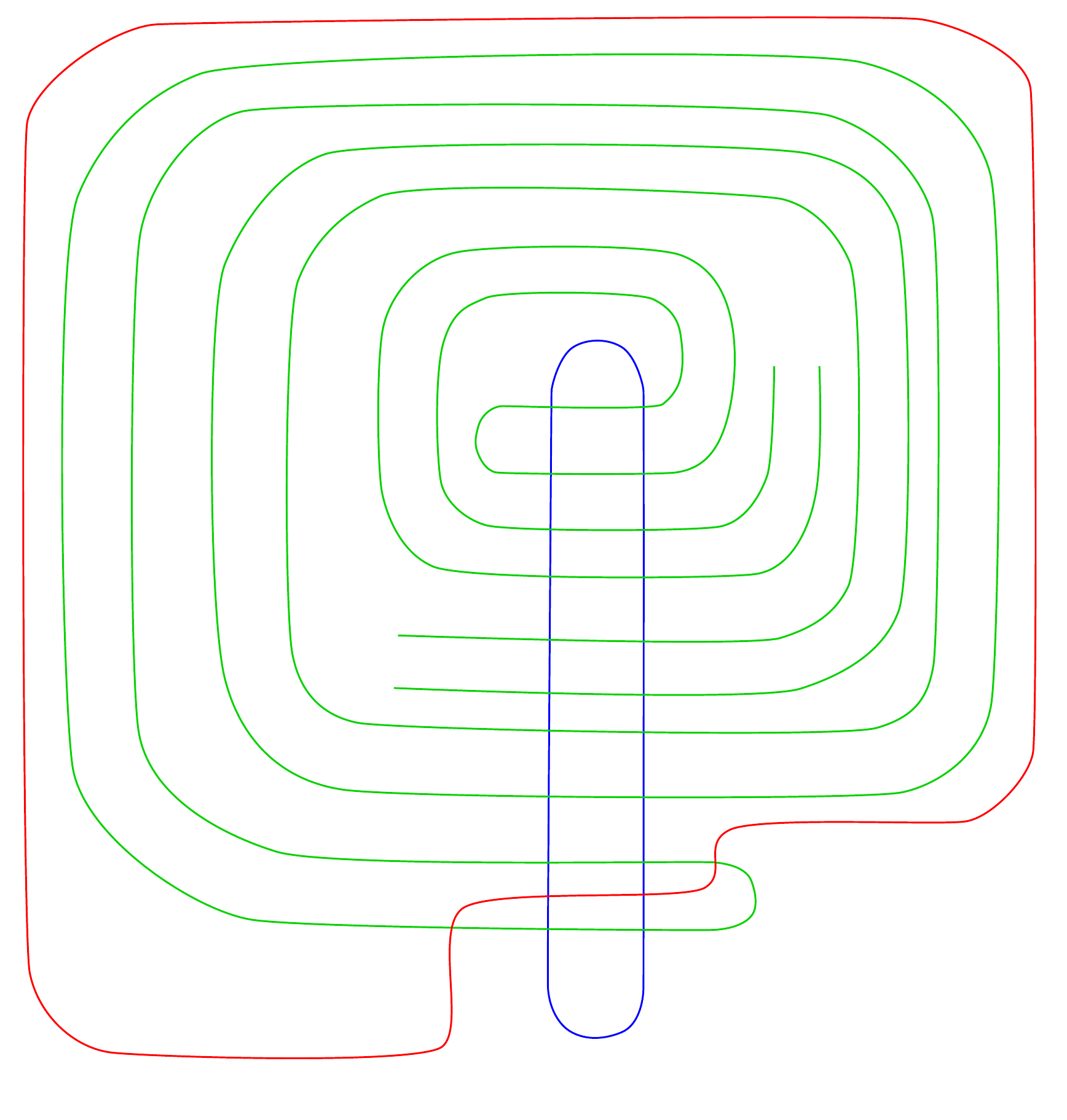
        \caption{Positive domain from equation \eqref{eq: domain for triangle} for the case $D(\psi_{n-2,3,1})$.}
        \label{fig: big Heegaard Triple Triangle example}
    \end{figure}

    Next, using the same arguments we have that for all $i\in \{1,\ldots, n\}$ there is exactly one class $\psi_{i,3,1}\in \pi_{2}(\theta^{+},a_{1},x_{3}^i)$ that has a positive domain. Precisely we have,
    \begin{align}
        D(\psi_{i,3,1})=\sum_{p=i}^{n-2}\bigg((p-i+1)(S_{3}^p+S_{4}^p)+(p-i)(S_{1}^p+S_{2}^P)\bigg) + (n-i-1)(S_{1}^{n-1}+P)\nonumber \\+(n-i)(S_{3}^{n-1}+S_{I}+S_{III}+T_{2}), \label{eq: domain for triangle}
    \end{align}
    for $i\neq n$ and $D(\psi_{n,3,1})=T_{1}$. For an example of the domain in equation \eqref{eq: domain for triangle} see Figure \ref{fig: big Heegaard Triple Triangle example}, which gives the case $i=n-2$. By similar arguments we also have that for all $i\in\{1,\ldots,n\}$ and $\psi_{i,4,k}\in \pi_{2}(\theta^{+},a_{k},x_{4}^i)$ that is positive we have $\partial(\partial D(\psi_{i,4,k})\cap \alpha)=\theta^+-a_{2}$. So there are no holomorphic representatives for the case $k=1$ and in fact we have a unique domain in the case of $k=2$. Doing this argument a final time we see that there is a unique $\psi_{1,1,2}\in \pi_{2}(\theta^{+},a_{2},x_{1}^1)$ that is positive and no $\psi_{1,1,1}\in \pi_{2}(\theta^{+},a_{1},x_{1}^1)$ that is positive. To compute the map we now need to compute $\mu(\psi_{i,j,k})$ and $\#\mathcal{M}(\psi_{i,j,k})$ for the cases where there may be holomorphic representatives.

    We start by computing $\mu(\psi_{i,3,1})$ using Theorem \ref{thm: formula for maslov index}. First, we note that $e(D)=1-\frac{n}{4}$ when $D$ is an $n$-gon. Therefore, using equation \eqref{eq: domain for triangle} we can see that $e(D(\psi_{i,3,1}))=\frac{1}{4}$. Next, we note that 
    \begin{equation*}
        n_{\theta^{+}}(D(\psi_{i,3,1}))=\frac{c_{S_{\text{I}}}+c_{P}}{4}=\frac{n-i}{2}-\frac{1}{4} \qquad \text{and} \qquad n_{a_{1}}(D(\psi_{i,3,1}))=\frac{c_{S_{\text{III}}}+c_{P}}{4}=\frac{n-i}{2}-\frac{1}{4}.
    \end{equation*}
    Lastly, we see that,
    \begin{equation*}
        a(D(\psi_{i,3,1}))\cdot c(D(\psi_{i,3,1}))=\frac{3(n-i-1)+(n-i)}{4}=n-i-\frac{3}{4},
    \end{equation*}
    and hence that $\mu(\psi_{i,3,1})=0$. We can similarly see that $\mu(\psi_{n,3,1})=0$.

    \begin{figure}[h!]
        \centering
        \def\svgwidth{1\textwidth}
        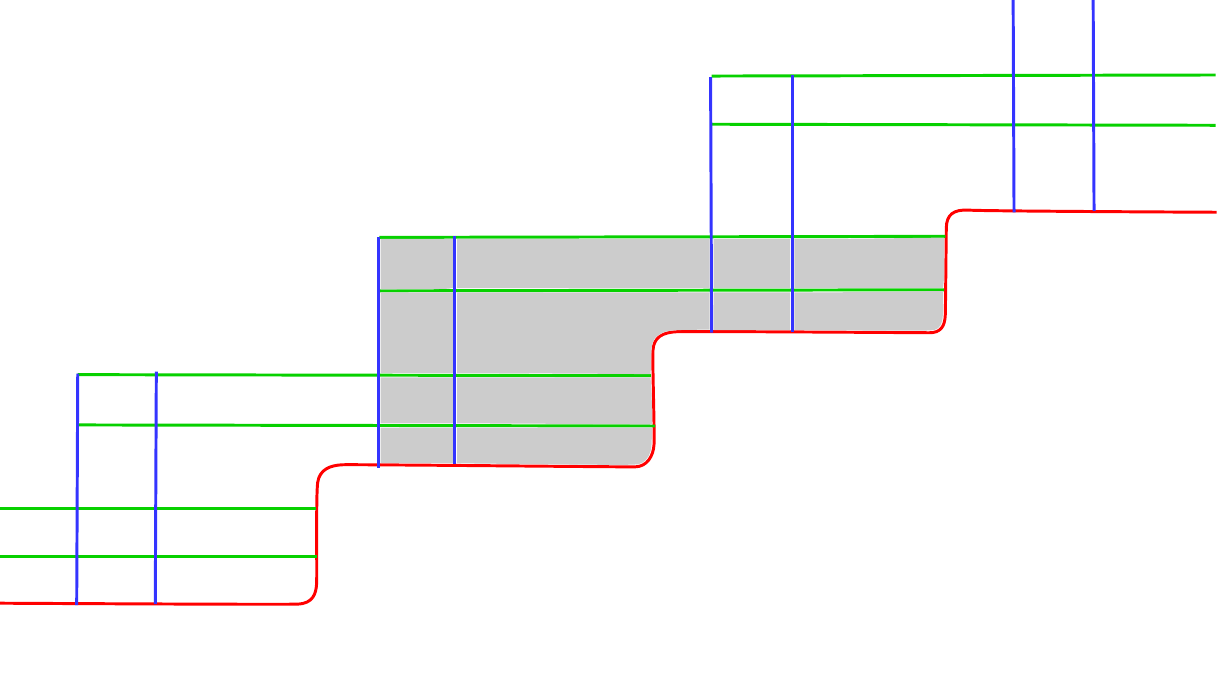
        \caption{Universal cover of the domain $D(\psi_{n-2,3,1})$. The shaded region corresponds to the domain of $\tilde{f}$.}
        \label{fig: lift of Heegaard Triple Triangle example}
    \end{figure}

    Next, we need to compute $\#\mathcal{M}(\psi_{i,3,1})$. First, we consider the case $i=n$. In this case note that the domain is exactly a disc. Therefore, by the Riemann mapping theorem there exists a holomorphic representative and due to the boundary conditions, this representative is unique. Thus, we have $\#\mathcal{M}(\psi_{n,3,1})=1$. Now for the case $i\neq n$ we will note that the domain is an annulus, $A$. Hence, an analytic universal cover $(\Omega,\tau)$ can be taken to be the infinite strip, with the exponential map. We also pick a lift $\tilde{\theta}^+$ such that $\tau\big(\tilde{\theta}^+\big)=\theta^+$. Let $u\in \psi_{i,3,1}$ be any representative, then by covering space theory there is a unique lift to a map $\tilde{u}\colon \Delta \to \Omega$ such that $\tau\circ\tilde{u}=u$ and $\tilde{u}(v_{\beta})=\tilde{\theta}^{+}$. Let us denote by $\tilde{u}(v_{\alpha})=\tilde{x}_{3}^i$ and $\tilde{u}(v_{\alpha}')=\tilde{a}_{1}$, and note that $\tau(\tilde{x}_{3}^i)=x_3^i$ and $\tau(\tilde{a}_1)=a_1$. Note further, that we can lift the segments of the curves $\alpha',\alpha$ and $\beta$ that intersect $A$ to $\Omega$. For an example see Figure \ref{fig: lift of Heegaard Triple Triangle example} for the lift of the domain in Figure \ref{fig: big Heegaard Triple Triangle example}. Now as the lift of $u$ is a triangle there exists a unique holomorphic map $\tilde{f}\colon \Delta \to \Omega$ such that $\tilde{f}(e_{\beta})=\tilde{\theta}^{+}$ and $\tau\circ \tilde{f}\in \psi_{i,3,1}$. Hence, as $\tilde{f}$ and $\tau$ are holomorphic, there exists a holomorphic representative of $\psi_{i,3,1}$. Now suppose $g\in \psi_{i,3,1}$ is a holomorphic representative. Then by standard results from complex analysis, see for example \cite[Chapter 16, Corollary 2.6]{ComplexAnalysisRef}, we obtain a unique lift $\tilde{g}\colon \Delta \to \Omega$ such that $\tau \circ \tilde{g}=g$ and $\tilde{g}(v_{\beta})=\tilde{\theta}^+$. However, by uniqueness of $\tilde{f}$ we have $\tilde{g}=\tilde{f}$ and so $g=\tau \circ \tilde{f}$. Therefore, there is a unique holomorphic representative of $\psi_{i,3,1}$, so $\#\mathcal{M}(\psi_{i,3,1})=1$. 

    Similar arguments to the previous paragraphs show that $\mu(\psi_{i,4,2})=0$ and $\#\mathcal{M}(\psi_{i,4,2})=1$, and $\mu(\psi_{1,1,2})=0$ and $\#\mathcal{M}(\psi_{1,1,2})=1$. Combining this with the previous paragraph we get that $F(\theta^+\otimes a_{1})=x_{3}^1 +\ldots +x_{3}^n$ and $F(\theta^+\otimes a_{2})=x_{4}^1+\ldots+x_{4}^n+x_{1}^1$, which completes the proof.
\end{proof}

With this computation done we cannot yet apply Theorem \ref{thm: main thm body} as we need the map $F_{U_{-n}}$ as defined in Definition \ref{defn: F_K map}. However, by using Corollary \ref{cor: spec maps for pants} we can compute $F_{U_{-n}}$ from the above calculation. This is the content of the next lemma.

\begin{lem}
\label{lem: FUn is an isomorphism}
    For all $n>0$ the map,
    \begin{equation*}
        F_{U_{-n}}\colon \widehat{HFL}_{i_{1,0}-1}(U_{-n}(1,0)\sqcup U,0)\to \widehat{HFL}_{i_{2,1}}(U_{-n}(2,1),(0,0,0)),
    \end{equation*}
    is an isomorphism.
\end{lem}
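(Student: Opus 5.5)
The map $F_{U_{-n}}$ is $F_{1,0}$, since $\Delta_U=1$ gives $d=0$. I would prove it is an isomorphism the same way the isomorphism $F_{a,b+1}$ was obtained in the proof of Lemma \ref{lem: inductive step}: use the spectral sequence for forgetting components together with Corollary \ref{cor: spec maps for pants}, and feed in the triangle count of Proposition \ref{prop: band map injective for n<0} as the input.

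\textbf{Step 1: both sides are one-dimensional.} Here $i_{1,0}=0$ and $i_{2,1}=-2$. The domain is $\widehat{HFL}(U\sqcup U)$ in Maslov grading $-1$ and Alexander grading $\mathbf{0}$. By Lemma \ref{lem:  unknot component} this is spanned by $v_{1,0}\otimes B$, so it is $\mathbb{F}$. For the target, Corollaries \ref{cor: HFL of Kn(a+t,b+t)} and \ref{cor:HFL of Kn(a+t,b+t) non meeting} with $a=1,b=0,t=1$ show that $\widehat{HFL}_{-2}(U_{-n}(2,1),\mathbf{0})\cong\mathbb{F}$, spanned by $v_{2,1}$. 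So it suffices to show $F_{1,0}(v_{1,0}\otimes B)\neq 0$.

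\textbf{Step 2: set up the spectral sequences.} Take the spectral sequence of Theorem \ref{thm: OS Spec seq} that forgets $L_0$, the original component of $U_{-n}(1,0)$. This component is not on the pair of pants, so $P$ contains $L_0\times I$ as a cylinder.
\begin{itemize}
\item On the source, $L_0$ is split from $U$, so the spectral sequence collapses immediately.
\item On the target $U_{-n}(2,1)$, we forget $L_0$; its linking numbers with the other two components are $\mp n$. The $E^\infty$-page is $\widehat{HFL}(U_{-n}(1,1))\otimes V$, with Alexander gradings shifted by $\pm n/2$.
\end{itemize}
Using the gradings from Corollaries \ref{cor: HFL of Kn(a+t,b+t)} and \ref{cor:HFL of Kn(a+t,b+t) non meeting}, the sum over $x\in\{-1,0,1\}$ of the $E^1$-page in Maslov grading $-2$ should be one-dimensional, generated by $v_{2,1}$. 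The corresponding part of the $E^\infty$-page should be $w\otimes B$, where $w$ is the minimal-grading generator of $\widehat{HFL}(U_{-n}(1,1),(\tfrac n2,-\tfrac n2))$. Thus the target spectral sequence restricts to $\mathbb{F}\Rightarrow\mathbb{F}$ in this grading. Lemma \ref{lem: filtration and associated grading in single degree} then identifies $E^\infty(F)$ with $F_*$ via Lemma \ref{lem: f_* and E^infty (f)}, exactly as in the proof of Lemma \ref{lem: inductive step}.

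\textbf{Step 3: compute $F_*$.} By Corollary \ref{cor: spec maps for pants}, $F_*=F_{P\setminus(L_0\times I)}\otimes\mathrm{Id}_V$ under the canonical isomorphisms of Proposition \ref{prop: canonical identification}. The reduced cobordism $P\setminus(L_0\times I)$ goes from the two-pointed unknot $U$ to $U_{-n}(1,1)$. It is the quasi-stabilisation $T^+$ followed by the $\alpha$-band of Figure \ref{fig: diagram for pair of pants}. By Lemma \ref{lem: Quasi stab link}, $T^+$ sends the generator of $\widehat{HFL}(U)$ to $x=\,$generator$\,\otimes\,\xi^{\mathbf{w}}$, which is the element of Maslov grading $-1$ in $\widehat{HFL}(\mathbb{U})$. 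Proposition \ref{prop: band map injective for n<0} then gives $F(x)\neq 0$. By the grading change formulas, this image lies in the one-dimensional piece spanned by $w$, so $F_{P\setminus(L_0\times I)}(v)=w$. Hence $F_*(v_{1,0}\otimes B)=w\otimes B\neq0$. Pulling back along the isomorphisms $E^1\cong E^\infty$ in this grading on both sides gives $F_{1,0}(v_{1,0}\otimes B)=v_{2,1}$.

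\textbf{Main obstacle.} I expect the hardest part to be the grading bookkeeping in Step 2. One must check that, after the linking-number shift, the minimal Maslov part of the target $E^1$-page is exactly one-dimensional across the three Alexander gradings $x\in\{-1,0,1\}$. One must also check that its class survives to $E^\infty$ and equals the generator $w\otimes B$, i.e.\ is neither killed nor hit by a differential. A related delicate point is matching $F(x)$ from Proposition \ref{prop: band map injective for n<0}, namely $x_3^1+\dots+x_3^n$ at chain level, with the specific Alexander grading $(\tfrac n2,-\tfrac n2)$ of $U_{-n}(1,1)$. If the gradings turn out to be degenerate for small $n$ (e.g.\ $n=1$), I would handle those cases separately by direct inspection of the gradings, as was done for $a+b+2=3$ in Lemma \ref{lem: inductive step}.
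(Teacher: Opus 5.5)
Your route is the paper's: forget the cylinder component, apply Corollary \ref{cor: spec maps for pants}, and feed in the triangle count. However, Step 3 has a genuine gap. Write $y$ for the generator of $\widehat{HFL}(U)$, so $T^+(y)=a_2$, and write $f_P$ for the filtered pants map.

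\textbf{The grading claim in Step 3 is false.} You assert that by the grading change formulas the image of $a_2$ lies in the one-dimensional piece spanned by $w$. The band cobordism is connected, so Theorem \ref{thm: grading change formulas} fixes only the Maslov grading and the \emph{collapsed} Alexander grading $A_{L_2}+A_{L_3}$ of the image. In bigrading $(M,A)=(-1,0)$, the group $\widehat{HFK}(U_{-n}(1,1))$ is $2n$-dimensional, spread over the multigradings $(\tfrac n2-j,-\tfrac n2+j)$ for $0\le j\le n$. Only the $j=0$ piece is spanned by $w$. Indeed, the triangle count in the proof of Proposition \ref{prop: band map injective for n<0} gives $F(a_2)=x_4^1+\cdots+x_4^n+x_1^1$, which has nonzero components outside multigrading $(\tfrac n2,-\tfrac n2)$. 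So $(f_P)_*(y\otimes B)\neq w\otimes B$.

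\textbf{Injectivity is the wrong input.} If an injective band map had zero component along $w=x_4^n$, your argument would conclude nothing. What is needed is:
\begin{enumerate}
\item the explicit chain-level formula from the proof of the proposition, not just its statement;
\item the observation that $x_4^n$ is the only term at filtration level $A_{L_0}=0$, with the other terms giving $E^\infty$-classes at strictly negative levels;
\item passage to the associated graded, $E^\infty(f_P)(y\otimes B)=\mathrm{gr}_0(f_P)_*(y\otimes B)=x_4^n\otimes B$.
\end{enumerate}
Relatedly, $f_P$ preserves only $A_{L_2}+A_{L_3}$, so the relevant target homology is not concentrated in one filtration degree. Hence Lemma \ref{lem: filtration and associated grading in single degree} does not identify $E^\infty$ of the pants map with its map on homology here; a projection is unavoidable. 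Also, $x_3^1+\cdots+x_3^n$ is the image of the Maslov-$0$ generator $a_1$, not of $T^+(y)=a_2$.

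\textbf{Step 2 also miscounts, though this is repairable.} In the summand over $(\tfrac n2,-\tfrac n2)$, the target $E^1$-page in Maslov grading $-2$ is two-dimensional. This is because $\widehat{HFL}_{-2}(U_{-n}(2,1),(-1,0,0))\cong\mathbb{F}$ as well, with the first coordinate belonging to the forgotten component. Across the three Alexander gradings $-1,0,1$ of that component, the $E^1$-page is $\mathbb{F}_{-2}\leftarrow\mathbb{F}_{-2}\oplus\mathbb{F}^3_{-1}\leftarrow\mathbb{F}_0$, and the extra $\mathbb{F}_{-2}$ is killed by the differential. So there is no restriction $\mathbb{F}\Rightarrow\mathbb{F}$, and no isomorphism $E^1\cong E^\infty$ in that grading.

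What does hold, and suffices, is that $v_{2,1}$ in the middle column survives. There is nothing in Maslov grading $-3$ to its left and nothing in Maslov grading $-1$ to its right. Thus $[v_{2,1}]^\infty=w\otimes B$. Then $[F_{U_{-n}}(v_{1,0}\otimes B)]^\infty=E^\infty(f_P)(y\otimes B)=x_4^n\otimes B\neq 0$ forces $F_{U_{-n}}(v_{1,0}\otimes B)=v_{2,1}$.
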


\begin{proof}
    Let $F\colon\widehat{HFL}(\mathbb{U})\to \widehat{HFL}(U_{-n}(1,1))$ be the map computed in Proposition \ref{prop: band map injective for n<0}. Further, let $T^+_U\colon \widehat{HFL}(U)\to \widehat{HFL}(\mathbb{U})$ be the quasi-stabilisation from the unknot with two basepoints to the unknot with four. Next we identify
    \begin{equation*}
        \widehat{HFL}(U)\cong\mathbb{F}\langle y \rangle \qquad \text{and} \qquad \widehat{HFL}(\mathbb{U})\cong \mathbb{F}\langle a_1, a_2 \rangle,
    \end{equation*}
    where $a_1$ and $a_2$ are the intersections from the previous proposition. Note that $M(a_1)=0$, $M(a_2)=-1$, $A(a_1)=\frac{1}{2} $ and $A(a_2)=-\frac{1}{2}$. Therefore, by the definition of quasi-stabilisation maps we have $T^{+}_U(y)=a_2$. Now we identify $\widehat{HFL}(U_{-n}(1,1))$ with the vector space generated by $x_{j}^i$ for $j\in \{1,2,3,4\}$ and $i \in \{1,\ldots ,n\}$ as in the previous proposition. Then the pair of pants map $G=F\circ T^+_U\colon \widehat{HFL}(U)\to \widehat{HFL}(U_{-n}(1,1))$ is given by,
    \begin{equation}
    \label{eq: computation of basic pants map}
        G(y)=x_{4}^1+\ldots+x_{4}^n+x_{1}^1.
    \end{equation}
    We will use this and an argument similar to the proof of Lemma \ref{lem: inductive step} to prove that $F_{U_{-n}}$ is non-vanishing, which is equivalent to being an isomorphism as the domain and codomain are 1-dimensional.

    We start by analysing the spectral sequence,
    \begin{equation}
        \widehat{HFL}(U_{-n}(1,0)\sqcup U)\implies \widehat{HFL}(U) \otimes V.
    \end{equation}
    First, note that $U_{-n}(1,0)$ is simply an unknot with two basepoints, which we will denote by $L_1$. Hence, the $E^1$-page is isomorphic to $\mathbb{F}\otimes V$ by Lemma \ref{lem:  unknot component}. However, the $E^{\infty}$-page is also isomorphic to $\mathbb{F}\otimes V$ so the spectral sequence collapses on the $E^1$-page. We denote by $v_{1,0}$ the element of minimal grading on the $E^1$-page and note that it is identified with $y\otimes B$ on the $E^\infty$-page.

    Next, label the components of $U_{-n}(2,1)$ by $L_1,L_2$ and $L_3$ such that $L_1$ and $L_2$ are oriented the same way and $L_3$ the opposite way. Further, we specify that $L_2$ and $L_3$ are the components coming from the pair of pants defining $F_{U_{-n}}$. Then consider the spectral sequence,
    \begin{equation*}
        \widehat{HFL}(U_{-n}(2,1))\implies \widehat{HFL}(U_{-n}(1,1))\otimes V,
    \end{equation*}
    given by forgetting the component $L_{1}$. Note that, $lk(L_{1},L_{2})=-n$ and $lk(L_1,L_3)=n$, so the spectral sequence sends the part supported in Alexander multigrading $(x,y,z)$ to $(y+\frac{n}{2},z-\frac{n}{2})$. Further, from the calculation in Section \ref{sec: Link Floer Homology of T(r,rn)} we have that,
    \begin{equation*}
        \widehat{HFL}(U_{-n}(1,1),(y,z))\cong \{0\},
    \end{equation*}
    for $|y|,|z|>\frac{n}{2}$. Therefore, if $v \in \widehat{HFL}(U_{-n}(2,1),(x,y,z))$ with $y>0$ or $z<0$ then $[v]^{\infty}=0$. Then using Corollaries \ref{cor: HFL of Kn(a+t,b+t)} and \ref{cor:HFL of Kn(a+t,b+t) non meeting} we can see that the spectral sequence collapses on the $E^1$-page for the part supported in Alexander gradings $(x,y,z)$ with $y<0$ or $z>0$. The remaining part has $E^1$-page $\bigoplus_{x\in\{-1,0,1\}}\widehat{HFL}(U_{-n}(2,1),(x,0,0))$ with differentials as follows,
    \begin{equation}
        \begin{tikzcd}
            \mathbb{F}_{-2} & \mathbb{F}_{-2}\oplus \mathbb{F}_{-1}^3 \arrow[l,"g"'] & \mathbb{F}_{0}.\arrow[l,"h"']
        \end{tikzcd}
    \end{equation}
    Further, the $E^\infty$-page is $\widehat{HFL}(U_{-n}(1,1),(\frac{n}{2},-\frac{n}{2}))\otimes V\cong \mathbb{F}_{-2}\oplus \mathbb{F}_{-1}$, so we see that $g$ is surjective and $h$ injective. Let $v_{2,1}$ be the unique non-zero element of minimum grading in $\widehat{HFL}(U_{-n}(2,1), (0,0,0))$. Then note from analysing Figure \ref{fig: Heegaard Triple} we see that $x_4^n$ is the unique non-zero element in $\widehat{HFL}(U_{-n}(1,1),(\frac{n}{2},-\frac{n}{2}))$. Hence, $[v_{2,1}]^{\infty}=x_{4}^n \otimes B$. 

    Let $F_P\colon \widehat{HFL}(U_{-n}(1,0)\sqcup U)\to \widehat{HFL}(U_{-n}(2,1))$ be the map associated to the pair of pants cobordism $P$ defining $F_{U_{-n}}$. Then note that $L_1\times I\subseteq P$ is a component of the cobordism. Further, let,
    \begin{equation*}
        f_P\colon \widehat{CFL}_{z_1}(U_{-n}(1,0)\sqcup U) \to \widehat{CFL}_{z_1}(U_{-n}(2,1)),
    \end{equation*}
    be the morphism of transitive systems associated to the cobordism $P$. Then note that $E^{1}(f_P)=F_P$ and by Corollary \ref{cor: spec maps for pants}, $(f_P)_*$ equals $G\otimes \text{Id}_V$ under the canonical identification maps from Proposition \ref{prop: canonical identification}. Now by computing $A_1$ gradings of the elements $x_{4}^i$ and $x_{1}^1$, using equation \eqref{eq: computation of basic pants map} and applying Lemma \ref{lem: filtration and associated grading in single degree} we see that $E^{\infty}(f_P)(y\otimes B)=x_{4}^n\otimes B$. Then we have,
    \begin{equation*}
        [F_{U_{-n}}(v_{1,0})]^\infty=[F_{P}(v_{1,0})]^\infty=E^{\infty}(f_P)([v_{1,0}]^{\infty})=x_{4}^n\otimes B.
    \end{equation*}
    However, by the previous paragraph this shows that $F_{U_{-n}}(v_{1,0})=v_{2,1}$. This completes the proof.
\end{proof}

Now by applying Theorem \ref{thm: main thm body} to the case of the unknot and using the above lemma we have the following corollary. Note that we restate this as we can now explicitly state which gradings are non-zero.

\begin{cor}
    Let $X_{-n}(U)$ be the $-n$ trace of the unknot, for $n>0$. Then for all $\alpha=2s+1$ with $s\geq 0$,
    \begin{equation*}
        \mathcal{FL}_{n(s^2+s)+1}(X_{-n}(U);\alpha,0)\neq \{0\}.
    \end{equation*}
\end{cor}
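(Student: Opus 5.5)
The plan is to specialise Theorem \ref{thm: main thm body} to $K=U$ and read off the grading from its proof. For the unknot, $\Delta_U(t)=1$, so in Proposition \ref{prop: alex poly of L-space knots} we have $d=0$. Hence $d$ is even, $\delta_d=0$ and $c_U(0)=0$ (an empty sum). By Definition \ref{defn: F_K map}, $F_{U_{-n}}=F_{1,0}$.

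Lemma \ref{lem: FUn is an isomorphism} shows that $F_{1,0}$ is an isomorphism for every $n>0$. Note that the hypothesis $n\geq 2g(U)=0$ is automatic. So the hypothesis of Theorem \ref{thm: main thm body} holds. The remaining work is to verify that its proof really yields the explicit statement \eqref{eq: not 0 for main proof} for every odd $\alpha=2s+1$ with $s\geq0$. For $d$ even this range of $s$ is exactly the range allowed there. With $c_U(0)=0$, the grading is $n(s^2+s)+1$.

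The step needing care is that the proof of Theorem \ref{thm: main thm body} was written for $K$ not the unknot. It uses Lemmas \ref{lem: Kn(a,b) odd}, \ref{lem: Kn(a,b) nonmeeting general 3} and the induction Lemma \ref{lem: inductive step}. For $U$, I would replace these with Corollaries \ref{cor: HFL of Kn(a+t,b+t)} and \ref{cor:HFL of Kn(a+t,b+t) non meeting}, as the proof of Lemma \ref{lem: inductive step} itself indicates.

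Concretely, I would check the following from Corollary \ref{cor: HFL of Kn(a+t,b+t)} at $\mathbf{k}=\mathbf{0}$, using the case $k<0$ or $k\ge0$ as appropriate with $r$ odd.
\begin{itemize}
\item $\widehat{HFL}(U_{-n}(\alpha+t,t),\mathbf{0})$ is one-dimensional in its minimal Maslov grading $i_{\alpha+t,t}=n(s^2+s)-2(s+t)$.
\item It vanishes in grading $i_{\alpha+t,t}-1$.
\end{itemize}
From Corollary \ref{cor:HFL of Kn(a+t,b+t) non meeting} I would check that Alexander gradings with some coordinates in $\{0,\pm1\}$ nonzero have strictly larger Maslov support.

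These facts are what the main proof uses to split off the summand $\bigl(\bigoplus_t \mathbb{F}\,v_{\alpha+t,t}\bigr)/\sim$ and to identify it with $\mathbb{F}$. The identification uses that $F_{1+t,t}$ is an isomorphism for all $t$, by induction from $F_{1,0}$. The cases $\alpha>1$ then follow via Lemma \ref{lem: pants orientation reversal} and basepoint moving maps.

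Finally, the shift $[\alpha+2t]$ converts $i_{\alpha+t,t}$ into the lasagna grading $n(s^2+s)-2s+\alpha=n(s^2+s)+1$. Theorem \ref{thm: cable and lasagna} then transfers the nonvanishing of $c\widehat{HFK}$ to $\mathcal{FL}_{n(s^2+s)+1}(X_{-n}(U);\alpha,0)$.

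The main obstacle is the bookkeeping for the unknot substitute. In particular, I expect the low cases $s=0$ and $a+b+2=3$ in Lemma \ref{lem: inductive step} to need direct checking against Corollary \ref{cor: HFL of Kn(a+t,b+t)}. The argument there relied on Lemma \ref{lem: Kn(a,b) even}, which excludes the unknot.
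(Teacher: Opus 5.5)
Your proposal is correct and follows the paper's proof. Lemma \ref{lem: FUn is an isomorphism} shows that $F_{U_{-n}}=F_{1,0}$ is an isomorphism (here $d=0$ and $c_U(0)=0$), and Theorem \ref{thm: main thm body}, specifically equation \eqref{eq: not 0 for main proof} in its proof, then gives nonvanishing in grading $n(s^2+s)+1$. Your extra bookkeeping, substituting Corollaries \ref{cor: HFL of Kn(a+t,b+t)} and \ref{cor:HFL of Kn(a+t,b+t) non meeting} for the lemmas stated for nontrivial $K$, is the substitution the paper itself indicates in the proof of Lemma \ref{lem: inductive step} and otherwise leaves implicit.
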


\begin{proof}
    By Lemma \ref{lem: FUn is an isomorphism} we have that $F_{U_{-n}}$ is an isomorphism for all $n>0$. Hence, by Theorem \ref{thm: main thm body} the result follows.
\end{proof}

\bibliographystyle{alpha}
\bibliography{bibliography}

@misc{floerlasagna,
      title={Floer lasagna modules from link Floer homology}, 
      author={Daren Chen},
      year={2022},
      eprint={2203.07650},
      archivePrefix={arXiv},
      primaryClass={math.GT},
      url={https://arxiv.org/abs/2203.07650},
      note={arxiv:2203.07650},
}

@article{OGSkLas,
    author = {Morrison, Scott and Walker, Kevin and Wedrich, Paul},
    issn = {1465-3060},
    journal = {Geometry \& topology},
    language = {eng},
    number = {8},
    pages = {3367-3420},
    title = {Invariants of 4–manifolds from Khovanov–Rozansky link homology},
    volume = {26},
    year = {2022},
}

@misc{exoticsklas,
      title={Khovanov homology and exotic $4$-manifolds}, 
      author={Qiuyu Ren and Michael Willis},
      year={2024},
      eprint={2402.10452},
      archivePrefix={arXiv},
      primaryClass={math.GT},
      url={https://arxiv.org/abs/2402.10452}, 
      note={arXiv:2402.10452}
}

@book{SpecSeqBook,
author = {McCleary, John},
address = {Cambridge},
booktitle = {A user's guide to spectral sequences},
edition = {Second},
isbn = {0-511-82182-4},
language = {eng},
publisher = {Cambridge University Press},
series = {Cambridge studies in advanced mathematics ; 58},
title = {A user's guide to spectral sequences },
year = {2001},
}

@article{KhovanovFloer,
author = {Baldwin, John A. and Hedden, Matthew and Lobb, Andrew},
copyright = {2019},
issn = {0001-8708},
journal = {Advances in mathematics (New York. 1965)},
language = {eng},
pages = {1162-1205},
publisher = {Elsevier Inc},
title = {On the functoriality of Khovanov–Floer theories},
volume = {345},
year = {2019},
}

@article{OSLinkFloer,
author = {Ozsváth, Peter and Szabó, Zoltán},
issn = {1472-2747},
journal = {Algebraic \& geometric topology},
language = {eng},
number = {2},
pages = {615-692},
title = {Holomorphic disks, link invariants and the multi-variable Alexander polynomial},
volume = {8},
year = {2008},
}

@article{OSHFOG,
author = {Ozsváth, Peter and Szabó, Zoltán},
address = {Princeton, NJ},
copyright = {Copyright 2004 Princeton University (Mathematics Department)},
issn = {0003-486X},
journal = {Annals of mathematics},
language = {eng},
number = {3},
pages = {1027-1158},
publisher = {Princeton University Press},
title = {Holomorphic Disks and Topological Invariants for Closed Three-Manifolds},
volume = {159},
year = {2004},
}

@misc{nntorus,
      title={Heegaard Floer homology of (n,n)-torus links: computations and questions}, 
      author={Joan E. Licata},
      year={2012},
      eprint={1208.0394},
      archivePrefix={arXiv},
      primaryClass={math.GT},
      url={https://arxiv.org/abs/1208.0394}, 
      note={arXiv:1208.0394}
}

@article{linkLspacernrm,
author = {Gorsky, Eugene and Hom, Jennifer},
copyright = {public},
journal = {Quantum Topology},
language = {eng},
number = {4},
pages = {629-666},
publisher = {eScholarship, University of California},
title = {Cable links and L-space surgeries},
volume = {8},
year = {2017},
}

@article{LemmasPaper,
author = {Baldwin, John A. and Grigsby, J. Elisenda},
copyright = {Copyright 2015, American Mathematical Society},
issn = {0002-9939},
journal = {Proceedings of the American Mathematical Society},
language = {eng},
number = {7},
pages = {2801-2814},
publisher = {American Mathematical Society},
title = {Categorified invariants and the braid group},
volume = {143},
year = {2015},
}

@article{CobMapPaper,
author = {Zemke, Ian},
copyright = {2018 London Mathematical Society},
issn = {1753-8416},
journal = {Journal of topology},
language = {eng},
number = {1},
pages = {94-220},
title = {Link cobordisms and functoriality in link Floer homology},
volume = {12},
year = {2019},
}

@article{CobGradings,
    author = {Ian Zemke},
    title = {Link cobordisms and absolute gradings on link Floer homology},
    journal = {Quantum Topology},
    year = {2019},
    language = {eng},
    volume={10},
    number = {2},
    pages={207-323}
}

@article{QuasiMap,
author = {Zemke, Ian},
issn = {1472-2747},
journal = {Algebraic \& geometric topology},
language = {eng},
number = {6},
pages = {3461-3518},
title = {Quasistabilization and basepoint moving maps in link Floer homology},
volume = {17},
year = {2017},
}

@article{Naturality,
author = {Juh\'asz, Andr\'as and Thurston, Dylan P. and Zemke, Ian},
copyright = {Open access},
language = {eng},
publisher = {American Mathematical Society},
journal = {Memoirs of the American Mathematical Society},
title = {Naturality and mapping class groups in Heegaard Floer homology},
year = {2021},
volume= {273},
number={1338},
}

@article{KhovanovOG,
author = {Khovanov, Mikhail},
copyright = {Copyright 2000 Duke University Press},
issn = {0012-7094},
journal = {Duke mathematical journal},
language = {eng},
number = {3},
pages = {359-426},
publisher = {DUKE University Press},
title = {A categorification of the Jones polynomial},
volume = {101},
year = {2000},
}

@article{KhovanovRozOG,
author = {Khovanov, Mikhail and Rozansky, Lev},
issn = {0016-2736},
journal = {Fundamenta mathematicae},
language = {eng},
number = {1},
pages = {1-91},
title = {Matrix factorizations and link homology},
volume = {199},
year = {2008},
}

@article{SkeinLasagna2Handle,
author = {Manolescu, Ciprian and Neithalath, Ikshu},
issn = {0075-4102},
journal = {Journal für die reine und angewandte Mathematik},
language = {eng},
number = {788},
pages = {37-76},
publisher = {De Gruyter},
title = {Skein lasagna modules for 2-handlebodies},
volume = {2022},
year = {2022},
}

@article{SkeinLasangaHandleDecomp,
author = {Manolescu, Ciprian and Walker, Kevin and Wedrich, Paul},
copyright = {2023},
issn = {0001-8708},
journal = {Advances in mathematics (New York. 1965)},
language = {eng},
pages = {109071},
publisher = {Elsevier Inc},
title = {Skein lasagna modules and handle decompositions},
volume = {425},
year = {2023},
}

@article{AndrasCobordisms,
author = {Juhász, András},
copyright = {2016 Elsevier Inc.},
issn = {0001-8708},
journal = {Advances in mathematics (New York. 1965)},
language = {eng},
pages = {940-1038},
publisher = {Elsevier Inc},
title = {Cobordisms of sutured manifolds and the functoriality of link Floer homology},
volume = {299},
year = {2016},
}

@article{MapsAreSamePaper,
author = {Juh\'asz, Andr\'as and Zemke, Ian},
address = {COVENTRY},
issn = {1465-3060},
journal = {Geometry \& topology},
language = {eng},
number = {1},
pages = {179-307},
publisher = {Geometry & Topology Publications},
title = {Contact handles, duality, and sutured Floer homology},
volume = {24},
year = {2020},
}

@article{LSpaceAlexMonic,
author = {Ozsváth, Peter and Szabó, Zoltán},
copyright = {2005 Elsevier Ltd},
issn = {0040-9383},
journal = {Topology (Oxford)},
language = {eng},
number = {6},
pages = {1281-1300},
publisher = {Elsevier Ltd},
title = {On knot Floer homology and lens space surgeries},
volume = {44},
year = {2005},
}

@article{OGSuturedPaper,
author = {Juhász, András},
issn = {1472-2747},
journal = {Algebraic \& geometric topology},
language = {eng},
number = {3},
pages = {1429-1457},
title = {Holomorphic discs and sutured manifolds},
volume = {6},
year = {2006},
}

@book{ComplexAnalysisRef,
isbn = {0387944605},
language = {eng},
lccn = {95002331},
publisher = {Springer-Verlag},
series = {Graduate texts in mathematics ; 159},
title = {Functions of one complex variable II },
year = {1995},
author = {Conway, John B.},
address = {New York},
booktitle = {Functions of one complex variable II},
}

@article{HFKdetectstrefoil,
copyright = {Copyright 2008 The Johns Hopkins University Press},
issn = {0002-9327},
journal = {American journal of mathematics},
language = {eng},
number = {5},
pages = {1151-1169},
publisher = {Johns Hopkins University Press},
title = {Knot Floer Homology Detects Genus-One Fibred Knots},
volume = {130},
year = {2008},
author = {Ghiggini, Paolo},
address = {Baltimore, MD},
}

@misc{NoSkeinLasExotica,
      title={The Rasmussen s-invariant and exotic 4-manifolds}, 
      author={Gheehyun Nahm},
      year={2026},
      eprint={2602.20138},
      archivePrefix={arXiv},
      primaryClass={math.GT},
      note ={arxiv:2602.20138}, 
}

@article{MaslovIndexPaper,
author = {Sarkar, Sucharit},
copyright = {Copyright 2011 International Press of Boston},
issn = {1527-5256},
journal = {Journal of symplectic geometry},
language = {eng},
number = {2},
pages = {251-270},
publisher = {International Press of Boston},
title = {Maslov index formulas for Whitney $n$-gons},
volume = {9},
year = {2011},
}

@article{LinkSurgeryFormulaOG,
author = {Manolescu, Ciprian and Ozsváth, Peter},
issn = {1465-3060},
journal = {Geometry \& topology},
language = {eng},
number = {6},
pages = {2783-3062},
title = {Heegaard Floer homology and integer surgeries on links},
volume = {29},
year = {2025},
}

@article{GraphCobZemke,
author = {Zemke, Ian},
issn = {1465-3060},
journal = {Geometry \& topology},
language = {eng},
number = {2},
pages = {389-528},
title = {Graph cobordisms and Heegaard Floer homology},
volume = {30},
year = {2026},
}

@article{CylindricalReformulation,
author = {Lipshitz, Robert},
issn = {1465-3060},
journal = {Geometry \& topology},
language = {eng},
number = {2},
pages = {955-1096},
title = {A cylindrical reformulation of Heegaard Floer homology},
volume = {10},
year = {2006},
}

\end{document}